\theoremstyle{plain}
\newtheorem{thm}{Theorem}[section]
\newtheorem{lem}[thm]{Lemma}
\newtheorem{question}[thm]{Question}
\newtheorem{cor}[thm]{Corollary}
\newtheorem{prop}[thm]{Proposition}
\theoremstyle{definition}
\newtheorem{defn}[thm]{Definition}
\newtheorem{rmk}[thm]{Remark}
\newtheorem{rmks}[thm]{Remarks}
\newtheorem{example}[thm]{Example}
\newtheorem{Step}{Step}
\numberwithin{equation}{section}
\def\owrepositiontagaux#1#2#3{#2\hbox to 1pt{\hss$\mathsurround=0pt#1{#3}$\hss}}
\newcommand{\alg}{{\mathrm{alg}}}
\newcommand{\cl}{{\mathrm{cl}}}
\newcommand{\cx}{{\mathrm{cx}}}
\newcommand{\rs}{{\mathrm{r}}}
\newcommand{\gr}{{\mathrm{gr}}}
\newcommand{\Gm}{\mathbf{G}_\mathrm{m}}
\newcommand{\sB}{{\mathscr B}}
\newcommand{\sH}{{\mathscr H}}
\newcommand{\sF}{{\mathscr F}}
\newcommand{\sG}{{\mathscr G}}
\newcommand{\sI}{{\mathscr I}}
\newcommand{\sO}{{\mathscr O}}
\newcommand{\sX}{{\mathscr X}}
\newcommand{\A}{{\mathbf A}}
\renewcommand{\C}{{\mathbf C}}
\renewcommand{\P}{{\mathbf P}}
\newcommand{\Q}{{\mathbf Q}}
\newcommand{\R}{{\mathbf R}}
\newcommand{\SB}{{\mathbf{SB}}}
\newcommand{\Z}{{\mathbf Z}}
\newcommand{\Zl}{{\Z_{\ell}}}
\newcommand{\CH}{\mathrm{CH}}
\newcommand{\Hdg}{\mathrm{Hdg}}
\newcommand{\Gal}{\mathrm{Gal}}
\newcommand{\nr}{\mathrm{nr}}
\newcommand{\KS}{\textsc{KS}}
\newcommand{\Pic}{\mathrm{Pic}}
\newcommand{\Br}{\mathrm{Br}}
\newcommand{\Tr}{\mathrm{Tr}}
\newcommand{\Id}{\mathrm{Id}}
\newcommand{\Trf}{\mathrm{Tr}_f}
\newcommand{\Spec}{\mathrm{Spec}}
\renewcommand{\phi}{\varphi}
\renewcommand{\emptyset}{\varnothing}
\newcommand{\red}{{\mathrm{red}}}
\newcommand{\sm}{\mathrm{sm}}
\newcommand{\Ker}{{\mathrm{Ker}}}
\newcommand{\Ima}{{\mathrm{Im}}}
\newcommand{\Hom}{{\mathrm{Hom}}}
\newcommand{\Ext}{{\mathrm{Ext}}}
\newcommand{\et}{\text{ét}}
\newcommand{\torsion}{(\mathrm{torsion})}
\newcommand{\tors}{{\mathrm{tors}}}
\newcommand{\elw}{\mathrm{ind}}
\newcommand{\ab}{\mathrm{ab}}
\newcommand{\van}{\mathrm{van}}
\newcommand{\Sing}{\mathrm{Sing}}
\newcommand{\bS}{{\mathbf S}}
\newcommand{\divi}{\mathrm{div}}
\newcommand{\ci}{\mathscr{C}^{\infty}}
\newcommand{\RR}{{\mathrm{R}}}
\date{January 2nd, 2018; revised on October 19th, 2019}
\title[On the integral Hodge conjecture for real varieties, II]{On the integral Hodge conjecture for\\real varieties, II}
\author{Olivier Benoist}
\address{Institut de Recherche Math\'ematique Avanc\'ee,
UMR 7501, Universit\'e de Strasbourg et CNRS,
7 rue Ren\'e Descartes,
67000 Strasbourg, FRANCE}
\email{olivier.benoist@unistra.fr}
\author{Olivier Wittenberg}
\address{D\'epartement de math\'ematiques et applications, \'Ecole normale sup\'erieure, 45~rue d'Ulm, 75230 Paris Cedex 05, France}
\email{wittenberg@dma.ens.fr}
\begin{document}

\begin{abstract}
We establish the real integral Hodge conjecture for $1$\nobreakdash-cycles on various
classes of uniruled threefolds (conic bundles, Fano threefolds
with no real point, some del Pezzo fibrations)
and on conic bundles over higher-dimensional bases which themselves satisfy the real integral Hodge conjecture for $1$\nobreakdash-cycles.
In addition, we show that
 rationally connected threefolds over non-archimedean real closed fields
do not satisfy the real integral Hodge conjecture in general
and that over such fields,
Br\"ocker's EPT theorem remains true for simply connected surfaces of geometric genus zero
but fails for some~$K3$ surfaces.
\end{abstract}

\maketitle
\setcounter{section}{5}

\section*{Introduction}
{\renewcommand*{\thethm}{\Alph{thm}}\setcounter{thm}{3}

This article forms the second part of a work started in~\cite{BW1},
which centers around the formulation and the study of
a ``real integral Hodge conjecture'' for real algebraic varieties.
The first part was devoted to the definition of this property,
to examples,
and to the connections, in the case of $1$\nobreakdash-cycles,
with other topics in the theory of real algebraic cycles
(existence of curves of even genus, image of the Borel--Haefliger cycle class
map, unramified cohomology).
In this second part,
we address the question of the validity of the real integral
Hodge conjecture for $1$\nobreakdash-cycles on
specific classes of varieties.

Let us first recall the set-up put forward in \emph{op.\ cit}.
We denote by~$X$ a smooth and proper algebraic variety of dimension~$d$
over a real closed field~$R$.
Letting $C=R(\sqrt{-1}\mkern2mu)$ and
$G=\Gal(C/R)$,
one defines
 the \emph{equivariant cycle class map}
\begin{align*}
\cl:\CH_1(X) \to H^{2d-2}_G(X(C),\Z(d-1))
\end{align*}
(see \cite[\textsection\ref*{BW1-subsubsec:eqcl}]{BW1}).
It takes its values in the equivariant semi-algebraic cohomology
of the semi-algebraic space~$X(C)$, which, when $R$ is the field~$\R$ of real numbers,
is the same as the equivariant Betti cohomology of the complex manifold $X(\C)$.
The image of the equivariant cycle class map is contained in
the subgroup
\begin{align*}
\Hdg^{2d-2}_G(X(C),\Z(d-1))_0 \subseteq H^{2d-2}_G(X(C),\Z(d-1))
\end{align*}
of those classes that satisfy
a topological constraint, determined by the image of the class
in the group $H^{2d-2}_G(X(R),\Z(d-1))$, and a Hodge-theoretic constraint,
which is classical when $C=\C$ and which is trivial when $H^2(X,\sO_X)=0$
(see \cite[\textsection\ref*{BW1-par:realIHC}]{BW1}).
We consider the \emph{real integral Hodge conjecture for $1$\nobreakdash-cycles on~$X$}
only when $R=\R$ or $H^2(X,\sO_X)=0$; it
is the assertion that every element of
$\Hdg^{2d-2}_G(X(C),\Z(d-1))_0$
belongs to the image of the equivariant cycle class map.

Examples and counterexamples to the real integral Hodge conjecture for $1$\nobreakdash-cycles are given in \cite[\textsection\ref*{BW1-sec:realIHC},
\textsection\ref*{BW1-section:examples}]{BW1}. Its connections with
curves of even genus and with the Borel--Haefliger cycle class map
are discussed in \cite[\textsection\ref*{BW1-sec:onecycles}]{BW1}.
As it turns out, the point of view we adopt
leads to a systematic explanation for
all known examples of smooth proper real algebraic varieties~$X$ that
satisfy $H_1^\alg(X(\R),\Z/2\Z)\neq H_1(X(\R),\Z/2\Z)$
or that fail to contain a curve of
even genus
(see \cite[Remark~\ref*{BW1-rk:covers hknotalg}~(i)
and \textsection\ref*{BW1-section:examples}]{BW1}).

By analogy with Voisin's results \cite{voisinthreefolds} over the complex numbers,
we were led,
in \cite[Question~\ref*{BW1-mainquestion}]{BW1},
to raise the question
of the validity of the real integral Hodge conjecture, over~$\R$,
for $1$\nobreakdash-cycles on uniruled threefolds, on Calabi--Yau threefolds and on rationally connected varieties.

Motivated by this question, we first prove, in this article, the following
three theorems, which provide some evidence towards a positive
answer.  We then proceed to discuss the real integral Hodge conjecture
over non-archimedean real closed fields.

\begin{thm}[see Theorem~\ref{thm:fibres en coniques}]
\label{th:D}
Let $X\to B$ be a morphism between smooth, proper, irreducible varieties over~$\R$,
whose generic fibre is a conic.
Assume that~$B$
satisfies the real integral Hodge conjecture for $1$\nobreakdash-cycles. (Such is the case,
for instance, if~$B$ is a surface.)
Then~$X$ satisfies the real integral Hodge conjecture for $1$\nobreakdash-cycles.
\end{thm}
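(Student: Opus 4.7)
\smallskip

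\noindent\emph{Plan of proof.}
The plan is to exploit the conic bundle structure $f : X \to B$ to reduce the real integral Hodge conjecture on~$X$ to the one on~$B$, via a projective-bundle-type decomposition of equivariant cohomology together with an explicit treatment of fibre components.

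First I would reduce to the case where~$f$ is flat and every fibre is a plane conic, possibly degenerate (a pair of lines, which may be conjugate over the residue field); this is harmless because the modifications of~$X$ involved only introduce $1$-cycles supported in fibres, which are manifestly algebraic. The relative anticanonical class $\xi := c_1(\omega_{X/B}^{-1}) \in H^2_G(X(C),\Z(1))$ plays the role of a relative hyperplane class: it is algebraic on~$X$ and restricts to a class of degree~$2$ on every smooth fibre.

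The main step is to establish, for any $\alpha \in \Hdg^{2d-2}_G(X(C),\Z(d-1))_0$, an integral decomposition
\begin{align*}
\alpha \;=\; f^*\gamma \cup \xi \;+\; \sum_i n_i\, \cl(Y_i),
\end{align*}
where $\gamma \in H^{2d-4}_G(B(C),\Z(d-2))$ and the $Y_i$ are irreducible components of fibres of~$f$. A Leray analysis for~$f$ in equivariant semi-algebraic cohomology should justify such a decomposition: away from the discriminant $\Delta \subset B$ the higher direct images $R^q f_* \Z$ are those of a $\P^1$-bundle, while over~$\Delta$ the singular-fibre components contribute the correction terms. Because the decomposition determines~$\gamma$ uniquely, the Hodge and topological conditions on~$\alpha$ descend to~$\gamma$, placing it in $\Hdg^{2d-4}_G(B(C),\Z(d-2))_0$. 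The hypothesis on~$B$ then yields $\gamma = \cl(Z)$ for some $1$-cycle~$Z$ on~$B$; choosing a divisor $D \in |\omega_{X/B}^{-1}|$ on~$X$, the class~$\alpha$ is realized by the $1$-cycle $f^{-1}(Z) \cdot D \,+\, \sum_i n_i Y_i$.

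The central obstacle is proving the integral decomposition. A general conic bundle is not a projective bundle: its Brauer class in $\Br(B \smallsetminus \Delta)$ can be the nontrivial $2$-torsion element, and correspondingly $f_*\xi = 2$ takes values in the even integers, reflecting the absence of a section. Controlling the differentials and edge maps of the Leray spectral sequence \emph{integrally}, together with their $G = \Gal(C/R)$-equivariance and their behaviour along components of~$\Delta(R)$ over which~$f$ has no real fibres, is the delicate point. Matching the topological constraint on~$X(R)$ exactly, rather than only up to $2$-torsion, is the subtlest ingredient and is where the real-algebraic machinery developed in~\cite{BW1} should enter decisively.
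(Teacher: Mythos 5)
Your proposed decomposition
\begin{align*}
\alpha \;=\; f^*\gamma \cup \xi \;+\; \sum_i n_i\, \cl(Y_i)
\end{align*}
does not exist in general, and this is a fatal gap rather than a ``delicate point'' that further Leray bookkeeping could repair. Since the $Y_i$ are components of fibres, $f_*\cl(Y_i)=0$ (they map to points on~$B$), and by the projection formula $f_*(f^*\gamma\cup\xi)=\gamma\cup f_*\xi=2\gamma$. Hence any class of the proposed form satisfies $f_*\alpha\in 2\,H^{2d-2}_G(B(\C),\Z(d-1))$. But already for the trivial conic bundle $X=B\times\Gamma$, taking $\alpha=\cl(Z\times\{z\})$ for a $1$-cycle~$Z$ on~$B$ and a point $z\in\Gamma$ gives $f_*\alpha=\cl(Z)$, which is in general not even. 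So the decomposition fails for classes that are manifestly algebraic, let alone for arbitrary Hodge classes. There is also a second problem you gloss over: even when such a $\gamma$ exists, it is not automatic that the topological constraint on~$\alpha$ (its compatibility with a class on~$X(\R)$) descends to the topological constraint on~$\gamma$ relative to~$B(\R)$. This fails to be formal because the map $X(\R)\to B(\R)$ need not be surjective, and whole components of $B(\R)$ can lie under the discriminant.

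There is a conceptual way to see that your argument cannot close: nothing in it uses anything specific to the field~$\R$. If a purely Leray-spectral-sequence argument worked, the theorem would hold over every real closed field --- but the paper shows (Example~\ref{ex:cexRC}, built from Ducros' example, and Proposition~\ref{prop:IHCconicbundlesnonarch}) that it is \emph{false} over $R=\cup_n\R((t^{1/n}))$, even for conic bundles over $\P^1_R\times\P^1_R$. The actual proof is forced to invoke the Stone--Weierstrass theorem three times: via Akbulut--King's approximation of $1$-cycle classes to kill $\alpha|_{X(\R)}$ (Step~\ref{step:real locus1}), via Ischebeck--Sch\"ulting's higher-dimensional EPT theorem to kill $f_*\alpha$ by a $1$-cycle with finitely many real points (Step~\ref{step:pushforward}), and again via a signs argument inside Lemma~\ref{lem:AKhomology}. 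Only after both $\alpha|_{X(\R)}$ and $f_*\alpha$ have been made to vanish does a Leray analysis (Steps~\ref{step:real locus2}--\ref{step:killonopen}) show that $\alpha$ dies on an open set, after which one concludes by purity and a localization argument (Step~\ref{step:removestuff}). In short: the strategy is not to decompose~$\alpha$ but to progressively kill it over larger and larger pieces of~$X$, and the approximation theorems are what make each killing step possible; your outline sets up no place where these enter.
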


\begin{thm}[see Theorem~\ref{thm:solides fano}]
\label{th:E}
Let~$X$ be a smooth Fano threefold over~$\R$. Assume that $X(\R)=\emptyset$. Then~$X$ satisfies
the real integral Hodge conjecture.
\end{thm}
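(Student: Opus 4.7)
The plan is to combine Voisin's integral Hodge theorem for $1$\nobreakdash-cycles on complex rationally connected threefolds, applied to $X_\C$, with a Galois descent exploiting that $G$ acts freely on~$X(\C)$.

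First, since $X$ is Fano, Kodaira vanishing gives $H^i(X,\sO_X)=0$ for every $i>0$; in particular $H^{3,1}(X_\C)=H^{1,3}(X_\C)=0$, so every integral class in $H^4(X(\C),\Z(2))$ is a Hodge class, making the Hodge-theoretic constraint in the definition of $\Hdg^4_G(X(\C),\Z(2))_0$ vacuous. Since $X(\R)=\emptyset$, the topological constraint is vacuous as well. The theorem therefore reduces to the surjectivity of the equivariant cycle class map
\[
\cl\colon\CH_1(X)\longrightarrow H^4_G(X(\C),\Z(2)),
\]
and on the complex side, $X_\C$ being a rationally connected threefold, Voisin's theorem yields the surjectivity of $\cl\colon\CH_1(X_\C)\to H^4(X(\C),\Z(2))$.

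Next, because $G$ acts freely on~$X(\C)$, the equivariant cohomology $H^\bullet_G(X(\C),\Z(n))$ coincides with the cohomology of the compact $6$\nobreakdash-manifold $Y=X(\C)/G$ with the appropriate local coefficients. I would then analyze the Hochschild--Serre spectral sequence
\[
E_2^{p,q}=H^p\bigl(G,H^q(X(\C),\Z(2))\bigr)\Longrightarrow H^{p+q}_G(X(\C),\Z(2))
\]
in total degree~$4$. Fano threefolds being simply connected, $H^1(X_\C,\Z)=H^5(X_\C,\Z)=0$, so only the rows $q\in\{0,2,3,4,6\}$ contribute. The edge piece $E_\infty^{0,4}$ is a subquotient of $H^4(X_\C,\Z(2))^G$, onto which $\CH_1(X_\C)^G$ surjects by Voisin, while the remaining pieces $E_\infty^{p,4-p}$ with $p\ge 1$ are $2$\nobreakdash-torsion.

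The main difficulty is to realize an arbitrary class in $H^4_G(X(\C),\Z(2))$ by a genuine $\R$\nobreakdash-cycle. For a Galois-invariant complex cycle~$z$, the norm $z+\sigma^*z$ descends to~$\R$ but gives only the class~$2[z]$, so a finer argument is needed to produce the full edge contribution as well as the $2$\nobreakdash-torsion graded pieces. I expect this to rely on a criterion from Part~I~\cite{BW1} which, under the hypotheses $H^2(X,\sO_X)=0$ and $X(\R)=\emptyset$, reduces the real integral Hodge conjecture for $1$\nobreakdash-cycles to the complex integral Hodge conjecture together with a condition on unramified cohomology of~$X_\C$; the rational connectedness of $X_\C$ and the abundance of rational curves on the Fano threefold~$X$ should then yield the required vanishing.
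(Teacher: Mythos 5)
Your reduction at the start is correct: since $H^2(X,\sO_X)=0$ the Hodge-theoretic constraint is vacuous, and since $X(\R)=\emptyset$ the topological constraint disappears, so the statement does reduce to the surjectivity of $\cl:\CH_1(X)\to H^4_G(X(\C),\Z(2))$. You also correctly identify where the difficulty lies: a norm argument from Voisin's complex theorem only produces the even multiples, and the $2$\nobreakdash-torsion graded pieces $E_\infty^{p,4-p}$, $p\geq 1$, are what have to be dealt with.

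But at exactly that point the proposal stops being a proof. The final paragraph says ``I expect this to rely on a criterion from Part~I which \dots reduces the real integral Hodge conjecture \dots to the complex integral Hodge conjecture together with a condition on unramified cohomology of $X_\C$; the rational connectedness of~$X_\C$ \dots should then yield the required vanishing.'' No such criterion exists, and indeed it cannot: the paper constructs, in \textsection\ref{par:cexnonarch}, smooth Calabi--Yau and rationally connected threefolds over the non-archimedean real closed field $R=\bigcup_n\R((t^{1/n}))$ which satisfy the complex integral Hodge conjecture (by Voisin and Lefschetz) but fail the real one. This shows that the real statement is not a formal consequence of Voisin's complex theorem plus bookkeeping over~$G$; the $2$\nobreakdash-torsion obstruction is genuine, and over~$\R$ it must be killed by an argument that exploits archimedean input. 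The relevant statement from Part~I, \cite[Proposition~\ref*{BW1-prop:ihc defect and bo}]{BW1}, identifies the defect with the group $H^3_{\nr}(X,\Q/\Z(2))_0$, but proving the vanishing of this group is exactly as hard as the theorem itself, and ``abundance of rational curves'' does not supply it (Colliot-Th\'el\`ene--Voisin's vanishing of $H^3_{\nr}$ over $\C$ does not descend to the twisted real version).

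The paper's actual route is quite different and specifically archimedean. When $-K_X$ is very ample, one adapts Voisin's Noether--Lefschetz density argument to~$\R$: Proposition~\ref{prop:critere infinitesimal} is a real version of Voisin's infinitesimal criterion, asserting that if a smooth surface $S\subset X$ has $H^1(S,N_{S/X})=0$ and there is $\lambda\in H^{1,1}(S_\C)^{F_\infty=-1}$ for which the variation map $H^0(N_{S/X})\to H^2(\sO_S)$ is surjective, then all classes coming from $H^2_G(S(\C),\Z(1))$ are algebraic. This is applied with $S\in|{-}K_X|$ a $K3$ surface (Proposition~\ref{prop:preuve Fano 1}); the hypothesis $X(\R)=\emptyset$ is then used in Proposition~\ref{prop:good K3}, via the Lefschetz fixed point theorem applied to complex conjugation on~$S(\C)$, to force $\dim H^{1,1}(S_\C)^{F_\infty=-1}=11$, which beats the bound $\rho(X_\C)\leq 10$ and guarantees a nonzero vanishing $(1,1)$\nobreakdash-class on which the criterion can be tested. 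When $-K_X$ is not very ample one falls back on Iskovskikh's classification (\textsection\ref{subsec:classification of fano}). None of this is captured by the Hochschild--Serre analysis you propose, and without an argument of this kind the surjectivity onto the torsion part of $H^4_G(X(\C),\Z(2))$ remains unproven.
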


\begin{thm}[see Theorem~\ref{thm:solides fibres en del Pezzo}]
\label{th:F}
Let $X\to B$ be a morphism between smooth, proper, irreducible varieties
over a real closed field~$R$, whose generic fibre is a del Pezzo surface.
Let $\delta \in \{1,\dots,9\}$ denote the degree of this del Pezzo surface
and assume that~$B$ is a curve.
Then~$X$ satisfies the real integral Hodge conjecture under any of the
following conditions:
\begin{enumerate}[(i)]
\item $\delta\geq 5$;
\item $X(R)=\emptyset$ and $B(R)\neq\emptyset$;
\item $\delta\in\{1,3\}$ and the real locus of each smooth real fiber of~$f$ has exactly
 one connected component;
\item $R=\R$ and $\delta=3$.
\end{enumerate}
\end{thm}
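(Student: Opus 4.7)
The plan is to decompose any class $\alpha\in\Hdg^4_G(X(C),\Z(2))_0$ according to the fibration $f\colon X\to B$ into a ``horizontal'' part, detected by the degree of $\alpha|_{X_b}$ on a smooth fibre $X_b$, and a ``vertical'' part supported on finitely many fibres, and then to represent each part by an algebraic $G$-equivariant $1$\nobreakdash-cycle. Since $H^2(X_b,\sO_{X_b})=0$ for every smooth del Pezzo fibre, the real integral Hodge conjecture for $1$\nobreakdash-cycles on $X_b$ reduces to a statement about the Galois module of line classes; this handles both the vertical contribution and the extraction of a constraint on the generic fibre. The horizontal problem then becomes that of producing a $G$-invariant $0$-cycle of the appropriate degree $N$ on the generic fibre $X_\eta$, a del Pezzo surface of degree $\delta$ over the function field $R(B)$, compatibly with the equivariant cohomology class of $\alpha$ along $B(R)$.

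The four cases are handled as follows. In case~(i), the generic fibre $X_\eta$ of degree $\delta\geq 5$ admits, after a birational modification, a conic bundle structure; one builds a birational model $X\dashrightarrow\widetilde X\to\widetilde B$ in which $\widetilde B$ is a smooth projective \emph{surface} and $\widetilde X\to\widetilde B$ is a conic bundle, and applies Theorem~\ref{th:D}, transferring the conclusion back to $X$ by the birational invariance of the real integral Hodge conjecture for $1$\nobreakdash-cycles on smooth proper threefolds. In case~(ii), the $G$-action on $X(C)$ is free, so equivariant cohomology of $X(C)$ is that of the quotient, and the problem reduces to a complex-type integral Hodge statement combined with a descent argument using $B(R)\neq\emptyset$. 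In case~(iii), for $\delta=1$ the base point of the anticanonical pencil on each fibre gives a universal section of $f$, making the horizontal part trivial; for $\delta=3$, the one-component hypothesis restricts the Galois action on the $27$ lines of smooth fibres sufficiently to ensure that $X_\eta$ carries enough real curves to realize the horizontal class. In case~(iv), one combines the Segre--Schläfli classification of real cubic surfaces with a systematic study of how the topology of real fibres varies over $B(\R)$ in order to produce the required real multi-sections.

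The main obstacle is case~(iv). Real cubic surfaces can have one or two connected components in their real locus, and in a cubic surface fibration over an $\R$-curve both types may occur, with degenerations between them. A real multi-section of $f$ must be chosen compatibly with the topological invariants of every real fibre simultaneously, and matching this choice against the equivariant Hodge-theoretic constraint encoded by $\alpha$ requires a delicate interplay between the real algebraic geometry of cubic surfaces in families and the topology of the map $X(\R)\to B(\R)$. I expect this analysis to account for the bulk of the proof.
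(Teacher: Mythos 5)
Your high-level plan---splitting $\alpha$ into a vertical part supported on special fibres and a horizontal part detected by the degree on a smooth fibre---does match the paper's organization (vertical curves are constructed in \S\ref{subsec:vertical} following~\cite{ewzcl}; the horizontal part is handled in \S\S\ref{subsec:joliLeray}--\ref{subsec:horizontal} via the modified Leray spectral sequence~\eqref{eq:jolileray} and multisections). But several of the case-by-case arguments you sketch would not go through.

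The most serious gap is case~(i). You propose to reduce a degree-$\delta\geq 5$ del Pezzo fibration to a conic bundle over a smooth projective \emph{surface} and then invoke Theorem~\ref{th:D}. But Theorem~\ref{th:D} is stated and proved only for $R=\R$: its proof relies repeatedly on the Stone--Weierstrass theorem, and Example~\ref{ex:cexRC} (reinterpreting Ducros) shows that its conclusion actually \emph{fails} over $R=\bigcup_n\R((t^{1/n}))$, already for a conic bundle over $\P^1_R\times\P^1_R$. Since case~(i) of Theorem~\ref{th:F} must hold over arbitrary real closed fields, this reduction is simply unavailable. (There is also the separate issue of whether every degree-$6$ or degree-$8$ del Pezzo surface over $R(B)$ without a rational point carries a conic fibration defined over $R(B)$, but the $R=\R$ restriction on Theorem~\ref{th:D} is already fatal.) The paper instead proves (i), (ii) and (iii) by a uniform argument that never detours through conic bundles: after removing the vertical part (Proposition~\ref{prop:curvesinfibers}), one subtracts a multiple of a (multi)section to make $\alpha$ of degree $0$ on every geometric fibre---using Manin--Colliot-Th\'el\`ene over~$C$ when the degree is even, and, when it is odd, a section of $f^0$ over $R(B)$ supplied by rationality, Lang's theorem, or the Hasse principle for homogeneous spaces according to~$\delta$; one then shows the real fibres have connected or empty real locus, so that $H^4_G(X^0_b(C),\Z(2))_0\simeq\Z$, and the vanishing of $\alpha$ follows from the injectivity statement of Proposition~\ref{prop:Leraybizarre}. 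Relatedly, the one-component hypothesis in case~(iii) is used exactly to guarantee this description of $H^4_G(X^0_b(C),\Z(2))_0$ on real fibres, not to constrain the Galois action on the $27$ lines as you suggest.

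For case~(iv), you anticipate a delicate topological argument via the Segre--Schl\"afli classification of real cubic surfaces and the variation of $X(\R)\to B(\R)$, and you expect this to dominate the proof. In fact the paper dispatches case~(iv) by a short arithmetic trick (Proposition~\ref{prop:cubicsurfacefibrations}): since $27$ is odd, the Galois action on the lines of the generic cubic fibre has an orbit of odd cardinality~$\nu$; the corresponding degree-$\nu$ cover $D\to B$ makes the generic fibre contain a line, whence, by projection, a model of $X_{\R(D)}$ is a conic bundle over the \emph{surface} $\P^1_\R\times D$. Theorem~\ref{thm:fibres en coniques} applies there precisely because $R=\R$, and a norm argument bounds the cokernel of $\cl$ by~$\nu$. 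A second norm argument for $X_\C\to X$, using Voisin's theorem over~$\C$, shows the cokernel is $2$-torsion; since $\gcd(\nu,2)=1$, it vanishes. No topological study of the family of real cubic surfaces is needed.
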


The classes of varieties considered in these three theorems correspond to the
three possible outputs of the minimal model programme applied to a uniruled
threefold: a Fano threefold, a fibration into del Pezzo surfaces over
a curve, a conic bundle over a surface.
We note, however, that Theorem~\ref{th:D} allows conic bundles over a base of
arbitrary dimension (which creates significant additional difficulties
in its proof).

As explained in the introduction of~\cite{BW1} and in
\textsection\textsection\ref{sec:conicbundles}--\ref{sec:dPfibrations}
below, these theorems also have concrete implications for the study of the
group $H_1^\alg(X(\R),\Z/2\Z)$, for the
problem of~$\ci$ approximation of loops in~$X(\R)$ by algebraic
curves, for the existence of geometrically irreducible curves of
even geometric genus on~$X$ and for the study of the third unramified cohomology group
of~$X$ with~$\Q/\Z$ coefficients,
when~$X$ belongs to one of the families of varieties which appear in their statements (see
Corollary~\ref{coro:solides fibres en coniques},
Corollary~\ref{cor:approxconiques},
Corollary~\ref{cor:elw solides fano},
Proposition~\ref{prop:application of ihc for dp fibration}).

We devote each of
\textsection\ref{sec:conicbundles},
\textsection\ref{sec:Fano},
\textsection\ref{sec:dPfibrations}
to proving one of the above theorems.
The methods are varied.
Theorem~\ref{th:E} is established, for threefolds~$X$ such that~$-K_X$ is very ample,
by adapting to~$\R$
the strategy employed by Voisin~\cite{voisinthreefolds} over~$\C$:
we exhibit real points of the Noether--Lefschetz locus for anticanonical sections of~$X$.
The proofs of Theorems~\ref{th:D} and~\ref{th:F} exploit the geometry of the fibration structure.
Theorem~\ref{th:F} relies on a study of the homology of the singular
fibres, adapting to real closed fields an argument contained in~\cite{ewzcl} over separably
closed fields,
while the proof of Theorem~\ref{th:D}
applies the Stone--Weierstrass theorem in many ways,
through
various results of real algebraic geometry due to Akbulut and King, to Bröcker,
and to Ischebeck and Schülting.

Let us now briefly describe the contents of~\textsection\ref{nonarchimedeansection},
concerned with the phenomena that are specific to
non-archimedean real closed fields and with their implications for the study of
the real integral Hodge conjecture over~$\R$.

For smooth, projective and irreducible varieties~$X$ of dimension~$d\geq 1$
such that $H^2(X,\sO_X)=0$,
allowing an arbitrary real closed field as the ground field amounts to strengthening
the real integral Hodge conjecture over~$\R$ by requiring that the group
$H^{2d-2}_G(X(\C),\Z(d-1))_0$ be generated by the equivariant cycle classes of
curves on~$X$ whose degrees
are bounded independently of~$X$ whenever~$X$ varies in a bounded family of
real varieties and is defined over~$\R$ (see~\textsection\ref{subsec:curves of bounded degree}).

The proof of Theorem~\ref{th:E} heavily relies on Hodge theory, which forces us to restrict
its statement to~$\R$.
We do not know whether it remains true over non-archimedean real closed fields.
Theorem~\ref{th:D}, on the other hand, fails over such fields,
as we show in~\textsection\ref{sss:RCR}
by reinterpreting an example of Ducros~\cite{ducros}:
there exist a real closed field~$R$ and a conic bundle threefold~$X$
over~$\P^1_R \times \P^1_R$ failing the real integral Hodge conjecture.
We refer to~\textsection\ref{sss:discussion IHCR}
for a detailed discussion of the consequences of this counterexample.

The proof of Theorem~\ref{th:D} depends on the ground field being archimedean
through three uses of the Stone--Weierstrass theorem:
one to apply results of Akbulut and King~\cite{AKhomology}, \cite{AkbulutKing}
to approximate $\ci$ loops in~$X(\R)$ by real loci of algebraic curves and~$\ci$ surfaces
in~$B(\R)$
by connected components of real loci of algebraic surfaces; one to apply, to~$B$,
Bröcker's
EPT theorem~\cite{brocker} if~$B$ is a surface,
or its higher-dimensional version due to Ischebeck and Schülting~\cite{IS} in general;
and one to separate the connected components of the real loci of algebraic
surfaces contained in~$B$ by the signs of rational functions on these surfaces.

The first of these uses becomes irrelevant when $X(\R)=\emptyset$ and~$B$
is a surface.
Under these assumptions,
as we explain in~\textsection\ref{subsubsec:cokcycleclassmap},
the proof of Theorem~\ref{th:D} goes through over an arbitrary real closed field
as soon as~$B$ satisfies the conclusion of the EPT theorem
and possesses enough rational functions to separate the connected components of its real locus---two
properties that we refer to as the ``EPT'' and the ``signs'' properties
(see Proposition~\ref{prop:cokersstorsion}).

This process can be reversed: if~$B$ is a surface over a real closed field~$R$
and~$\Gamma$ denotes the anisotropic conic over~$R$, the real integral Hodge conjecture
for the trivial conic bundle $X = B \times \Gamma$ implies, in turn, the EPT and signs
properties for~$B$ (see Proposition~\ref{prop:IHCconicbundlesnonarch}).
We use this criterion to deduce the validity of the EPT and signs properties,
over arbitrary real closed fields,
for surfaces 
subject to the assumptions $H^2(B,\sO_B)=0$ and $\Pic(B_C)[2]=0$
(see Theorem~\ref{thm:signsEPTsurfaces}).

The assumption $H^2(B,\sO_B)=0$ is essential:
the signs property is known to fail
for some~$K3$ surfaces over non-archimedean real closed fields.
We complete the picture by showing that over non-archimedean real closed fields,
the EPT property can fail as well
(see Proposition~\ref{prop:cexEPT}; the counterexample is a~$K3$ surface),
thus answering a question raised in \cite{IS} and again in \cite{scheidererpurity}. We also verify that the hypothesis $\Pic(B_C)[2]=0$ cannot be dispensed with, by constructing a bielliptic surface failing the signs property (see Proposition \ref{prop:cexsigns}).

Additional positive and negative results are contained in~\textsection\ref{nonarchimedeansection}:
counterexamples to the real integral Hodge conjecture over a non-archimedean real closed field
for simply connected Calabi--Yau threefolds with and without real points (in~\textsection\ref{sss:CYR})
and for hypersurfaces of degree~$8$ in~$\P^4$ with no real point (in~\textsection\ref{subsec:hypersurfaces in P4}), and the validity of the real integral Hodge conjecture,
over arbitrary real closed fields, for smooth cubic hypersurfaces of dimension $\geq 3$ (see Theorem~\ref{thm:cubiques}).

\bigskip
\emph{Notation.}
Given a real closed field~$R$, we let $C=R(\sqrt{-1}\mkern2mu)$
and $G=\Gal(C/R)$.
We denote by~$\R$ the field of real numbers.
We recall from~\cite{BW1}
that \emph{variety} (over~$R$)
is understood as \emph{separated scheme of finite type} (over~$R$).
We refer to~\cite[\textsection\ref*{BW1-sec:cohomology}]{BW1} for generalities concerning the cohomology of varieties over $R$.
For a smooth and proper variety~$X$ of pure dimension~$d$ over~$R$,
we set $H_i(X(R),\Z/2\Z)=H^{d-i}(X(R),\Z/2\Z)$
and write $H_i^\alg(X(R),\Z/2\Z)=\cl_R(\CH_i(X))$
and $H^i_\alg(X(R),\Z/2\Z)=\cl_R(\CH^i(X))$,
where~$\cl_R$ denotes the Borel--Haefliger cycle class maps
(see~\cite[\textsection\ref*{BW1-subsubsec:borel-haefliger}]{BW1}).
}

\bigskip
\emph{Acknowledgements.}
We are grateful to Jacek Bochnak and Wojciech Kucharz
for calling our attention to an oversight in the argument
we had given for Theorem~\ref{thm:AK} in a previous version of this article, to Jean-Louis Colliot-Th\'el\`ene for pointing out that our proof of Theorem~\ref{thm:cubiques} for three-dimensional cubics works in higher dimension as well, and to the referee for their useful suggestions.

\section{Conic bundles}
\label{sec:conicbundles}

\subsection{The real integral Hodge conjecture for conic bundles}

Our goal in \S\ref{sec:conicbundles} is to prove the following theorem.

\begin{thm}
\label{thm:fibres en coniques}
Let $f:X\to B$ be a morphism of smooth proper connected varieties over $\R$ whose generic fiber is a conic. If $B$ satisfies the real integral Hodge conjecture for $1$-cycles, then so does $X$.
\end{thm}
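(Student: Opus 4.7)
Plan. Write $d = \dim X$, so $\dim B = d-1$, and classes of $1$-cycles on~$X$ (resp.\ on~$B$) live in $H^{2d-2}_G(X(\C),\Z(d-1))$ (resp.\ in $H^{2d-4}_G(B(\C),\Z(d-2))$). Given $\alpha \in \Hdg^{2d-2}_G(X(\C),\Z(d-1))_0$, the first step is to form the proper pushforward $f_*\alpha$ and to check that it lies in $\Hdg^{2d-4}_G(B(\C),\Z(d-2))_0$; both the Hodge-theoretic constraint and the topological constraint defining the subscript $0$ are preserved by proper pushforward. The hypothesis on~$B$ then produces an algebraic $1$-cycle~$W$ on~$B$ with $\cl(W) = f_*\alpha$.

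The natural next step is to lift $W$ back to~$X$ up to a factor of~$2$. Intersecting $f^*W$ with an algebraic divisor $D\subset X$ of relative degree~$2$ over~$B$ (for instance, a representative of $-K_{X/B}$) and using the projection formula produces an algebraic $1$-cycle $\tilde W$ on~$X$ satisfying $f_*\cl(\tilde W)=2f_*\alpha$. Thus $f_*(2\alpha-\cl(\tilde W))=0$, and the task splits into two sub-problems: (i) showing that classes in $\Ker(f_*)$ are algebraic, and (ii) resolving the resulting $2$-torsion defect, since we have only controlled $2\alpha$.

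For~(i), I would use a conic-bundle decomposition of cohomology: over the smooth locus of $f$ one has a projective-bundle-type formula, and the degenerate fibres contribute the classes of their irreducible components, which are pairs of lines (algebraic over~$C$). Modulo visibly algebraic classes (sums of fibres, components of singular fibres), $\Ker(f_*)$ is then generated by pullbacks $f^*\gamma$ of $0$-cycle classes from~$B$; these are represented by sums of fibres of~$f$, hence are algebraic.

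The heart of the proof is sub-problem~(ii). The $2$-torsion defect is controlled by $H_1^\alg(X(\R),\Z/2\Z)$ and, through the conic-bundle structure, by the real locus of~$B$. To resolve it one invokes the three uses of the Stone--Weierstrass theorem flagged in the introduction: (a)~the Akbulut--King theorem, approximating $\ci$ loops in~$X(\R)$ and $\ci$ surfaces in~$B(\R)$ by real loci of algebraic subvarieties; (b)~Bröcker's EPT theorem applied to~$B$ if~$B$ is a surface, or the Ischebeck--Schülting strengthening otherwise, controlling the semi-algebraic structure of~$B(\R)$; and (c)~the ``signs property'', which separates the connected components of the real loci of algebraic surfaces in~$B$ by the signs of rational functions on them. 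Combined with the equivariant-cohomology comparison relating $\alpha$ to its restriction to $X(\R)$, these ingredients construct, for every target class in the real-locus piece, an algebraic $1$-cycle on~$X$ with the prescribed real topology, thereby resolving the $2$-torsion defect. I expect the main obstacle to be coordinating (a)--(c) when $\dim B > 2$: the Ischebeck--Schülting version of EPT and multiple layers of semi-algebraic approximation must be matched; indeed, the paper's remarks on Ducros's non-archimedean counterexample indicate that this coordination is precisely what fails without the archimedean hypothesis.
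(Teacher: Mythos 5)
Your opening move — push $\alpha$ forward to $B$ and use the hypothesis on $B$ to represent $f_*\alpha$ algebraically — matches the paper, and the tools you list (Akbulut--King, Br\"ocker/Ischebeck--Sch\"ulting EPT, the signs property) are the right ones. The organization of the rest does not work, though. Your sub-problem (ii) is as hard as the theorem: passing from ``$2\alpha$ algebraic'' to ``$\alpha$ algebraic'' is exactly the content of the real integral Hodge conjecture in this setting, since the cokernel of $\cl$ into the Hodge group is already $2$-torsion by a norm argument from $X_\C$ once the complex statement is granted (cf.\ the discussion in \S\ref{subsubsec:cokcycleclassmap}), and nothing in your gesture at Stone--Weierstrass supplies the missing mechanism. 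Your sub-problem (i) is also not established: the cohomology of a degenerating conic bundle is not a projective-bundle decomposition plus visibly algebraic vertical classes. After reducing to a Sarkisov standard model — a step you omit and which is indispensable — the sheaf $\RR^2f_*\Z(1)$ sits in the four-term exact sequence \eqref{eq:morphisme trace}, with a rank-one $G$-equivariant local system $\mathscr{L}$ on the smooth locus of the degeneracy divisor $\Delta$ and a $\Z/2\Z$ on its singular locus $\Sigma$, and these terms are not absorbed by fibres, components of singular fibres, or pullbacks of $0$-cycle classes.

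The paper copes with those terms only after arranging two vanishings in a specific order that your plan never achieves. Step~\ref{step:real locus1} first makes $\alpha|_{X(\R)}=0$: a $\ci$ loop representing the image of $\alpha$ in $H_1(X(\R),\Z/2\Z)$ is pushed to $B(\R)$, approximated (Akbulut--King, Theorem~\ref{thm:AK}) by the real locus of an algebraic curve, and lifted back to $X$ using a $\ci$ section of the conic bundle and Lemma~\ref{lem:section conic bundle}, producing an algebraic $z$ with the prescribed Borel--Haefliger class. Only then does Step~\ref{step:pushforward} make $f_*\alpha=0$: because $\alpha|_{X(\R)}=0$ forces $\cl_\R(y)=0$ for the cycle $y$ with $\cl(y)=f_*\alpha$, the Ischebeck--Sch\"ulting theorem replaces $y$ by a rationally equivalent $1$-cycle whose support has only finitely many real points, which the Sarkisov structure and Witt's theorem then lift to $X$ at degree \emph{one} — no factor of $2$ is ever introduced. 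Your degree-$2$ lift via $-K_{X/B}$ creates precisely the $2$-torsion ambiguity you have no tool to resolve; the EPT-based lift avoids it, but it needs $\alpha|_{X(\R)}=0$ first, so the order matters. With both vanishings in place, Steps~\ref{step:real locus2}--\ref{step:removestuff} kill $\alpha$ over $\Xi=f^{-1}(U(\R))$, use a relative Leray spectral sequence and the trace morphism (together with one more Akbulut--King/signs-property application, Lemma~\ref{lem:AKhomology}) to kill it over an open $B^0$ with small complement, and finish by d\'evissage (Lemma~\ref{lem:algebraichomology}).
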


This theorem is already non-trivial when $B$ is a surface.
 In this case, in view of the real Lefschetz $(1,1)$ theorem \cite[Proposition~\ref*{BW1-prop:real(1,1)}]{BW1} and of the reformulation of the real integral Hodge conjecture for $1$-cycles on $X$ in terms of unramified cohomology \cite[Remark~\ref*{BW1-rks:ch0 supported on surface}~(iii)]{BW1}, we get:

\begin{cor}
\label{coro:solides fibres en coniques}
Let $f:X\to B$ be a morphism of smooth proper connected varieties over $\R$ whose generic fiber is a conic. If $B$ is a surface, the real integral Hodge conjecture holds for $1$-cycles on $X$ and the unramified cohomology group $H^3_{\nr}(X,\Q/\Z(2))_0$ vanishes.
\end{cor}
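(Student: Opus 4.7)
The plan is to derive this corollary directly from Theorem~\ref{thm:fibres en coniques} by verifying the hypothesis that~$B$ satisfies the real integral Hodge conjecture for $1$\nobreakdash-cycles, and then to translate the conclusion into vanishing of unramified cohomology via the dictionary established in Part~I.

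First, I would handle the hypothesis. Since~$B$ is a smooth proper connected surface over~$\R$, a $1$-cycle on~$B$ is the same as a divisor. The real integral Hodge conjecture for $1$-cycles on~$B$ is then nothing other than the real Lefschetz $(1,1)$ theorem, which is precisely \cite[Proposition~\ref*{BW1-prop:real(1,1)}]{BW1}: every class in $\Hdg^2_G(B(\C),\Z(1))_0$ is the equivariant cycle class of a divisor. Consequently the hypothesis of Theorem~\ref{thm:fibres en coniques} is satisfied by~$B$, and applying that theorem to $f:X\to B$ yields the real integral Hodge conjecture for $1$-cycles on~$X$.

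Second, for the vanishing of $H^3_{\nr}(X,\Q/\Z(2))_0$, I would invoke the reformulation \cite[Remark~\ref*{BW1-rks:ch0 supported on surface}~(iii)]{BW1}. That remark shows that for a smooth proper variety~$X$ over~$\R$ whose $\CH_0$ is supported on a surface---which is the case here, because the conic bundle structure $f:X\to B$ over a surface~$B$ ensures that $\CH_0(X)$ is supported on any multisection, hence on a surface---the real integral Hodge conjecture for $1$-cycles on~$X$ is equivalent to the vanishing of $H^3_{\nr}(X,\Q/\Z(2))_0$. Combined with the first part, this gives the desired vanishing.

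There is no real obstacle here: both ingredients (the real $(1,1)$ theorem and the unramified cohomology reformulation) are already established in~\cite{BW1}, and the geometric input, namely the conic bundle structure guaranteeing the $\CH_0$-support condition, is immediate. The corollary is simply the conjunction of Theorem~\ref{thm:fibres en coniques} with these two results from Part~I.
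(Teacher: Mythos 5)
Your proof is correct and follows exactly the route the paper takes: the real Lefschetz $(1,1)$ theorem furnishes the hypothesis on~$B$ needed for Theorem~\ref{thm:fibres en coniques}, and the reformulation of the real integral Hodge conjecture for $1$\nobreakdash-cycles in terms of unramified cohomology (valid here because the conic bundle structure over a surface forces $\CH_0(X)$ to be supported on a surface) yields the vanishing of $H^3_{\nr}(X,\Q/\Z(2))_0$. The paper presents the corollary as an immediate consequence of these same two citations from Part~I together with Theorem~\ref{thm:fibres en coniques}, so there is no material difference between the two arguments.
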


\begin{rmk}
We show in Example \ref{ex:cexRC} below that an analogous statement over an arbitrary real closed field $R$ fails in general, even when $B$ is an $R$-rational surface. This is the reason why we only work over $\R$ in this section.
We explain geometric consequences of this failure in Example \ref{ex:implicationsDucros} below.

 The reason why the proof does not work over arbitrary real closed fields is that we extensively use consequences of the Stone--Weierstrass approximation theorem. In Proposition \ref{prop:cokersstorsion}, we  explain what remains of the proof over  general real closed fields when $B$ is a surface and~$X$ has no real point.
\end{rmk}

\begin{rmk}
The last statement of Corollary \ref{coro:solides fibres en coniques} is an analogue over $\R$ of a theorem of Parimala and Suresh over finite fields  \cite[Theorem 1.3]{parimalasuresh}.

The proof of Parimala and Suresh is very different from ours: they use the fact that the relevant unramified cohomology classes of $X$ come from cohomology classes of the function field of $B$, and study in detail their ramification over $B$.
It is possible to give a proof of Corollary \ref{coro:solides fibres en coniques} along these lines. 
The difficulties that arise, in this alternative proof, in the analysis of the ramification, are identical to those encountered in the proof we give below. This strategy is however not applicable when the base has dimension $\geq 3$.
\end{rmk}

Combining Theorem~\ref{thm:fibres en coniques} with the duality theorem \cite[Theorem~\ref*{BW1-th:nohodgetheoreticob}]{BW1}
and with an approximation result to be discussed next, namely Theorem~\ref{thm:AK} below,
yields the following corollary.

\begin{cor}
\label{cor:approxconiques}
Let $f:X\to B$ be a morphism of smooth projective connected varieties over $\R$ whose generic fiber is a conic.
Suppose that $H^2(B,\sO_B)=0$ and $\Pic(B_{\C})[2]=0$ and that $B$ satisfies the real integral Hodge conjecture for $1$-cycles.

Then $H_1(X(\R),\Z/2\Z)=H_1^\alg(X(\R),\Z/2\Z)$ and all one-dimensional compact $\ci$ submanifolds of~$X(\R)$
have an algebraic approximation in $X(\R)$ (in the sense of
\cite[Definition 12.4.10]{bcr},
recalled in Definition~\ref{def:approximation} below).
\end{cor}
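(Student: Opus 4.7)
The plan is to chain together the three ingredients pointed to in the paragraph preceding the corollary: Theorem~\ref{thm:fibres en coniques} (the real integral Hodge conjecture for $1$\nobreakdash-cycles on~$X$), the duality theorem \cite[Theorem~\ref*{BW1-th:nohodgetheoreticob}]{BW1} (which converts the real integral Hodge conjecture into a statement about the Borel--Haefliger cycle class map), and the approximation result Theorem~\ref{thm:AK} (which upgrades the surjectivity of the Borel--Haefliger map onto $H_1$ to an approximation statement).

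First, Theorem~\ref{thm:fibres en coniques} applied to $f:X\to B$ immediately yields that $X$ satisfies the real integral Hodge conjecture for $1$\nobreakdash-cycles. To feed this into the duality theorem of \cite{BW1}, the next step is to check that~$X$ inherits the Hodge-theoretic hypotheses $H^2(X,\sO_X)=0$ and $\Pic(X_\C)[2]=0$ from the corresponding hypotheses on~$B$. For the first, since the geometric fibres of~$f$ are conics and hence of arithmetic genus zero, the higher direct images $R^if_*\sO_X$ vanish for $i\geq 1$, and the Leray spectral sequence produces the isomorphism $H^2(X,\sO_X)\cong H^2(B,\sO_B)=0$. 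For the second, one uses the structure of the Picard group of a conic bundle over~$\C$: since $f_\C$ admits a section Zariski-locally up to codimension one (the Brauer class of the generic fibre is trivial over the algebraically closed ground field), the pullback $f^*_\C:\Pic(B_\C)\to\Pic(X_\C)$ fits in an exact sequence whose cokernel is a finitely generated free abelian group generated by relative divisors, so $\Pic(X_\C)[2]=\Pic(B_\C)[2]=0$.

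With these verifications in hand, the duality theorem \cite[Theorem~\ref*{BW1-th:nohodgetheoreticob}]{BW1} translates the real integral Hodge conjecture for $1$\nobreakdash-cycles on~$X$ into the equality $H_1(X(\R),\Z/2\Z)=H_1^\alg(X(\R),\Z/2\Z)$, giving the first assertion of the corollary. For the second assertion, I would apply Theorem~\ref{thm:AK}: having the Borel--Haefliger map $\CH_1(X)\to H_1(X(\R),\Z/2\Z)$ hit every homology class means in particular that the class of any compact one-dimensional $\ci$ submanifold of $X(\R)$ is realised by a real algebraic curve, and then Theorem~\ref{thm:AK} (whose proof invokes the Stone--Weierstrass-based approximation techniques of Akbulut and King) upgrades this to a $\ci$ approximation of the given submanifold by the real locus of a real algebraic curve in~$X$.

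The delicate point is the passage from~$B$ to~$X$ in the Picard-theoretic hypothesis: a clean control of $\Pic(X_\C)[2]$ in terms of $\Pic(B_\C)[2]$ and of the conic bundle structure is needed, and this is where I would expect the main technical care to be required. Once this is settled, the remainder of the argument is a formal composition of the three cited results.
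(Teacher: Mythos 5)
Your overall strategy — combine Theorem~\ref{thm:fibres en coniques}, the duality theorem from \cite{BW1}, and Theorem~\ref{thm:AK}, after verifying that $X$ inherits $H^2(X,\sO_X)=0$ and $\Pic(X_\C)[2]=0$ from $B$ — is precisely the paper's. However, the verification step contains a genuine gap.

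Your argument for $H^2(X,\sO_X)=0$ asserts that ``the geometric fibres of $f$ are conics and hence of arithmetic genus zero, so $R^if_*\sO_X$ vanish for $i\geq 1$.'' This is not correct as stated: only the \emph{generic} fibre of $f$ is a conic. The morphism $f:X\to B$ is merely a morphism between smooth projective varieties and need not be flat; over codimension~$\geq 2$ loci in $B$ the fibres can have higher dimension, embedded components, or otherwise be far from conics, so there is no a priori control on $R^if_*\sO_X$ away from the generic point. The paper's Lemma~\ref{lem:Hi0 Pictors} bridges exactly this gap by invoking Koll\'ar's torsion-freeness theorem \cite[Theorem~7.1]{kollartorsionfree}: the sheaves $R^if_*\sO_X$ are torsion-free and generically zero, hence zero everywhere. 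That theorem is a substantial geometric input, not a consequence of the fibres ``being conics.''

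The same objection applies to your treatment of $\Pic(X_\C)[2]$. Your appeal to a ``Zariski-local section up to codimension one'' and an exact sequence with free cokernel would be fine for a genuine conic bundle in standard form, but again the hypotheses only pin down the generic fibre. The paper's route (still in Lemma~\ref{lem:Hi0 Pictors}) works through pro-$\ell$ abelianised fundamental groups: the identity $\Pic(Z)[\ell^\infty]=\Hom(\pi_1^{\et}(Z)^{\ell,\ab},\Q/\Z(1))$ reduces the claim to showing $\pi_1^{\et}(X)^{\ell,\ab}\to\pi_1^{\et}(B)^{\ell,\ab}$ is an isomorphism, which in turn uses the absence of multiple fibres over codimension-one points (a consequence of $H^i(F,\sO_F)=0$) together with SGA~1 descent of fundamental groups. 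Your proposal correctly flags this step as the delicate one, but the argument you sketch for it does not close the gap. Once you replace your two ad hoc verifications by an appeal to Lemma~\ref{lem:Hi0 Pictors} (or a correct proof of its two conclusions), the rest of your proof is the paper's proof.
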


Corollary~\ref{cor:approxconiques} is interesting already when $B$ is a geometrically rational surface. Its hypotheses are then satisfied by \cite[Proposition~\ref*{BW1-prop:real(1,1)}]{BW1}.

\begin{proof}[Proof of Corollary~\ref{cor:approxconiques}]
By Lemma~\ref{lem:Hi0 Pictors} below, we have $H^2(X,\sO_X)=\Pic(X_{\C})[2]=0$. 
We deduce from \cite[Theorem~\ref*{BW1-th:nohodgetheoreticob}]{BW1} and Theorem \ref{thm:fibres en coniques} that $H_1(X(\R),\Z/2\Z)=H_1^\alg(X(\R),\Z/2\Z)$. The second statement follows from Theorem~\ref{thm:AK} below.
\end{proof}

\newcommand{\citekollarpictors}{Koll\'ar~\cite{kollartorsionfree,kollarshaf}}
\begin{lem}[\citekollarpictors]
\label{lem:Hi0 Pictors}
Let $f:X\to B$ be a morphism of smooth, projective, connected varieties over an algebraically
closed field of characteristic~$0$,
whose geometric
generic fiber~$F$
is irreducible.  Let~$\ell$ be a prime number.
Assume that $\Pic(F)[\ell]=0$ and $H^i(F,\sO_F)=0$ for $i>0$.
Then the pull-back maps $\Pic(B)[\ell^\infty]\to \Pic(X)[\ell^\infty]$
and $H^i(B,\sO_B)\to H^i(X,\sO_X)$ for $i\geq 0$
are isomorphisms.
\end{lem}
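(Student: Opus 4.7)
The lemma splits cleanly into two assertions, one about coherent cohomology and one about $\ell$-primary Picard torsion, and my plan is to treat each via the Leray spectral sequence for~$f$ applied to the relevant sheaf ($\sO_X$ or $\mu_{\ell^n}$), with Kollár's theorems from \cite{kollartorsionfree} and~\cite{kollarshaf} providing the key vanishing inputs.

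For the coherent cohomology statement, I would analyze $E_2^{p,q}=H^p(B,R^qf_*\sO_X)\Rightarrow H^{p+q}(X,\sO_X)$ and aim to prove $Rf_*\sO_X\simeq\sO_B$. First, $f_*\sO_X=\sO_B$: by generic smoothness in characteristic~$0$, the geometric generic fibre~$F$ is reduced, and since it is irreducible it is integral, so $H^0(F,\sO_F)$ is the base field and $f_*\sO_X$ is a finite $\sO_B$-algebra of generic rank~$1$; normality of~$B$ then forces $f_*\sO_X=\sO_B$. For $q>0$, cohomology and base change on a flat dense open of~$B$ identifies the generic stalk of $R^qf_*\sO_X$ with $H^q(F,\sO_F)=0$; I would then cite Kollár's torsion-freeness theorem from \cite{kollartorsionfree} to conclude that $R^qf_*\sO_X$, being torsion-free on~$B$ with vanishing generic stalk, is zero. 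The Leray spectral sequence collapses and the edge maps $H^i(B,\sO_B)\to H^i(X,\sO_X)$ are isomorphisms.

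For the Picard torsion statement, I would work in étale cohomology and use the Kummer sequence $1\to\mu_{\ell^n}\to\Gm\to\Gm\to 1$: since the base field is algebraically closed and $X,B$ are proper, this identifies $H^1_\et(-,\mu_{\ell^n})$ with $\Pic(-)[\ell^n]$. The Leray spectral sequence for~$f$ and~$\mu_{\ell^n}$, combined with $f_*\mu_{\ell^n}=\mu_{\ell^n}$ (valid because the fibres of~$f$ are connected), gives
$$0\to\Pic(B)[\ell^n]\to\Pic(X)[\ell^n]\to H^0(B,R^1f_*\mu_{\ell^n})\to H^2_\et(B,\mu_{\ell^n}).$$
The hypothesis $\Pic(F)[\ell]=0$ propagates to $\Pic(F)[\ell^n]=0$ by iterated descent along multiplication by~$\ell$, so $R^1f_*\mu_{\ell^n}$ has vanishing stalk at the generic point of~$B$. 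To conclude that its global sections also vanish I would invoke Kollár's results in~\cite{kollarshaf}, which describe the constructible sheaf $R^1f_*\mu_{\ell^n}$ precisely enough, under our hypotheses, to rule out non-trivial global sections supported on proper closed subvarieties. Passing to the direct limit over~$n$ then yields the desired isomorphism on $\ell^\infty$-torsion.

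The main obstacle is the Picard-torsion step: generic vanishing of the stalk of a constructible $\ell$-torsion sheaf does \emph{not} automatically kill its global sections, so the argument genuinely requires Kollár's structural results on fibrations with Picard-trivial (or simply connected) geometric generic fibre, rather than formal Leray/base-change manipulations alone. Once that input is granted, the coherent cohomology assertion is a mechanical consequence of Kollár's torsion-freeness together with the Leray spectral sequence.
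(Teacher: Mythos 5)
The coherent-cohomology half of your proposal is correct and is essentially what the paper is silently invoking when it cites \cite[Theorem~7.1]{kollartorsionfree}: prove $\RR f_*\sO_X\simeq\sO_B$ by combining generic vanishing of $\RR^qf_*\sO_X$ (from $H^q(F,\sO_F)=0$) with torsion-freeness of the higher direct images (du~Bois--Jarraud/Koll\'ar), then read the Leray edge maps.

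The Picard-torsion half has a genuine gap, and you flag it yourself: the sentence ``I would invoke Koll\'ar's results in~\cite{kollarshaf}, which describe the constructible sheaf $\RR^1f_*\mu_{\ell^n}$ precisely enough\dots to rule out non-trivial global sections'' defers the entire content of the statement to an unspecified citation. Two concrete things are missing. First, the hypothesis $H^i(F,\sO_F)=0$ for $i>0$ never enters your Picard argument, yet it is indispensable there: it is what excludes multiple fibres of~$f$ over codimension-one points of~$B$ (see \cite[Proposition~7.3~(iii)]{ctvoisin}, \cite[Proposition~2.4]{elw}), and a multiple fibre would contribute $\ell$\nobreakdash-torsion to $\Pic(X)$ not pulled back from~$B$ (as happens for elliptic fibrations), making $H^0(B,\RR^1f_*\mu_{\ell^n})$ nonzero and defeating your sequence. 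Second, even granting that, one still has to propagate the vanishing $\Pic(F)[\ell]=0$ from the generic geometric fibre to all geometric fibres over codimension-one points (Raynaud's theorem~\cite{Raynaud}), use constructibility of $\RR^1f_*\Z/\ell\Z$ to find an open $B^0\subset B$ whose complement has codimension $\geq 2$ over which the groups $\pi_1^\et(X_b)^{\ab}/\ell$ all vanish, and then conclude via \cite[Exp.~IX, Cor.~6.11]{SGA1} together with $\pi_1^\et(B^0)=\pi_1^\et(B)$ and $\pi_1^\et(X^0)\twoheadrightarrow\pi_1^\et(X)$. The paper runs precisely this argument on the Pontrjagin-dual side, via the identification $\Pic(Z)[\ell^\infty]=\Hom(\pi_1^\et(Z)^{\ell,\ab},\Q/\Z(1))$; your Leray-for-$\mu_{\ell^n}$ framing is compatible in spirit, but as written it is an outline with an acknowledged hole at the single step where all the work lies, not a proof.
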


\begin{proof}
The assertion on $H^i(B,\sO_B)\to H^i(X,\sO_X)$ follows from \cite[Theorem~7.1]{kollartorsionfree}
and from the Lefschetz principle.
The other assertion
is a close variation on \cite[Theorem~5.2]{kollarshaf};
for the sake of completeness, we provide a proof.
First,
we recall that
\begin{align}
\label{eq:picpi1dual}
\Pic(Z)[\ell^\infty]=\Hom(\pi_1^\et(Z)^{\ell,\ab},\Q/\Z(1))
\end{align}
for any smooth and
proper variety~$Z$ over an algebraically closed field of characteristic~$0$,
where $\pi_1^\et(Z)^{\ell,\ab}$ denotes the pro\nobreakdash-$\ell$ completion
of $\pi_1^\et(Z)^\ab$ (a finitely generated $\Zl$\nobreakdash-module).
In particular $\pi_1^\et(Z)^\ab/\ell=0$ if and only if $\Pic(Z)[\ell]=0$
(where, for any abelian group~$M$, we write $M/\ell$ for $M/\ell M$).

As~$F$ is irreducible and satisfies $H^i(F,\sO_F)=0$ for $i>0$, the morphism~$f$ has no multiple
fibre above codimension~$1$ points of~$B$ (see \cite[Proposition~7.3~(iii)]{ctvoisin}, \cite[Proposition~2.4]{elw}).
As, on the other hand, the group $\pi_1^\et(F)^\ab/\ell$ vanishes, it follows that $\pi_1^\et(X_b)^\ab/\ell$ vanishes
for any geometric point~$b$ of~$B$ above a codimension~$1$
point of~$B$ (see \cite[Proposition~6.3.5~(ii) a)]{Raynaud}).
As $\pi_1^\et(X_b)^\ab/\ell$ and $H^1(X_b,\Z/\ell\Z)$ are Pontrjagin dual and as
$\RR^1f_*\Z/\ell\Z$ is a constructible sheaf, we deduce that $\pi_1^\et(X_b)^\ab/\ell$,
hence also $\pi_1^\et(X_b)^{\ell,\ab}$, vanishes
for any geometric point~$b$ of an open subset $B^0 \subset B$ whose complement has codimension at least~$2$.
Letting $X^0=f^{-1}(B^0)$,
we can now apply \cite[Exp.~IX, Corollaire~6.11]{SGA1}
to conclude that the natural map $\pi_1^\et(X^0)^{\ell,\ab} \to \pi_1^\et(B^0)^{\ell,\ab}$
is an isomorphism.
As $\pi_1^\et(B^0)=\pi_1^\et(B)$ and $\pi_1^\et(X^0)\twoheadrightarrow \pi_1^\et(X)$ (\emph{op.\ cit.},
Exp.~V, Proposition~8.2 and Exp.~X, Corollaire~3.3),
it follows that the natural map
$\pi_1^\et(X)^{\ell,\ab} \to \pi_1^\et(B)^{\ell,\ab}$ is an isomorphism;
\emph{i.e.},
in view of~\eqref{eq:picpi1dual},
the pull-back map $\Pic(B)[\ell^\infty] \to \Pic(X)[\ell^\infty]$ is an isomorphism.
\end{proof}

\subsection{Algebraic approximation}
\label{subsec:algebraic approximation}

The proof of Theorem \ref{thm:fibres en coniques} will rely, in an essential way, on consequences of the Stone--Weierstrass approximation theorem.
One of these is an approximation result
due to Akbulut and King, which we now state.

If $M$ and $N$ are $\ci$ manifolds,
we endow $\ci(M,N)$ with the weak $\ci$ topology defined in \cite[p.\ 36]{Hirsch}.
We first recall the definition of algebraic approximation (see \cite[Definition 12.4.10]{bcr}).

\begin{defn}
\label{def:approximation}
Let $X$ be a smooth algebraic variety over $\R$.  A Zariski closed subset of $X(\R)$ is said to be \textit{nonsingular} if its Zariski closure in $X$ is smooth along $X(\R)$.

 A compact $\ci$ submanifold $Z\subset X(\R)$ is said to have \textit{an algebraic approximation} in $X(\R)$ if the inclusion $i:Z\hookrightarrow X(\R)$ can be approximated in $\ci(Z,X(\R))$ by maps whose images are nonsingular Zariski closed subsets of $X(\R)$.
\end{defn}

The existence of algebraic approximations for $\ci$ hypersurfaces is entirely understood (see \cite[Theorem 12.4.11]{bcr}, which goes back to Benedetti and Tognoli \cite[Proof of Theorem 4.1]{BenedettiTognoli}).
The following theorem  is a counterpart for curves.
Akbulut and King~\cite{AkbulutKing} established a slight variant,
which turns out to be
 equi\-valent to the statement given below
thanks to
a moving lemma of Hironaka~\cite{hironakasmoothing}.
Using entirely different methods,
Bochnak and Kucharz~\cite{bochnakkucharz} were able to give a direct proof
when~$X$ has dimension~$3$.

\begin{thm}[Akbulut--King]
\label{thm:AK}
Let $X$ be a smooth projective algebraic variety over $\R$ and $i:Z\hookrightarrow X(\R)$ be a one-dimensional compact  $\ci$ submanifold. The following are equivalent:
\begin{enumerate}[(i)]
\item $[Z]\in H_1^{\alg}(X(\R),\Z/2\Z)$,
\item $Z$ has an algebraic approximation in $X(\R)$,
\item For every neighbourhood $V$ of $Z$ in $X(\R)$, there exists a $\ci$ isotopy $$\phi:Z\times [0,1]\to V$$ such that $\phi|_{Z\times\{0\}}=i$ and $\phi|_{Z\times\{1\}}$ is arbitrarily close to $i$ in $\ci(Z,V)$ with nonsingular Zariski closed image.
\end{enumerate}
\end{thm}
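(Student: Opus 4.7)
The plan is to establish the cycle of implications (iii) $\Rightarrow$ (ii) $\Rightarrow$ (i) $\Rightarrow$ (iii). The first two implications follow directly from the definitions and standard differential topology, whereas the closing implication is the substantive content of the theorem and is where the combination of the results of Akbulut--King and of Hironaka referred to in the statement comes into play.

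The implication (iii) $\Rightarrow$ (ii) is immediate, since the endpoint $\phi|_{Z \times \{1\}}$ of an isotopy as in (iii) is by construction a map close to $i$ in $\ci(Z, X(\R))$ with nonsingular Zariski closed image, which is exactly what Definition~\ref{def:approximation} requires. For (ii) $\Rightarrow$ (i), I would take an approximation $g : Z \to X(\R)$ of $i$ whose image is nonsingular Zariski closed; provided $g$ is sufficiently close to $i$, the tubular neighborhood theorem shows that $g$ is a $\ci$ embedding isotopic to $i$, so $[Z] = [g(Z)]$ in $H_1(X(\R), \Z/2\Z)$. The image $g(Z)$, being the intersection with $X(\R)$ of a subscheme $C \subset X$ smooth along $X(\R)$, equals $C(\R)$, and hence $[Z] = \cl_\R([C])$ lies in $H_1^\alg(X(\R), \Z/2\Z)$.

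For (i) $\Rightarrow$ (iii), I would first invoke the approximation theorem of Akbulut and King~\cite{AkbulutKing}: starting from $[Z] \in H_1^\alg(X(\R), \Z/2\Z)$ and a prescribed neighborhood $V$ of $Z$, this produces a smooth projective real algebraic curve $C \subset X$ whose real locus lies in $V$ and is $\ci$-isotopic to $Z$ inside $V$. This is the ``slight variant'' alluded to in the statement: the isotopy exists, but its endpoint---an embedding of $Z$ onto $C(\R)$---need not be $\ci$-close to $i$. To bridge this gap, I would invoke Hironaka's moving lemma~\cite{hironakasmoothing}: considering a sufficiently rich algebraic family of deformations of $C$ in $X$, one can continue the Akbulut--King isotopy by a further $\ci$ isotopy, traced out by the real points of the family, that terminates at an embedding $\ci$-close to $i$ while remaining inside $V$ and having nonsingular image.

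The main obstacle is precisely this last step: one must control, at the level of real points, the algebraic family of deformations of $C$ so that (a) the terminal member is smooth along $X(\R)$, (b) its real locus remains inside $V$, and (c) this real locus is $\ci$-close to $i(Z)$. Hironaka's smoothing lemma is exactly what grants this flexibility; with it in hand, the isotopy $\phi$ of (iii) is obtained by reparametrizing and concatenating the Akbulut--King isotopy with the real trace of the Hironaka deformation.
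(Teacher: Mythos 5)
Your reorganisation of the cycle is legitimate, and your arguments for (iii)\,$\Rightarrow$\,(ii) and (ii)\,$\Rightarrow$\,(i) are correct. The gap is in (i)\,$\Rightarrow$\,(iii), where you have misidentified the role of both external inputs. The theorem of Akbulut and King (\cite[Proposition~2 and Remark]{AkbulutKing}, valid for $\dim X\geq 3$) does \emph{not} take hypothesis (i) as its input and does not produce a smooth curve whose real locus is $\ci$-isotopic to $Z$: what it proves is that the \emph{stronger} hypothesis (i')---that $[Z]$ is a $\Z/2\Z$-linear combination of Borel--Haefliger classes of \emph{smooth} algebraic curves in $X$---implies (ii), i.e.\ algebraic approximation. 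The ``slight variant'' the paper alludes to is precisely this substitution of (i') for (i), not a deficiency in the closeness of the approximation. Hironaka's contribution is to bridge (i) and (i'), and it is a Chow-group statement, not an isotopy statement: the result from \cite{hironakasmoothing} (made concrete in Proposition~\ref{prop:hironaka51}) shows that on a smooth projective variety of dimension $\geq 3$ over an infinite perfect field, $\CH_1$ is generated by classes of smooth curves, so that any $1$-cycle realising a class in $H_1^\alg$ may be replaced by a difference of two smooth curves without changing its class. Attributing to Hironaka an isotopy traced out by the real points of a family of algebraic deformations misstates what the cited result says, and would not in any case furnish the needed equivalence (i)\,$\Leftrightarrow$\,(i').

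Even granting (i)\,$\Rightarrow$\,(ii), your outline still does not reach (iii): one must upgrade an algebraic approximation to a $\ci$ isotopy confined to a prescribed neighbourhood, and the paper does this by quoting \cite[Proposition~4.4.4]{Wall} rather than by hand-waving about ``continuing the isotopy.'' You also need to treat the cases $\dim X = 1$ and $\dim X = 2$ separately, since the Akbulut--King theorem requires $\dim X\geq 3$; the paper disposes of $\dim X=2$ via \cite[Theorem~12.4.11]{bcr} and notes that $\dim X = 1$ is obvious.
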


\begin{proof}
The implication (iii)$\Rightarrow$(i) is immediate and (ii)$\Rightarrow$(iii) follows from
\cite[Proposition~4.4.4]{Wall}. It remains to prove (i)$\Rightarrow$(ii).
For this implication, the case $d=1$ is obvious, $d=2$ is \cite[Theorem 12.4.11]{bcr} and $d=3$ is due to
Bochnak and Kucharz \cite[Theorem 1.1]{bochnakkucharz}.
For any $d\geq 3$, it is a theorem of Akbulut and King \cite[Proposition~2 and Remark]{AkbulutKing}
that the condition
\begin{enumerate}
\item[(i')] $[Z]\in H_1(X(\R),\Z/2\Z)$ is a linear combination of Borel--Haefliger cycle classes
of \emph{smooth} algebraic curves lying in~$X$
\end{enumerate}
implies~(ii).  On the other hand,
conditions~(i) and~(i') are in fact equivalent, by a theorem
of Hironaka \cite{hironakasmoothing} according to which the classes of smooth curves generate
the Chow group of $1$\nobreakdash-cycles
of any smooth and projective
variety of dimension~$\geq 3$ over an infinite perfect field.

The ground field is assumed
to be algebraically closed
in \cite[p.~50, Theorem]{hironakasmoothing}.
As it turns out, Hironaka's argument for the proof of the above assertion
does not make use of this assumption.
Nevertheless, in order to dispel any doubt regarding its validity
over~$\R$, we give a brief outline of the argument.
Let $\iota:B \hookrightarrow X$ be a closed embedding, where~$B$ is an integral curve.
Let $\nu:B'\to B$
be its normalisation.
Choose a finite morphism $g:B' \to \P^1_\R$ in such a way that
$(\iota\circ \nu)\times g: B' \to X \times \P^1_\R$ is an embedding.
Applying the following proposition
twice, to $C=B'$ and to $C=\varnothing$ (with the same~$H$ and~$n$),
now shows that~$B$ is rationally equivalent, as a cycle on~$X$,
to the difference of two smooth curves.

\begin{prop}
\label{prop:hironaka51}
Let~$X$ be a smooth and projective variety of dimension $d\geq 3$ over an infinite field.
Let~$H$ be an ample divisor on $X \times \P^1$.
Let $C \subset X \times \P^1$ be a smooth closed subscheme everywhere of dimension~$1$,
with ideal sheaf $\sI \subset \sO_{X \times \P^1}$.
Assume that~$C$ does not contain any fibre of the first projection $p:X \times \P^1 \to X$.
For~$n$ large enough,
if $\sigma_1,\dots,\sigma_d \in H^0(X \times \P^1, \sI(nH))$ are general,
the subvariety of $X \times \P^1$ defined by $\sigma_1=\dots=\sigma_d=0$
is equal to $C \cup D$ for a smooth curve $D \subset X \times \P^1$
such that the map $D \to X$ induced by~$p$ is a closed embedding.
\end{prop}

Proposition~\ref{prop:hironaka51}
is what the proof of \cite[Theorem~5.1]{hironakasmoothing}
applied to $f=p$ and $Z'=C$
really establishes.
On the other hand,
in order to prove Proposition~\ref{prop:hironaka51}, it is clear that
the ground field may be assumed to be algebraically closed.
\end{proof}

\subsection{Sarkisov standard models}

In the proof of Theorem \ref{thm:fibres en coniques}, it will be convenient to replace the morphism $f:X\to B$ by one that is birational to it and whose geometry is as simple as possible. Such models were constructed by Sarkisov~\cite{Sarkisov}.

\subsubsection{Definition and existence}
\label{subsubsec:def sarkisov}

A morphism $f:X\to B$ of varieties over a field $k$ is said to be \textit{a Sarkisov standard model} if it satisfies the following conditions\footnote{Contrary to common usage, we do not require that $f$ be relatively minimal.}:
\begin{enumerate}[(i)]
\item The varieties $X$ and $B$ are smooth, projective and connected.
\item The morphism $f$ is flat.
\item There exists a reduced simple normal crossings divisor $\Delta\subset B$ 
such that the fibers of $f$ are smooth (rank $3$) conics outside of $\Delta$, rank $2$ conics above the regular locus of $\Delta$, and rank $1$ conics above the singular locus $\Sigma$ of $\Delta$.
\end{enumerate}

That every conic bundle is birational to a Sarkisov standard model has been proven by Sarkisov \cite[Proposition 1.16]{Sarkisov} over an algebraically closed field of characteristic $0$. His proof goes through over 
non-closed fields of characteristic $0$ and another proof has been provided by Oesinghaus \cite[Theorem 7]{Oesinghaus}.

\begin{thm}[\cite{Sarkisov, Oesinghaus}]
\label{thm:existenceSarkisov}
Let $f:X\to B$ be a morphism of integral varieties over a field $k$ of characteristic $0$ whose generic fiber is a smooth conic. There exist a birational morphism $B'\to B$ and a birational rational map $X'\dashrightarrow X$ inducing a morphism $f' : X'\to B'$ that is a Sarkisov standard model.
\end{thm}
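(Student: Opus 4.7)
The plan is to embed $X$ birationally into a $\P^2$-bundle over $B$ via the relative anticanonical system, apply embedded resolution of singularities to the discriminant, and finally perform elementary transformations of conic bundles to eliminate pathological fibers. Over the non-empty open set $U \subset B$ on which $f$ is smooth, $\omega_{X/B}^{-1}$ restricts on each fiber to the anticanonical bundle of a smooth conic, namely $\sO_{\P^2}(1)|_{X_b}$. By cohomology and base change, $E := f_*\omega_{X/B}^{-1}|_U$ is locally free of rank~$3$ and the natural surjection $f^{*}E \twoheadrightarrow \omega_{X/B}^{-1}$ induces a closed embedding $X_U \hookrightarrow \P_U(E)$ realising $X_U$ as the zero locus of a section $q \in H^0(U,\mathrm{Sym}^2 E^{\vee} \otimes L)$ for some line bundle~$L$. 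After blowing up $B$ along suitable subschemes supported in $B \setminus U$ to extend $(E,L,q)$ coherently, and replacing $X$ by its strict transform, I obtain a global embedding $X \subset \P_B(E)$ cut out by~$q$.

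Next, letting $\Delta = \{\det q = 0\} \subset B$ be the discriminant divisor, I would apply Hironaka's embedded resolution of singularities to produce a proper birational morphism $B' \to B$ with $B'$ smooth and whose reduced preimage of $\Delta$ is a simple normal crossings divisor $\Delta' \subset B'$. Pulling back everything to $B'$ and resolving the singularities of the total space while preserving the conic bundle structure away from $\Delta'$ yields a new morphism $f' : X' \to B'$ which is flat with smooth fibers outside $\Delta'$. Generic points of the components of $\Delta'$ carry rank~$2$ conics and generic points of the components of $\Sigma' := \Sing(\Delta')$ carry rank~$1$ conics by standard computation of the Jacobian criterion applied to~$q$.

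The main obstacle is the final step, in which one must enforce the exact rank pattern required in the definition of a Sarkisov standard model: rank~$3$ away from $\Delta'$, rank~$2$ at every point of $\Delta' \setminus \Sigma'$, and rank~$1$ at every point of $\Sigma'$. A priori the rank of $q$ may drop further on proper closed subsets of the various strata. The remedy is to perform \emph{elementary transformations} of the conic bundle---blow up the singular line (resp.\ singular point) of a pathological fiber and contract the proper transform of a ruling (resp.\ of a plane); each such operation trades a pathological fiber for a milder one. The delicate point is to show, by measuring the total degeneracy with a suitable lexicographic invariant built from the multiplicities of $\det q$ along components of $\Delta'$ and at their crossings, that the process terminates after finitely many steps and yields a conic bundle with exactly the prescribed rank stratification. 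This termination analysis is the technical heart of the argument, carried out in \cite[Proposition~1.16]{Sarkisov} over algebraically closed fields of characteristic~$0$; the argument goes through without modification over non-closed fields of characteristic~$0$, and \cite[Theorem~7]{Oesinghaus} provides an alternative proof via the relative minimal model program which bypasses elementary transformations altogether.
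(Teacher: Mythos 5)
The paper offers no proof of its own: it simply cites Sarkisov (Proposition~1.16) and Oesinghaus (Theorem~7), asserting that Sarkisov's argument, written over algebraically closed fields of characteristic~$0$, goes through over non-closed fields. Your proposal does essentially the same, deferring the decisive termination argument to those same two references, and your sketch of Sarkisov's construction (anticanonical embedding into a $\P^2$-bundle, SNC resolution of the discriminant, elementary transformations) is a fair outline of it---though note that the correct rank pattern over the strata of $\Delta'$ is not guaranteed a priori by the Jacobian criterion, since $\det q$ may vanish to high order along components of $\Delta'$; restoring the generic ranks $2$ and $1$ is precisely what the elementary transformations must achieve, which your text partially acknowledges but initially misstates.
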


\subsubsection{Computing their cohomology}
\label{subsub:cohoSarkisov}
Let $f:X\to B$ be a Sarkisov standard model over $\R$ whose degeneracy locus $\Delta$ has singular locus $\Sigma$.
 Let $i_{\Delta}:\Delta(\C)\to B(\C)$, $i_{\Sigma}:\Sigma(\C)\to B(\C)$ and $j:(\Delta\setminus\Sigma)(\C)\to\Delta(\C)$ be the inclusions. 
Sheafifying the push-forward maps $\big(f|_{f^{-1}(U)}\big)_*:H^2(f^{-1}(U),\Z(1))\to H^0(U,\Z)$
yields a morphism of $G$-equivariant sheaves $\Trf:\RR^2f_*\Z(1)\to\Z$ on $B(\C)$: the trace morphism, whose analogue in \'etale cohomology is defined in \cite[Th\'eor\`eme 2.9]{delignedualite}. 

The  morphism $\Trf$ may be computed over each stratum using the proper base change theorem. Over $B\setminus\Delta$, it is an isomorphism. Over $\Delta\setminus\Sigma$, $\RR^2f_*\Z(1)|_{(\Delta\setminus\Sigma)(\C)}$ is a $\Z$-local system of rank~$2$ whose stalk at a point is generated by the classes of points on the  two lines in the fibers, and the trace morphism is a surjection. The kernel of $\Trf |_{(\Delta\setminus\Sigma)(\C)}$ is then a $G$-equivariant $\Z$-local system $\mathscr{L}$ of rank $1$ on $(\Delta\setminus\Sigma)(\C)$ (corresponding geometrically to the monodromy action on the two lines of the rank~$2$ conics). Over~$\Sigma$, $\RR^2f_*\Z(1)|_{\Sigma(\C)}\simeq\Z$ and the trace morphism $\Trf |_{\Sigma(\C)}:\Z\to\Z$ is the multiplication by $2$ map. We deduce a canonical exact sequence of $G$-equivariant sheaves on $B(\C)$:
\begin{equation}
\label{eq:morphisme trace}
0\to (i_{\Delta})_*j_!\mathscr{L}\to \RR^2f_*\Z(1)\stackrel{\Trf}\longrightarrow \Z\to (i_{\Sigma})_*\Z/2\Z\to 0.
\end{equation}

\subsection{Proof of Theorem \ref{thm:fibres en coniques}}
\label{subsec:proofconicbundles}
This paragraph is devoted to the proof of Theorem~\ref{thm:fibres en coniques}.
Since the validity of the real integral Hodge conjecture for $1$-cycles is a birational invariant \cite[Proposition~\ref*{BW1-prop:birinvIHC}]{BW1}, we may assume that $f:X\to B$ is a standard Sarkisov model, by Theorem~\ref{thm:existenceSarkisov}. We define $\Delta$ and $\Sigma$ as in~\textsection\ref{subsubsec:def sarkisov}. Let $d$ be the dimension of $B$. The statement of Theorem~\ref{thm:fibres en coniques} is obvious if $d=0$ and follows from \cite[Proposition~\ref*{BW1-prop:real(1,1)}]{BW1} if $d=1$. From now on, we suppose that $d\geq 2$.

We fix $\alpha\in \Hdg^{2d}_G(X(\C),\Z(d))_0$ and shall now prove that $\alpha$ is in the image of the cycle class map $\cl:\CH_1(X)\to  H^{2d}_G(X(\C),\Z(d))$. We proceed in six steps.

\subsubsection{Analysis of the push-forward}
\label{subsub:reallocus}
We consider the push-forward homomorphism $f_*:H^{2d}_G(X(\C),\Z(d)) \to H^{2d-2}_G(B(\C),\Z(d-1))$
(see \cite[(\ref*{BW1-eq:pushforward equivariant complex})]{BW1})
and first make use of the algebraicity of $f_*\alpha$.

\begin{Step}
\label{step:real locus1}
We may assume that $\alpha|_{X(\R)}\in H^{2d}_G(X(\R),\Z(d))$ vanishes.
\end{Step}

\begin{proof}
The class $f_*\alpha$ belongs to $\Hdg^{2d-2}_G(B(\C),\Z(d-1))_0$ by \cite[Theorem~\ref*{BW1-th:stability of topological constraints}]{BW1}
 hence is algebraic by the hypothesis on $B$. Let $y\in \CH_1(B)$ be such that $f_*\alpha=\cl(y)$.

Consider the image of~$\alpha$ by the natural map
$$H^{2d}_G(X(\C),\Z(d))_0 \to H^d(X(\R),\Z/2\Z)=H_1(X(\R),\Z/2\Z)$$
(see \cite[(\ref*{BW1-eq:psi with point})]{BW1}) and represent it by a topological cycle. By $\ci$ approximation \cite[Chapter 2, Theorem 2.6]{Hirsch}, we may assume that it is a sum of $\ci$ loops $(g_i:\bS^1\to X(\R))_{1\leq i\leq s}$.
Let $h_i:=f(\R)\circ g_i:\bS^1\to B(\R)$ and $\bS:=\sqcup_i\bS^1$, and consider the disjoint union maps $g:=\sqcup_i g_i:\bS\to X(\R)$ and $h:=\sqcup_i h_i:\bS\to B(\R)$.

Since $f$ is a Sarkisov standard model, the locus where $f$ is not smooth has codimension $\geq 2$ in $X$, hence real codimension $\geq 2$ in $X(\R)$.
Since moreover $d\geq 2$, an application of multijet transversality \cite[Theorem 4.6.1]{Wall} shows that
after deforming $g$, which preserves its homology class, we may assume that its image is contained in the smooth locus of $f$, that~$h$ is an immersion,
that $h(\bS)$ has distinct tangents at the finite number of points over which $h$ is not injective, and that these points do not belong to $\Delta$
(see the proof of \emph{loc.\ cit.}, Theorem~4.7.7 and apply \cite[Theorem~21.5~(5)]{michorbook}).

Defining $\pi:B'\to B$ to be the blow-up of $B$ at this finite number of points, the map $h$ lifts to an embedding $h':\bS\to B'(\R)$. We introduce the cartesian diagram:
$$
\xymatrix@R=3ex{
X'\ar[d]^(.42){f'}\ar[r] & X \ar[d]^(.42)f \\
B'\ar[r]^{\pi}& B.
}
$$
The map $g$ lifts to an embedding $g':\bS\to X'(\R)$. We still denote by $\bS$ the images of $g'$ and $h'$.
By Lemma \ref{lemtubul}, there exists a neighbourhood $V$ of $\bS$ in $B'(\R)$ and
a $\ci$ map $s:V\to X'(\R)$ that is the identity on $\bS$, yielding a commutative diagram:
$$
\xymatrix{
&& X'(\R) \ar[r]\ar[d]^{f'(\R)}& X(\R)\ar^{f(\R)}[d] \\
\bS\ar@/_1pc/[rr]_{h'}\ar@{^{(}->}[r]\ar@<.4em>[rru]^{g'}&V\ar@<.15em>[ru]_s\ar@{^{(}->}[r]& B'(\R)\ar_{\pi(\R)}[r] & B(\R).
}
$$
The class $[h(\bS)]\in H^{d-1}(B(\R),\Z/2\Z)$  equals $\cl_{\R}(y)$ by \cite[Theorem~\ref*{BW1-th:conditions de krasnov} and Theorem~\ref*{BW1-th:stability of topological constraints}]{BW1},
hence is algebraic.
Since the kernel of $\pi(\R)_*:H^{d-1}(B'(\R),\Z/2\Z)\to H^{d-1}(B(\R),\Z/2\Z)$  is generated by classes of lines on the exceptional divisor of $\pi$,
one has $[h'(\bS)]\in H^{d-1}_{\alg}(B'(\R),\Z/2\Z)$. Then Theorem~\ref{thm:AK} (i)$\Rightarrow$(iii)
applies:
there exist a smooth projective curve $\Gamma$ and a morphism $\mu:\Gamma\to B'$ such that $\mu(\Gamma(\R))\subset V$ and $[\mu(\Gamma(\R))]=[h'(\bS)]\in H_1(V,\Z/2\Z)$.

Let $X'_\Gamma\to \Gamma$ be the base change of $f'$ by $\mu$, $S\to X'_\Gamma$ be a resolution of singularities and $\psi:S\to\Gamma$ be the composition.
We have a commutative diagram:
$$
\xymatrix@R=3ex{
S\ar[r]\ar[rd]_\psi&X'_\Gamma\ar[d]\ar[r]& X'\ar[d]^(.42){f'} \\
&\Gamma\ar[r]^{\mu}&B'.
}
$$
The lifting $s:V\to X'(\R)$ induces a $\ci$ section still denoted by $s:\Gamma(\R)\to S(\R)$.
In the commutative diagram
$$
\xymatrix@R=3ex{
H_1(S(\R),\Z/2\Z)\ar[r]&H_1(X'(\R),\Z/2\Z)\ar[r]& H_1(X(\R),\Z/2\Z) \\
H_1(\Gamma(\R),\Z/2\Z)\ar[u]^{s_*}\ar[r]& H_1(V,\Z/2\Z),\ar[u]^{s_*}&
}
$$
the image in $H_1(V,\Z/2\Z)$ of the fundamental class of $\Gamma(\R)$ is $[h'(\bS)]$. We deduce that  its image in $H_1(X(\R),\Z/2\Z)$ is $[g(\bS)]$. By Lemma \ref{lem:section conic bundle} below, its image in $H_1(S(\R),\Z/2\Z)$ is algebraic, hence so is its image in $H_1(X(\R),\Z/2\Z)$. We have shown the existence of $z\in\CH_1(X)$ such that $\cl_{\R}(z)=[g(\bS)]$. 
Using the decomposition \cite[(\ref*{BW1-eq:canonical decomposition})]{BW1} and applying \cite[Theorem~\ref*{BW1-th:conditions de krasnov}]{BW1} with $Y=z$, we now see that $(\alpha-\cl(z))|_{X(\R)}=0$.
Replacing $\alpha$ by $\alpha-\cl(z)$ completes the proof.
\end{proof}

We have used the following two lemmas.

\begin{lem}
\label{lemtubul}
Let $\phi: M\to N$ be a $\ci$ map between $\ci$ manifolds, and let $Z\subset M$ be a closed $\ci$ submanifold such that $\phi$ is submersive along $Z$ and such that $\phi|_Z: Z\to N$ is an embedding. Then there exists an open neighbourhood $V$ of~$\phi(Z)$ in $N$ and a $\ci$ section $s: V\to M$ of $\phi$ above $V$ such that $s\circ \phi|_Z=\Id_Z$.
\end{lem}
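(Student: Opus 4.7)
The plan is to construct~$s$ in two stages. First, extend the smooth map $\sigma_0 := (\phi|_Z)^{-1} \colon L \to Z \hookrightarrow M$, where $L := \phi(Z)$ is an embedded submanifold of~$N$, to an auxiliary smooth map $\tilde\sigma$ defined on a tubular neighbourhood $V_0$ of~$L$ in~$N$. Second, use the inverse function theorem to straighten $\phi\circ\tilde\sigma$ into the identity, thereby turning~$\tilde\sigma$ into a genuine section of~$\phi$ on a (possibly smaller) neighbourhood of~$L$.

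The key point of the first stage is to arrange that $d(\phi\circ\tilde\sigma)_{y_0}$ be the identity on $T_{y_0}N$ for every $y_0\in L$, not just on $T_{y_0}L$. The submersion hypothesis along~$Z$ provides a surjective bundle map $d\phi\colon TM|_Z \twoheadrightarrow (\phi|_Z)^*TN$, which splits smoothly; fix a subbundle $H\subset TM|_Z$ complementary to $\ker(d\phi)|_Z$, on which $d\phi$ restricts to an isomorphism. Choose auxiliary Riemannian metrics on~$M$ and on~$N$, and use the normal exponential of~$N$ to identify $V_0$ with a neighbourhood of the zero section in the normal bundle $\nu\to L$ of~$L$ in~$N$. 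Then set
\[
\tilde\sigma\bigl(\exp_{N,y_0}(v)\bigr) := \exp_{M,\sigma_0(y_0)}\bigl((d\phi|_H)^{-1}(v)\bigr)
\]
for $y_0\in L$ and $v\in\nu_{y_0}$. This is a smooth extension of $\sigma_0$ to~$V_0$, and a direct computation with the decomposition $T_{y_0}N = T_{y_0}L \oplus \nu_{y_0}$, using $\phi\circ\sigma_0 = \Id_L$ together with $d\phi\circ(d\phi|_H)^{-1} = \Id$, shows that $d(\phi\circ\tilde\sigma)_{y_0} = \Id_{T_{y_0}N}$ for every $y_0\in L$.

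The map $\phi\circ\tilde\sigma$ is therefore a local diffeomorphism at each point of~$L$ and restricts to the identity on~$L$. A standard paracompactness argument then yields an open neighbourhood $V_1\subset V_0$ of~$L$ on which $\phi\circ\tilde\sigma$ is globally injective, hence a diffeomorphism onto an open subset $V\subset N$ containing~$L$. Setting $s := \tilde\sigma\circ(\phi\circ\tilde\sigma|_{V_1})^{-1}\colon V\to M$ produces the desired smooth section of~$\phi$, with $s|_L = \sigma_0$, \emph{i.e.} $s\circ\phi|_Z = \Id_Z$. The main technical subtlety I anticipate is precisely this passage from local invertibility of $\phi\circ\tilde\sigma$ near each point of~$L$ to injectivity on a common open neighbourhood of~$L$; in the application to Step~1 above, however, $Z$~is compact (a disjoint union of circles), so this step reduces to a finite subcover argument.
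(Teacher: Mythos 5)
Your proof is correct and follows essentially the same strategy as the paper's: both build a lift of a tubular neighbourhood of $\phi(Z)$ back into $M$ using a splitting of the derivative of $\phi$ along $Z$ (the paper splits $\phi_* : N_{Z/M} \twoheadrightarrow N_{\phi(Z)/N}$; you split $d\phi : TM|_Z \twoheadrightarrow (\phi|_Z)^*TN$), verify that composing with $\phi$ gives a local diffeomorphism along $\phi(Z)$, and then invert on a smaller neighbourhood. The paper delegates both the derivative check and the ``local diffeomorphism along a submanifold implies diffeomorphism on a neighbourhood'' step to a single cited result (Wall, Corollary~A.2.6), whereas you do the derivative computation by hand and treat the shrinking step informally; this last step does require an argument (for non-compact $Z$ it uses that $\phi|_Z$ is a closed embedding), but as you correctly note it is elementary in the compact case relevant to the application.
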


\begin{proof}
Choose a tubular neighbourhood of $Z$ in $M$ contained in the locus where $\phi$ is submersive,
and view it as an open embedding of the normal bundle $\iota: N_{Z/M}\to M$. The choice of a metric on $N_{Z/M}$ induces a splitting $\sigma: N_{\phi(Z)/N}\to N_{Z/M}$ of the surjection $\phi_*:N_{Z/M}\to N_{\phi(Z)/N}$ of  $\ci$ vector bundles on $Z$.
 The composition $\phi\circ \iota\circ\sigma$ realizes a diffeomorphism between an open neighbourhood of $\phi(Z)$ in $N_{\phi(Z)/N}$ and an open neighbourhood $V$ of $\phi(Z)$ in $N$ (apply \cite[Corollary A.2.6]{Wall}). The existence of a $\ci$ map $s: V\to M$ as required follows from the construction.
\end{proof}

\begin{lem}
\label{lem:section conic bundle}
Let $\psi:S\to\Gamma$ be a morphism between smooth projective connected varieties over $\R$ whose base is a curve, and whose generic fiber is a conic. Let $s:\Gamma(\R)\to S(\R)$ be a $\ci$ section. Then $[s(\Gamma(\R))]\in H^1_\alg(S(\R),\Z/2\Z)$.
\end{lem}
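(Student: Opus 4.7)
My plan is to produce an algebraic regular section $\sigma : \Gamma \to S$ of $\psi$ defined over~$\R$, and then to compare this algebraic section with~$s$ in $H_1(S(\R),\Z/2\Z)$.

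For the first step, since $\Gamma$ is a smooth proper curve and $\psi$ is proper, it suffices to produce an $\R(\Gamma)$-rational point of the generic fibre $S_\eta$, which is a smooth conic over $K := \R(\Gamma)$. By Pfister's local--global principle for isotropy of quadratic forms over function fields of real curves, $S_\eta$ has a $K$-point if and only if it has a point in the real closure of~$K$ at every ordering of~$K$. Since $\Gamma$ is a smooth projective real curve, every such ordering is a half-branch ordering at some real point $\gamma \in \Gamma(\R)$, whose real closure contains the henselisation $\sO_{\Gamma,\gamma}^h$. The $\ci$ section $s$ provides a real point $s(\gamma) \in S_\gamma(\R)$, which is a smooth point of~$S$ by smoothness of the ambient variety. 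Hensel's lemma then yields an $\sO_{\Gamma,\gamma}^h$-point of~$S$, which gives the required point of~$S_\eta$ in each half-branch real closure at~$\gamma$.

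For the second step, both $s$ and $\sigma(\R)$ are $\ci$ sections of $\psi(\R) \colon S(\R) \to \Gamma(\R)$. On each connected component $C$ of $\Gamma(\R)$, which is a circle, the fibration $\psi(\R)^{-1}(C) \to C$ has circle fibres over the smooth locus, and two $\ci$ sections of this $S^1$-bundle differ in $H_1(\psi(\R)^{-1}(C),\Z/2\Z)$ by an element of $\Z/2\Z$ times the class of a smooth fibre. The fibre class $[\psi(\R)^{-1}(\gamma)]$ is the Borel--Haefliger image of $\psi^*([\gamma]) \in \CH_1(S)$, hence is algebraic. Consequently $[s(\Gamma(\R))]$ equals $\cl_\R([\sigma(\Gamma)])$ plus a sum of algebraic fibre classes, so lies in $H^1_\alg(S(\R),\Z/2\Z)$.

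The main obstacle is the first step, specifically the situation where $s(\gamma)$ lies at a singular point of~$S_\gamma$, such as the real node of a rank-two conic whose two components are complex conjugate lines. The critical observation making Hensel's lemma applicable is that $s(\gamma)$, though possibly singular in~$S_\gamma$, is a smooth point of the ambient variety~$S$ by hypothesis. One must additionally verify that the Hensel lift specialises into the appropriate half-branch real closure, which follows by tracking the $\ci$ behaviour of~$s$ on either side of~$\gamma$.
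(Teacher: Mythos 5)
The approach you take is genuinely different from the paper's. The paper never constructs an algebraic section; it instead invokes a duality criterion from Part~I (that a class in $H^1(S(\R),\Z/2\Z)$ is algebraic if and only if it pairs trivially with $\cl_\R(\Pic(S)[2^\infty])$), combines it with the projection formula for the push-forward $\psi(\R)_*[s(\Gamma(\R))]=\cl_\R(\Gamma)$, and uses Lemma~\ref{lem:Hi0 Pictors} to reduce all torsion line bundles on~$S$ to pull-backs of degree-zero line bundles on~$\Gamma$. Your strategy --- produce an algebraic section~$\sigma$ by a local--global principle and then compare $[s]$ with $\cl_\R(\sigma(\Gamma))$ topologically --- would, if it worked, be more elementary in that it avoids the duality theorem of Part~I. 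Your first step is correct in its conclusion: it is precisely the theorem of Witt cited in the paper as \cite[Satz~22]{WittHasse}, and it can indeed be proved via the Hasse principle for conics over~$\R(\Gamma)$. However, the Hensel argument is shaky as written: the smoothness version of Hensel's lemma requires $\psi$ to be smooth at the point being lifted, and smoothness of the ambient~$S$ does \emph{not} make $\psi$ smooth at a node of the fibre. The clean argument is to note that the quadratic form cutting out $S_\eta$ is indefinite at the half-branch ordering at~$\gamma$ because it is indefinite at nearby real points~$t$ (where $s(t)\in S_t(\R)$), hence isotropic over the associated real closure.

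The genuine gap is in your second step. You assert that two $\ci$ sections of $\psi(\R)^{-1}(C)\to C$ differ in $H_1(\psi(\R)^{-1}(C),\Z/2\Z)$ by a multiple of the fibre class, but this claim is only established for honest $S^1$-bundles, whereas $\psi(\R)^{-1}(C)\to C$ has critical points over the finitely many points of~$C$ above which $\psi$ fails to be smooth. Near such a critical point the map is a Morse saddle, the total space of $\psi(\R)^{-1}(C)$ is a closed surface of possibly high genus, and $\ker\bigl(\psi(\R)_*:H_1\to H_1(C,\Z/2\Z)\bigr)$ can be strictly larger than the span of the generic fibre class. Ruling out a contribution to $[s]-[\sigma(\R)(\Gamma(\R))]$ from these extra classes (or showing that they are supported on algebraic components of the degenerate fibres, which are themselves algebraic) requires an analysis of the real topology of the degenerations --- connectedness of $S_\gamma(\R)$ for each fibre, the local Morse picture at a node, a Leray or Mayer--Vietoris bookkeeping --- none of which you supply. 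As written, the argument only covers the case where $\psi$ is smooth over all of~$\Gamma(\R)$, and the general case is left open.
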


\begin{proof}
We suppose that $S(\R)\neq\emptyset$ since the theorem is trivial otherwise. In particular, $S$ and $\Gamma$ are geometrically connected.
By Lemma \ref{lem:Hi0 Pictors}, $H^2(S,\sO_S)=0$ and $\Pic(\Gamma_{\C})_{\tors}\to \Pic(S_{\C})_{\tors}$ is an isomorphism. By \cite[8.1/4]{neronmodels}, $\Pic(\Gamma)\simeq\Pic(\Gamma_{\C})^G$ and $\Pic(S)\simeq\Pic(S_{\C})^G$,
so that $\psi^*:\Pic(\Gamma)_{\tors}\to \Pic(S)_{\tors}$ is an isomorphism.
For any $\mathcal{L}\in\Pic(\Gamma)$,
the projection formula shows that
\begin{align*}
\deg\big([s(\Gamma(\R))]\smile\cl_{\R}(\psi^*\mathcal{L})\big)=\deg\big(\cl_{\R}(\Gamma)\smile\cl_{\R}(\mathcal{L})\big)\in\Z/2\Z\rlap{.}
\end{align*}
Since torsion line bundles have degree~$0$, the right-hand side of the above equation vanishes for any $\mathcal{L} \in \Pic(\Gamma)[2^\infty]$.  We now deduce from \cite[Corollary~\ref*{BW1-cor:surfaces}~(i)]{BW1} that $[s(\Gamma(\R))]\in H^1_\alg(S(\R),\Z/2\Z)$.
\end{proof}

\begin{Step}
\label{step:pushforward}
We may assume that  $f_*\alpha\in H^{2d-2}_G(B(\C),\Z(d-1))$ vanishes.
\end{Step}

\begin{proof}
The argument at the beginning of the proof of Step \ref{step:real locus1}
shows the existence of $y\in \CH_1(B)$ such that $f_*\alpha=\cl(y)$. By Step \ref{step:real locus1}, $\alpha|_{X(\R)}=0$, so that $(f_*\alpha)|_{X(\R)}=0$ by \cite[Theorem~\ref*{BW1-th:stability of topological constraints}]{BW1}
and $\cl_{\R}(y)=0$ by \cite[Theorem~\ref*{BW1-th:conditions de krasnov}]{BW1}. Ischebeck and Sch\"ulting have shown \cite[Main Theorem (4.3)]{IS} that there exists a $1$-cycle $\Gamma$ on $B$ whose support has only finitely many real points, and such that $[\Gamma]=y\in\CH_1(B)$ (when $B$ is a surface, this follows from Br\"ocker's EPT
theorem \cite{brocker}).

Since $f$ is a Sarkisov standard model, the normalisations of the components of $\Gamma$ lift to $X$. For those that are not contained in $\Delta$, this is a theorem of Witt \cite[Satz~22]{WittHasse}; those contained in $\Delta$ but not in $\Sigma$ lift uniquely to the locus where $f$ is not smooth; and those contained in $\Sigma$ lift because $f|_{\Sigma}:f^{-1}(\Sigma)_{\red}\to\Sigma$ is a $\P^1$\nobreakdash-bundle.
This proves the existence of a $1$-cycle $\tilde{\Gamma}$ on $X$ whose class $\tilde y:=[\tilde{\Gamma}]\in \CH_1(X)$ satisfies $f_*\tilde y=y\in \CH_1(B)$, and $\cl_{\R}(\tilde y)=0$ because the support of $\tilde{\Gamma}$ has only finitely many real points. By \cite[Theorem~\ref*{BW1-th:conditions de krasnov}]{BW1}, $\cl(\tilde y)|_{X(\R)}\in H^{2d}_G(X(\R),\Z(d))$ vanishes.
By \cite[Theorem~\ref*{BW1-th:stability of topological constraints}]{BW1},
replacing $\alpha$ by $\alpha-\cl(\tilde y)$ finishes the proof of Step~\ref{step:pushforward}. The conclusion of Step~\ref{step:real locus1} remains valid because $\cl(\tilde y)|_{X(\R)}=0$.
\end{proof}

\subsubsection{The restriction above the real locus}

Let $U\subset B$ be the complement of finitely many points, including at least one in each connected component of $B(\R)$, $B(\C)$ and $\Delta(\C)$, and at least one in the image of each connected component of $X(\R)$.
Denote by $f_U:X_U\to U$
the base change of $f:X\to B$.
Let $\Xi:=f^{-1}(U(\R))$ be the inverse image of $U(\R)$ by $f:X(\C)\to B(\C)$. 
Building on Step~\ref{step:real locus1}, we prove:

\begin{Step}
\label{step:real locus2}
The restriction $\alpha|_{\Xi}\in H^{2d}_G(\Xi,\Z(d))$ vanishes.
\end{Step}

\begin{proof}
There is an exact sequence of relative cohomology:
$$ H^{2d}_G(\Xi, X_U(\R),\Z(d))\to H^{2d}_G(\Xi,\Z(d))\to H^{2d}_G(X_U(\R),\Z(d)).$$
Since $\alpha|_{X_U(\R)}=0$ by Step~\ref{step:real locus1}, it suffices to prove that  $H^{2d}_G(\Xi, X_U(\R),\Z(d))=0$. 
The real-complex exact sequence \cite[(\ref*{BW1-eq:real-complex long 01})]{BW1} yields:
\begin{equation}
\label{eq:devissagexi}
H^{2d}(\Xi, X_U(\R),\Z)\to H^{2d}_G(\Xi, X_U(\R),\Z(d))\to H^{2d+1}_G(\Xi, X_U(\R),\Z(d+1)).
\end{equation}
The left-hand side of (\ref{eq:devissagexi}) vanishes by the relative cohomology exact sequence:
$$ H^{2d-1}(X_U(\R),\Z)\to H^{2d}(\Xi, X_U(\R),\Z)\to H^{2d}(\Xi,\Z).$$
Indeed, $H^{2d-1}(X_U(\R),\Z)=0$ by Poincar\'e duality since $2d-1\geq d+1$ and $X_U(\R)$ has no compact component
by the choice of $U$;
 and $H^{2d}(\Xi,\Z)=0$ because the only term that might contribute to it in the Leray spectral sequence for $f|_{\Xi}:\Xi\to U(\R)$ is $H^2(U(\R),\RR^2f_*\Z)$ if $d=2$, and computing this latter group using  (\ref{eq:morphisme trace})  shows that it vanishes by cohomological dimension \cite[Ch. II, Lemma 9.1]{delfshomology} and because $H^2(U(\R),\Z)=0$ as $U(\R)$ has no compact component.

To see that the right-hand side of (\ref{eq:devissagexi}) vanishes, we let $j:\Xi\setminus X_U(\R)\to \Xi$ be the inclusion and apply the equivariant cohomology spectral sequence \cite[(\ref*{BW1-eq:first spectral sequence})]{BW1} to $\sF=j_!\Z(d+1)$, noting that $H^{2d+1}(\Xi/G,\sH^0(G,\sF))$ vanishes by \cite[\emph{loc.\ cit.}]{delfshomology} since $2d+1>\dim(\Xi/G)$.
\end{proof}

\subsubsection{The Leray spectral sequence}
\label{subsubsec:Leray}

By Step \ref{step:real locus2}, the class $\alpha|_{X_U(\C)}\in H^{2d}_G(X_U(\C),\Z(d))$ lifts to a class $\tilde{\alpha}\in H^{2d}_G(X_U(\C),\Xi,\Z(d))$ in relative cohomology.

To compute the equivariant cohomology of a sheaf on~$X_U(\C)$, we shall consider the Leray spectral sequence for~$f_U$ obtained by composing the push-forward $f_{U*}$ from $G$-equivariant abelian sheaves on $X_U(\C)$ to $G$-equivariant abelian sheaves on $U(\C)$
with the invariant global sections functor. Applied to the sheaf $\Z(d)$, it reads:
\begin{equation}
\label{eq:LeraySarkisov}
E_2^{p,q}=H^p_G(U(\C),\RR^qf_{*}\Z(d))\Rightarrow H_G^{p+q}(X_U(\C),\Z(d)).
\end{equation}
Applying it to $u_!\Z(d)$, where $u:X_U(\C)\setminus\Xi\to X_U(\C)$ is the inclusion, and computing $\RR^qf_*(u_!\Z(d))$ using proper base change yields:
\begin{equation}
\label{eq:LeraySarkisovbis}
E_2^{p,q}=H^p_G(U(\C),U(\R),\RR^qf_{*}\Z(d))\Rightarrow H_G^{p+q}(X_U(\C),\Xi,\Z(d)).
\end{equation}
The natural morphism $u_!\Z(d)\to\Z(d)$ induces a morphism from (\ref{eq:LeraySarkisovbis}) to (\ref{eq:LeraySarkisov}).

By proper base change and since $f$ is a Sarkisov standard model, the only non-zero $G$-equivariant sheaves among the $\RR^qf_{*}\Z(d)$ are $f_{*}\Z(d)\simeq\Z(d)$ and $\RR^2f_{*}\Z(d)$. Consider the edge maps $\varepsilon:H^{2d}_G(X_U(\C),\Z(d))\to H^{2d-2}_G(U(\C),\RR^2f_{*}\Z(d))$ and $\tilde{\varepsilon}:H^{2d}_G(X_U(\C),\Xi,\Z(d))\to H^{2d-2}_G(U(\C),U(\R),\RR^2f_{*}\Z(d))$, and recall from \S\ref{subsub:cohoSarkisov} the definition of the trace map $\Trf: \RR^2f_{*}\Z(1)\to\Z$.

\begin{Step}
\label{step:notrace}
There exists an open subset $B^0\subset U$ such that $B\setminus B^0$ has dimension $\leq 1$ and such that 
$\Trf(\tilde{\varepsilon}(\tilde{\alpha}))|_{B^0}\in H^{2d-2}_G(B^0(\C),B^0(\R),\Z(d-1))$ vanishes.
\end{Step}

\begin{proof} 
Consider the long exact sequence of relative cohomology for $U(\R)\subset U(\C)$. 
The image of 
$\Trf(\tilde{\varepsilon}(\tilde{\alpha}))\in H^{2d-2}_G(U(\C),U(\R),\Z(d-1))$  in $H^{2d-2}_G(U(\C),\Z(d-1))$ is $\Trf(\varepsilon(\alpha|_{X_U(\C)}))$, by the compatibility between  (\ref{eq:LeraySarkisovbis}) and (\ref{eq:LeraySarkisov}). By the definition of the trace morphism, this is $(f_*\alpha)|_{U(\C)}$, which vanishes by Step \ref{step:pushforward}.
We deduce that $\Trf(\tilde{\varepsilon}(\tilde{\alpha}))$ lifts to a class $\eta_U\in H^{2d-3}_G(U(\R),\Z(d-1))$.

By purity \cite[(\ref*{BW1-eq:equivariant purity})]{BW1} applied with $V=B(\R)\setminus U(\R)$, $W=B(\R)$, $i=2d-2$, $c=d$ and $\mathscr{F}=\Z(d-1)$, one has, after choosing local orientations of~$B(\R)$ at the
(finitely many) points of $B(\R)\setminus U(\R)$, an exact sequence:
$$H^{2d-3}_G(B(\R),\Z(d-1))\to H^{2d-3}_G(U(\R),\Z(d-1))\to H^{d-2}_G(B(\R)\setminus U(\R),\Z(d-1)).$$
Since $H^{d-2}(G,\Z(d-1))=0$ and $B(\R)\setminus U(\R)$ is finite, the right-hand side group vanishes; thus $\eta_U$ lifts to a class $\eta\in H^{2d-3}_G(B(\R),\Z(d-1))$.

Let $B^0\subset B$ be given by Lemma \ref{lem:AKhomology} below. Replacing it by its intersection with~$U$, we may assume that $B^0\subset U$.
Then $\eta|_{B^0}$ lifts to $H^{2d-3}_G(B^0(\C),\Z(d-1))$, so that its image $\Trf(\tilde{\varepsilon}(\tilde{\alpha}))|_{B^0}\in H^{2d-2}_G(B^0(\C),B^0(\R),\Z(d-1))$ vanishes.
\end{proof}

We have used:

\begin{lem}
\label{lem:AKhomology}
Let $\eta\in H^{2d-3}_G(B(\R),\Z(d-1))$. There exists an open subset $B^0\subset B$ such that $B\setminus B^0$ has dimension $\leq 1$ and $$\eta|_{B^0(\R)}\in \Ima \left(H^{2d-3}_G(B^0(\C),\Z(d-1))\to  H^{2d-3}_G(B^0(\R),\Z(d-1))\right)\mkern-3mu\rlap{.}$$ 
\end{lem}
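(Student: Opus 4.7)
The plan is to realize the class $\eta$ geometrically via Poincaré--Lefschetz duality on the $d$-manifold $B(\R)$, and then to invoke the Akbulut--King type approximation results already developed in this section to find an algebraic subset supporting the relevant data.

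First, I would apply the Hochschild--Serre spectral sequence $E_2^{p,q} = H^p(G, H^q(B(\R),\Z(d-1))) \Rightarrow H^{p+q}_G(B(\R), \Z(d-1))$, valid because $G$ acts trivially on $B(\R)$. Since the total degree is $2d-3$ and $B(\R)$ has real dimension $d$, the non-zero contributions involve only $H^q(B(\R),\Z)$ with $d-3 \leq q \leq d$. By Poincaré--Lefschetz duality on $B(\R)$, these groups are dual to homology classes in degrees at most~$3$. For $p \geq 1$ the contributions are $2$-torsion (since $H^p(G,-)$ is), and the $p=0$ piece $H^0(G, H^{2d-3}(B(\R),\Z(d-1)))$ vanishes for $d \geq 4$ by dimension reasons, so only the cases $d=2,3$ require handling non-torsion parts through genuine integral cycles.

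Second, I would use Theorem~\ref{thm:AK} (together with the mod-$2$ Akbulut--King results for $0$\nobreakdash- and $1$\nobreakdash-dimensional cycles, cf.\ also \cite[\emph{loc.\ cit.}]{AKhomology}) to represent the dual homology classes of $B(\R)$ appearing in the decomposition of $\eta$ by Borel--Haefliger cycle classes of algebraic subvarieties of $B$ of dimension at most~$1$. Letting $Z\subset B$ be the union of these subvarieties, we have $\dim Z \leq 1$.

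Third, I would set $B^0 = B\setminus Z$ and analyze the long exact sequence of the pair $(B^0(\C), B^0(\R))$ in $G$\nobreakdash-equivariant cohomology. The obstruction to lifting $\eta|_{B^0(\R)}$ lies in $H^{2d-2}_G(B^0(\C), B^0(\R), \Z(d-1))$, and via a Thom isomorphism for the normal bundle of $B^0(\R)$ in $B^0(\C)$ (of real rank $d$, carrying the sign $G$-action on each fibre, so twisting by $\Z(d)$), this obstruction group identifies with a twisted cohomology group on $B^0(\R)$. Because the pieces of $\eta$ encoded by $Z$ have been excised when passing to $B^0$, the image of the obstruction vanishes.

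The main obstacle is the bookkeeping in the third step: aligning the geometric representation of each graded piece $E_\infty^{p,q}$ of $\eta$ with the appropriate algebraic subvariety, and verifying that the Gysin sequence produced by removing $Z$ kills precisely this piece of the obstruction. An alternative, possibly more robust, route would be to first treat the case where $\eta$ is itself in the image of $H^{2d-3}_G(B(\C),\Z(d-1)) \to H^{2d-3}_G(B(\R),\Z(d-1))$ (trivially, no removal is needed), and then handle the cokernel class by class using Akbulut--King to produce the supporting subvariety $Z$.
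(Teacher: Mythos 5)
Your proposal goes wrong at a structurally important point, and the gap is not cosmetic.

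The key component to handle is $\eta_{d-2} \in H^{d-2}(B(\R),\Z/2\Z)$ appearing in the decomposition of $\eta$ via \cite[(\ref*{BW1-eq:canonical decomposition})]{BW1}. Poincar\'e dually, this corresponds to a class in $H_2(B(\R),\Z/2\Z)$, so the relevant Akbulut--King input is the \emph{surface}-theoretic statement \cite[Lemma~9~(1)]{AKhomology} (realizing the class as $\varphi_*\sigma$ for a morphism $\varphi:S\to B$ from a smooth projective surface and $\sigma\in H^0(S(\R),\Z/2\Z)$), not Theorem~\ref{thm:AK} or results about $0$- and $1$-dimensional cycles, which address a different degree of homology. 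More seriously, your plan to kill the obstruction by excising a $1$-dimensional $Z$ from $B$ cannot work: removing a $1$-dimensional $Z$ from the $d$-manifold $B(\R)$ only affects cohomology in degrees $\geq d-1$, and the restriction $H^{d-2}(B(\R),\Z/2\Z)\to H^{d-2}(B^0(\R),\Z/2\Z)$ is an isomorphism, so $\eta_{d-2}$ is not "excised." The role of shrinking $B$ to $B^0$ in the paper's proof is entirely different: it makes a Stone--Weierstrass rational function $h$ on $S$ invertible on $\varphi^{-1}(B^0)$, and it makes $B^0(\R)$ have no compact component (which kills only the top-degree summand $H^d(B^0(\R),E)$, nothing else).

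More fundamentally, the paper's argument is \emph{constructive}, not obstruction-theoretic. Rather than proving a relative group vanishes, it exhibits a lift of $\eta|_{B^0(\R)}$: starting from $\varphi_*\sigma = \eta_{d-2}$, one uses Stone--Weierstrass to produce $h\in\R(S)^*$ whose sign on $S(\R)$ detects $\sigma$, interprets $h$ as a morphism $S^0\to\mathbf{G}_m$ on a suitable shrinking, pulls back an explicit class $\mu\in H^1_G(\C^*,\Z(1))$ to get $\nu\in H^1_G(S^0(\C),\Z(1))$, and then shows (using \cite[\textsection\ref*{BW1-subsubsec:reduction modulo 2} and Proposition~\ref*{BW1-prop:differentiable rr}]{BW1}) that $((\varphi|_{S^0})_*\nu)|_{B^0(\R)}$ has exactly the right components in the canonical decomposition. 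Your proposal contains no analogue of this construction, and your claimed vanishing of $H^{2d-2}_G(B^0(\C),B^0(\R),\Z(d-1))$ after removing $Z$ is both unsubstantiated and, as a route to the lemma, doomed by the codimension count above. I would recommend restructuring around the explicit push-forward construction rather than obstruction vanishing.
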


\begin{proof}
Let $E=H^{d-3}(G,\Z(d-1))$.
For any open subset~$B^0$ of~$B$, we recall that there is a canonical decomposition 
(see \cite[(\ref*{BW1-eq:canonical decomposition})]{BW1})
\begin{align}
\label{eq:canonical decomposition in proof of lemma AKhomology}
H^{2d-3}_G(B^0(\R),\Z(d-1)) = H^d(B^0(\R),E)\oplus \mkern-10mu\bigoplus_{\substack{p<d \\ p\mkern1mu\equiv\mkern1mu d \text{ mod } 2}}\mkern-10muH^p(B^0(\R),\Z/2\Z)\rlap{.}
\end{align}
For $p<d$ with $p\mkern1mu\equiv\mkern1mu d \text{ mod } 2$, let $\eta_p \in H^p(B(\R),\Z/2\Z)$ denote the degree~$p$ component of~$\eta$
according to this decomposition for $B^0=B$.
By \cite[Proposition~\ref*{BW1-prop:truncated projection integral coeff}]{BW1}, to prove the lemma, we may assume that $\eta_p=0$ for $p\leq d-4$.

By a theorem of Akbulut and King \cite[Lemma 9 (1)]{AKhomology}, there exist a smooth projective surface $S$ over $\R$ and a morphism $\varphi: S\to B$ such that $\eta_{d-2}$ is the image by $\varphi_*:H^0(S(\R),\Z/2\Z)\to H^{d-2}(B(\R),\Z/2\Z)$ of a class $\sigma\in H^0(S(\R),\Z/2\Z)$.

By the Stone--Weierstrass approximation theorem, there exists a rational
function $h\in\R(S)^*$ that is invertible on $S(\R)$, such that $h>0$ on the
connected components of $S(\R)$ where $\sigma$ vanishes and $h<0$ on the
other connected components (see \cite[Theorem~3.4.4 and Theorem~8.8.5]{bcr}).

Let $B^0 \subset B$ be an open subset such that $\dim(B\setminus B^0)\leq 1$,
small enough that~$h$ is invertible on $S^0=\phi^{-1}(B^0)$
and that no connected component of~$B^0(\R)$ is compact.
View $h$ as a morphism $h:S^0\to\mathbf{G}_m$.
As a consequence of the isomorphism~\cite[(\ref*{BW1-eq:cohomological dimension iso relative})]{BW1},
of the localisation exact sequence, and of the remark
that $\C^*/G$ and~$\R^*$ are disjoint unions of contractible spaces, we
have $H^2_G(\C^*,\R^*,\Z(1))=0$.  This vanishing and the isomorphism
$H^1_G(\R^*,\Z(1))= H^0(\R^*,\Z/2\Z)$ \cite[(\ref*{BW1-eq:canonical decomposition})]{BW1}
show that there exists a class $\mu\in H^1_G(\C^*,\Z(1))$ whose
restriction to a real point $x\in\R^*$ vanishes if and only if $x>0$.
The class
$\nu:=h^*\mu\in H^1_G(S^0(\C),\Z(1))$ then has the property that
 if $x\in S^0(\R)$, $\left.\nu\right|_x=0$ if and only if $\left.\sigma\right|_x=0$.

It follows from \cite[\textsection\ref*{BW1-subsubsec:reduction modulo 2} and Proposition~\ref*{BW1-prop:differentiable rr}]{BW1} 
applied to $\varphi|_{S^0}:S^0\to B^0$ that
in the decomposition~\eqref{eq:canonical decomposition in proof of lemma AKhomology},
the degree~$p$ component of $((\varphi|_{S^0})_*\nu)|_{B^0(\R)}$ is equal to~$0$ for $p<d-2$
and to $(\phi_*\sigma)|_{B^0(\R)}$ for $p=d-2$.
On the other hand, as no connected component of~$B^0(\R)$ is compact,
we have $H^d(B^0(\R),E)=0$.
As $\phi_*\sigma=\eta_{d-2}$, this concludes the proof of the lemma.
\end{proof}

Define $f^0:X^0\to B^0$ to be the base change of $f:X\to B$ by the open immersion $B^0\subset B$, and let $\Delta^0:=\Delta\cap B^0$, $\Sigma^0:=\Sigma\cap B^0$ and $\Xi^0:=\Xi\cap X^0(\C)$. 
Shrinking $B^0$, we may assume that $\Delta^0=\emptyset$ if $d=2$.

\begin{Step}
\label{step:killonopen}
The restriction $\alpha|_{X^0} \in H^{2d}_G(X^0(\C),\Z(d))$ vanishes.
\end{Step}

\begin{proof}
We have proven in Step \ref{step:notrace} that the trace $\Trf(\tilde{\varepsilon}(\tilde{\alpha}))|_{B^0}$ of $\tilde{\varepsilon}(\tilde{\alpha})|_{B^0}$ vanishes. By the exact sequence (\ref{eq:morphisme trace}) and the vanishing of $H^{2d-3}_G(\Sigma^0(\C),\Sigma^0(\R), \Z/2\Z)$ and $H^{2d-2}_G(\Delta^0(\C),\Delta^0(\R), j_!\mathscr{L}(d-1))$ proven in Lemma \ref{lem:relativevanishing} below, $\tilde{\varepsilon}(\tilde{\alpha})|_{B^0}=0$.

The analogue of (\ref{eq:LeraySarkisovbis}) for the morphism $f^0:X^0\to B^0$ is a spectral sequence:
\begin{equation}
\label{eq:LeraySarkisovter}
E_2^{p,q}=H^p_G(B^0(\C),B^0(\R),\RR^qf_{*}\Z(d))\Rightarrow H_G^{p+q}(X^0(\C),\Xi^0,\Z(d)).
\end{equation}
Recall that $\RR^qf_{*}\Z(d)=0$ unless $q\in\{0,2\}$ and $f_*\Z(d)\simeq\Z(d)$.
By the compatibility between (\ref{eq:LeraySarkisovbis}) and (\ref{eq:LeraySarkisovter}), the image of $\tilde{\alpha}|_{X^0}\in H_G^{2d}(X^0(\C),\Xi^0,\Z(d))$ by the edge map of (\ref{eq:LeraySarkisovter}) coincides with $\tilde{\varepsilon}(\tilde{\alpha})|_{B^0}$ hence vanishes.
Consequently, (\ref{eq:LeraySarkisovter}) shows that $\tilde{\alpha}|_{X^0}$ comes from a class in $H^{2d}_G(B^0(\C),B^0(\R),\Z(d))$. Since this group vanishes by Lemma~\ref{lem:relativevanishing} below, $\tilde{\alpha}|_{X^0}=0$. We deduce that $\alpha|_{X^0}=0$.
\end{proof}

\begin{lem}
\label{lem:relativevanishing}
The groups $H^{2d}_G(B^0(\C),B^0(\R),\Z(d))$, $H^{2d-3}_G(\Sigma^0(\C),\Sigma^0(\R), \Z/2\Z)$  and $H^{2d-2}_G(\Delta^0(\C),\Delta^0(\R), j_!\mathscr{L}(d-1))$ vanish.
\end{lem}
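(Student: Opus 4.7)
My plan for the lemma is to handle the three vanishings in a uniform way, relying on two ingredients: the cohomological dimension of semi-algebraic spaces (\cite[Ch.~II, Lemma~9.1]{delfshomology}), and the fact that, by the choice of $U$ in~\textsection\ref{subsubsec:Leray} combined with the codimension bound on $B\setminus B^0$, no connected component of $B^0(\C)$ or of $(\Delta^0\setminus\Sigma^0)(\C)$ is compact. (For the latter, one uses that each irreducible component of $\Delta_\C$ either meets another in~$\Sigma$, so becomes non-compact after removing $\Sigma$, or is a connected component of $\Delta(\C)$ from which~$U$ removes a point.) For each of the three groups, I would apply the equivariant cohomology spectral sequence \cite[(\ref*{BW1-eq:first spectral sequence})]{BW1} to an extension by zero $j_!(-)$ from the locus on which~$G$ acts freely: the sheaves $\sH^q(G,j_!(-))$ then vanish for $q>0$, and the spectral sequence collapses to ordinary sheaf cohomology on the quotient.

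The vanishing of $H^{2d-3}_G(\Sigma^0(\C),\Sigma^0(\R),\Z/2\Z)$ is immediate: if $d=2$ then $\Sigma^0=\emptyset$ after the shrinking of~$B^0$ performed before Step~\ref{step:killonopen}, while if $d\geq 3$ the spectral sequence yields the conclusion from $\dim(\Sigma^0(\C)/G)=2d-4<2d-3$. For $H^{2d}_G(B^0(\C),B^0(\R),\Z(d))$, I would invoke the real-complex exact sequence \cite[(\ref*{BW1-eq:real-complex long 01})]{BW1} to bracket it between $H^{2d}(B^0(\C),B^0(\R),\Z)$ and $H^{2d+1}_G(B^0(\C),B^0(\R),\Z(d+1))$. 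The latter vanishes by the same spectral-sequence argument since $\dim(B^0(\C)/G)=2d<2d+1$, and the former follows from the long exact sequence of the pair: $H^{2d-1}(B^0(\R),\Z)=0$ for dimensional reasons ($\dim B^0(\R)=d<2d-1$ as $d\geq 2$), and $H^{2d}(B^0(\C),\Z)=0$ because $B^0(\C)$ is a $2d$\nobreakdash-manifold without compact connected component.

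The third vanishing, $H^{2d-2}_G(\Delta^0(\C),\Delta^0(\R),j_!\mathscr{L}(d-1))$, is the main obstacle: the cohomological degree $2d-2$ coincides with $\dim_\R\Delta^0(\C)$, so pure cohomological dimension will no longer suffice on the preceding term of the real-complex sequence. The case $d=2$ is trivial because $\Delta^0=\emptyset$, so assume $d\geq 3$. The real-complex exact sequence again brackets the group, and the successor $H^{2d-1}_G(\Delta^0(\C),\Delta^0(\R),j_!\mathscr{L}(d))$ vanishes by the same argument as before since $\dim(\Delta^0(\C)/G)=2d-2<2d-1$. For the predecessor $H^{2d-2}(\Delta^0(\C),\Delta^0(\R),j_!\mathscr{L})$, the long exact sequence of the pair gives $H^{2d-3}(\Delta^0(\R),-)=0$ (since $d-1<2d-3$), reducing the problem to $H^{2d-2}(\Delta^0(\C),j_!\mathscr{L})$. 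I would handle this via the distinguished triangle $j_!\mathscr{L}\to Rj_*\mathscr{L}\to i_*i^*Rj_*\mathscr{L}\to[1]$ with $i\colon\Sigma^0(\C)\hookrightarrow\Delta^0(\C)$: the contribution of $\Sigma^0(\C)$ vanishes because $\dim\Sigma^0(\C)=2d-4<2d-3$, leaving as the crux the vanishing of $H^{2d-2}((\Delta^0\setminus\Sigma^0)(\C),\mathscr{L})$. This last step follows from Poincar\'e--Lefschetz duality for rank-$1$ local systems on oriented manifolds, since every connected component of $(\Delta^0\setminus\Sigma^0)(\C)$ is a non-compact complex manifold of complex dimension $d-1$---which is exactly what the choice of~$U$ was arranged to ensure.
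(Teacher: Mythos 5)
Your treatment of the first two groups is fine, and the reduction of the third to $H^{2d-2}(\Delta^0(\C),j_!\mathscr{L})$ via the real-complex sequence, the long exact sequence of the pair, and the noncompactness of $(\Delta^0\setminus\Sigma^0)(\C)$ all track the kind of arguments the paper uses (the paper only writes out the third vanishing, noting that the other two are similar and easier).

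The gap is in the $\Sigma^0$ term of your distinguished triangle. You argue that $H^{2d-3}\bigl(\Sigma^0(\C),i^*Rj_*\mathscr{L}\bigr)=0$ simply because $\dim\Sigma^0(\C)=2d-4<2d-3$. But $i^*Rj_*\mathscr{L}$ is a complex, not a sheaf in degree zero: since $\Delta$ is SNC, the punctured link of $\Sigma^0(\C)$ in $\Delta^0(\C)$ is, fibrewise, two circles, so $i^*R^qj_*\mathscr{L}$ lives in degrees $q=0$ and $q=1$. The hypercohomology spectral sequence therefore has a potentially nonzero term $E_2^{2d-4,1}=H^{2d-4}\bigl(\Sigma^0(\C),i^*R^1j_*\mathscr{L}\bigr)$ contributing in total degree $2d-3$, and pure cohomological dimension does not kill it. To kill it you would need either that $i^*R^1j_*\mathscr{L}$ vanishes (which is false in general: the local monodromy of $\mathscr{L}$ around $\Sigma$ is $-1$, giving $R^1j_*\mathscr{L}|_{\Sigma}\cong\Z/2\Z$, not $0$) or a Poincar\'e-duality argument for a torsion sheaf on $\Sigma^0(\C)$ together with non-compactness of $\Sigma^0(\C)$; but the choice of the open set $U$ and the shrinking of $B^0$ in the surrounding text never remove anything from $\Sigma(\C)$, so $\Sigma^0(\C)$ can perfectly well be compact, and the term can be nonzero.

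This is exactly the difficulty the paper is engineered to avoid. Instead of confronting $i^*Rj_*\mathscr{L}$, the paper replaces $\mathscr{L}$ by the cokernel of $\Z\to\chi_*\Z$ for the associated double cover $\chi$, then compactifies $\widetilde{\Delta\setminus\Sigma}$ to a smooth projective $\widetilde{\Delta}$ over $\Delta$, so that the closed complement of $\widetilde{j}$ carries the \emph{constant} sheaf $\Z$ (concentrated in degree~$0$), for which the dimension count is valid, and the final vanishing reduces to Poincar\'e duality for the constant sheaf on the noncompact $\widetilde{\Delta}^0(\C)$. Your more direct route could be repaired, but only by carefully analysing $i^*R^1j_*\mathscr{L}$ (or by arranging $\Sigma^0(\C)$ to be noncompact, which the setup does not do); as written the dimension count is simply off by one degree.
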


\begin{proof}
We only prove the vanishing of $H^{2d-2}_G(\Delta^0(\C),\Delta^0(\R), j_!\mathscr{L}(d-1))$. The other two groups are easier to analyse and are dealt with similarly. We assume that $d\geq 3$ since otherwise $\Delta^0=\emptyset$.

By the real-complex exact sequence \cite[(\ref*{BW1-eq:real-complex long 01})]{BW1},
it suffices to show the vanishing of $H^{2d-1}_G(\Delta^0(\C),\Delta^0(\R), j_!\mathscr{L}(d))$ and $H^{2d-2}(\Delta^0(\C),\Delta^0(\R), j_!\mathscr{L})$. The first group is equal to $H^{2d-1}(\Delta^0(\C)/G,\Delta^0(\R), \sH^0(G,j_!\mathscr{L}(d)))$ by the spectral sequence \cite[(\ref*{BW1-eq:first spectral sequence})]{BW1} hence vanishes by cohomological dimension \cite[Ch.~II, Lemma~9.1]{delfshomology}.

The long exact sequence of relative cohomology  and \cite[\emph{loc.\ cit.}]{delfshomology} show that the vanishing of the second group will be implied by that of $H^{2d-2}(\Delta^0(\C), j_!\mathscr{L})$.
 Letting $\chi:\widetilde{\Delta\setminus\Sigma}\to\Delta\setminus\Sigma$ be the double cover associated with $\mathscr{L}$, there is an exact sequence 
$0\to\Z\to\chi_*\Z\to\mathscr{L}\to 0$ of $G$-equivariant sheaves on $(\Delta\setminus\Sigma)(\C)$.
By \cite[\emph{loc.\ cit.}]{delfshomology} again, it suffices to prove that $H^{2d-2}(\Delta^0(\C), j_!\chi_*\Z)=0$.

Let $\widetilde{j}:\widetilde{\Delta\setminus\Sigma}\hookrightarrow\widetilde{\Delta}$ be a smooth compactification fitting in a commutative diagram:
$$
\xymatrix@R=3ex{
\widetilde{\Delta\setminus\Sigma}\ar[d]^(.45){\chi}\ar[r]^(.56){\widetilde{j}} &\widetilde{\Delta} \ar[d]^(.45){\pi} \\
\Delta\setminus\Sigma\ar[r]^(.56){j}& \Delta\rlap{,}\vphantom{\setminus}
}
$$
and let $\widetilde{\Delta}^0:=\pi^{-1}(\Delta^0)$. The Leray spectral sequence and proper base change for $\pi$ show that $H^{2d-2}(\Delta^0(\C), j_!\chi_*\Z)=H^{2d-2}(\widetilde{\Delta}^0(\C), \widetilde{j}_!\Z)$. A final application of \cite[\emph{loc.\ cit.}]{delfshomology}\ to the cokernel of the injection $\widetilde{j}_!\Z\to\Z$ shows that it suffices to prove the vanishing of $H^{2d-2}(\widetilde{\Delta}^0(\C),\Z)$, which follows from Poincar\'e duality since $\Delta^0(\C)$, hence $\widetilde{\Delta}^0(\C)$, has no compact component.
\end{proof}

\subsubsection{Algebraicity}

We are ready to complete the proof of Theorem \ref{thm:fibres en coniques}.

\begin{Step}
\label{step:removestuff}
The class $\alpha$ is in the image of $\cl:\CH_1(X)\to  H^{2d}_G(X(\C),\Z(d))$.
\end{Step}

\begin{proof}
By Step \ref{step:killonopen}, $\alpha|_{X^0}=0$.
From the structure of Sarkisov standard models,  and since $B\setminus B^0$ has dimension $\leq 1$, one sees that there is a descending chain of open subsets $X=X_0\supset\dots \supset X_k=X^0$ such that for $0\leq i<k$, $Z_i:=X_i\setminus X_{i+1}$ is of one of the three types described in the statement of Lemma \ref{lem:algebraichomology} below. This can be achieved by successively removing from $X$, in an appropriate way, first singular points of fibers of $f$, then components of fibers of $f$, then curves in the non-smooth locus of $f|_{\Sigma}:X_{\Sigma}\to\Sigma$, then inverse images of curves in $B$. 
The closed subsets removed in the first three steps are of type~(i). Those removed in the last step are of type (iii) if the conics above the curve are smooth, and of type (ii) if the conics above the curve are rank~$1$ conics, or rank~$2$ conics whose singular points have already been removed in the third step. Indeed, in this last situation, $Z$ has a structure of $\A^1$-bundle over a double \'etale cover of the curve.

Let $\delta_i:=\dim(Z_i)$ and consider the long exact sequence of cohomology with supports at every step, taking into account purity \cite[(\ref*{BW1-eq:equivariant purity subvariety})]{BW1}:
$$H^{2\delta_i-2}_G(Z_i(\C),\Z(\delta_i-1))\to H^{2d}_G(X_i(\C),\Z(d))\to H^{2d}_G(X_{i+1}(\C),\Z(d)).$$
By Lemma \ref{lem:algebraichomology} below,
the leftmost group consists of classes of algebraic cycles, therefore so does the
kernel of $H^{2d}_G(X_i(\C),\Z(d))\to H^{2d}_G(X_{i+1}(\C),\Z(d))$, and the statement follows.
\end{proof}

\begin{lem}
\label{lem:algebraichomology}
Let $Z$ be a connected variety of dimension $\delta$ over $\R$ that is either
\begin{enumerate}[(i)]
\item a smooth curve or the point,
\item a (Zariski locally trivial) $\A^1$-bundle $g:Z\to \Gamma$ over a smooth curve $\Gamma$,
\item or a smooth conic bundle $g:Z\to \Gamma$ over a smooth curve $\Gamma$.
\end{enumerate}
Then the cycle class map $\CH_1(Z)\to H^{2\delta-2}_{G}(Z(\C),\Z(\delta-1))$ is surjective.
\end{lem}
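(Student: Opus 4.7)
I will treat the three cases separately. Case~(i) is immediate: when $\delta=0$, both the source and the target vanish, and when $\delta=1$ (smooth connected curve), $\CH_1(Z)=\Z\cdot[Z]$ and $H^0_G(Z(\C),\Z)\simeq\Z$, with $\cl([Z])=1$.

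For case~(ii), I invoke homotopy invariance. Since $g:Z\to\Gamma$ is a Zariski locally trivial $\A^1$\nobreakdash-bundle and $\A^1(\C)$ is $G$\nobreakdash-equivariantly contractible, the pull-back $g^*:H^*_G(\Gamma(\C),\Z(\bullet))\to H^*_G(Z(\C),\Z(\bullet))$ is an isomorphism; likewise, the flat pull-back $g^*:\CH_0(\Gamma)\to\CH_1(Z)$ is an isomorphism by the homotopy invariance of Chow groups under affine bundles. The lemma in case~(ii) thus reduces to the surjectivity of $\CH_0(\Gamma)\to H^2_G(\Gamma(\C),\Z(1))$ for the smooth curve~$\Gamma$, which follows from the real Lefschetz~$(1,1)$ theorem for curves (see \cite[Proposition~\ref*{BW1-prop:real(1,1)}]{BW1}).

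Case~(iii) requires the Leray spectral sequence
\begin{equation*}
E_2^{p,q}=H^p_G(\Gamma(\C),\RR^qg_*\Z(1)) \Longrightarrow H^{p+q}_G(Z(\C),\Z(1)).
\end{equation*}
Since all fibers of~$g$ are smooth conics, proper base change together with the analysis of~\S\ref{subsub:cohoSarkisov} (with empty degeneracy locus) gives $\RR^0g_*\Z(1)=\Z(1)$, $\RR^1g_*\Z(1)=0$, and $\Trf:\RR^2g_*\Z(1)\xrightarrow{\sim}\Z$. The associated $5$\nobreakdash-term exact sequence
\begin{equation*}
0\to H^2_G(\Gamma(\C),\Z(1))\xrightarrow{g^*} H^2_G(Z(\C),\Z(1))\xrightarrow{\tau} H^0_G(\Gamma(\C),\Z)\xrightarrow{d_3} H^3_G(\Gamma(\C),\Z(1))
\end{equation*}
reduces the lemma to covering both the pull-back subgroup $g^*H^2_G(\Gamma(\C),\Z(1))$ (done as in case~(ii), via the identity $g^*\circ\cl=\cl\circ g^*$) and the image of~$\tau$ by cycle classes. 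For the latter I would use the class of the relative anticanonical divisor $\omega_{Z/\Gamma}^{-1}\in\Pic(Z)=\CH_1(Z)$, which restricts to~$\sO_{\P^1}(2)$ on every fiber and therefore contributes fiber-degree~$2$ under~$\tau$; on components of $\Gamma(\C)/G$ above which~$g$ admits an algebraic section~$s$, the class $[s(\Gamma)]$ contributes fiber-degree~$1$.

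The main obstacle is to show that $\Ima(\tau)$ is never strictly larger than the subgroup covered by these algebraic classes: on a component of $\Gamma(\C)/G$ above which~$g$ has no algebraic section, every algebraic $1$\nobreakdash-cycle has even fiber-degree by Springer's theorem, so one must check that the image of~$\tau$ is contained in~$2\Z$ above such components. This ultimately amounts to identifying the differential~$d_3$ with the Brauer-theoretic obstruction to the existence of a section of the conic bundle~$g$, ruling out any purely topological ``fiber-degree-one'' class in the absence of an algebraic section.
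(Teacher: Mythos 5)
Your treatment of cases (i) and (ii) is correct and follows essentially the same route as the paper; note only that since $\Gamma$ need not be proper, the precise statement to use is that the cokernel of $\cl:\Pic(\Gamma)\to H^2_G(\Gamma(\C),\Z(1))$ is torsion-free, together with the identification of the rational part $H^2_G(\Gamma(\C),\Q(1))=H^2(\Gamma(\C),\Q(1))^G$ with the span of the class of a closed point, rather than the real Lefschetz $(1,1)$ theorem as such.

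In case (iii), however, there is a genuine gap. You have correctly isolated the decisive claim — that a class $\alpha\in H^2_G(Z(\C),\Z(1))$ with $g_*\alpha=1$ can only exist if $g$ admits an algebraic section — but you have not proved it. The phrase ``identifying the differential $d_3$ with the Brauer-theoretic obstruction'' is a heuristic, not an argument: $d_3$ is a topological invariant of the $G$-equivariant $\P^1(\C)$-bundle $Z(\C)\to\Gamma(\C)$, whereas the obstruction to an algebraic section is a class in $\Br(\R(\Gamma))[2]$, and it is not automatic that the vanishing of the former forces the vanishing of the latter. The paper bridges exactly this gap by a concrete argument: restrict $\alpha$ to the fiber $Z_x$ over each $x\in\Gamma(\R)$ to obtain a class of degree $1$ in $H^2_G(Z_x(\C),\Z(1))$; since the anisotropic conic has no such class (the cokernel of $\cl$ is torsion-free and $\Pic$ of the anisotropic conic is generated in degree $2$), this forces $Z_x(\R)\neq\emptyset$, so $Z(\R)\to\Gamma(\R)$ is surjective; Witt's theorem \cite[Satz~22]{WittHasse} — a Hasse principle for conics over $\R(\Gamma)$ — then produces an algebraic section of $g$, whose image has fiber degree $1$. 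This passage through real points and Witt's theorem is the substantive content of case (iii) and cannot be replaced by a spectral-sequence computation. (A minor point: since $Z$ is assumed connected, so is $\Gamma$, so there is only one component of $\Gamma(\C)/G$ to consider.)
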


\begin{proof}
Assertion (i) is obvious.
In case (ii), the Leray spectral sequence 
\begin{equation}
\label{eq:Lerayalgebraicity}
E_2^{p,q}=H^p_G(\Gamma(\C),\RR^qg_*\Z(1))\Rightarrow H^{p+q}_G(Z(\C),\Z(1))
\end{equation}
for $g$ shows that $H^2_G(\Gamma(\C),\Z(1))\to H^2_G(Z(\C),\Z(1))$ is an isomorphism.
On the other hand, the cycle class map
$\Pic(\Gamma) \to H^2_G(\Gamma(\C),\Z(1))$
is surjective since its cokernel is torsion-free
(see \cite[Proposition~\ref*{BW1-prop:real(1,1)nonarch}]{BW1})
while $H^2_G(\Gamma(\C),\Q(1))=H^2(\Gamma(\C),\Q(1))^G$
is generated by the class of any closed point of~$\Gamma$.
Assertion~(ii) follows.
In case (iii), the Leray spectral sequence (\ref{eq:Lerayalgebraicity}) gives rise to an exact sequence:
$$0\to H^2_G(\Gamma(\C),\Z(1))\stackrel{g^*}{\longrightarrow}  H^2_G(Z(\C),\Z(1))\stackrel{g_*}{\longrightarrow} H^0_G(\Gamma(\C),\Z)\simeq \Z.$$
The argument given above shows that $H^2_G(\Gamma(\C),\Z(1))$ is generated by algebraic cycles. Moreover the  relative anticanonical line bundle gives rise to a class in $H^2_G(Z(\C),\Z(1))$ whose push-forward is $2\in\Z\simeq H^0_G(\Gamma(\C),\Z)$. To conclude suppose that there is a cohomology class in $H^2_G(Z(\C),\Z(1))$ whose push-forward is $1\in\Z\simeq H^0_G(\Gamma(\C),\Z)$. Restricting it to a fiber of $g$ over a real point $x\in \Gamma(\R)$ and applying \cite[Proposition~\ref*{BW1-prop:real(1,1)nonarch}]{BW1} to the real conic $Z_x$ shows that $Z_x$ has a real point, hence that $Z(\R)\to \Gamma(\R)$ is surjective. It is then a theorem of Witt \cite[Satz~22]{WittHasse} that $g$ has a section. The class in $H^2_G(Z(\C),\Z(1))$ of the image of this section induces $1\in\Z\simeq H^0_G(\Gamma(\C),\Z)$, which concludes the proof.
\end{proof}

\section{Fano threefolds}
\label{sec:Fano}

\subsection{Curves of even genus in Fano threefolds}

The goal of~\textsection\ref{sec:Fano} is to establish the following theorem.
We prove it in~\textsection\ref{par:systeme anticanonique}
when~$-K_X$ is very ample (relying on Hodge theory)
and in~\textsection\ref{subsec:classification of fano}
when~$-K_X$ is ample but not very ample (relying on the easier half of the classification of Fano threefolds).

\begin{thm}
\label{thm:solides fano}
Let~$X$ be a smooth Fano threefold over~$\R$.  If $X(\R)=\emptyset$, then the real integral Hodge conjecture holds for~$X$.
\end{thm}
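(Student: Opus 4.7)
The plan is the one sketched in the paragraph following the statement: treat separately the cases where $-K_X$ is very ample (handled by Hodge theory, adapting Voisin~\cite{voisinthreefolds}) and where $-K_X$ is ample but not very ample (handled by the classification of smooth Fano threefolds). Since $H^2(X,\sO_X)=0$ for any Fano variety and $X(\R)=\emptyset$ by hypothesis, both the Hodge-theoretic and the topological constraint defining the subgroup $\Hdg^4_G(X(\C),\Z(2))_0\subseteq H^4_G(X(\C),\Z(2))$ are vacuous; it therefore suffices to prove that every $G$\nobreakdash-equivariant integral Hodge class in $H^4_G(X(\C),\Z(2))$ is algebraic.

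Assume first that $-K_X$ is very ample and fix such a class~$\alpha$. Let $U\subset|{-}K_X|$ be the open subset parametrising smooth anticanonical sections, with universal family $\pi:\mathscr{S}\to U$; the fibres are K3 surfaces, and for every $t\in U(\R)$ one has $S_t(\R)\subseteq X(\R)=\emptyset$. Consequently, the real Lefschetz $(1,1)$ theorem \cite[Proposition~\ref*{BW1-prop:real(1,1)}]{BW1} applied to $S_t$ ensures that every $G$\nobreakdash-equivariant Hodge class in $H^2_G(S_t(\C),\Z(1))$ is the class of a divisor on $S_t$. The strategy is thus to find a real $t\in U(\R)$ for which $\alpha|_{S_t}$ is of Hodge type---that is, orthogonal to $H^{2,0}(S_t)$---and then to recover~$\alpha$ as the equivariant class of a $1$\nobreakdash-cycle by taking the Gysin pushforward of a divisor representing $\alpha|_{S_t}$, combined with the Lefschetz-pencil decomposition of $H^4_G(X(\C),\Z(2))$ used by Voisin in~\cite{voisinthreefolds}.

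The heart of the proof is therefore to exhibit a real point of the Noether--Lefschetz locus
\[
\mathrm{NL}_\alpha=\bigl\{t\in U(\C):\alpha|_{S_t}\in H^{1,1}(S_t)\bigr\}.
\]
Over~$\C$, Voisin proves that $\mathrm{NL}_\alpha$ is non-empty (and in fact analytically dense in $U(\C)$) via an infinitesimal variation of Hodge structures argument resting on the non-vanishing of a second fundamental form and on the positive codimension of its components. The real enhancement must use that the $G$\nobreakdash-equivariance of~$\alpha$ forces $\mathrm{NL}_\alpha$ to be Galois-stable, and then promote Voisin's density to the existence of a smooth real point on a Galois-stable component---for instance by constructing an explicit real analytic one-parameter family of K3 sections along which the Hodge condition is eventually met, or by a real transversality argument at the level of the period map. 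This step is the principal obstacle.

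When $-K_X$ is only ample, the classification of smooth Fano threefolds leaves a short list of cases. In each of them the geometry of~$X$ is sufficiently explicit that the real integral Hodge conjecture can be settled directly: one typically exhibits a birational model of~$X$ carrying a conic bundle or a del~Pezzo fibration structure, to which Theorem~\ref{thm:fibres en coniques} or Theorem~\ref{thm:solides fibres en del Pezzo} applies, invoking the birational invariance of the real integral Hodge conjecture for smooth projective varieties \cite[Proposition~\ref*{BW1-prop:birinvIHC}]{BW1}; the hypothesis $X(\R)=\emptyset$ is then used case by case to verify the applicability of the relevant theorem. This part of the argument is essentially bookkeeping once the very ample case has been established.
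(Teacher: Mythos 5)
Your overall two-step split (very ample $-K_X$ via Hodge theory; ample but not very ample via classification) matches the paper's, but the execution of both halves differs in essential ways and, as you yourself flag, the Hodge-theoretic half is left unproved.

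On the very ample case, you propose to look for a real point $t\in U(\R)$ of the Noether--Lefschetz locus $\mathrm{NL}_\alpha$ of the \emph{given} class $\alpha$, and you write that exhibiting such a point is ``the principal obstacle.'' This is indeed a genuine gap, but it is also not the shape of the argument the paper uses (nor, in fact, the shape of Voisin's original argument over $\C$). The paper does not try to show that any particular $\alpha$ becomes Hodge on a single real anticanonical section. Instead, Proposition~\ref{prop:critere infinitesimal} shows, via a real version of Green's infinitesimal density criterion, that the period map restricted to $H^{1,1}_\R(1)^G$ over a real neighbourhood $U(\R)$ is submersive at a suitable $\lambda$, so its image contains an open \emph{cone} in $H^2(S(\C),\R(1))^G$; the cone argument then shows that the full lattice $H^2_G(S(\C),\Z(1))/\torsion$ is generated by classes that become Hodge at nearby real $t$. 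Thus $\alpha$ is algebraic because it is a $\Z$-linear combination of pushforwards of Hodge classes on nearby real members of the family---not because $\alpha$ itself becomes Hodge on one member. Furthermore, you omit the verification of the hypothesis of the infinitesimal criterion: one needs a $\lambda \in H^{1,1}(S_\C)^{F_\infty=-1}$ making~\eqref{eq:variation infinitesimale} surjective. The paper reduces this (Proposition~\ref{prop:preuve Fano 1}) to the non-vanishing of $H^{1,1}_\van(S_\C)^{F_\infty=-1}$ via a parallel-transport/monodromy-irreducibility argument, and then proves this non-vanishing (Proposition~\ref{prop:good K3}) precisely from the hypothesis $S(\R)=\emptyset$: the Lefschetz fixed-point formula for the antiholomorphic involution gives $\dim H^{1,1}(S_\C)^{F_\infty=-1}=11$, which exceeds the Picard rank of $X_\C$ (at most $10$), so the $(-1)$-eigenspace meets the vanishing cohomology. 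This is the step where $X(\R)=\emptyset$ is actually used, and it is entirely absent from your proposal.

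On the case where $-K_X$ is not very ample, your sketch (``exhibit a birational conic bundle or del Pezzo fibration model and apply Theorem~\ref{thm:fibres en coniques} or Theorem~\ref{thm:solides fibres en del Pezzo}'') does not reflect the paper's argument and would not be straightforward to carry out. By \cite[Corollary~\ref*{BW1-cor:IHC pour solides RC ou CY}]{BW1} it suffices to prove $\elw_1(X)=1$, i.e.\ to exhibit a geometrically irreducible curve of even geometric genus, and the paper does this directly in each of Iskovskikh's six exceptional cases (Proposition~\ref{prop:preuve Fano 23}): by locating a canonical $G$-invariant base point, by taking intersections of two anticanonical divisors to get a curve of genus $2$ or $4$, by using Comessatti's theorem on geometrically rational surfaces, or, in case~(v), by reducing to the already-established very ample case via a canonical blow-down. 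Neither the conic-bundle theorem nor the del~Pezzo-fibration theorem of this paper is invoked in this part of the proof.
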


By \cite[Corollary~\ref*{BW1-cor:IHC pour solides RC ou CY} and Proposition~\ref*{BW1-prop:parity of genus in irrelevant situations}]{BW1},
the statement of Theorem~\ref{thm:solides fano} can be given a more geometric formulation:

\begin{cor}
\label{cor:elw solides fano}
Any smooth Fano threefold over~$\R$ contains a geometrically
irreducible curve of even geometric genus.
\end{cor}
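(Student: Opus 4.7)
The plan is to reduce Corollary~\ref{cor:elw solides fano} to Theorem~\ref{thm:solides fano} via two translation results from \cite{BW1}, splitting into two cases according to whether the real locus is empty.

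First, I would dispose of the case $X(\R)\neq\emptyset$ independently. In this regime, Proposition~\ref*{BW1-prop:parity of genus in irrelevant situations} of~\cite{BW1} produces a geometrically irreducible curve of even geometric genus on~$X$ with no appeal to the real integral Hodge conjecture, by handling the ``irrelevant'' situations directly from the geometry; the presence of a real point on a Fano threefold supplies enough flexibility (through anticanonical or hyperplane sections passing through a real point) for the cited proposition to apply. This takes care of all Fano threefolds with $X(\R)\neq\emptyset$.

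In the remaining case $X(\R)=\emptyset$, I would invoke Theorem~\ref{thm:solides fano} to obtain the real integral Hodge conjecture for~$X$. Since every smooth Fano threefold is rationally connected (Campana, Koll\'ar--Miyaoka--Mori), Corollary~\ref*{BW1-cor:IHC pour solides RC ou CY} of~\cite{BW1} then applies: for rationally connected threefolds it translates the real integral Hodge conjecture into the existence of a geometrically irreducible curve of even geometric genus. Combining the two cases yields the statement.

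There is no substantive obstacle at the level of the corollary itself: once Theorem~\ref{thm:solides fano} is in hand, the corollary is a purely formal transcription, using the dictionary between the equivariant cycle class map and curves of even geometric genus developed in~\cite{BW1}. The real difficulty lies entirely upstream, in proving Theorem~\ref{thm:solides fano}, which is the business of the subsequent subsections (and which proceeds via Hodge theory and the Noether--Lefschetz locus when $-K_X$ is very ample, and via the classification of Fano threefolds otherwise).
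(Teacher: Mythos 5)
Your proposal is correct and matches the paper's own argument: the case $X(\R)\neq\emptyset$ is dispatched by \cite[Proposition~\ref*{BW1-prop:parity of genus in irrelevant situations}]{BW1}, and the case $X(\R)=\emptyset$ follows from Theorem~\ref{thm:solides fano} together with \cite[Corollary~\ref*{BW1-cor:IHC pour solides RC ou CY}]{BW1}. Your aside speculating on the internal mechanism of the cited proposition (anticanonical or hyperplane sections through a real point) is not needed and is not quite what that proposition does, but since you only invoke it as a black box, this does not affect the correctness of the reduction.
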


Equivalently, in the notation of \cite[\textsection\ref*{BW1-subsec:elw}]{BW1}, every smooth Fano threefold $X$ over~$\R$ satisfies that $\elw_1(X)=1$ (see \cite[Corollary~\ref*{BW1-cor:what is elw1}]{BW1}). Corollary \ref{cor:elw solides fano} is nontrivial only when $X(\R)=\emptyset$, by  \cite[Proposition~\ref*{BW1-prop:parity of genus in irrelevant situations}]{BW1}.

\begin{example}
\label{ex:elw hypersurface quartique}
By Corollary~\ref{cor:elw solides fano}, any smooth quartic
threefold $X\subset \P^4_\R$
with empty real locus contains a geometrically irreducible curve of even geometric genus.
We do not know whether the minimal degree (resp.~genus) of such a curve
can be bounded independently of the quartic threefold.
(The existence of a bound on the degree
would be equivalent to the validity of Theorem~\ref{thm:solides fano} over arbitrary real closed fields; see~\textsection\ref{subsec:curves of bounded degree} and the discussion in~\textsection\ref{sss:discussion IHCR}).
We do not even know whether any such~$X$ contains a conic in~$\P^4_\R$.
For such an~$X$,
we recall that there is a canonical isomorphism $H^4_G(X(\C),\Z(2))=\Z\oplus\Z/2\Z$,
the first summand being generated by $\cl(L+\bar L)$,
where $L\subset X_\C$ denotes a line and~$\bar L$ its conjugate
(see \cite[Lemma~\ref*{BW1-lem:computation cohomology real quartic}]{BW1}).
It is the algebraicity of the $2$\nobreakdash-torsion
class which is difficult to prove.
\end{example}

\begin{example}
\label{ex:revetement double SB3}
If we denote by~$\SB^3_\R$ the nontrivial three-dimensional Severi--Brauer variety over~$\R$
(corresponding to the algebra of $2\times 2$ matrices over the quaternions)
and define the degree of
a subvariety $V \subseteq\SB^3_\R$
as the degree of~$V_\C$ in~$\P^3_\C\simeq (\SB^3_\R)_\C$,
then,
by Corollary~\ref{cor:elw solides fano},
any double cover $X\to \SB^3_\R$ ramified along a smooth quartic surface
must contain a geometrically irreducible curve of even geometric genus.
We note that~$\SB^3_\R$ does contain many surfaces of each even degree,
since~$\smash[b]{\sO_{\P^3_\C}(2)}$
descends
from~$\P^3_\C$ to a
line bundle on~$\SB^3_\R$.

For such an~$X$, there is a canonical isomorphism $H^4_G(X(\C),\Z(2))=\Z$.
Indeed, the push-forward map $H^4(X(\C),\Z(2)) \to H^4(\SB^3_\R(\C),\Z(2))=\Z$
is an isomorphism
(see \cite[Lemma~1.23]{clemensdoublesolids});
by the five lemma applied to the push-forward morphism
between the exact sequences \cite[(\ref*{BW1-eq:conoyau de la norme})]{BW1} associated with~$X$
and with~$\SB^3_\R$, we deduce that the push-forward map
$H^4_G(X(\C),\Z(2)) \to H^4_G(\SB^3_\R(\C),\Z(2))$ is an isomorphism.
On the other hand,
the natural map $\Br(\R) \to \Br(\SB^3_\R)$ vanishes
(its kernel contains the class of $\SB^3_\R$)
and is onto (see \cite[Proposition~2.1.4~(iv)]{ctsd94}), so that $\Br(\SB^3_\R)=0$ and hence (see \cite[II, Th\'eor\`eme~3.1]{grothbrauer})
\begin{align*}
H^2_G(\SB^3_\R(\C),\Q/\Z(1))=H^2_\et(\SB^3_\R,\Q/\Z(1))=\Pic(\SB^3_\R)\otimes_\Z \Q/\Z=\Q/\Z\rlap{\text{.}}
\end{align*}
It follows that $H^4_G(\SB^3_\R(\C),\Z(2))=\Z$, by
\cite[Proposition~\ref*{BW1-prop:poincare duality equivariant}, Remark~\ref*{BW1-rks:equivariant poincare}~(ii)]{BW1}.

A line of $\P^3_\C \simeq (\SB^3_\R)_\C$ which is bitangent to the ramification locus
breaks up, in~$X_\C$, into two components.
If $L \subset X_\C$ denotes one of the components
and~$\bar L$ its complex conjugate,
the isomorphism
$H^4_G(X(\C),\Z(2))=\Z$
maps $\cl(L+\bar L)$ to~$2$.
It is the algebraicity of $\frac{1}{2}\cl(L+\bar L)$,
or, equivalently, the existence of a curve in~$X$ whose push-forward to~$\SB^3_\R$
has odd degree,
which is difficult to prove.
\end{example}

\begin{rmk}
The consequences of Theorem~\ref{thm:solides fano}
spelled out in
Examples~\ref{ex:elw hypersurface quartique}
and~\ref{ex:revetement double SB3}
are, as far as we know, nontrivial existence statements.
For some families of real Fano threefolds, however,
Theorem~\ref{thm:solides fano} is vacuous
as
one can produce a curve of even genus out of the geometry.
For instance,
a double cover $X\to\P^3_\R$ ramified along a smooth quartic surface,
with $X(\R)=\emptyset$,
contains a smooth del Pezzo surface of degree~$2$ (pull
back a general plane in~$\P^3_\R$)
and therefore a geometrically irreducible curve of even geometric genus
by \cite[Corollary~\ref*{BW1-cor:surfaces}~(ii)]{BW1}.
Other examples of this phenomenon will be given in the proof of Proposition~\ref{prop:preuve Fano 23}.
\end{rmk}

\subsection{An infinitesimal criterion}
\label{subsec:an infinitesimal criterion}

Let us now turn to the proof of Theorem~\ref{thm:solides fano}.
We shall adapt the strategy of Voisin~\cite{voisinthreefolds} to our situation.
Before considering the particular case of Fano threefolds, let us work out a real analogue of
Voisin's infinitesimal criterion \cite[\textsection1, Proposition~1]{voisinthreefolds}
for the algebraicity of cohomology classes supported on an ample surface.

To formulate it, we need some notation.
Let~$X$ be a smooth, projective variety over~$\R$
and $S \subset X$ be a smooth surface.
We denote by~$F_\infty$
the $\C$\nobreakdash-linear involution
of $H^2(S(\C),\C)$
induced by the complex conjugation involution of~$S(\C)$.
As the latter is antiholomorphic,
the endomorphism~$F_\infty$
interchanges
the summands $H^{p,q}(S_{\C})$ and $H^{q,p}(S_{\C})$ of the Hodge decomposition.
In particular, it stabilises $H^{1,1}(S_{\C})$.
For any subspace $V \subset H^2(S(\C),\C)$ stable under~$F_\infty$,
we denote by $V^{F_\infty=-1}$ the eigenspace $\{v \in V;F_\infty(v)=-v\}$.
Finally,
following~\cite[\textsection1]{voisinthreefolds}, we shall consider,
for $\lambda \in H^{1,1}(S_\C)$,
the composition
\begin{equation}
\label{eq:variation infinitesimale}
H^0(S_\C,N_{S_\C/X_\C})\xrightarrow{\KS}
H^1(S_\C,T_{S_\C})\xrightarrow{\lambda}
H^2(S_\C,\sO_{S_\C})
\end{equation}
of the Kodaira--Spencer map,
which is by definition the boundary of the short exact sequence $0\to T_S \to T_X|_S \to N_{S/X} \to 0$,
of the cup product with~$\lambda \in H^1(S_\C,\Omega^1_{S_\C})$,
and of the natural map 
 $H^2(S_\C,T_{S_\C} \otimes_{\sO_{S_\C}} \Omega^1_{S_\C}) \to H^2(S_\C,\sO_{S_\C})$.

\begin{prop}
\label{prop:critere infinitesimal}
Let~$X$ be a smooth, projective and geometrically irreducible variety over~$\R$,
of dimension~$d$.
Let $S \subseteq X$ be a smooth surface
such that $H^1(S,N_{S/X})=0$.
If there exists $\lambda\in H^{1,1}(S_{\C})^{F_\infty=-1}$ such that the
composition~\eqref{eq:variation infinitesimale}
is surjective,
then the image of the Gysin map
$H^2_G(S(\C),\Z(1)) \to H^{2d-2}_G(X(\C),\Z(d-1))$
(see \cite[(\ref*{BW1-eq:pushforward equivariant complex})]{BW1} for its definition)
is contained in the image of the equivariant cycle class map
\begin{align*}
\cl:\CH_1(X) \to H^{2d-2}_G(X(\C),\Z(d-1))\rlap{.}
\end{align*}
\end{prop}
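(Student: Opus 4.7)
The plan is to adapt Voisin's infinitesimal criterion \cite[\S1, Proposition~1]{voisinthreefolds} from the complex setting to the equivariant one, with the eigenspace condition $\lambda \in H^{1,1}(S_\C)^{F_\infty=-1}$ encoding that $\lambda$ is the type of Hodge class that can be approximated by $G$\nobreakdash-equivariant Chern classes of real line bundles on~$S$, because the Tate twist in $\Z(1)$ reverses the sign of $F_\infty$.

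First I would set up the deformation framework. The hypothesis $H^1(S,N_{S/X})=0$ implies that the component of the Hilbert scheme of $X$ containing $[S]$ is smooth at $[S]$ with tangent space $H^0(S,N_{S/X})$, and is defined over~$\R$. I would pick a $G$\nobreakdash-stable, contractible complex-analytic neighborhood $U$ of $[S]$ in $\mathrm{Hilb}_X(\C)$ over which the universal family $\pi:\mathcal{S}\to U$ admits a $G$\nobreakdash-equivariant closed embedding $\iota:\mathcal{S}\hookrightarrow X\times U$. Parallel transport in $\RR^2\pi_*\Z(1)$ then gives a canonical $G$\nobreakdash-equivariant identification $H^2(S_t(\C),\Z(1))\simeq H^2(S(\C),\Z(1))$ for $t\in U$, while the Hodge filtration varies holomorphically in $t$.

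Second, I would carry out the Noether--Lefschetz density argument. For $\beta\in H^2(S(\C),\Z(1))$ let $\beta_t$ denote its parallel transport to $S_t$. The Hodge locus $\mathrm{NL}(\beta)=\{t\in U:\beta_t\in H^{1,1}(S_t)\}$ is cut out by the vanishing of the holomorphic $(0,2)$\nobreakdash-projection of $\beta_t$, and by Griffiths transversality its differential at $[S]$ in a direction $v\in H^0(S_\C,N_{S_\C/X_\C})$ is computed by the composition~\eqref{eq:variation infinitesimale} applied to the $(1,1)$\nobreakdash-component of~$\beta$. Applying the surjectivity hypothesis, after suitably perturbing $\beta$ by small real $(1,1)$\nobreakdash-classes on $S_\C$, one sees that these Hodge loci are smooth of the expected codimension $h^2(S,\sO_S)$ and, as $\beta$ varies, sweep out a neighborhood of $[S]$ in $U$. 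Over $\mathrm{NL}(\beta)$, the class $\beta_t$ is Hodge on $S_t$ hence algebraic by the Lefschetz $(1,1)$ theorem, producing through $\iota$ a family of algebraic $1$\nobreakdash-cycles on~$X$ whose class in $H^{2d-2}_G(X(\C),\Z(d-1))$ specialises to the Gysin image $i_*\beta$.

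The main obstacle, and the precise reason for the sign in $F_\infty(\lambda)=-\lambda$, is ensuring that every step of the construction is compatible with the $G$\nobreakdash-action so that the resulting cycles descend to~$X$ rather than merely to~$X_\C$: one needs $\mathrm{NL}(\beta)$ to be $G$\nobreakdash-stable and the algebraic representatives on each fiber to come in $F_\infty$\nobreakdash-conjugate pairs (or to be individually $G$\nobreakdash-invariant), and the $F_\infty=-1$ eigenspace condition is exactly what realises this after the Tate twist. Here the real Lefschetz $(1,1)$ theorem \cite[Proposition~\ref*{BW1-prop:real(1,1)}]{BW1} delivers the equivariant lift of the algebraic class on each fiber. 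A separate and easier verification, via the equivariant cohomology spectral sequence combined with the real Lefschetz $(1,1)$ theorem applied to $S$ itself, handles the summands of $H^2_G(S(\C),\Z(1))$ that do not lift from $H^2(S(\C),\Z(1))$ (those supported on the real locus $S(\R)$), completing the argument.
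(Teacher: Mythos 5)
Your proposal follows essentially the same strategy as the paper (adapting Voisin's infinitesimal criterion from~\cite{voisinthreefolds} to the equivariant setting via the Hilbert scheme, variation of Hodge structure, and the real Lefschetz~$(1,1)$ theorem), but there is a genuine gap at the central $G$\nobreakdash-equivariance step, and you only gesture at its resolution.

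The hypothesised $\lambda$ lies in $H^{1,1}(S_\C)^{F_\infty=-1}$, which is an \emph{anti}-invariant eigenspace. It is therefore not a priori a tangent direction compatible with a $G$\nobreakdash-equivariant deformation, and one cannot simply restrict the Noether--Lefschetz construction to $U(\R)$ and hope to use $\lambda$ there. You write that ``the $F_\infty=-1$ eigenspace condition is exactly what realises this after the Tate twist,'' which is the right instinct but does not constitute an argument. The paper's resolution is a small but essential trick: since $H^{1,1}(S_\C)^{F_\infty=-1}=V_\C$ with $V=H^{1,1}_\R(S_\C)^{F_\infty=-1}$, the decomposition $V_\C=V\oplus V(1)$ has both summands Zariski dense in~$V_\C$, and the surjectivity of~\eqref{eq:variation infinitesimale} is a Zariski open condition in~$\lambda$; one may therefore replace~$\lambda$ by an element of $H^{1,1}_\R(S_\C)(1)^G$, which \emph{is} $G$\nobreakdash-invariant. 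Only then does Voisin's submersivity at~$\lambda$ transfer to the $G$\nobreakdash-fixed loci, via the fact that the fixed locus of a smooth $G$\nobreakdash-action is a smooth submanifold whose tangent space is the $G$\nobreakdash-invariants. A direct implementation of your argument would get stuck before this replacement is made.

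Two smaller points. First, framing the density argument in terms of the Hodge loci $\mathrm{NL}(\beta)$ ``sweeping out a neighbourhood of~$[S]$ in~$U$'' is not what one actually needs: the relevant output is that the image of the equivariant period map contains a nonempty open subset of $H^2(S(\C),\R(1))^G$, hence (being a cone) a nonempty open cone, which by Voisin's Lemma~3 then generates the lattice $H^2_G(S(\C),\Z(1))/\mathrm{torsion}$. Second, you also need the local constancy of the sheaf $\sH^2_{G,\Z(1)}$ on~$U(\R)$ (proved in the paper via the Hochschild--Serre spectral sequence or an equivariant Ehresmann argument) to make sense of the map from $\bigoplus_{t\in U(\R)}\Hdg^2_G(\mathcal S_t,\Z(1))$ to $H^2_G(S(\C),\Z(1))/\mathrm{torsion}$; your sketch takes this for granted. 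The torsion part you describe can be handled more simply: torsion in $H^2_G(S(\C),\Z(1))$ is algebraic directly by the real Lefschetz $(1,1)$ theorem.
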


\begin{proof}
We closely follow~\cite[\textsection1]{voisinthreefolds}, keeping track of $G$-actions.

The $\C$\nobreakdash-linear involution~$F_\infty$ stabilises the subgroup $H^2(S(\C),\R)\subseteq H^2(S(\C),\C)$
and therefore defines an $\R$\nobreakdash-linear action of~$G$ on
$H^{1,1}_{\R}(S_{\C}):=H^{1,1}(S_{\C})\cap H^2(S(\C),\R)$.
Let $V=H^{1,1}_\R(S_\C)^{F_\infty=-1}$.
We have $V_\C=H^{1,1}(S_\C)^{F_\infty=-1}$
and $V(1)=H^{1,1}_\R(S_\C)(1)^G$.
As the surjectivity of~\eqref{eq:variation infinitesimale}
is a Zariski open condition on~$\lambda$
and as the two summands of the decomposition
$V_\C=V \oplus V(1)$ induced by the decomposition $\C=\R\oplus\R(1)$ are Zariski dense in~$V_\C$
(viewed as an algebraic variety over~$\C$),
there exists
$\lambda\in H^{1,1}_{\R}(S_{\C})(1)^G$ such that~\eqref{eq:variation infinitesimale}
is surjective.  We now fix such a~$\lambda$.

The hypothesis that $H^1(S,N_{S/X})=0$ implies that the Hilbert scheme~$\mathcal{H}$ of~$X$ is smooth at~$[S]$.
Let~$U$ be an analytic neighbourhood of~$[S_{\C}]$ in $\mathcal{H}(\C)$
and $\pi:\mathcal{S}_U \to U$ denote the universal family.
After shrinking~$U$,
we may assume that~$\pi$ is smooth and projective,
that the local system $\sH^2_\Z:=\RR^2\pi_*\Z/\torsion$ is trivial,
and that~$U$ is $G$\nobreakdash-invariant,
in which case~$\pi$ is a $G$\nobreakdash-equivariant morphism
between complex analytic varieties
and~$\sH^2_\Z$ is a $G$\nobreakdash-equivariant sheaf.
We set $U(\R):=U\cap \mathcal{H}(\R)=U^G$.
After further shrinking~$U$, we may assume that~$U(\R)$ is contractible.

A trivialisation of $\sH^2_{\Z}$ induces a biholomorphism $H^2\simeq U\times H^2(S(\C),\C)$, where $H^2$ is
the total space of the holomorphic vector bundle on~$U$ whose sheaf of sections is the locally free $\sO_U^{\mathrm{an}}$\nobreakdash-module
 $\sH^2_\Z \otimes_\Z \sO_U^{\mathrm{an}}$.
We fix such a trivialisation and let
\begin{align*}
\tau: H^2\to H^2(S(\C),\C)
\end{align*}
denote the second projection.
Let $H^{1,1}_{\R}\subseteq H^2_\R \subseteq H^2$
be the smooth real subbundles
whose fibres above $t\in U$ are $H^{1,1}_{\R}(\mathcal{S}_t) \subseteq H^2(\mathcal S_t,\R) \subseteq H^2(\mathcal S_t,\C)$.
(We stress that $\mathcal{S}_t=\pi^{-1}(t)$ denotes a complex analytic surface;
thus, if $t=[S_\C]$, then $\mathcal{S}_t=S(\C)$.)
Building on Green's
infinitesimal criterion for the density of Noether--Lefschetz loci
(see~\cite[Proposition~17.20]{voisinbook}),
Voisin shows in~\cite[p.~48]{voisinthreefolds},
under the assumption that~\eqref{eq:variation infinitesimale}
is surjective,
that~$\tau$ restricts to a map
\begin{align*}
\tau_{1,\R}(1):H^{1,1}_\R(1)\to H^2(S(\C),\R(1))
\end{align*}
which is submersive at~$\lambda$.

This map is equivariant with respect to the natural actions of~$G$ on its domain and target,
and~$\lambda$ is $G$\nobreakdash-invariant.
As the fixed locus~$M^G$ of a $G$\nobreakdash-action on a smooth manifold~$M$ is again a smooth manifold, with tangent space $T_x(M^G)=(T_xM)^G$ for $x\in M^G$
(see, \emph{e.g.}, \cite[Chapter~I, \textsection2.1]{Audin}),
the map
\begin{align*}
\tau_{1,\R}(1)^G:H^{1,1}_{\R}(1)^G\to H^2(S(\C),\R(1))^G
\end{align*}
induced by~$\tau_{1,\R}(1)$
is again submersive at~$\lambda$.
Here, the manifold $H^{1,1}_{\R}(1)^G$ is a smooth real vector bundle on~$U(\R)$;
its fiber over
$t\in U(\R)$ is $H^{1,1}_{\R}(\mathcal{S}_t)(1)^G$.

As this map is submersive at~$\lambda$,
its image
$\mathrm{Im}(\tau_{1,\R}(1)^G)$
contains a nonempty open subset of $H^2(S(\C),\R(1))^G$.
Being a cone, it then contains a nonempty open cone.
It follows that $H^2_G(S(\C),\Z(1))/\torsion$, which is a lattice in $H^2(S(\C),\R(1))^G$,
is generated by
$\mathrm{Im}(\tau_{1,\R}(1)^G)\cap\left(H^2_G(S(\C),\Z(1))/\torsion\right)$
(see \cite[Lemma 3]{voisinthreefolds}).

The sheaf~$\sH^2_{G,\Z(1)}$ on~$U(\R)$
associated
with the presheaf
$\Omega \mapsto H^2_G(\pi^{-1}(\Omega),\Z(1))$
is locally constant as it is the abutment of the Hochschild--Serre spectral sequence,
all of whose terms are locally constant sheaves
(see \cite[Lemme~2.1]{artinfaisceauxconstructibles});
hence, the natural map
$\sH^2_{G,\Z(1)} \to \sH^2_\R(1)^G$,
where $\sH^2_\R=\sH^2_\Z\otimes_\Z\R$,
is locally constant.
(Alternatively, this would also be a consequence of the
$G$\nobreakdash-equivariant variant of Ehresmann's theorem \cite[Lemma 4]{Dimca}.)
As~$U(\R)$ is contractible,
it follows that~$\tau_{1,\R}(1)^G$ induces a map
$$\bigoplus_{t \in U(\R)} \Hdg^2_G(\mathcal S_t,\Z(1)) \to H^2_G(S(\C),\Z(1))/\torsion\rlap{\text{,}}$$
which we have shown to be surjective,
and that it fits into a commutative diagram
\begin{align*}
\xymatrix{
\displaystyle\smash[b]{\bigoplus_{t\in U(\R)}}\CH_1(\mathcal S_t) \ar[r]^(.41)\cl \ar@<.5em>[d] &
\displaystyle\smash[b]{\bigoplus_{t\in U(\R)}}\Hdg^2_G(\mathcal S_t,\Z(1)) \ar@{->>}[r] \ar@<.5em>[d] &
H^2_G(S(\C),\Z(1))/\torsion \ar[d] \\
*!<-.5em,0ex>\entrybox{\CH_1(X)} \ar[r]^(.41)\cl &
*!<-.5em,0ex>\entrybox{H^{2d-2}_G(X(\C),\Z(d-1))} \ar[r] & H^{2d-2}_G(X(\C),\Z(d-1))/\torsion
}
\end{align*}
whose vertical arrows are the Gysin maps.
The top horizontal map on the left is surjective
by the real Lefschetz~$(1,1)$ theorem
(see \cite[Proposition~\ref*{BW1-prop:real(1,1)}]{BW1}).
This diagram shows that $H^2_G(S(\C),\Z(1))$ is generated by torsion classes and by classes
whose image in $H^{2d-2}_G(X(\C),\Z(d-1))$
is algebraic.
As torsion classes in $H^2_G(S(\C),\Z(1))$ are themselves algebraic
(\emph{loc.\ cit.}),
this completes the proof.
\end{proof}

\subsection{The anticanonical linear system}
\label{par:systeme anticanonique}

We do not know how to verify our criterion over~$\R$ in the same generality as Voisin does over~$\C$.
However, for Fano threefolds without real points, it will be possible,
in most cases, to apply it with $S\in|{-}K_X|$,
as~$S$ is then a $K3$ surface, which makes the analysis tractable.

In~\textsection\ref{par:systeme anticanonique}, we establish
Theorem~\ref{thm:solides fano}
under the assumption that~$-K_X$ is very ample.
We proceed in two steps.
First,
we apply
Proposition~\ref{prop:critere infinitesimal} to a
smooth anticanonical divisor
and deduce, in Proposition~\ref{prop:preuve Fano 1},
that Theorem~\ref{thm:solides fano}
holds as soon as~$X$ contains a good enough smooth anticanonical divisor.
We then check,
in Proposition~\ref{prop:good K3},
that
when $X(\R)=\emptyset$,
any smooth anticanonical divisor is good enough in this sense.

For smooth $S \subset X$,
we denote by $H^2_\van(S(\C),\C)\subset H^2(S(\C),\C)$
the orthogonal complement
of the image of $H^2(X(\C),\C)$
with respect to the cup product pairing.
The subspace $H_\van^{1,1}(S_\C) = H^{1,1}(S_\C) \cap H^2_\van(S(\C),\C)$
is stable under~$F_\infty$.

\begin{prop}
\label{prop:preuve Fano 1}
Let~$X$ be a smooth Fano threefold over~$\R$, with~$-K_X$ very ample.
Let $S \in |{-}K_X|$ be a smooth anticanonical divisor.
If $H_{\van}^{1,1}(S_{\C})^{F_\infty=-1}\neq 0$,
then the image of the Gysin map $H^2_G(S(\C),\Z(1)) \to H^4_G(X(\C),\Z(2))$
is contained in the image of the equivariant cycle class map
$\cl:\CH^2(X) \to H^4_G(X(\C),\Z(2))$.
If, in addition,
the Gysin map $H_1(S(\R),\Z/2\Z)\to H_1(X(\R),\Z/2\Z)$
is surjective,
then~$X$ satisfies the real integral Hodge conjecture.
\end{prop}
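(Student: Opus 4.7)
The plan is in two parts.

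First, for the statement about the Gysin image, I would apply Proposition~\ref{prop:critere infinitesimal}. By adjunction $K_S=(K_X+S)|_S=0$, so the smooth anticanonical divisor~$S$ is a K3 surface, connected by the Lefschetz hyperplane theorem since~$S$ is ample. The normal bundle $N_{S/X}=-K_X|_S$ is ample on~$S$, so Kodaira vanishing on the K3 surface~$S$ gives $H^1(S,N_{S/X})=0$. Because $H^2(S_\C,\sO_{S_\C})\cong\C$ is one-dimensional, the composition~\eqref{eq:variation infinitesimale} is surjective as soon as it is non-zero. A computation going back to Griffiths and exploited by Voisin, valid because $-K_X$ is very ample, shows that this composition is non-zero for every non-zero $\lambda\in H^{1,1}_\van(S_\C)$ (see~\cite[\S2]{voisinthreefolds}). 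The hypothesis yields such a~$\lambda$ in the eigenspace $H^{1,1}_\van(S_\C)^{F_\infty=-1}$, and Proposition~\ref{prop:critere infinitesimal} then applies.

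Next, for the real integral Hodge conjecture, I would take $\alpha\in\Hdg^4_G(X(\C),\Z(2))_0$ and aim to show $\alpha\in\cl(\CH^2(X))$. By the first claim, it suffices to reduce~$\alpha$, modulo an algebraic cycle, to the image of the Gysin map $i_*\colon H^2_G(S(\C),\Z(1))\to H^4_G(X(\C),\Z(2))$. The obstruction is $j^*\alpha \in H^4_G(U(\C),\Z(2))$ with $U:=X\setminus S$ and $j\colon U\hookrightarrow X$. Since~$S$ is very ample, $U(\C)$ is affine of complex dimension~$3$, so $H^k(U(\C),\Z)=0$ for $k\geq 4$. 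The Hochschild--Serre spectral sequence $E_2^{p,q}=H^p(G,H^q(U(\C),\Z))\Rightarrow H^{p+q}_G(U(\C),\Z(2))$ then forces $H^4_G(U(\C),\Z(2))$ to be entirely $2$-torsion, with non-zero contributions coming from $G$-cohomology of $H^q(U(\C),\Z)$ for $q\leq 3$ and detected by the real locus.

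The topological constraint in $(\cdot)_0$ says that the component of $\alpha|_{X(\R)}$ in $H^2(X(\R),\Z/2\Z)\cong H_1(X(\R),\Z/2\Z)$ (via the canonical decomposition of $H^4_G(X(\R),\Z(2))$ combined with Poincaré duality on the compact $3$-manifold~$X(\R)$) is algebraic. By the surjectivity hypothesis $H_1(S(\R),\Z/2\Z)\twoheadrightarrow H_1(X(\R),\Z/2\Z)$, this class lifts to $H_1(S(\R),\Z/2\Z)$, and the canonical decomposition on~$S$ produces a class $\beta\in H^2_G(S(\C),\Z(1))$ whose Gysin push-forward~$i_*\beta$ matches~$\alpha$ on the real locus up to a residual $H^0(X(\R),\Z/2\Z)$ component handled by elementary algebraic classes. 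Replacing~$\alpha$ by $\alpha-i_*\beta$, we reduce to $\alpha|_{X(\R)}=0$; the vanishing analysis above then yields $j^*\alpha=0$, so $\alpha\in i_*(H^2_G(S(\C),\Z(1)))$ is algebraic by the first claim.

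The main obstacle I expect is the second part: the careful accounting of which pieces of the Hochschild--Serre spectral sequence for~$U(\C)$ contribute to $H^4_G(U(\C),\Z(2))$, and the verification that the surjectivity hypothesis on real loci really kills every component of this cokernel via the canonical decomposition of equivariant cohomology. Part~1, in contrast, reduces to invoking Voisin's infinitesimal computation on K3 surfaces in the eigenspace singled out by the complex conjugation.
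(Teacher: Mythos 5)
Your proof of the first assertion has a genuine gap: the claim that the composition~\eqref{eq:variation infinitesimale} is non-zero for \emph{every} non-zero $\lambda\in H^{1,1}_\van(S_\C)$ is false. Since $H^2(S,\sO_S)$ is $1$-dimensional, the vanishing of~\eqref{eq:variation infinitesimale} for a given $\lambda$ is equivalent to $\lambda$ being orthogonal (for the Serre duality pairing $H^1(S,T_S)\times H^{1,1}(S_\C)\to H^2(S,\sO_S)$) to the image of the Kodaira--Spencer map. That image has dimension at most $h^0(S,N_{S/X})=h^0(S,-K_X|_S)$, which is typically far smaller than $\dim H^{1,1}_\van(S_\C)$. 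Concretely, for a smooth quartic threefold $X\subset\P^4_\R$ with $S$ a quartic K3 hyperplane section, $h^0(S,\sO_S(1))=4$ while $\dim H^{1,1}_\van(S_\C)=19$, so the annihilator of the Kodaira--Spencer image meets $H^{1,1}_\van(S_\C)$ in a subspace of dimension at least $15$. What Voisin's argument provides is that the composition is non-zero for \emph{some} (generic) $\lambda$; but the point of the proposition is that one must find such a $\lambda$ inside the prescribed eigenspace $H^{1,1}(S_\C)^{F_\infty=-1}$, which need not contain any generic element. This is precisely why the paper does not simply cite Voisin: it argues by contradiction, uses Griffiths transversality to conclude that $\left.H^{1,1}_{\R,\van}(1)^G\right|_{B(\R)^0}$ would otherwise be a flat subbundle, extends this flatness via the parallel-transport Lemma~\ref{lem:parallel transport}, and finally derives a contradiction from the absolute irreducibility of the monodromy action on vanishing cohomology. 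None of this machinery can be bypassed by the one-line appeal you make.

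For the second assertion, your route is genuinely different from the paper's, which simply invokes the weak-Lefschetz-type result \cite[Proposition~\ref*{BW1-prop:weak lefschetz surjectivity}, Remarks~\ref*{BW1-rk:topological condition 1-cycles and ak}~(i)]{BW1} to identify the image of the Gysin map with $H^4_G(X(\C),\Z(2))_0$. Your direct analysis of the localisation sequence for $U=X\setminus S$ and the Hochschild--Serre spectral sequence correctly shows that $H^4_G(U(\C),\Z(2))$ is $2$-torsion, but the assertion that this torsion is ``detected by the real locus'' (i.e.\ that the restriction $H^4_G(U(\C),\Z(2))\to H^4_G(U(\R),\Z(2))$ is injective, or that suitable relative groups vanish) is not justified, and this is exactly the non-trivial content of the cited BW1 result. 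Likewise, the accounting of the canonical decomposition of $H^4_G(X(\R),\Z(2))$ and of the ``residual $H^0(X(\R),\Z/2\Z)$ component'' is too vague to check. Since the paper's citation already does this work cleanly, rewriting it this way offers no gain and leaves real gaps.
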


\begin{proof}
According to \cite[Proposition~\ref*{BW1-prop:weak lefschetz surjectivity}, Remarks~\ref*{BW1-rk:topological condition 1-cycles and ak}~(i)]{BW1},
the image of the Gysin map $H^2_G(S(\C),\Z(1)) \to H^4_G(X(\C),\Z(2))$
coincides with $H^4_G(X(\C),\Z(2))_0$
as soon as
the Gysin map
$H_1(S(\R),\Z/2\Z)\to H_1(X(\R),\Z/2\Z)$ is surjective.
In view of this remark, the second assertion results from the first one.

To prove the first assertion,
we consider
the family $\pi:\mathcal{S}\to B$
of smooth anticanonical sections of~$X_{\C}$.
Here~$B$ is a Zariski open subset of $|{-}K_{X_\C}|$,
the map~$\pi$ is smooth and projective, and its fibres are~$K3$ surfaces.
To conform with
the notation used in the proof of Proposition~\ref{prop:critere infinitesimal},
we view~$\mathcal{S}$ and~$B$ as complex analytic varieties endowed with compatible antiholomorphic actions of~$G$
and we set $B(\R)=B^G$
and $\mathcal S_b(\R)={\mathcal S_b}^G$
for $b \in B(\R)$.
We choose once and for all
a contractible open neighbourhood
$B(\R)^0\subset B(\R)$
of the point of~$B(\R)$ defined by~$S$; by Ehresmann's theorem,
the image of the Gysin map
$H_1(\mathcal S_b(\R),\Z/2\Z)\to H_1(X(\R),\Z/2\Z)$
is independent of~$b\in B(\R)^0$.
To establish Proposition~\ref{prop:preuve Fano 1}, it now suffices
to check that
Proposition~\ref{prop:critere infinitesimal} is applicable
to $\mathcal S_b \in |{-}K_X|$ for at least one $b \in B(\R)^0$.
We have
$H^1(\mathcal S_b,N_{\mathcal S_b/X_\C})=H^1(\mathcal S_b,\sO_{X_\C}(-K_{X_\C})|_{\mathcal S_b})=0$
for all $b \in B$ anyway,
by the Kodaira vanishing theorem.
It therefore suffices to prove the surjectivity of the composition
\begin{equation}
\label{eq:variation infinitesimale Sb}
H^0(\mathcal S_b,N_{\mathcal S_b/X_\C})\xrightarrow{\KS}
H^1(\mathcal S_b,T_{\mathcal S_b})\xrightarrow{\lambda}
H^2(\mathcal S_b,\sO_{\mathcal S_b})\rlap{\text{,}}
\end{equation}
for at least one $b \in B(\R)^0$
and one $\lambda\in H^{1,1}(\mathcal S_b)^{F_\infty=-1}$.
Let us argue by contradiction and suppose that
the map~\eqref{eq:variation infinitesimale Sb}
fails to be surjective
for every such~$b$ and every such~$\lambda$.
The vector space $H^2(\mathcal S_b,\sO_{\mathcal S_b})$
has dimension~$1$
since~$\mathcal S_b$ is a~$K3$ surface,
hence~\eqref{eq:variation infinitesimale Sb} in fact vanishes for all $b \in B(\R)^0$
and all~$\lambda\in H^{1,1}(\mathcal S_b)^{F_\infty=-1}$.

As the map $\pi:\mathcal{S}\to B$ is $G$\nobreakdash-equivariant,
so is the local system $\sH^2_{\R}=\RR^2\pi_*\R$.
The sub-local system $\sH^2_{\R,\van}\subset  \sH^2_\R$ of vanishing cohomology,
defined as the orthogonal complement of the image of $H^2(X(\C),\R)$
with respect to the cup product pairing, is also $G$\nobreakdash-equivariant.
We denote by
$H^2_{\van}$
the holomorphic vector bundle on~$B$ whose sheaf of sections is the locally
free $\sO_B^{\mathrm{an}}$\nobreakdash-module $\sH^2_{\R,\van} \otimes_\R \sO_B^{\mathrm{an}}$.
It comes endowed with the Gauss--Manin connection and carries a variation of Hodge structure.

For $b \in B$, let
$H^{1,1}_{\R,\van}(\mathcal S_b)=H^{1,1}(\mathcal S_b) \cap H^2_\van(\mathcal S_b,\R)$.
We consider the smooth real subbundles
$H^{1,1}_{\R,\van}(1)^G\subseteq H^2_{\R,\van}(1)^G\subseteq \left.H^2_{\van}\right|_{B(\R)}$
whose fibres above $b \in B(\R)$ are
$H_{\R,\van}^{1,1}(\mathcal{S}_{b})(1)^G\subseteq H^2_{\van}(\mathcal{S}_{b},\R(1))^G\subseteq H^2_{\van}(\mathcal{S}_b,\C)$.
The Gauss--Manin connection on~$H^2_\van$
induces a connection on $H^2_{\R,\van}(1)^G$
since the action of~$G$ on $\left.H^2_{\van}\right|_{B(\R)}$ comes from an action on the local system $\left.\sH^2_{\R,\van}\right|_{B(\R)}$.

Let $b\in B(\R)^0$.
The derivative of a local section~$s$ of $H^{1,1}_{\R,\van}(1)^G$ around~$b$
along a vector field on~$B(\R)^0$, with respect to this connection,
is a local section of $H^2_{\R,\van}(1)^G$
whose projection to $H^{0,2}(\mathcal{S}_b)$ is controlled by the
map~\eqref{eq:variation infinitesimale Sb} for $\lambda=s(b)$, by a theorem of Griffiths
(see \cite[Th\'eor\`eme 17.7]{voisinbook}), hence vanishes.  It follows
that the fibre above~$b$ of this derivative lies in the subgroup
\begin{align*}
H^2_{\van}(\mathcal{S}_{b},\R(1))^G\cap\big(H^{2,0}(\mathcal{S}_b)\oplus H^{1,1}(\mathcal{S}_b)\big)=H_{\R,\van}^{1,1}(\mathcal{S}_{b})(1)^G
\end{align*}
of $H^2_{\van}(\mathcal{S}_b,\C)$,
where the equality comes from the fact that $F_\infty$
interchanges $H^{2,0}(\mathcal{S}_b)$ and $H^{0,2}(\mathcal{S}_b)$
and stabilises $H^{1,1}(\mathcal{S}_b)$.
We have thus proved that
$\left.H^{1,1}_{\R,\van}(1)^G\right|_{B(\R)^0}$
is a flat subbundle of $\left.H^2_{\van}\right|_{B(\R)^0}$.

Let us now consider the effect on
$\left.H^{1,1}_{\R,\van}(1)^G\right|_{B(\R)^0}$,
which is a vector bundle on~$B(\R)^0$,
of parallel transport
from~$B(\R)^0$ to an arbitrary point of~$B$
with respect to the Gauss--Manin connection on the holomorphic vector bundle~$H^2$ on~$B$.

\begin{lem}
\label{lem:parallel transport}
Let $b \in B(\R)^0$
and $\alpha\in H_{\R,\van}^{1,1}(\mathcal{S}_b)(1)^G$.
For any $x \in B$ and any path~$\gamma$ from~$b$ to~$x$, the parallel transport
of~$\alpha$ along~$\gamma$ belongs to $H^{1,1}(\mathcal S_x) \subset H^2(\mathcal S_x,\C)$.
\end{lem}

\begin{proof}
Let $\rho:(\widetilde{B},\widetilde{b})\to (B,b)$
denote the universal cover of the pointed space~$(B,b)$ and let $\widetilde{B}(\R)=\rho^{-1}(B(\R))$.
Let $Z\subseteq\widetilde{B}$ denote the set of $\tilde{x} \in \widetilde{B}$
such that the parallel transport of~$\alpha$ from~$\tilde{b}$ to~$\tilde{x}$ remains of type~$(1,1)$.
This is
a well-defined locus since~$\widetilde{B}$ is simply connected.
We need only check that $Z=\widetilde{B}$.

For any $\tilde{x} \in \widetilde{B}$, the parallel transport
$\alpha(\tilde{x})\in H^2(\mathcal S_{\rho(\tilde{x})},\C)$ of~$\alpha$
from~$\tilde{b}$ to~$\tilde{x}$ belongs to the subgroup $H^2(\mathcal S_{\rho(\tilde{x})},\R(1))$,
since $\alpha\in H^2(\mathcal S_b,\R(1))$
and~$H^2$ is the complexification of the flat smooth real vector bundle on~$B$
associated with the local system~$\sH^2_\R$.
It follows that~$\alpha(\tilde{x})$ is of type~$(1,1)$ if and only if $\alpha(\tilde{x})\in F^1H^2(\mathcal S_{\rho(\tilde{x})},\C)$, where~$F$ denotes the Hodge filtration.
As $F^1 H^2 \subset H^2$ is a holomorphic subbundle,
we deduce that~$Z$ is a closed complex analytic subvariety of~$\widetilde{B}$.

On the other hand, as
$\left.H^{1,1}_{\R,\van}(1)^G\right|_{B(\R)^0}$
is a flat subbundle of $\left.H^2_{\van}\right|_{B(\R)^0}$,
the connected component
$\widetilde{B}(\R)^0$
of~$\tilde{b}$ in~$\rho^{-1}(B(\R)^0)$
is contained in~$Z$.
Let $W \subseteq Z$ be a (reduced) closed complex analytic subvariety of minimal dimension
containing~$\widetilde{B}(\R)^0$.
For any $u \in \widetilde{B}(\R)^0$, the inclusion of real vector spaces
\begin{align*}
T_u\widetilde{B}(\R) \subseteq T_u W \subseteq T_u \widetilde{B}=T_u\widetilde{B}(\R)\otimes_\R\C
\end{align*}
shows that $T_uW=T_u\widetilde{B}$,
since $T_uW$ is a complex vector space.
As a consequence, we have either $\dim(W)=\dim(\widetilde{B})$ or
$\widetilde{B}(\R)^0 \subseteq \Sing(W)$.
As~$\dim(W)$ is minimal and $\dim(\Sing(W))<\dim(W)$, it follows
that $\dim(W)=\dim(\widetilde{B})$, so that $W=\widetilde{B}$, and hence $Z=\widetilde{B}$.
\end{proof}

Let~$E$ be the smallest holomorphic flat subbundle of~$H^2_{\van}$
such that
\begin{align*}
\left.H^{1,1}_{\R,\van}(1)^G\right|_{B(\R)^0} \subseteq \left.E\right|_{B(\R)^0}\rlap{\text{.}}
\end{align*}
It follows from
Lemma~\ref{lem:parallel transport}
that~$E$
is purely of type~$(1,1)$ at every point of~$B$.
In particular $E\neq H^2_\van$.
On the other hand, the given
smooth anticanonical section~$S$ of~$X$
satisfies $H_{\van}^{1,1}(S_{\C})^{F_\infty=-1}\neq 0$
by assumption and it defines a point of~$B(\R)^0$,
so that $E\neq 0$
(note
that $H_{\van}^{1,1}(S_{\C})^{F_\infty=-1}= H^{1,1}_{\R,\van}(S_\C)(1)^G \otimes_\R \C$).
This contradicts the absolute irreducibility of the action of monodromy on vanishing cohomology
(see~\cite[Theorem~7.3.2]{Lamotke})
and concludes the proof of Proposition~\ref{prop:preuve Fano 1}.
\end{proof}

We do not know whether an anticanonical divisor satisfying all the assumptions
of Proposition~\ref{prop:preuve Fano 1} always exists.
The next statement provides a positive answer to this question
when $X(\R)=\emptyset$:
in this case, any smooth anticanonical divisor will do.

\begin{prop}
\label{prop:good K3}
Let~$X$ be a smooth Fano threefold over~$\R$.
If $S\in |{-}K_X|$ is smooth and satisfies $S(\R)=\emptyset$,
then $H_{\van}^{1,1}(S_{\C})^{F_\infty=-1}\neq 0$.
\end{prop}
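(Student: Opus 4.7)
The plan is to compare $\dim_\C H^{1,1}(S_\C)^{F_\infty=-1}$, which I will compute topologically via the Lefschetz fixed point formula, with an upper bound on the dimension of the image of the restriction $H^2(X(\C),\C)\to H^2(S(\C),\C)$ inside that eigenspace, and to conclude by a dimension count that the vanishing part is nonzero. Throughout, adjunction combined with Kodaira vanishing on the Fano threefold $X$ identifies $S$ as a $K3$ surface, so that $h^{2,0}(S_\C)=h^{0,2}(S_\C)=1$ and $h^{1,1}(S_\C)=20$.

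I would first apply the Lefschetz fixed point formula to the antiholomorphic involution of $S(\C)$, whose fixed locus $S(\R)$ is empty. Since $\dim_\C S=2$ is even, $F_\infty$ preserves the orientation of $S(\C)$ and hence acts trivially on $H^0$ and $H^4$, which gives
\[
0=\chi(S(\R))=2+\mathrm{tr}\bigl(F_\infty\mid H^2(S(\C),\C)\bigr).
\]
Since $F_\infty$ swaps the two one-dimensional spaces $H^{2,0}(S_\C)$ and $H^{0,2}(S_\C)$ without fixed vector, its trace on their sum vanishes, and so the trace on $H^{1,1}(S_\C)$ is $-2$. Combined with $\dim_\C H^{1,1}(S_\C)=20$, this yields $\dim_\C H^{1,1}(S_\C)^{F_\infty=-1}=11$.

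Next, the Lefschetz hyperplane theorem applied to the ample divisor $S\subset X$ shows that the restriction $H^2(X(\C),\C)\hookrightarrow H^2(S(\C),\C)$ is injective, and its image lies in $H^{1,1}(S_\C)$ because $H^2(X(\C),\C)=H^{1,1}(X_\C)$ (Kodaira vanishing gives $H^2(X,\sO_X)=0$ on the Fano threefold $X$). The resulting orthogonal decomposition
\[
H^{1,1}(S_\C)=\mathrm{Im}\bigl(H^2(X(\C),\C)\bigr)\oplus H^{1,1}_\van(S_\C)
\]
is $F_\infty$-stable. Since $-K_X$ is defined over $\R$, its first Chern class is $F_\infty$-invariant and thus contributes to the $+1$ eigenspace of the image, so that $\dim_\C\mathrm{Im}(H^2(X(\C),\C))^{F_\infty=-1}\leq b_2(X)-1$, whence
\[
\dim_\C H^{1,1}_\van(S_\C)^{F_\infty=-1}\geq 12-b_2(X).
\]

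The main obstacle is bounding $b_2(X)$. I would invoke the Mori--Mukai classification of smooth Fano threefolds, which gives $b_2(X)\leq 10$ and hence $\dim_\C H^{1,1}_\van(S_\C)^{F_\infty=-1}\geq 2>0$, as required. Since Fano threefolds of the form $\SB^1_\R\times Y$ (with $Y$ a real del Pezzo surface of degree~$1$ and $Y(\R)\neq\varnothing$) provide examples with $X(\R)=\varnothing$ and $b_2(X)=10$ falling within the hypotheses of the proposition, the inequality $b_2(X)\leq 10$ is essentially sharp in this context and some reliance on the classification of Fano threefolds appears unavoidable.
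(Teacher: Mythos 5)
Your proof takes essentially the same route as the paper's: compute $\dim_\C H^{1,1}(S_\C)^{F_\infty=-1}=11$ via the Lefschetz fixed-point formula, then bound the codimension of the vanishing part by $\rho(X_\C)\leq 10$ using the classification of Fano threefolds. The only point where you diverge is in the claim that $c_1(-K_X)|_{S_\C}$ lies in the $F_\infty=+1$ eigenspace of $H^{1,1}(S_\C)$: this is incorrect. With $F_\infty$ defined as the $\C$-linear extension of the pullback $\sigma^*$ on $H^2(S(\C),\Z)$ (no Tate twist), the cycle class of a line bundle defined over~$\R$ lies in the $F_\infty=-1$ eigenspace, because the cycle class map naturally lands in $H^2(S(\C),\Z(1))$ and untwisting introduces the sign $-1$. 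A quick check: for $S=\P^1_\R\times\P^1_\R$, the same Lefschetz computation as yours gives $\Tr(F_\infty\mid H^2)=-2$ with $\dim H^2=2$, forcing $F_\infty=-\mathrm{Id}$ on $H^2$, so both $c_1(p_i^*\sO(1))$ are anti-invariant. Your claimed refinement $\dim_\C\mathrm{Im}\bigl(H^2(X(\C),\C)\bigr)^{F_\infty=-1}\leq b_2(X)-1$ is therefore unjustified (indeed false when $\Pic(X_\C)$ is generated by real line bundles), and the best available bound is the trivial $\leq b_2(X)$, giving $\dim_\C H^{1,1}_\van(S_\C)^{F_\infty=-1}\geq 11-b_2(X)\geq 1$. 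This is exactly the paper's count; the conclusion of your proof stands, but the claim that the bound $b_2(X)\leq 10$ could be relaxed to $\leq 11$ does not.
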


\begin{proof}
Let us apply the Lefschetz fixed-point theorem to
the complex conjugation involution of~$S(\C)$,
which has no fixed point
and
acts trivially
on $H^0(S(\C),\C)=\C$ and on $H^4(S(\C),\C)=\C$.
We have $H^1(S(\C),\C)=0$
and $H^3(S(\C),\C)=0$
as~$S$ is a~$K3$ surface.
Hence $\Tr(F_\infty\mid H^2(S(\C),\C))=-2$.
As~$F_\infty$ interchanges $H^{2,0}(S_{\C})$ and~$H^{0,2}(S_{\C})$,
it follows that
$\Tr(F_\infty\mid H^{1,1}(S_\C))=-2$.
As $\dim H^{1,1}(S_\C)=20$,
we deduce that $\dim H^{1,1}(S_\C)^{F_\infty=-1}=11$.
On the other hand, the Picard rank of~$X_{\C}$ is at most~$10$
(see \cite[Theorem 2]{morimukai81classification}, \cite[Theorem~1.2]{morimukai83onfano};
this also follows from \cite[Theorem~1.1]{casagrande},
see \cite[Theorem~1.3]{dellanoce}).
Therefore $H_{\van}^{1,1}(S_\C)$
has codimension at most~$10$ in~$H^{1,1}(S_{\C})$ and intersects $H^{1,1}(S_{\C})^{F_\infty=-1}$ nontrivially.
\end{proof}

\begin{rmk}
The statement of Proposition~\ref{prop:good K3} would fail
without the assumption that $S(\R)=\emptyset$. For instance,
if~$X$ has Picard rank~$1$, then $H_{\van}^{1,1}(S_{\C})^{F_\infty=-1}=0$
if and only if the eigenvalue~$-1$ for the action of~$F_{\infty}$ on $H^2(S(\C),\Q)$ has multiplicity~$2$. As can be seen from the classification of real~$K3$ surfaces \cite[Ch.~VIII, §3]{silhol}, this happens exactly when the real locus of~$S$ is a union of~$10$ spheres.
\end{rmk}

Putting together Proposition~\ref{prop:preuve Fano 1}
and
Proposition~\ref{prop:good K3},
we have now established
Theorem~\ref{thm:solides fano}
under the assumption that~$-K_X$ is very ample.

\subsection{Classification of Fano threefolds}
\label{subsec:classification of fano}

We finish the proof of Theorem~\ref{thm:solides fano} by dealing with the case when $-K_X$ is not very ample. To do so, we need to rely, to some extent, on classification results for complex Fano threefolds. This classification roughly consists of two distinct steps: first, one studies the anticanonical linear system and classifies the (few) Fano threefolds for which it is badly behaved; then, one classifies the (many) Fano threefolds for which it is well behaved.
We shall only need the first of these two steps,
which is mainly due to Iskovskikh~\cite{iskovskikh} and whose statement is contained in
\cite[Theorem~2.1.16 and Proposition~2.4.1]{fanorusse}.

\begin{prop}[Iskovskikh]
\label{prop:classification fano}
If~$X$ is a smooth complex Fano threefold, exactly one of the following holds:
\begin{enumerate}
\item The anticanonical bundle $-K_X$ is very ample.
\item The linear system $|{-}K_X|$ is base point free and realises~$X$ as a finite double cover, the possible cases being listed in \cite[Theorem 2.1.16]{fanorusse}.
\item The base locus of $|{-}K_X|$ is either a point or a smooth rational curve.
\end{enumerate}
\end{prop}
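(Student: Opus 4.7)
The plan is simply to reduce the statement to the cited results of Iskovskikh as formulated in~\cite[Theorem~2.1.16 and Proposition~2.4.1]{fanorusse}, without reproving them. Since this is a classical statement in the theory of Fano threefolds, I would present it essentially as a quotation, but with enough structure to show that the trichotomy is exhaustive and that the three cases are mutually exclusive.

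First, I would separate the statement according to whether $|{-}K_X|$ has base points or not. If the base locus of $|{-}K_X|$ is nonempty, then a careful analysis of a general anticanonical section, due to Shokurov and refined by Reid (the ``general elephant'' theorem), combined with the classification results in~\cite[Proposition~2.4.1]{fanorusse}, shows that this base locus is either a single point or a smooth rational curve, placing us in case~(3). In particular this is incompatible with $|{-}K_X|$ being base point free, so cases~(1) and~(2) are excluded in this situation.

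Next, assume that $|{-}K_X|$ is base point free, so that it defines a morphism $\phi=\phi_{|-K_X|}:X\to\P^{g+1}$ where $g$ is the genus of~$X$. Following \cite[Theorem~2.1.16]{fanorusse}, the alternative is governed by the degree of~$\phi$ onto its image: if $\deg(\phi)=1$ then~$\phi$ is actually a closed embedding (so that $-K_X$ is very ample and we are in case~(1)), and otherwise $\deg(\phi)=2$ and the image $\phi(X)$ is a variety of minimal degree, so that~$\phi$ realises~$X$ as a finite double cover as listed explicitly in \emph{loc.\ cit.}, which is case~(2). The mutual exclusion of~(1) and~(2) is immediate from the constraint on~$\deg(\phi)$.

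The only real subtlety, which is where the bulk of~\cite{iskovskikh} is spent, is to exclude intermediate pathologies: namely, that $\phi$ could contract a divisor, or fail to be finite onto a variety of minimal degree. For the purposes of this article, I would not reprove any of this; I would simply invoke \cite[Theorem~2.1.16 and Proposition~2.4.1]{fanorusse} as a black box and note that the three alternatives listed there translate directly into~(1),~(2),~(3) above. The main obstacle to making the proof self-contained would be re-establishing Shokurov's general elephant theorem and Iskovskikh's analysis of the hyperelliptic case, both of which are far beyond the scope of what we need here.
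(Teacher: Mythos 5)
Your proposal matches the paper exactly: the paper gives no proof of this proposition either, treating it as a direct quotation of \cite[Theorem~2.1.16 and Proposition~2.4.1]{fanorusse}, which is precisely the "black box" reduction you describe. Your extra paragraph unpacking how the trichotomy assembles from those two references (base locus nonempty vs.\ base point free, then the degree of the anticanonical morphism) is accurate and consistent with the intent of the citation.
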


In view of \cite[Corollary~\ref*{BW1-cor:IHC pour solides RC ou CY}]{BW1},
the following proposition completes the proof of Theorem~\ref{thm:solides fano}.
We recall that the notation~$\elw_1(X)$ was introduced in~\cite[\textsection\ref*{BW1-subsec:elw}]{BW1}.

\begin{prop}
\label{prop:preuve Fano 23}
If~$X$ is a smooth Fano threefold over~$\R$
and if~$-K_X$ is not very ample, then $\elw_1(X)=1$.
\end{prop}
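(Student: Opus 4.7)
The plan is to argue case by case, using the classification provided by Proposition~\ref{prop:classification fano}. At the outset, I would reduce to the case $X(\R)=\emptyset$, because if $X(\R)\neq\emptyset$, then $\elw_1(X)=1$ follows directly from \cite[Proposition~\ref*{BW1-prop:parity of genus in irrelevant situations}]{BW1}. Thus I may assume throughout that~$X$ has no real points. The goal is then to produce, in each remaining case of Proposition~\ref{prop:classification fano}, a geometrically irreducible curve of even geometric genus on~$X$, which, by \cite[Corollary~\ref*{BW1-cor:what is elw1}]{BW1}, is equivalent to $\elw_1(X)=1$.

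First I would handle case~(2) of Proposition~\ref{prop:classification fano}. Here $|{-}K_X|$ is base point free and realises~$X$ as a finite double cover $\pi:X\to Y$ with~$Y$ in the explicit (finite) list of \cite[Theorem 2.1.16]{fanorusse}. Each variety~$Y$ on that list carries a natural linear system whose general member is a geometrically rational surface, and pulling back a real such member under~$\pi$ produces a smooth surface $S\subset X$ which is either a del Pezzo surface of low degree or a related geometrically rational surface. Since $X(\R)=\emptyset$, we also have $S(\R)=\emptyset$, so \cite[Corollary~\ref*{BW1-cor:surfaces}~(ii)]{BW1} applies and yields the desired curve of even geometric genus on~$S$, hence on~$X$. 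As a prototypical example, the quartic double solid case is exactly the one already treated in the remark preceding this proposition: the pullback of a real plane is a del Pezzo surface of degree~$2$ with empty real locus. The remaining items in \cite[Theorem~2.1.16]{fanorusse} (such as the double cover of the cone over the Veronese surface, or of~$\P^3$ ramified along a sextic, and so on) are dispatched by entirely analogous arguments, each time using the double cover structure to produce a geometrically rational $S$ (or, in the genus~$2$ Fano case, a further double cover construction reducing to a del Pezzo of degree~$2$ obtained from a suitable pencil) with $S(\R)=\emptyset$.

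Next I would handle case~(3), where the base locus of $|{-}K_X|$ is either a point or a smooth rational curve. Here the classical description (going back to Iskovskikh) shows that~$X$ admits a very explicit birational or fibered model: projection from the base point, or from the base line, turns~$X$ into (a birational modification of) a conic bundle or a del Pezzo fibration of small degree over a rational base. The construction of an even-genus curve can then be reduced to the analogous question on a geometrically rational surface inside~$X$ (obtained as the strict transform of a suitable hyperplane section through the base locus), to which \cite[Corollary~\ref*{BW1-cor:surfaces}~(ii)]{BW1} again applies once one observes that $S(\R)=\emptyset$ because $X(\R)=\emptyset$.

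The main obstacle, and the place where the bookkeeping becomes delicate, is ensuring that every item in the finite classification list of \cite[Theorem~2.1.16]{fanorusse} and every subcase of case~(3) actually admits such a surface~$S\subset X$ defined over~$\R$ whose real locus is forced to be empty by the assumption $X(\R)=\emptyset$. In a few of these cases the natural linear system over~$\C$ may not be realisable over~$\R$ (for instance if the base point or base line is not a real subscheme), and one has to replace the chosen divisor by a conjugation-invariant variant, or first take a conjugation-stable pair and then combine them. Once this is checked uniformly across the (short) classification list, the proposition follows.
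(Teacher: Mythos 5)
Your reduction to $X(\R)=\varnothing$ is fine and your overall plan (case analysis via Proposition~\ref{prop:classification fano}) matches the paper's. However, your treatment of case~(2) contains a genuine error. You assert that, for each variety~$Y$ in the list of~\cite[Theorem~2.1.16]{fanorusse}, pulling back a general real surface under the anticanonical double cover $\pi:X\to Y$ yields a geometrically rational or del Pezzo surface in~$X$. That is false for the two ``main series'' cases (numbered (ii) and (iv) in the paper's argument): for the genus~$2$ Fano threefold, which is the double cover of~$\P^3$ branched along a smooth sextic, pulling back a plane gives a double cover of~$\P^2$ ramified along a sextic curve, i.e.\ a \emph{$K3$ surface}, to which \cite[Corollary~\ref*{BW1-cor:surfaces}~(ii)]{BW1} does not apply; a similar issue occurs in case~(iv). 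The paper bypasses the surface entirely in these cases: the intersection of two general members of~$|{-}K_X|$ is a smooth geometrically irreducible curve of genus~$2$ or~$4$, directly producing the needed curve of even genus. Likewise, in case~(iii) the relevant surface is not a pullback but the \emph{ramification divisor} of the anticanonical map, which happens to be a degree~$4$ del Pezzo; your uniform recipe does not produce it. Finally, in cases~(v) and~(vi) (where $X_\C$ has Picard rank~$2$) one must actually justify that the relevant contraction or fibration structure descends from~$\C$ to~$\R$; the paper does this via the observation that the two extremal contractions are determined by the boundaries of the nef cone (hence are canonical and $G$-equivariant), an argument your sketch omits.

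Two smaller points. First, your concern about the base point or base curve failing to be defined over~$\R$ in case~(3) dissolves on inspection: the base locus of $|{-}K_X|$ is automatically $G$-stable, so a unique base point is a real point and a base curve is a real, geometrically rational curve, which gives $\elw_1(X)=1$ in one line via \cite[Corollary~\ref*{BW1-cor:what is elw1}]{BW1}---no auxiliary surface is needed. Second, having reduced to $X(\R)=\varnothing$, you could save yourself case~(1) of the double-cover list entirely: the paper shows that in this case~$|H|$ has a unique, hence real, base point, so this case simply does not occur under your standing assumption.
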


\begin{proof}
If the base locus of $|{-}K_X|$ is a point or a smooth geometrically rational curve,
then~$\elw_1(X)=1$
by \cite[Corollary~\ref*{BW1-cor:what is elw1}]{BW1}.
Thus, by Proposition~\ref{prop:classification fano}, we may assume
that~$X$ falls into one of the six cases numbered~(i) to~(vi) in~\cite[Theorem~2.1.16]{fanorusse}.

In case~(i), the variety~$X_\C$ is also the one described in
\cite[Theorem~2.4.5~(i)~a)]{fanorusse}, where it is explained that there is
a unique $H \in \Pic(X_\C)$ such that $-K_{X_{\C}}=2H$, and that~$|H|$ has
a unique base point. This base point is necessarily $G$\nobreakdash-invariant, hence it is a real point of~$X$,
so that $\elw_1(X)=1$.

In cases~(ii) and~(iv), the intersection of two general members of~$|{-}K_X|$ is a smooth and geometrically irreducible
curve of genus~$2$ or~$4$, respectively.
Therefore $\elw_1(X)=1$ in these two cases,
by \cite[Corollary~\ref*{BW1-cor:what is elw1}]{BW1}.

In case (iii), the ramification divisor of the morphism induced by $|{-}K_X|$ is a smooth degree~$4$ del Pezzo surface~$S$.
Such a surface satisfies $\elw_1(S)=1$ by \cite[Corollary~\ref*{BW1-cor:surfaces}~(ii)]{BW1}
or by a theorem of Comessatti \cite[discussion below Teorema~VI, p.~60]{comessatti},
see also \cite{colliotsanspoint}. It follows that $\elw_1(X)=1$.

In case (v), the variety~$X_\C$ has Picard rank~$2$ and can be realised both
as a degree~$2$ del Pezzo fibration over~$\P^1_\C$ or as the blow-up of a smooth
elliptic curve in a Fano threefold 
whose anticanonical bundle
is very ample
(a double cover of~$\P^3_\C$ ramified
along a smooth quartic).  It follows that the two boundaries of the nef
cone of~$X_\C$ are generated by the two line bundles inducing this
fibration and this contraction, so that these morphisms are canonical and
descend to~$\R$.
Therefore~$X$ is isomorphic, over~$\R$, to the blow-up
of a smooth and geometrically irreducible curve of genus~$1$ in
a Fano threefold~$Z$ such that~$-K_Z$ is very ample.
We have already proved,
in~\textsection\ref{par:systeme anticanonique},
the validity of Theorem~\ref{thm:solides fano} and Corollary~\ref{cor:elw solides fano}
for~$Z$; hence $\elw_1(Z)=1$.
A geometrically irreducible curve of even geometric genus in~$Z$ cannot be contained in
the centre of the blow-up since the latter is a smooth and geometrically irreducible curve of genus~$1$.
By considering its strict
transform in~$X$, we deduce that $\elw_1(X)=1$.

In case (vi), we have $X_{\C}\simeq \P^1_{\C}\times S_{\C}$ for a complex
degree~$2$ del Pezzo surface~$S_{\C}$. The line bundle $-K_{X_{\C}}$
induces a double cover $X_{\C}\to\P^1_{\C}\times\P^2_{\C}$.
Arguing as above,
the two projections
$X_{\C}\to\P^1_\C$ and $X_\C\to \P^2_{\C}$
are canonical,
and so is the Stein factorisation $X_{\C}\to S_{\C}$ of the second one,
so that
these three morphisms must
descend to~$\R$.  Therefore $X\simeq B\times S$ for a geometrically rational curve~$B$
and a degree~$2$ del Pezzo surface~$S$ over~$\R$.
If~$S$ contains a real point~$s$,
then~$B \times S$ contains the geometrically rational curve $B \times \{s\}$
and hence $\elw_1(X)=1$.
Otherwise,
by the theorem of Comessatti quoted above,
the surface~$S$ contains a geometrically rational curve.
Its inverse image in~$X$ is a geometrically rational surface,
which, by the same theorem again, must contain a geometrically rational curve,
so that $\elw_1(X)=1$.
\end{proof}


\begin{rmk}
It is likely that many of the cases in which~$|{-}K_X|$ induces a double
cover could have been handled by a variant of the strategy
of~\S\ref{par:systeme anticanonique}. However, note that the theorem on the
irreducibility of the monodromy action~\cite[Theorem~7.3.2]{Lamotke} used
at the very end of the proof of Proposition \ref{prop:preuve Fano 1} fails
for such~$X$: the vanishing cohomology splits as the sum of its invariant
and anti-invariant parts with respect to the involution given by the
anticanonical double cover.
\end{rmk}

\section{Del Pezzo fibrations}
\label{sec:dPfibrations}

\subsection{Main result}

In this section, we study del Pezzo fibrations $f:X\to B$ over a curve $B$ over a real closed field $R$. 
Our main goal  is the following statement.

\begin{thm}
\label{thm:solides fibres en del Pezzo}
Let $f:X\to B$ be a morphism between smooth proper connected varieties over a real closed field $R$ such that $B$ is a curve and the generic fiber of $f$ is a del Pezzo surface of degree $\delta$.
The cycle class map
$\cl:\CH_1(X)\to H^4_G(X(C),\Z(2))_0$ is surjective
 in each of the following cases:
\begin{enumerate}[(i)]
\item $\delta\geq 5$,
\item $X(R)=\emptyset$ and $B(R)\neq\emptyset$,
\item $\delta\in\{1,3\}$ and the real locus of each smooth real fiber of $f$ has exactly
 one connected component,
\item $R=\R$ and $\delta=3$.
\end{enumerate}
\end{thm}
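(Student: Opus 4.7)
The plan is to analyse the $G$-equivariant Leray spectral sequence for $f:X\to B$,
$$
E_2^{p,q} = H^p_G(B(C), \RR^q f_*\Z(2)) \Rightarrow H^{p+q}_G(X(C),\Z(2)),
$$
and to reduce the algebraicity of a class $\alpha \in H^4_G(X(C),\Z(2))_0$ to fibrewise statements on del Pezzo surfaces together with a careful analysis of the contribution of singular fibres, adapting to real closed fields the argument of~\cite{ewzcl} over separably closed fields. Since $B$ is a curve and a smooth del Pezzo surface has $H^1=H^3=0$, the sheaf $\RR^q f_*\Z(2)$ vanishes generically for $q \notin\{0,2,4\}$, and for $q\in\{1,3\}$ is supported on the finite set of points of $B$ above which $f$ is not smooth. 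Combined with the cohomological bounds on~$B$, this produces a filtration on $H^4_G(X(C),\Z(2))$ with three main graded pieces: an ``intersection number'' term $H^0_G(B(C),\RR^4 f_*\Z(2))$, a middle term $H^2_G(B(C),\RR^2 f_*\Z(2))$, and a ``pullback from $B$'' term $H^4_G(B(C),\Z(2))$, plus controlled contributions from singular fibres.

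I would first dispose of the outer pieces. The push-forward $f_*\alpha \in H^2_G(B(C),\Z(1))$ is a Hodge class on the smooth curve~$B$ over~$R$, hence is algebraic by the real Lefschetz $(1,1)$ theorem of~\cite{BW1}; subtracting the class of a suitable $1$-cycle---obtained by taking the pullback of the corresponding divisor on~$B$ and intersecting with a very ample divisor on $X$ to obtain a $1$-cycle rather than a $2$-cycle---reduces to the case $f_*\alpha=0$. The intersection number of~$\alpha$ with the generic fibre can then be killed by adjoining a multiple of a multisection of~$f$, produced by taking any curve in the geometrically rational generic fibre and closing it up in~$X$. After these reductions, $\alpha$ is supported, in a precise spectral-sequence sense, on the middle term and on the singular fibres.

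The heart of the proof is thus to show that every class in the middle term $H^2_G(B(C),\RR^2 f_*\Z(2))$ compatible with our topological constraints is realised by an algebraic $1$-cycle on~$X$. On each smooth fibre $F$ of $f$, the real Lefschetz $(1,1)$ theorem for del Pezzo surfaces over~$R$ (established in part I of the present work) ensures that the $G$-invariant $(1,1)$-classes in $H^2_G(F(C),\Z(1))$ are algebraic; the task is to lift this fibrewise algebraicity to a global $1$-cycle on~$X$. Following~\cite{ewzcl}, I would produce enough ``tautological'' algebraic $1$-cycles on~$X$---relative exceptional divisors of contractions, components of singular fibres (which by the structure theory of degenerations of del Pezzos come in predictable types), universal families of lines on cubic surface fibrations when $\delta=3$, and so on---to generate the middle term. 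The case hypotheses govern when this is possible: case~(i) $\delta\geq 5$ gives direct access to explicit sections and contractions on each fibre; case~(ii) $X(R)=\emptyset$ trivialises the topological constraint, and the assumed real point of~$B$ yields a real del Pezzo fibre with no real point, to which the results of part~I apply; cases~(iii) and~(iv) use the classical theory of degree~$1$ or degree~$3$ del Pezzo surfaces (over $R$ or $\R$) to realise the remaining classes.

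The main obstacle is the singular fibre analysis over a real closed field. Over a separably closed ground field,~\cite{ewzcl} computes the local contribution of each singular fibre to the Leray spectral sequence via vanishing cycle sequences and shows that the resulting obstruction is captured by classes of components of the singular fibres, which are algebraic by inspection. Over~$R$, one must simultaneously track the $G$-action on these local vanishing cycle data, the topological constraint defining $H^4_G(X(C),\Z(2))_0$ (i.e.\ that $\alpha|_{X(R)}$ have the expected form), and the Hodge constraint (here trivial, since $H^2(X,\sO_X)=0$ for any del Pezzo fibration over a curve). The case distinctions (i)--(iv) in the statement are exactly those for which these three pieces of data can be reconciled; I expect each case to require its own geometric input---a section, a real fibre with the right topology, or a specific component of a degenerate cubic or degree one surface---to complete the argument.
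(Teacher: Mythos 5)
Your proposal has the right overall shape — a Leray spectral sequence for $f$, a reduction via push-forward and multisections, and an Esnault--Wittenberg-style treatment of singular fibres — but it misses the two technical points that make the paper's argument actually work, and mishandles case~(iv) entirely.

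First, the spectral sequence. You propose using the ordinary equivariant Leray spectral sequence
$E_2^{p,q}=H^p_G(B(C),\RR^q f_*\Z(2))$. The paper explicitly rejects this in favour of
a spectral sequence on the quotient $B(C)/G$,
\[
E_2^{p,q}=H^p\bigl(B(C)/G,\RR^q f^G_*\Z(2)\bigr)\Rightarrow H^{p+q}_G(X(C),\Z(2)),
\]
whose base has bounded (semi-algebraic) cohomological dimension. The version you propose has nonzero rows in arbitrarily high $p$, driven by the group cohomology of $G$, so the ``filtration with three main graded pieces'' you describe does not exist; the outer pieces are not isolated. The paper's Proposition~\ref{prop:Leraybizarre}, proved with the quotient spectral sequence, is the statement you actually need: for $f$ smooth projective over an affine curve with contractible real components, a class in $H^4_G(X(C),\Z(2))$ restricting to zero on every fibre $X_b$ ($b\in B(R)$ or $B(C)$) is zero. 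There is no need to ``lift fibrewise algebraicity to a global $1$-cycle''; the reductions make $\alpha$ restrict to zero on every fibre, and then $\alpha=0$.

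Second, and more seriously, your reduction of the fibrewise degree to zero is wrong as stated: ``taking any curve in the geometrically rational generic fibre and closing it up in~$X$'' produces a multisection of \emph{some} degree $d$, and subtracting its multiples only changes the degree of $\alpha$ modulo $d$. If $\alpha$ has odd degree and every multisection one can construct this way has even degree, the reduction fails. This is precisely where the case hypotheses (i)--(iv) enter: the paper first uses Manin--Colliot-Thélène to get a section of $f^0_C$ (giving a degree-$1$-or-$2$ multisection of $f^0$), handling the even-degree case; for odd degree it shows that $X^0(R)\to B^0(R)$ is surjective, forcing one into cases~(i) or~(iii), and then constructs an actual section of $f^0$ by a separate argument for each $\delta$ (anticanonical base point for $\delta=1$, Lang's theorem for $\delta=3$, rationality for $\delta\in\{5,7,9\}$, homogeneous-space Hasse principle for $\delta\in\{6,8\}$). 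Your proposal gives no account of why the cases as listed are exactly those for which this works, which is the whole content of the theorem.

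Finally, case~(iv) ($R=\R$, $\delta=3$) is not handled in the paper by the spectral-sequence/multisection mechanism at all. Instead (Proposition~\ref{prop:cubicsurfacefibrations}) one exploits that the $27$ lines of the generic cubic surface admit an odd-cardinality Galois orbit, giving a cover $D\to B$ of odd degree $\nu$ over which the fibration becomes a conic bundle, hence satisfies the real integral Hodge conjecture by Theorem~\ref{thm:fibres en coniques}; a norm argument then kills the cokernel up to $\nu$-torsion, while Voisin's complex theorem kills it up to $2$-torsion, and $\gcd(\nu,2)=1$. This ``trick'' is entirely absent from your account.
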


In the above statement, the surjectivity of $\cl:\CH_1(X)\to H^4_G(X(C),\Z(2))_0$ is nothing but
 the real integral Hodge conjecture for $1$-cycles on $X$ in the sense of \cite[Definition~\ref*{BW1-def:ihc real closed field}]{BW1}, since $H^2(X,\sO_X)=0$ (see Lemma \ref{lem:Hi0 Pictors}).

Our proof of Theorem \ref{thm:solides fibres en del Pezzo} is uniform, except in case (iv) where we rely on a specific trick explained in \S\ref{subsec:cubicfibrations}.
   To prove all other cases of Theorem \ref{thm:solides fibres en del Pezzo}, we use in a geometric way the fibration structure. We separately construct curves in the fibers of $f$ in \S\ref{subsec:vertical} (adapting to the real situation a strategy due to Esnault and the second-named author \cite{ewzcl} over a separably closed field), and (multi-)sections of $f$ in \S\ref{subsec:horizontal}. We show that their classes generate the whole of $H^4_G(X(C),\Z(2))_0$ using a variant of the Leray spectral sequence, studied in \S\ref{subsec:joliLeray}.

\begin{example}\label{exeasydP}
  Theorem \ref{thm:solides fibres en del Pezzo} is easy if $\delta\in\{5,7,9\}$. Indeed, the generic fiber $X_{R(B)}$ is then rational over $R(B)$
(see \cite[Theorem~3.7 and Theorem~3.15]{maninrational} and \cite{Swinnerton-Dyer}, and note that $\Br(R(B))[3]=0$ since $\Br(C(B))[3]=0$ by Tsen's theorem), so that
$X$ is birational to $\P^2_R\times B$, and one may use \cite[Proposition \ref*{BW1-prop:birinvIHC}]{BW1}.
\end{example}

The first case to which Theorem \ref{thm:solides fibres en del Pezzo} does not apply, that of degree $4$ del Pezzo fibrations, is related to an old conjecture of Lang, as we now explain.

\begin{prop}
Let $f:X\to \Gamma$ be a degree $4$ del Pezzo fibration over the anisotropic conic $\Gamma$ over $R$.
Then the following assertions are equivalent:
\begin{enumerate}[(i)]
\item $X$ satisfies the real integral Hodge conjecture for $1$-cycles,
\item $X$ contains a geometrically integral curve of even geometric genus,
\item $X$ contains a geometrically rational curve,
\item $f$ has a section,
\item $X_{R(\Gamma)}$ has a rational point.
\end{enumerate}
\end{prop}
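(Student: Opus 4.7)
The cyclic equivalence will be proven via $(v) \Leftrightarrow (iv) \Rightarrow (iii) \Rightarrow (ii)$, together with $(ii) \Leftrightarrow (i)$ and the main implication $(ii) \Rightarrow (iv)$. The equivalence $(iv) \Leftrightarrow (v)$ is immediate from the valuative criterion of properness applied to the proper morphism~$f$ to the smooth curve~$\Gamma$. The implications $(iv) \Rightarrow (iii) \Rightarrow (ii)$ are trivial: a section embeds the geometrically rational curve~$\Gamma$ in~$X$, and any geometrically rational curve has geometric genus~$0$, which is even.

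For $(i) \Leftrightarrow (ii)$, I observe that $X$ is rationally connected: $\Gamma_C \simeq \P^1_C$ and the generic fibre of~$f$ is a smooth degree~$4$ del Pezzo surface, hence geometrically rational, so that $X_C$ is birational to $\P^1_C$ times a rational surface. Moreover $X(R) = \emptyset$ since $\Gamma(R) = \emptyset$. Then \cite[Corollary~\ref*{BW1-cor:IHC pour solides RC ou CY}]{BW1} translates~(i) into the condition $\elw_1(X) = 1$, and \cite[Corollary~\ref*{BW1-cor:what is elw1}]{BW1} together with \cite[Proposition~\ref*{BW1-prop:parity of genus in irrelevant situations}]{BW1} further translates this condition into~(ii).

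The heart of the proof is the implication $(ii) \Rightarrow (iv)$. Starting from a geometrically integral curve $D \subset X$ of even geometric genus, I would first argue that $f|_D \colon D \to \Gamma$ is dominant. Every closed point of~$\Gamma$ has residue field isomorphic to~$C$, as $\Gamma(R) = \emptyset$ and every nontrivial finite extension of a real closed field is isomorphic to~$C$; so if $f|_D$ were constant, then $D_C = D \otimes_R C$ would split as two copies of~$D$, contradicting geometric integrality. Writing $d = \deg(f|_D)$, the curve~$D$ determines a closed point of degree~$d$ on the generic fibre $X_{R(\Gamma)}$, which is a smooth intersection of two quadrics in $\P^4_{R(\Gamma)}$. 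I would then use the refined (gcd-type) content of the condition $\elw_1(X) = 1$, rather than the mere existence of one even-genus curve, combined with the Riemann--Hurwitz parity relation $g_D + d - 1 = \tfrac{1}{2}\deg \mathrm{Ram}(f|_D)$, to extract from~(ii) a $0$-cycle of \emph{odd} degree on $X_{R(\Gamma)}$. The Amer--Brumer theorem for pairs of quadratic forms (Springer's theorem for smooth intersections of two quadrics) then supplies a rational point of $X_{R(\Gamma)}$, which is~(v), equivalently a section of~$f$, which is~(iv).

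The main obstacle is this last extraction: translating the cohomological/geometric hypothesis $\elw_1(X) = 1$ into the arithmetic statement that $X_{R(\Gamma)}$ admits an odd-degree $0$-cycle. This is where the geometry of degree~$4$ del Pezzo surfaces and the arithmetic of the function field $R(\Gamma)$ of the anisotropic conic interact in a delicate way; it is precisely this step whose validity is predicted, but not a priori guaranteed, by the old conjecture of Lang alluded to in the motivation for the proposition.
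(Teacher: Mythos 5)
Your overall architecture matches the paper's: (i)$\Leftrightarrow$(ii) is a direct application of \cite[Corollary~\ref*{BW1-cor:IHC pour solides RC ou CY}]{BW1}, the implications (v)$\Leftrightarrow$(iv)$\Rightarrow$(iii)$\Rightarrow$(ii) are soft, and the crux is (ii)$\Rightarrow$(v), obtained from an odd-degree $0$-cycle on $X_{R(\Gamma)}$ together with a Springer-type theorem for smooth intersections of two quadrics (the paper cites Coray; your Amer--Brumer plus Springer variant reaches the same conclusion).

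The genuine gap is in the crux, and your diagnosis of what is missing is wrong. You write that extracting the odd-degree $0$-cycle from (ii) is a step whose ``validity is predicted, but not a priori guaranteed, by the old conjecture of Lang.'' That confuses the unconditional \emph{equivalence} of (i)--(v) with the (conjectural) \emph{truth} of any of them: Lang's conjecture would imply all five hold, but has no bearing on proving them equivalent. Moreover the tool you propose, Riemann--Hurwitz, cannot do the job: the relation $\deg\mathrm{Ram}(f|_D)=2(g_D+d-1)$ holds for any finite cover of a genus-zero curve and imposes no parity constraint on~$d$. What the paper uses is \cite[Proposition~\ref*{BW1-prop:genusundermorphism}]{BW1}: if $D\to\Gamma$ is a finite morphism from a geometrically integral smooth proper curve of \emph{even} genus to the anisotropic conic, then its degree~$d$ is automatically \emph{odd}. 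The correct mechanism is Riemann--Roch, not Riemann--Hurwitz: writing $\phi=f|_D$, one has $\chi(\sO_D)=\chi(\phi_*\sO_D)=\deg\det(\phi_*\sO_D)+d$, hence $\deg\det(\phi_*\sO_D)=1-g_D-d$; since $\det(\phi_*\sO_D)\in\Pic(\Gamma)$ and the anisotropic conic has no line bundle of odd degree, the integer $1-g_D-d$ is even, so $g_D+d$ is odd. A single geometrically integral curve of even genus therefore already dominates $\Gamma$ with odd degree, and no ``gcd-type refinement'' of $\elw_1(X)=1$ is needed.

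Two smaller remarks. Your verification that any geometrically integral curve in $X$ must dominate $\Gamma$ (because its image cannot be a closed point with residue field~$C$) is correct and left implicit in the paper. And where the paper cites Coray directly for ``odd-degree $0$-cycle implies rational point'' on a degree~$4$ del Pezzo surface, your route through Amer--Brumer and Springer is a valid, if more roundabout, substitute.
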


\begin{proof}
The equivalence between (i) and (ii) is \cite[Corollary~\ref*{BW1-cor:IHC pour solides RC ou CY}]{BW1}.
Suppose that (ii) holds, and let $\Delta\to X$ be a smooth projective geometrically integral curve of even genus. 
By \cite[Proposition~\ref*{BW1-prop:genusundermorphism}]{BW1}, the degree of $\Delta\to\Gamma$ is odd.
This implies that $X_{R(\Gamma)}$ has a $0$-cycle of odd degree. By a theorem of Coray \cite{coray}, it has a rational point, proving (v). The implications (v)$\Rightarrow$(iv)$\Rightarrow$(iii)$\Rightarrow$(ii) are trivial.
\end{proof}
 
In the formulation (v), the validity of the above assertions follows from the conjecture
of Lang \cite[p.~379]{langrealplaces} according to which the function field of a curve over $R$ without rational points is a $C_1$ field (see \cite[Theorem~1 and Theorem~3]{langquasialg}).
This particular case of Lang's conjecture has been proven in \cite[Theorem 0.13]{pi} if $R=\R$; it is still open if $R$ is a non-archimedean real closed field.

\subsection{Applications}
When Theorem \ref{thm:solides fibres en del Pezzo} applies,
one can combine it with the next proposition,
which builds on the duality theorem \cite[Theorem~\ref*{BW1-th:image psi}]{BW1}
and on Theorem~\ref{thm:AK},
to compute $H_1^{\alg}(X(R),\Z/2\Z)$ and to deduce approximation results.

\begin{prop}
\label{prop:application of ihc for dp fibration}
Let $f:X\to B$ be a morphism of smooth projective connected varieties over a real closed field~$R$ with irreducible geometric generic fiber $F$.
Assume that $B$ is a curve, that $\Pic(F)[2]=0$, that $H^i(F,\sO_F)=0$ for $i>0$, 
and that the cycle class map $\cl:\CH_1(X)\to H^4_G(X(C),\Z(2))_0$ is surjective.
\begin{enumerate}[(i)]
\item
The group $H_1^{\alg}(X(R),\Z/2\Z)$ is:
$$\{\alpha\in H_1(X(R),\Z/2\Z)\mid f_*\alpha\in H_1(B(R),\Z/2\Z)\textrm{ is a multiple of }\cl_{R}(B) \}.$$
\item
If $R=\R$ and $B(\R)$ is connected, any one-dimensional compact $\ci$ submanifold $Z\subset X(\R)$ has an algebraic approximation in $X(\R)$ in the sense of Definition~\ref{def:approximation}.
\end{enumerate} 
\end{prop}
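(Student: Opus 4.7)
The plan is to deduce~(i) from the duality theorem \cite[Theorem~\ref*{BW1-th:image psi}]{BW1} combined with Lemma~\ref{lem:Hi0 Pictors}, and then to extract~(ii) from~(i) and Theorem~\ref{thm:AK}.

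The inclusion~$\subseteq$ in~(i) is elementary. If $\alpha=\cl_R(z)$ for some $z\in\CH_1(X)$, then $f_*\alpha=\cl_R(f_*z)$; but $\CH_1(B)=\Z\cdot[B]$ because $\dim B=1$, so $f_*z=n[B]$ for some $n\in\Z$, whence $f_*\alpha$ is a multiple of $\cl_R(B)$ modulo~$2$.

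For the reverse inclusion, I would first invoke Lemma~\ref{lem:Hi0 Pictors} applied to $X_C\to B_C$: under the assumptions on~$F$, the pullback $f^*:\Pic(B_C)[2]\to\Pic(X_C)[2]$ is an isomorphism and $H^2(X,\sO_X)\simeq H^2(B,\sO_B)=0$. The surjectivity of~$\cl$ onto $H^4_G(X(C),\Z(2))_0$ then amounts to the real integral Hodge conjecture for $1$\nobreakdash-cycles on~$X$, so the duality theorem \cite[Theorem~\ref*{BW1-th:image psi}]{BW1} identifies $H_1^\alg(X(R),\Z/2\Z)$ with the subgroup of $H_1(X(R),\Z/2\Z)$ that is orthogonal, under the Borel--Haefliger intersection pairing, to the $2$-torsion classes of~$\Pic(X_C)$. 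Given $\alpha$ with $f_*\alpha=n\cl_R(B)$, one writes any $\mathcal{L}_C\in\Pic(X_C)[2]$ as $f^*\mathcal{M}_C$ with $\mathcal{M}_C\in\Pic(B_C)[2]$, and the projection formula reduces the pairing $\deg\bigl(\alpha\smile\cl_R(\mathcal{L}_C)\bigr)$ to $\deg\bigl(f_*\alpha\smile\cl_R(\mathcal{M}_C)\bigr)=n\deg(\mathcal{M}_C)\bmod 2$, which vanishes since $2$\nobreakdash-torsion line bundles on~$B_C$ have degree zero. This gives $\alpha\in H_1^\alg(X(R),\Z/2\Z)$.

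For~(ii), when $R=\R$ and $B(\R)$ is connected we have $H_1(B(\R),\Z/2\Z)=\Z/2\Z\cdot\cl_\R(B)$, so the hypothesis of~(i) is automatic for any one-dimensional compact $\ci$ submanifold $Z\subset X(\R)$. Part~(i) then yields $[Z]\in H_1^\alg(X(\R),\Z/2\Z)$, and the implication (i)$\Rightarrow$(ii) in Theorem~\ref{thm:AK} produces the desired algebraic approximation of~$Z$ in~$X(\R)$.

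The main obstacle is to apply the correct version of the duality theorem, which yields the annihilator description of $H_1^\alg$ in terms of $2$-torsion line bundles; once this is in place, the proof reduces to Lemma~\ref{lem:Hi0 Pictors}, the projection formula, and the vanishing of degrees of $2$-torsion line bundles on~$B_C$.
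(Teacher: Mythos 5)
Your proposal is essentially correct and follows the same overall strategy as the paper: reduce~(ii) to~(i) via Theorem~\ref{thm:AK}, then establish~(i) by combining the duality theorem with Lemma~\ref{lem:Hi0 Pictors} and the projection formula. There is one minor divergence in the final step and one small imprecision worth flagging.

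The paper closes the argument by applying the duality theorem a second time, now to the curve~$B$: since $\CH_1(B)=\Z\cdot[B]$, the annihilator of $\cl_R(\Pic(B)[2^\infty])$ in $H_1(B(R),\Z/2\Z)$ is exactly $H_1^\alg(B(R),\Z/2\Z)=\Z/2\Z\cdot\cl_R(B)$. You instead compute the pairing directly, using that torsion line bundles on~$B$ have degree~$0$ together with the identity $\deg\bigl(\cl_R(B)\smile\cl_R(\mathcal{M})\bigr)=\deg(\mathcal{M})\bmod 2$ for $\mathcal{M}\in\Pic(B)$. Both routes work; yours is slightly more hands-on, the paper's slightly more uniform since it reuses the same theorem. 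Your explicit check of the inclusion $\subseteq$ is harmless but redundant, as the duality theorem already furnishes an equality.

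The imprecision: you phrase the duality statement in terms of ``the $2$-torsion classes of $\Pic(X_C)$'' and then take $\mathcal{L}_C\in\Pic(X_C)[2]$. The correct group is $\Pic(X_C)^G[2^\infty]=\Pic(X)[2^\infty]$, both because the Borel--Haefliger class $\cl_R$ is only defined for line bundles descending to~$R$ and because the duality theorem references the full $2$\nobreakdash-primary torsion of the $G$\nobreakdash-invariant Picard group, not merely the $2$\nobreakdash-torsion of $\Pic(X_C)$. Your argument does go through verbatim for this larger group, since Lemma~\ref{lem:Hi0 Pictors} gives an isomorphism $\Pic(B)[2^\infty]\xrightarrow{\sim}\Pic(X)[2^\infty]$ and every element of $\Pic(B)[2^\infty]$ still has degree zero. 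You should also note, as the paper does, that the case $X(R)=\emptyset$ is trivial, so one may assume $X(R)\neq\emptyset$ before invoking the duality theorem.
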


\begin{proof}
Since (ii) is a consequence of (i) and Theorem \ref{thm:AK}, we need only prove (i).
We may assume that $X(R)\neq\varnothing$. 
By \cite[Theorems~\ref*{BW1-th:image psi} and~\ref*{BW1-th:phi}]{BW1}, $H_1^{\alg}(X(R),\Z/2\Z)$ is the subgroup of $H_1(X(R),\Z/2\Z)$ consisting of the classes orthogonal to the image of the Borel--Haefliger map $\cl_{R}:\Pic(X_C)^G[2^{\infty}]=\Pic(X)[2^\infty]\to H^1(X(R),\Z/2\Z)$.
Now the pull-back map
$\Pic(B)[2^\infty]=\Pic(B_C)^G[2^{\infty}]\to \Pic(X_C)^G[2^{\infty}]=\Pic(X)[2^\infty]$
is an isomorphism by Lemma~\ref{lem:Hi0 Pictors}.
The projection formula \cite[(\ref*{BW1-eq:projection formula})]{BW1}
 then identifies $H_1^{\alg}(X(R),\Z/2\Z)$ with the set of $\alpha\in H_1(X(R),\Z/2\Z)$ such that $f_*\alpha\in H_1(B(R),\Z/2\Z)$ is orthogonal to $\cl_{R}(\Pic(B)[2^{\infty}])$.
Applying \cite[Theorem~\ref*{BW1-th:image psi}]{BW1} to $B$ concludes the proof.
\end{proof}

\subsection{Cubic surface fibrations}
\label{subsec:cubicfibrations}
We now start to prove Theorem \ref{thm:solides fibres en del Pezzo}.

\begin{prop}
\label{prop:cubicsurfacefibrations}
Theorem \ref{thm:solides fibres en del Pezzo} holds when $R=\R$ and $\delta=3$.
\end{prop}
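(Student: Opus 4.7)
The plan is to reduce the statement to the conic bundle case (Theorem~\ref{thm:fibres en coniques}) by exploiting a fact that is specific to smooth cubic surfaces over~$\R$: each of them contains real lines among its~$27$ complex lines. Indeed, complex conjugation acts on the~$27$ lines and pairs up non-real ones, so the number of real lines is always odd, hence at least one (it is in fact always $3$, $7$, $15$, or~$27$).

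Concretely, over the smooth locus $B^\circ \subset B$ of~$f$, I would consider the relative Fano scheme of lines $F \to B^\circ$, a finite étale $G$-equivariant cover of degree~$27$. Its generic fiber decomposes as a disjoint union of spectra $\Spec(K_i)$ with $\sum_i [K_i:\R(B)] = 27$, so some $d := [K_i:\R(B)]$ is odd; moreover, using that every real fiber of~$f$ has an odd nonzero number of real lines, I can further choose such an~$i$ so that the normalization $B' \to B$ of~$B$ in~$K_i$ satisfies $B'(\R) \twoheadrightarrow B^\circ(\R)$. The base change $X \times_B B'$ carries a canonical line in its generic fiber defined over $\R(B')$; projecting from it and resolving indeterminacies yields a smooth projective threefold~$X'$ with a generically finite morphism $\pi:X' \to X$ of odd degree~$d$ together with a conic bundle structure $X' \to S$ onto a smooth projective surface~$S$ birational to $B' \times \P^1$. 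Being a real surface, $S$ satisfies the real integral Hodge conjecture for $1$\nobreakdash-cycles by the real Lefschetz $(1,1)$ theorem \cite[Proposition~\ref*{BW1-prop:real(1,1)}]{BW1}; Theorem~\ref{thm:fibres en coniques} then gives the real integral Hodge conjecture for~$X'$.

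The remaining task is to transfer the conclusion back to~$X$. Given $\alpha \in H^4_G(X(\C), \Z(2))_0$, the pullback $\pi^*\alpha$ lies in the analogous subgroup on~$X'$ (by stability of the topological constraint under pullback), so it is algebraic; hence $d\cdot\alpha = \pi_*\pi^*\alpha$ is a cycle class on~$X$. To deduce that~$\alpha$ itself is algebraic, I would use that $H^2(X,\sO_X)=0$ by Lemma~\ref{lem:Hi0 Pictors}, which, via the standard analysis of the integral Hodge conjecture for uniruled threefolds, forces the cokernel of $\cl:\CH_1(X) \to H^4_G(X(\C),\Z(2))_0$ to be $2$\nobreakdash-primary torsion; multiplication by the odd integer~$d$ is then invertible on this cokernel. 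The main obstacle is precisely this last point: making precise and justifying the $2$\nobreakdash-primary nature of the obstruction in our real equivariant setting. A fallback, should this $2$\nobreakdash-primariness require additional work, would be to run the above construction for several irreducible components $F_{\eta,i}$ simultaneously and combine the resulting divisibility statements on $\alpha$ via Bézout among the odd degrees $d_i$.
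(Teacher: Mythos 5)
Your proposal follows essentially the same route as the paper: both use that a smooth cubic surface over a field has a Galois orbit of odd size~$\nu$ among its $27$ lines, base change along the corresponding odd-degree cover, obtain a conic bundle structure on the resulting threefold~$Y$, invoke Theorem~\ref{thm:fibres en coniques} together with \cite[Propositions~\ref*{BW1-prop:real(1,1)} and~\ref*{BW1-prop:birinvIHC}]{BW1}, and then use the projection formula to conclude that the cokernel of $\cl$ is killed by~$\nu$.

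There is, however, a genuine gap exactly where you flag uncertainty. The $2$\nobreakdash-primariness of the cokernel of $\cl:\CH_1(X)\to H^4_G(X(\C),\Z(2))_0$ is \emph{not} a formal consequence of $H^2(X,\sO_X)=0$ in the real equivariant setting; it is obtained by a norm argument along $X_\C\to X$ \emph{combined with Voisin's theorem} \cite[Theorem~2]{voisinthreefolds}, which supplies the complex integral Hodge conjecture for $1$\nobreakdash-cycles on~$X_\C$. Concretely, for $\alpha\in H^4_G(X(\C),\Z(2))_0$ the norm of the underlying complex class gives $2\alpha$, but this is only known to be algebraic because the complex class is algebraic by Voisin. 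Without that input you cannot conclude $2$\nobreakdash-torsion. Your proposed fallback does not repair this: Bézout among the odd orbit sizes~$d_i$ only yields that $\gcd(d_1,\dots,d_r)\cdot\alpha$ is algebraic, and nothing prevents all the odd~$d_i$ from sharing a common factor (the orbits partition $27=3^3$, so all odd orbit sizes could be multiples of~$3$). So the odd-degree cover alone cannot close the proof; you need the complex theorem as a second, independent source of divisibility, coprime to~$\nu$.

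A minor remark: the extra requirement $B'(\R)\twoheadrightarrow B^\circ(\R)$ plays no role and is absent from the paper's proof. Once $\pi$ has odd degree, the projection formula on equivariant cohomology already does the job regardless of surjectivity on real points.
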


\begin{proof}
Consider the generic fiber $X_{\R(B)}$ of $f$: it is a smooth cubic surface over $\R(B)$. Its $27$ lines are permuted by the Galois group of $\R(B)$. Since $27$ is odd, this action has an orbit of odd cardinality $\nu$. Let $D\to B$ be the corresponding degree $\nu$ morphism of smooth projective curves over $\R$. By construction, $X_{\R(D)}$ contains a line. Projecting from this line, one sees that $X_{\R(D)}$ is birational to a conic bundle over $\P^1_{\R(D)}$. Let $Y$ be a smooth projective model of $X_{\R(D)}$ dominating $X$ and $\pi:Y\to X$ be the corresponding generically finite morphism of degree $\nu$. The variety $Y$ is birational to a conic bundle over $\P^1_{\R}\times D$. By Theorem \ref{thm:fibres en coniques} and \cite[Propositions~\ref*{BW1-prop:real(1,1)} and~\ref*{BW1-prop:birinvIHC}]{BW1}, $Y$ satisfies the real integral Hodge conjecture for $1$-cycles, and therefore $\cl:\CH_1(Y)\to H^4_G(Y(\C),\Z(2))_0$ is surjective as $H^2(Y,\sO_Y)=0$ (see Lemma \ref{lem:Hi0 Pictors}).
The diagram
\begin{align*}
\xymatrix@R=3ex{
\CH_1(X)\ar[r]^{\pi^*}\ar[d]^{\cl} & \CH_1(Y) \ar[r]^{\pi_*} \ar@{->>}[d]^{\cl} & \CH_1(X)\ar[d]^{\cl}\\
H^4_G(X(C),\Z(2))_0 \ar[r]^{\pi^*} &H^4_G(Y(C),\Z(2))_0\ar[r]^{\pi_*}& H^4_G(X(C),\Z(2))_0,
}
\end{align*}
where the vertical maps are the equivariant cycle class maps \cite[\S\ref*{BW1-subsubsec:eqcl} and Theorem~\ref*{BW1-th:conditions de krasnov}]{BW1} and where the push-forward map $\pi_*$ in equivariant cohomology is defined in \cite[(\ref*{BW1-eq:pushforward equivariant complex}) and Theorem~\ref*{BW1-th:stability of topological constraints}]{BW1}, is commutative by \cite[\S \ref*{BW1-subsubsec:eqcl}]{BW1}.
 The projection formula \cite[(\ref*{BW1-eq:projection formula equivariant})]{BW1} shows that the composition of the horizontal arrows of the bottom row is multiplication by $\nu$. It follows that the cokernel $K$ of the cycle class map  $\cl:\CH_1(X)\to H^4_G(X(\C),\Z(2))_0$ is killed by $\nu$.

By Voisin's theorem \cite[Theorem 2]{voisinthreefolds}, $X_{\C}$ satisfies the integral Hodge conjecture for $1$-cycles.
 A norm argument for $X_{\C}\to X$, similar to the one explained above, shows that $K$ is $2$-torsion. Since $\gcd(\nu,2)=1$, one has $K=0$, as required.
\end{proof}

\subsection{Vertical curves}
\label{subsec:vertical}

We construct here, following \cite{ewzcl}, many curves in the fibers of some surface fibrations over a curve over $R$, including del Pezzo fibrations.

\begin{prop}
\label{prop:curvesinfibers}
Let $f:X\to B$ be a proper flat morphism of smooth varieties over a real closed field~$R$.
Assume that $B$ is a curve
and that the geometric generic fiber of $f$ is a connected surface with geometric genus zero.
Let $t\in B(R)$ and $S=f^{-1}(t)$.

Then the image of the natural map $H^4_{G,S(C)}(X(C),\Z(2))\to H^4_G(X(C),\Z(2))$ is contained in the image of the cycle class map $\cl:\CH_1(X)\to H^4_G(X(C),\Z(2))$.
\end{prop}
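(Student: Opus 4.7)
The plan is to adapt to the real closed setting the strategy of Esnault and the second-named author~\cite{ewzcl}, which handles the analogous statement over separably closed fields. The case $\dim(X)\leq 2$ is trivial (the fiber $S$ is then a union of curves), so assume $\dim(X)=3$. By the birational invariance of the real integral Hodge conjecture for $1$\nobreakdash-cycles~\cite[Proposition~\ref*{BW1-prop:birinvIHC}]{BW1} combined with Hironaka's theorem, we may replace $X$ by an appropriate blow-up so that the reduced fiber $S_{\red}=\bigcup_iS_i$ is a simple normal crossings divisor with smooth irreducible components~$S_i$; the hypotheses of the proposition are preserved. We then exploit a Čech-type spectral sequence computing $H^4_{G,S(C)}(X(C),\Z(2))$ out of the strata of~$S_{\red}$, in which purity identifies the contributing pieces with $\bigoplus_i H^2_G(S_i(C),\Z(1))$, $\bigoplus_{i<j} H^0_G((S_i\cap S_j)(C),\Z)$, and similar groups coming from the triple loci $S_i\cap S_j\cap S_k$.

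The contributions from the double and triple loci are generated by Gysin images of cycle classes of points and curves lying in the singular locus of $S$, all of which belong tautologically to the image of $\cl\colon\CH_1(X)\to H^4_G(X(C),\Z(2))$. The problem thus reduces to showing that the Gysin image $\tilde\iota_{i,*}\colon H^2_G(S_i(C),\Z(1))\to H^4_G(X(C),\Z(2))$, restricted to the classes that arise from $H^4_{G,S(C)}(X(C),\Z(2))$ through the spectral sequence, is contained in $\cl(\CH_1(X))$. The real Lefschetz~$(1,1)$ theorem~\cite[Proposition~\ref*{BW1-prop:real(1,1)}]{BW1} asserts precisely that $\cl(\CH^1(S_i))=H^2_G(S_i(C),\Z(1))_0$, the subgroup of classes satisfying the Hodge and topological constraints, so, together with the compatibility of Gysin with cycle class maps, this yields algebraicity provided one can choose lifts inside $H^2_G(S_i(C),\Z(1))_0$.

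The main obstacle is arranging this last point: lifting classes of interest from $H^4_{G,S(C)}(X(C),\Z(2))$ to $\bigoplus_i H^2_G(S_i(C),\Z(1))_0$, possibly after correction by contributions from the singular strata (already shown to be algebraic). The Hodge part of the $_0$\nobreakdash-constraint is controlled by the vanishing $H^2(S_i,\sO_{S_i})=0$, which we expect to extract from Kollár's theorem $\RR^if_*\sO_X=0$ for $i>0$ (the key input to Lemma~\ref{lem:Hi0 Pictors}) combined with Steenbrink's limit mixed Hodge theory: in a semistable degeneration $S=\bigcup S_i$ of surfaces with $p_g=0$, the vanishing of $h^{0,2}$ of the nearby fiber forces $h^{0,2}(S_i)=0$ for each component. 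The topological part of the constraint --- absent from the analysis over separably closed fields in~\cite{ewzcl} --- requires a careful $G$\nobreakdash-equivariant compatibility argument on each $S_i(R)$, whose verification is the novel point in the real setting and where the flexibility provided by the singular-stratum contributions must be exploited.
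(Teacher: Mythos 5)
Your overall plan — reduce to the SNC case via Hironaka, use a Mayer--Vietoris decomposition of $S_{\red}$, establish $H^2(S_i,\sO_{S_i})=0$, and conclude by the Lefschetz~$(1,1)$ theorem applied to each component — is the same as the paper's, but two substantive points go wrong.

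First, the passage from $H^4_{G,S(C)}(X(C),\Z(2))$ to the components $S_i$ is not as innocuous as you suggest. What is needed is the surjectivity of the sum of the Gysin maps
$\rho:\bigoplus_i H^4_{G,S_i(C)}(X(C),\Z(2))\to H^4_{G,S(C)}(X(C),\Z(2))$,
which is the content of the paper's Lemma~\ref{lem:cohofibrespeciale}. Your claim that the double- and triple-locus contributions are ``tautologically in the image of $\cl$'' misdescribes the structure: the relevant obstruction groups are not $H^0_G(S_{ij}(C),\Z)$ (whose elements indeed correspond to algebraic classes) but the higher-degree groups $\bigoplus_{i<j}H^5_{G,S_{ij}(C)}(X(C),\Z(2))\simeq\bigoplus_{i<j}H^1_G(S_{ij}(C),\Z)$ and $\bigoplus_{i<j<k}H^6_{G,S_{ijk}(C)}(X(C),\Z(2))\simeq\bigoplus_{i<j<k}H^0_G(S_{ijk}(C),\Z(-1))$, into which the cokernel of $\rho$ injects. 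These do not carry cycle classes; the paper instead proves $\coker\rho=0$ in two steps: it is torsion-free because those groups are torsion-free (which requires a short Hochschild--Serre argument for $H^1_G(S_{ij}(C),\Z)$), and it is torsion because the dual restriction map $H^2(S(\C),\Q)\to\bigoplus_i H^2(S_i(\C),\Q)$ is injective, a mixed-Hodge-theoretic fact hinging on $H^2(S_{\red},\sO_{S_{\red}})=0$ and the Lefschetz principle. None of this is indicated in your sketch, and it is the technical heart of the proof.

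Second, your final step is stuck on a difficulty that in fact does not arise. You cite the archimedean real Lefschetz~$(1,1)$ theorem \cite[Proposition~\ref*{BW1-prop:real(1,1)}]{BW1}, which only yields $\cl(\CH^1(S_i))=H^2_G(S_i(C),\Z(1))_0$ and only over $\R$, and you flag as an unresolved obstacle the problem of landing in the constrained subgroup and of controlling the topological constraint on each $S_i(R)$. The paper instead invokes \cite[Proposition~\ref*{BW1-prop:real(1,1)nonarch}]{BW1}, valid over any real closed field, which together with $H^2(S_i,\sO_{S_i})=0$ (Lemma~\ref{vanishingH2}) shows that $\cl:\CH_1(S_i)\to H^2_G(S_i(C),\Z(1))$ is surjective onto the \emph{whole} group: no topological constraint, no lifting problem. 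The ``novel $G$-equivariant compatibility argument on each $S_i(R)$'' that you describe as the remaining obstacle is an artefact of citing the wrong form of the theorem, and with the archimedean version your argument also could not reach arbitrary real closed fields as the statement requires.

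Minor points: the reduction to the SNC case is most simply justified by the projection formula $\pi_*\circ\pi^*=\Id$ on $H^4_G(X(C),\Z(2))$, not by ``birational invariance of the real integral Hodge conjecture'', which is a statement about a different object and does not obviously imply what you need here; and your Steenbrink argument for $H^2(S_i,\sO_{S_i})=0$ is unnecessarily restrictive (it presupposes semistability, which Hironaka's theorem does not provide), whereas the paper's argument via torsion-freeness of $\RR^i f_*\sO_X$ works for any fibre.
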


We prove this proposition in~\textsection\ref{subsubsec:proofofcurvesinfibers}.
Before doing so, we shall need two lemmas.

\subsubsection{Two lemmas}
\label{subsubsec:afewlemmas}

In~\S\ref{subsubsec:afewlemmas}, we fix $f:X\to B$ and $t\in B(R)$ as in Proposition~\ref{prop:curvesinfibers} and we assume, in addition, that $S_\red$ is a divisor with simple normal
crossings on~$X$.
Let $(S_i)_{i\in I}$ be its irreducible components.
 The following lemma is essentially \cite[Lemmas 4.4, 4.5]{ewzcl}. We give a short proof in our context, already suggested in \emph{loc.\ cit}.

\begin{lem}\label{vanishingH2}
The groups $H^2(S_i,\sO_{S_i})$ and $H^2(S_{\red},\sO_{S_{\red}})$ all vanish.
\end{lem}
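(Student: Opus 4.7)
The plan is to reduce both vanishings to a single claim---namely, $H^2(S,\sO_S)=0$ for the scheme-theoretic fiber $S=f^{-1}(t)$---and then to establish the latter by examining the coherent sheaf $R^2f_*\sO_X$ on~$B$.

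Both reductions are standard d\'evissages. First, the short exact sequence $0\to\sI\to\sO_S\to\sO_{S_\red}\to 0$ defined by the nilpotent ideal $\sI$ of $S_\red$ in~$S$, combined with $H^3(S,\sI)=0$ (forced by $\dim S=2$), yields a surjection $H^2(S,\sO_S)\twoheadrightarrow H^2(S_\red,\sO_{S_\red})$. Second, for each~$i$, I would write $S_\red=S_i\cup T_i$ with $T_i=\bigcup_{j\neq i}S_j$ and apply the Mayer--Vietoris exact sequence
\begin{equation*}
0\to\sO_{S_\red}\to\sO_{S_i}\oplus\sO_{T_i}\to\sO_{S_i\cap T_i}\to 0;
\end{equation*}
combined with the vanishing $H^2(S_i\cap T_i,\sO)=0$, which holds because $S_i\cap T_i$ is of pure dimension one by the simple normal crossings hypothesis, the associated long exact sequence produces a surjection $H^2(S_\red,\sO_{S_\red})\twoheadrightarrow H^2(S_i,\sO_{S_i})\oplus H^2(T_i,\sO_{T_i})$. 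Hence both vanishings reduce to showing $H^2(S,\sO_S)=0$.

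For the remaining claim, the hypothesis on the geometric generic fiber gives $(R^2f_*\sO_X)_\eta=H^2(X_\eta,\sO_{X_\eta})=0$, so $R^2f_*\sO_X$ is a torsion coherent sheaf on the curve~$B$. Since $R^3f_*\sO_X=0$ for dimension reasons, standard cohomology and base change produces a surjection $(R^2f_*\sO_X)_t\otimes k(t)\twoheadrightarrow H^2(S,\sO_S)$, so it remains to show $(R^2f_*\sO_X)_t=0$.

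The main obstacle is exactly this final vanishing. I would attack it through Koll\'ar's torsion-freeness theorem for higher direct images of the canonical sheaf: since $\omega_X|_{X_\eta}\cong\omega_{X_\eta}$ and the generic fiber satisfies $H^0(X_\eta,\omega_{X_\eta})=p_g(X_\eta)=0$ by hypothesis, the generic stalk of $f_*\omega_X$ vanishes, and Koll\'ar's theorem gives that $f_*\omega_X$ is torsion-free; together these force $f_*\omega_X=0$. Relative Serre--Grothendieck duality on the Cohen--Macaulay morphism~$f$ of relative dimension two (note that $f$ is Cohen--Macaulay since $X$ and~$B$ are smooth and $f$ is flat) then converts this into the sought-after vanishing $R^2f_*\sO_X=0$, completing the proof.
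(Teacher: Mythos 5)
Your overall strategy parallels the paper's (a torsion-free sheaf on the curve with vanishing generic stalk must be zero), but the two specific steps you choose both differ from the paper's, and the second one has a gap as written.

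The reduction to $H^2(S,\sO_S)=0$ via Mayer--Vietoris on $S_\red=S_i\cup T_i$ is correct but longer than needed. The paper simply observes that $\sO_{S_i}$ and $\sO_{S_\red}$ are quotients of $\sO_S$, and since $S$ is a proper scheme of dimension $2$, the group $H^3(S,\sK)$ of the kernel $\sK$ of any such quotient vanishes (Grothendieck vanishing); the long exact sequence then gives surjections $H^2(S,\sO_S)\twoheadrightarrow H^2(S_i,\sO_{S_i})$ and $H^2(S,\sO_S)\twoheadrightarrow H^2(S_\red,\sO_{S_\red})$ with no need to analyse the intersections $S_i\cap T_i$.

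For $H^2(S,\sO_S)=0$ itself, the paper directly applies the torsion-freeness of the sheaves $\RR^if_*\sO_X$ (du Bois--Jarraud), concludes $\RR^2f_*\sO_X=\RR^3f_*\sO_X=0$, and pushes forward the Koszul sequence $0\to\sO_X\xrightarrow{s}\sO_X\to\sO_S\to 0$. You instead route through Koll\'ar's theorem for $\RR^if_*\omega_X$ and relative duality. The conclusion $f_*\omega_{X/B}=0$ is fine, but the claim that ``relative Serre--Grothendieck duality then converts this into $\RR^2f_*\sO_X=0$'' is not an immediate consequence of duality. Duality gives the isomorphism $\RR f_*\sO_X\cong\RR\mathcal{H}om_{\sO_B}(\RR f_*\omega_{X/B},\sO_B)[-2]$ in the derived category of~$B$, and the spectral sequence computing the second cohomology sheaf of the right-hand side (which degenerates because $B$ is a smooth curve) produces a short exact sequence
\begin{equation*}
0\to\mathcal{E}xt^1_{\sO_B}\bigl(\RR^1f_*\omega_{X/B},\sO_B\bigr)\to\RR^2f_*\sO_X\to\mathcal{H}om_{\sO_B}\bigl(f_*\omega_{X/B},\sO_B\bigr)\to 0\rlap{.}
\end{equation*}
Using $f_*\omega_{X/B}=0$ kills only the right-hand term; to conclude you must also kill the $\mathcal{E}xt^1$ term, which requires knowing that $\RR^1f_*\omega_{X/B}$ is locally free on~$B$---that is, torsion-free on a smooth curve, hence another application of Koll\'ar's theorem. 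So the argument is closable, but duality alone does not ``convert'' the vanishing for you; the missing local-freeness of $\RR^1f_*\omega_{X/B}$ is a needed ingredient that you should state explicitly.
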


\begin{proof}
Replacing $B$ by an open subset, we may assume that $t\in B$ is defined by the vanishing of a global section $s\in H^0(B,\sO_B)$.
  The coherent sheaves $\RR^if_*\sO_{X}$ are torsion-free
by du Bois and Jarraud \cite{duboisjarraud}
(see \cite[Step~6 p.~20]{kollartorsionfree}).
Since  $\RR^2 f_*\sO_{X}$ and $\RR^3 f_*\sO_{X}$ are moreover torsion (by hypothesis for the first one, by cohomological dimension for the second), these sheaves vanish.
The short exact sequence $0\to \sO_{X}\xrightarrow{s}
\sO_{X}\to \sO_S\to 0$ implies that $\RR^2 f_*\sO_S=0$, that is $H^2(S,\sO_S)=0$.
  Since both $\sO_{S_i}$ and $\sO_{S_{\red}}$ are quotients of $\sO_S$ and since $S$ has coherent cohomological dimension $2$, it follows that 
$H^2(S_i,\sO_{S_i})=0$ and $H^2(S_{\red},\sO_{S_{\red}})=0$, as desired.
\end{proof}

The next lemma originates from \cite[\S\S 4.2--4.3]{ewzcl}.

\begin{lem}
\label{lem:cohofibrespeciale}
The morphism $\rho:\bigoplus_{i\in I} H^4_{G,S_i(C)}(X(C),\Z(2))\to H^4_{G,S(C)}(X(C),\Z(2))$ induced by the inclusions $S_i(C)\subset S(C)$ is surjective.
\end{lem}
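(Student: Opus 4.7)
My approach is to apply a Mayer--Vietoris spectral sequence to the closed cover $S_{\red}(C) = \bigcup_{i\in I} S_i(C)$ of the underlying topological space of $S(C)$ and to analyze the obstructions to surjectivity of~$\rho$ via equivariant purity on the strata. Write $S_J := \bigcap_{i\in J} S_i$ for each nonempty $J \subseteq I$; the simple normal crossings hypothesis guarantees that $S_J$ is smooth of codimension $|J|$ in~$X$, or empty. Iterating the two-set Mayer--Vietoris triangle $R\underline{\Gamma}_{A\cap B} \to R\underline{\Gamma}_A \oplus R\underline{\Gamma}_B \to R\underline{\Gamma}_{A\cup B}$ yields a resolution of $R\underline{\Gamma}_{S_{\red}(C)}\Z(2)$ whose $k$-th term is $\bigoplus_{|J|=k+1} R\underline{\Gamma}_{S_J(C)}\Z(2)$, with stalkwise exactness following from the contractibility of the simplex on the set of components through each point. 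The associated hypercohomology spectral sequence has
\begin{equation*}
E_1^{-k,q} = \bigoplus_{|J|=k+1} H^q_{G,S_J(C)}(X(C),\Z(2)) \Longrightarrow H^{q-k}_{G,S_{\red}(C)}(X(C),\Z(2))\rlap{,}
\end{equation*}
its edge map at $(k,q)=(0,4)$ being exactly~$\rho$, so surjectivity of~$\rho$ is equivalent to the vanishing of $E_\infty^{-k,4+k}$ for all $k\geq 1$.

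By equivariant purity \cite[(\ref*{BW1-eq:equivariant purity subvariety})]{BW1}, these obstruction groups are
\begin{equation*}
E_1^{-k,4+k} = \bigoplus_{|J|=k+1} H^{2-k}_G(S_J(C),\Z(1-k))\rlap{.}
\end{equation*}
Since $\dim X = 3$, the stratum $S_J$ has dimension $2-k$ when nonempty, so it vanishes for $k\geq 3$ and contributes at most a finite-dimensional group for $k \in \{1,2\}$. The only possibly nonzero contributions therefore come from the Gysin kernel for $k=1$ (smooth curves $S_{ij}$) and from $\bigoplus H^0_G(S_{ijk},\Z(-1))$ for $k=2$ (isolated points $S_{ijk}$), so the higher-filtration part of the spectral sequence is concentrated in a two-column strip.

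The main obstacle is to show that these surviving terms are killed by the differentials $d_1$ and $d_2$ of the spectral sequence. I expect the essential input to be Lemma~\ref{vanishingH2}: the vanishings $H^2(S_{\red},\sO_{S_{\red}}) = 0$ and $H^2(S_i,\sO_{S_i}) = 0$, combined with the real Lefschetz $(1,1)$ theorem, should translate the required acyclicity of the Gysin complex $\bigoplus_{|J|=3} H^0_G(S_J,\Z(-1)) \to \bigoplus_{|J|=2} H^1_G(S_J,\Z) \to \bigoplus_i H^3_G(S_i,\Z(1))$ into the acyclicity of the corresponding coherent Picard complex on $S_{\red}$---exactly the argument strategy of \cite[\S\S 4.2--4.3]{ewzcl}, adapted from separably closed to real closed fields. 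The delicate step I anticipate is this coherent-to-topological translation, in particular making sure that the integral torsion contributions behave well; should it prove too subtle in the equivariant semi-algebraic setting, a fallback is to argue inductively on the number of components using two-set Mayer--Vietoris, reducing each step to an auxiliary vanishing that can be checked by the same coherent-cohomological input.
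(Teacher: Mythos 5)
Your framework is sound and is essentially a repackaging of the paper's own argument: the sequence~\eqref{MayerVietorissheaves}, split into two short exact sequences, encodes exactly the filtration whose spectral sequence you write down, and purity correctly identifies the obstruction groups in the strip $k\in\{1,2\}$. The genuine gap is in the finish. You propose to show that the surviving terms ``are killed by the differentials $d_1$ and $d_2$'', i.e.\ that the Gysin complex $\bigoplus_{|J|=3}H^0_G(S_J,\Z(-1))\to\bigoplus_{|J|=2}H^1_G(S_J,\Z)\to\bigoplus_iH^3_G(S_i,\Z(1))$ is exact in the relevant spots. That is not what the paper proves and there is no reason to expect it to hold integrally: the $E_2$-terms need not vanish. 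What the paper actually establishes is a \emph{pair} of weaker facts whose conjunction forces $\mathrm{coker}(\rho)=0$: (a) the obstruction groups $E_1^{-1,5}\simeq\bigoplus H^1_G(S_{ij}(C),\Z)$ and $E_1^{-2,6}\simeq\bigoplus H^0_G(S_{ijk}(C),\Z(-1))$ are \emph{torsion-free}, so every subquotient of them is, and hence so is $\mathrm{coker}(\rho)$; and (b) $\mathrm{coker}(\rho)$ is \emph{torsion}, because $\rho_\Q$ is surjective. Point (a) is not automatic and requires a specific argument via the Hochschild--Serre spectral sequence, crucially using that $H^0(S_{ij}(C),\Z)$ is a $G$-module of the form $\Z^a\oplus\Z[G]^b$ so that $H^1(G,H^0(S_{ij}(C),\Z))=0$; your proposal doesn't address this, and it is exactly the ``integral torsion contributions'' you flag as worrisome. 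Point (b) is where Lemma~\ref{vanishingH2} enters, but not through the real Lefschetz $(1,1)$ theorem as you suggest: after reducing (by Hochschild--Serre) to the non-equivariant question over~$\C$ and dualising, the paper invokes Deligne's mixed Hodge theory of the degeneration to show that the kernel of the dual restriction map $H^2(S(\C),\Q)\to\bigoplus H^2(S_i(\C),\Q)$ has only Hodge numbers $h^{0,0},h^{0,1},h^{1,0}$, and then kills these using $\gr^0_F H^2(S(\C),\C)=H^2(S_\red,\sO_{S_\red})=0$ together with exactness of $\gr^0_F$.

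So the missing ideas are concrete: you need the torsion-freeness statement for $H^1_G(S_{ij}(C),\Z)$ (via the $G$-module structure of $H^0$ of the double curves), the reduction to rational, non-equivariant coefficients, and the duality-plus-mixed-Hodge-theory argument replacing your appeal to Lefschetz $(1,1)$. Your ``fallback'' of two-set Mayer--Vietoris induction would face exactly the same need for these inputs, since the content of the lemma is not an induction formality but precisely these two vanishing mechanisms acting in tandem.
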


\begin{proof}
Order the finite set $I$. For $n\geq 0$ and $i_1<\dots<i_{n+1}\in I$, we define $S_{i_1\mkern-1mu,\mkern1mu\dots,\mkern1mui_{n+1}}:=S_{i_1}\cap\dots \cap S_{i_{n+1}}$.
For $n \geq 0$, we set
$S^{(n)}:=\coprod_{i_1<\dots<i_{n+1}\in I} S_{i_1,\dots,i_{n+1}}$
and define $a_n:S^{(n)}\to S$ to be the natural morphism. That $S_\red$ has simple normal crossings shows that $S^{(n)}$ is smooth of pure dimension $2-n$, and the ordering on $I$ gives rise to an exact sequence of $G$-equivariant sheaves on $S(C)$
analogous to \cite[(3.7)]{ewzcl}:
\begin{equation}
\label{MayerVietorissheaves}
0\to \Z\to a_{0*}\Z\to a_{1*}\Z\to a_{2*}\Z\to 0.
\end{equation}
Define $\mathcal{F}$ to be the cokernel of $\Z\to a_{0*}\Z$, split~(\ref{MayerVietorissheaves}) into two short exact sequences,
let $\iota:S \hookrightarrow X$ denote the inclusion
and apply the derived functors of $\Hom_G(\iota_*-,\Z(2))$ to get two exact sequences:
\begin{equation}
\label{shortMV}
\bigoplus_i H^4_{G,S_i(C)}(X(C),\Z(2))\xrightarrow{\rho} H^4_{G,S(C)}(X(C),\Z(2))\to \Ext^5_G(\iota_*\mathcal{F},\Z(2)),
\end{equation}
\begin{equation*}
0\to \bigoplus_{i<j} H^5_{G,S_{ij}(C)}(X(C),\Z(2))\to \Ext^5_G(\iota_*\mathcal{F},\Z(2))\to \bigoplus_{i<j<k} H^6_{G,S_{ijk}(C)}(X(C),\Z(2)),
\end{equation*}
where $H^5_{G,S_{ijk}(C)}(X(C),\Z(2))=0$ for $i<j<k$ by purity \cite[(\ref*{BW1-eq:equivariant purity subvariety})]{BW1}.
Purity also shows that $H^6_{G,S_{ijk}(C)}(X(C),\Z(2))\simeq H^0_G(S_{ijk}(C),\Z(-1))$, which is torsion-free, and that $H^5_{G,S_{ij}(C)}(X(C),\Z(2))\simeq H^1_G(S_{ij}(C),\Z)$.
The latter group is torsion-free as well: indeed, the Hochschild--Serre spectral sequence \cite[(\ref*{BW1-eq:hochschild-serre})]{BW1} exhibits it as an extension of a subgroup of $H^1(S_{ij}(C),\Z)$ by $H^1(G, H^0(S_{ij}(C),\Z))$. On the one hand,  $H^1(S_{ij}(C),\Z)$ is torsion-free, as the long exact sequences of cohomology associated with $0\to\Z\xrightarrow{n}\Z\to\Z/n\Z\to 0$ show.
 On the other hand, the $G$\nobreakdash-module $H^0(S_{ij}(C),\Z)$ is isomorphic to $\Z^a \oplus \Z[G]^b$ for some $a,b\geq 0$ (that correspond respectively to the connected components of $S_{ij}$ that are geometrically connected, and to those that are not),
so that $H^1(G, H^0(S_{ij}(C),\Z))=0$. The exact sequences (\ref{shortMV}) now show that $\Ext^5_G(\iota_*\mathcal{F},\Z(2))$, hence also the cokernel of $\rho$, is torsion free.

It remains to show that the cokernel of $\rho$ is torsion, or equivalently that the morphism  $\rho_{\Q}:\bigoplus_{i\in I} H^4_{G,S_i(C)}(X(C),\Q(2))\to H^4_{G,S(C)}(X(C),\Q(2))$ is surjective.
By the Hochschild--Serre spectral sequence, it suffices to prove that the morphism
$\overline{\rho}_{\Q}:\bigoplus_{i\in I} H^4_{S_i(C)}(X(C),\Q(2))\to H^4_{S(C)}(X(C),\Q(2))$ is surjective. 
To check this, we may assume that $C=\C$, by the Lefschetz principle.
By \cite[Corollary~6.28]{peterssteenbrinkbook},
the dual of $\overline{\rho}_{\Q}$ identifies, up to a twist, with the restriction map
$$\overline{\rho}_{\Q}^{\vee}: H^{2}(S(\C),\Q)\to \bigoplus_{i\in I} H^{2}(S_i(\C),\Q)\rlap{.}$$
To prove that $\overline{\rho}_{\Q}^{\vee}$ is injective, we use Deligne's mixed Hodge theory.
By \cite[Proposition~8.2.5 and Th\'eor\`eme~8.2.4]{HodgeIII},
the Hodge numbers of the mixed Hodge structure
$N=\Ker(\overline{\rho}_{\Q}^{\vee})$
all vanish except perhaps $h^{0,0}$, $h^{0,1}$, $h^{1,0}$.
On the other hand, it follows from the definition of the Hodge filtration on $H^2(S(\C),\C)$
that $\gr^0_F\mkern2muH^2(S(\C),\C)=H^2(S_\red,\sO_{S_\red})$
(see, \emph{e.g.}, its description in \cite[Chapter~4, \textsection2.4]{AGIII}),
which vanishes by Lemma~\ref{vanishingH2}.
As~$\gr^0_F$ is exact (see \cite[Th\'eor\`eme~2.3.5]{HodgeII}), we deduce that $\gr^0_F\mkern2muN_\C=0$,
hence $h^{0,0}=h^{0,1}=0$.  As $h^{1,0}=h^{0,1}$, it follows that all Hodge numbers of~$N$ vanish, so that
$N=0$, which completes the proof.
\end{proof}

\subsubsection{Proof of Proposition~\ref{prop:curvesinfibers}}
\label{subsubsec:proofofcurvesinfibers}

By Hironaka's theorem on embedded resolution of singularities, there exists
a proper birational morphism $\pi:X'\to X$, with~$X'$ smooth, such that $(f\circ \pi)^{-1}(t)_\red$ is a simple normal crossings divisor on~$X'$.  As $\pi_*\circ\pi^*$ is the identity of $H^4_G(X(C),\Z(2))$
(a consequence of the equivariant projection formula \cite[(\ref*{BW1-eq:projection formula equivariant})]{BW1}), we may
replace~$X$ with~$X'$ and assume that~$S_\red$ is a simple normal crossings divisor on~$X$.
By Lemma \ref{lem:cohofibrespeciale},
it now suffices to show that
the image of $H^4_{G,S_i(C)}(X(C),\Z(2))\to H^4_G(X(C),\Z(2))$ is contained, for all $i\in I$, in the image of the cycle class map $\cl:\CH_1(X)\to H^4_G(X(C),\Z(2))$. Consider the diagram
$$
\xymatrix{
\CH_1(S_i)\ar^{\cl}[d]\ar[rr] && \CH_1(X) \ar^{\cl}[d] \\
H^2_{G}(S_i(C),\Z(1))\ar[r]^(0.45){\sim}& H^4_{G,S_i(C)}(X(C),\Z(2))\ar[r]& H^4_G(X(C),\Z(2)),
}
$$
whose vertical arrows are the equivariant cycle class maps, whose bottom right horizontal arrow is the purity isomorphism \cite[(\ref*{BW1-eq:equivariant purity subvariety})]{BW1}, and whose top horizontal arrow is the push-forward map.
It is commutative by \cite[\S\ref*{BW1-subsubsec:eqcl}]{BW1}.
By \cite[Proposition~\ref*{BW1-prop:real(1,1)nonarch}]{BW1} and Lemma~\ref{vanishingH2}, the left vertical arrow is surjective. This proves the proposition.

\subsection{Exploiting a Leray spectral sequence}
\label{subsec:joliLeray}
In \S \ref{subsec:joliLeray}, we introduce and study
a variant of the Leray spectral sequence that is adapted to the study of fibrations in the real context. Applied to the map $f(C):X(C)\to B(C)$ associated with a morphism $f:X\to B$ of varieties over $R$, it is very closely related to (and inspired by) the Leray spectral sequence for the natural morphism from the \'etale site of $X$ to the $b$\nobreakdash-site of $B$ defined by Scheiderer in \cite[Section~2]{scheiderer}.

\subsubsection{Two Leray spectral sequences}
\label{subsubsec:two leray}

For the whole of~\textsection\ref{subsubsec:two leray},
let us fix a real closed field~$R$,
a $G$\nobreakdash-equivariant morphism $f:V\to W$
between locally complete semi-algebraic spaces over~$R$ endowed with a continuous semi-algebraic action
of~$G$, and a $G$\nobreakdash-equivariant sheaf~$\sF$ of abelian groups on~$V$.
In order to compute the equivariant cohomology group $H^n_G(V,\sF)$,
one could think
of two Leray spectral sequences.
The more straightforward one has $H^p_G(W,\RR^qf_*\sF)$ as its $E_2^{p,q}$ term.
The other one, which for the purposes of the proof of
Theorem~\ref{thm:solides fibres en del Pezzo}
is significantly more convenient, reads
\begin{align}
\label{eq:jolileray}
E_2^{p,q} = H^p(W/G,\RR^q f^G_*\sF) \Rightarrow H^{p+q}_G(V,\sF)\rlap{.}
\end{align}
Here~$f^G_*$ is the functor, from the category of $G$\nobreakdash-equivariant sheaves of abelian
groups on~$V$ to the category of sheaves of abelian groups on the quotient semi-algebraic space~$W/G$,
which takes a sheaf~$\sF$ to $(\pi_*f_*\sF)^G$,
where $\pi:W\to W/G$ denotes the quotient map
(see \cite[Corollary~1.6]{brumfielquotient}, \cite[Proposition~3.5]{delfsknebuschsemialgretractions}).
This left exact functor admits an exact left adjoint;
hence the spectral sequence~\eqref{eq:jolileray} results from \cite[Proposition~2.3.10, Theorem~5.8.3]{weibelbook}.

\begin{prop}
\label{prop:joli leray general properties}
Let~$f$ and~$\sF$ be as above.  Let $q\geq 0$ be an integer.
\begin{itemize}
\item[(i)]
The sheaf on~$W/G$ associated with the presheaf
\begin{align*}
U \mapsto H^q_G\big(f^{-1}(\pi^{-1}(U)),\sF|_{f^{-1}(\pi^{-1}(U))}\big)
\end{align*}
is canonically isomorphic to $\RR^qf^G_*\sF$.

\item[(ii)]
Viewing $\sF[G]=\sF \otimes_\Z \Z[G]$ as a $G$\nobreakdash-equivariant sheaf
with the diagonal action of~$G$, there is a canonical isomorphism
$\RR^qf^G_*(\sF[G])=\pi_*\RR^qf_*\sF$.

\item[(iii)]
The cokernel of the map
$\RR^qf^G_*(\sF[G]) \to \RR^qf^G_*\sF$ induced by the norm map $\sF[G]\to \sF$
is supported on $W^G \subseteq W/G$.

\item[(iv)]
If~$f$ is proper, the formation of $\RR^qf^G_*\sF$ commutes with arbitrary base change, in the following sense.
Let~$Z$ be a locally complete semi-algebraic space.
Let $\rho:Z \to W/G$ be a morphism.
Let $W' = W \times_{W/G} Z$.
Let $V' = V \times_W W'$.
Let $\rho_V:V'\to V$ and $f':V'\to W'$ denote the projections.
If~$f$ is proper, the base change morphism
$\rho^*\RR^qf^G_*\sF\to \RR^qf'^G_*(\rho_V^*\sF)$
is an isomorphism.

\item[(v)]
If~$f$ is proper, the stalks of $\RR^qf^G_*\sF$ can be computed as follows.
Let $w \in W$.
If $w \in W^G$, there is a canonical isomorphism
$$
(\RR^qf^G_*\sF)_{\pi(w)} = H^q_G\big(f^{-1}(w),\sF|_{f^{-1}(w)}\big)\rlap{.}
$$
If $w \notin W^G$, there is a canonical isomorphism
$$
(\RR^qf^G_*\sF)_{\pi(w)} = H^q\big(f^{-1}(w),\sF|_{f^{-1}(w)}\big)\rlap{.}
$$

\item[(vi)]
Let~$X$ and~$Y$ be algebraic varieties over~$R$ endowed with an action of~$G$
and let $\phi:X\to Y$ be a smooth and proper $G$\nobreakdash-equivariant
morphism.
Assume that~$G$ acts trivially on~$Y$,
that $V=X(R)$ and $W=Y(R)$
and that $f=\phi(R)$.
If~$\sF$ is locally constant with finitely generated stalks,
then so is $\RR^qf^G_*\sF$.

\item[(vii)]
Let $\phi:X\to Y$ be a smooth and projective morphism
between varieties over~$R$.
Assume that $V=X(C)$ and $W=Y(C)$ and that $f=\phi(C)$.
If~$\sF$ is locally constant with finitely generated stalks,
then so are the restrictions of $\RR^qf^G_*\sF$
to $Y(R) \subseteq W/G$ and to $(Y(C)\setminus Y(R))/G \subseteq W/G$.
\end{itemize}
\end{prop}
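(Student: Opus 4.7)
The plan is to establish parts (i)--(vii) in order, using formal derived-functor manipulations for (i)--(ii), proper base change for (iv)--(v), deducing (iii) from (v), and applying an equivariant version of Ehresmann's fibration theorem for (vi)--(vii).

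Part (i) follows from the universal $\delta$-functor criterion applied to the functor $f^G_* = (\pi_* f_* (-))^G$: the presheaf $U \mapsto H^q_G(f^{-1}(\pi^{-1}(U)), \sF|_{f^{-1}(\pi^{-1}(U))})$ coincides in degree zero with $f^G_*\sF$, and injective $G$-equivariant sheaves have vanishing higher equivariant cohomology, so the sheafification of this presheaf is $\RR^q f^G_*\sF$. Part (ii) follows from the fact that $(-)[G]$ is exact and admits the forgetful functor as both a left and a right adjoint (for a finite group, induction and coinduction coincide), hence preserves injectives, together with the direct computation $f^G_*(\sG[G]) = \pi_* f_* \sG$ for any ordinary sheaf $\sG$; deriving yields the asserted isomorphism.

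For (iv), proper base change for $f_*$ in the locally complete semi-algebraic category is a theorem of Delfs--Knebusch, and the functors $\pi_*$ and $(-)^G$ commute with arbitrary base change formally, which gives the statement for $f^G_*$. Part (v) is obtained by applying (iv) to $\rho:\{\pi(w)\}\hookrightarrow W/G$: when $w \in W^G$, $W'$ is a single $G$-fixed point and $V' = f^{-1}(w)$ carries its induced $G$-action, so the stalk is $H^q_G$ of the fibre; when $w \notin W^G$, $W' = \{w,\sigma w\}$ is a free $G$-orbit, $V' = f^{-1}(w) \sqcup f^{-1}(\sigma w)$ is swapped by $G$, and the equivariant cohomology of a free $G$-space equals the ordinary cohomology of its quotient, here identified with either component. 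Part (iii) then follows from (v) and (ii): at the stalk over $\pi(w)$ for $w \notin W^G$, the map in question becomes the cohomology map $H^q(f^{-1}(w), \sF[G]|_{f^{-1}(w)}) \to H^q(f^{-1}(w), \sF|_{f^{-1}(w)})$ induced by the norm map on the underlying sheaves, which is split surjective as a map of plain sheaves; hence the cokernel vanishes on $W/G \setminus \pi(W^G)$.

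For (vi), the $G$-equivariant semi-algebraic Ehresmann theorem (which follows from the non-equivariant Delfs--Knebusch version by averaging local trivialisations over the finite group $G$) shows that $\phi(R)$ is $G$-equivariantly locally trivial; combined with (v), this gives local constancy of $\RR^q f^G_* \sF$ with finitely generated stalks. For (vii), apply the semi-algebraic Ehresmann theorem to the smooth proper map $\phi(C)$: over $Y(R) \subseteq Y(C)/G$, trivialisations can be made $G$-equivariant by averaging exactly as in (vi); over $(Y(C) \setminus Y(R))/G$ the $G$-action is free on source and target, so $\phi(C)$ descends to a locally trivial fibration on the quotient, and (v) again yields local constancy on each stratum. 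I expect the main obstacle to be the stalk computation in (v) at non-fixed points: one must carefully carry out the base change and identify the equivariant cohomology of a two-component fibre (with swap action) with the ordinary cohomology of one component, a computation that in turn underlies the deduction of (iii), the stalk formula used in (vii), and the internal consistency of the spectral sequence~\eqref{eq:jolileray}.
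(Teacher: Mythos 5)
Your treatment of parts (i), (ii), (iv), and (v) is essentially sound and close to the paper's argument: (i) is the standard $\delta$-functor/sheafification identification, (ii) follows because $(-)[G]$ is exact and preserves injectives, and (iv)--(v) are handled exactly as in the paper by composing proper base change for $f_*$ with base change for the finite map $\pi$ and for $(-)^G$, then specialising to a point.

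The proof of (iii) has a genuine gap. You deduce (iii) by computing the stalk of the cokernel at $\pi(w)$ for $w \notin W^G$ using (v), showing it vanishes, and concluding that ``the cokernel vanishes on $W/G \setminus \pi(W^G)$.'' This fails for two independent reasons. First, (v) requires $f$ to be proper, but properness is not assumed in the statement of (iii); you would be proving a strictly weaker result. Second, and more fundamentally, in the semi-algebraic category a sheaf need not vanish on an open set even if all its stalks at the points of the underlying set vanish there --- this is precisely the content of Remark~\ref{rmk:warning stalks} in the paper, which cites \cite[Chapter~I, Example~1.7]{delfshomology}. Since the cokernel sheaf is not known a priori to be locally constant (or to become constant on a finite locally closed cover), the stalk computation simply does not give the conclusion. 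The paper's argument avoids both problems: by triangulating, one covers $(W\setminus W^G)/G$ by finitely many connected and simply connected semi-algebraic open sets over which the double cover $W\setminus W^G \to (W\setminus W^G)/G$ trivialises, \emph{i.e.}\ $\pi^{-1}(U) \cong U \times G$; on such a~$U$ the norm map $\sF[G] \to \sF$ admits a $G$-equivariant section, hence so does the induced map on $\RR^q f^G_*$, and surjectivity holds by a global, properness-free argument.

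For (vi) and (vii), your route through a $G$-equivariant semi-algebraic Ehresmann theorem obtained by ``averaging local trivialisations'' differs from the paper, which instead runs the spectral sequence $E_2^{a,b}=\sH^a(G,\RR^b f_*\sF)\Rightarrow\RR^{a+b}f^G_*\sF$ and invokes Scheiderer's local constancy theorem \cite[Corollary~17.20]{scheiderer}, with a Weil-restriction trick in (vii) to reduce the statement over~$Y(R)$ to a situation where (vi) applies. Your idea is plausible, but ``averaging'' a semi-algebraic trivialisation over~$G$ is not a standard operation (the usual arguments via averaged Riemannian metrics or connections are not semi-algebraic), so this step would require a separate justification, whereas the spectral-sequence approach needs nothing beyond the already-quoted non-equivariant result. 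You should either make the equivariant Ehresmann argument precise in the semi-algebraic category or switch to the spectral-sequence argument; and in any case, part (iii) must be reproved along the lines indicated above.
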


\begin{proof}
For~(i), see \cite[Lemme~3.7.2]{tohoku}.
Assertion~(ii) follows from \cite[(\ref*{BW1-eq:cohoeqnoneq})]{BW1}
and from~(i).
For~(iii),
applying \cite[Theorem~2.1]{delfsknebuschonthehomology} to obtain a triangulation
and considering the star neighbourhoods of the open simplices,
we see that we can cover the semi-algebraic space $(W\setminus W^G)/G$
by finitely many connected and simply connected open semi-algebraic subsets.
After shrinking~$W$, we may therefore assume that $W=(W/G) \times G$
and that~$\pi$ is the first projection.
In this case, the norm map $\sF[G]\to \sF$ admits a $G$\nobreakdash-equivariant section,
so that the map
$\RR^qf^G_*(\sF[G]) \to \RR^qf^G_*\sF$
admits a section
and is hence surjective.
For~(iv), we note that $\RR f^G_* = \RR (\mathrm{Id}_W)_*^G\circ \RR f_*$
and $\RR f'^G_* = \RR (\mathrm{Id}_{W'})_*^G\circ \RR f'_*$
and that~$\RR f_*$ satisfies the proper base change theorem,
by \cite[Chapter~II, Theorem~7.8]{delfshomology}.
This reduces us to checking~(iv) when~$f$ is the identity map,
in which case it is an exercise.
Assertion~(v) follows from~(iv).
For~(vi),
let us consider the natural spectral sequence
\begin{align*}
E_2^{a,b}=\sH^a(G,\RR^b f_*\sF) \Rightarrow \RR^{a+b} f^G_* \sF\rlap{,}
\end{align*}
where, for a $G$\nobreakdash-equivariant sheaf~$\sG$ on~$Y(R)$,
we denote by $\sH^a(G,\sG)$ the sheaf, on~$Y(R)$,
associated with the presheaf $U \mapsto H^a(G,\sG(U))$.
Applying \cite[Corollary~17.20]{scheiderer} to~$\phi$,
we see that the sheaves $\RR^b f_*\sF$ are locally constant on~$Y(R)$, with finitely generated stalks.
It follows that $\sH^a(G,\RR^b f_*\sF)$ is locally constant on~$Y(R)$, with finitely generated stalks,
for all~$a$ and~$b$; hence, so is the abutment $\RR^q f^G_*\sF$.
Finally, to check~(vii), we may assume that~$Y$ is affine.
In this case, the assertion on~$Y(R)$ follows from~(vi) applied to
the second projection $R_{C/R} X_C \times_{R_{C/R} Y_C} Y \to Y$,
where~$R_{C/R}$ denotes the Weil restriction from~$C$ to~$R$
of a quasi-projective variety over~$C$,
and from~(iv),
while the assertion on $(Y(C)\setminus Y(R))/G$ follows from
 \cite[Corollary~17.20]{scheiderer}
applied to $R_{C/R}\phi:R_{C/R}X_C \to R_{C/R}Y_C$.
\end{proof}

\begin{rmk}
\label{rmk:warning stalks}
To prevent misuse of Proposition~\ref{prop:joli leray general properties}~(v),
we remind the reader that a sheaf on a semi-algebraic space need not vanish even if its stalks
at the points of the underlying set
do (see \cite[Chapter~I, Example~1.7]{delfshomology}).
This phenomenon does not occur for sheaves
which become constant on a set-theoretic cover by finitely many locally closed semi-algebraic subsets
(\emph{e.g.}, for locally constant sheaves, see \cite[Chapter~II, Definition~1]{delfshomology}).
\end{rmk}

\subsubsection{Application}

We resume the proof of Theorem~\ref{thm:solides fibres en del Pezzo}.

\begin{prop}\label{prop:Leraybizarre}
Let~$B$ be a smooth, geometrically connected, affine curve over a real closed field~$R$.
Let $f:X\to B$ be a smooth and projective morphism
whose fibre over any $b \in B(C)$ satisfies $H^3(X_b(C),\Z)=0$.
Assume that all semi-algebraic connected components of~$B(R)$ are semi-algebraically
contractible.
Then the product of restriction maps
\begin{align*}
H^4_G(X(C),\Z(2)) \to \prod_{b \in B(R)} H^4_G(X_b(C),\Z(2)) \times \prod_{b \in B(C)} H^4(X_b(C),\Z(2))
\end{align*}
is injective.
\end{prop}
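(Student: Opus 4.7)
The plan is to use the Leray spectral sequence \eqref{eq:jolileray}
\begin{align*}
E_2^{p,q} = H^p(B(C)/G, \RR^q f^G_* \Z(2)) \Rightarrow H^{p+q}_G(X(C), \Z(2))
\end{align*}
and to factor the restriction map in question as
\begin{align*}
H^4_G(X(C),\Z(2)) \stackrel{\alpha}{\to} H^0(B(C)/G, \RR^4 f^G_* \Z(2)) \stackrel{\beta}{\to} \prod_{b \in B(R)} H^4_G(X_b(C),\Z(2)) \times \prod_{b \in B(C)} H^4(X_b(C),\Z(2))\rlap{,}
\end{align*}
where $\alpha$ is the edge map of the spectral sequence and $\beta$ takes stalks at every point of $B(C)/G$ (using Proposition~\ref{prop:joli leray general properties}~(v) to identify the stalks of $\RR^4 f^G_*\Z(2)$ with the cohomology groups appearing in the target). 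The map $\beta$ is injective because any section of any sheaf is determined by the collection of its stalks.

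It therefore suffices to prove that $E_2^{p,q} = 0$ for all $(p,q)$ with $p \geq 1$ and $p+q=4$, which will imply that $\alpha$ is injective.

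For the terms $E_2^{2,2}$, $E_2^{3,1}$ and $E_2^{4,0}$, I would invoke cohomological dimension: the semi-algebraic space $B(C)/G$ is of dimension $2$, and since $B$ is affine, $B(C)$ has no compact connected component; the finite quotient map $B(C) \to B(C)/G$ ensures that $B(C)/G$ likewise has no compact connected component. By \cite[Ch.~II, Lemma~9.1]{delfshomology}, we obtain $H^p(B(C)/G, -) = 0$ for $p \geq 2$, killing these three terms.

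The last term $E_2^{1,3}$ is the main obstacle. By Proposition~\ref{prop:joli leray general properties}~(v), the stalks of $\sF' := \RR^3 f^G_*\Z(2)$ at points of $(B(C) \setminus B(R))/G$ coincide with $H^3(X_b(C), \Z(2))$, which vanish by the fibrewise hypothesis. Since $\sF'|_{(B(C)\setminus B(R))/G}$ is locally constant with finitely generated stalks by Proposition~\ref{prop:joli leray general properties}~(vii), it is itself zero there; thus $\sF'$ is supported on the closed subspace $B(R) \subseteq B(C)/G$, and equals $j_* j^* \sF'$ where $j: B(R) \hookrightarrow B(C)/G$ denotes the inclusion. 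Using that the Leray spectral sequence for the closed embedding $j$ collapses (as $\RR^p j_* = 0$ for $p \geq 1$), this gives $H^1(B(C)/G, \sF') = H^1(B(R), j^* \sF')$. By Proposition~\ref{prop:joli leray general properties}~(vii) again, the sheaf $j^* \sF'$ is locally constant on $B(R)$, hence constant on each of its semi-algebraically contractible connected components; as the $H^1$ of a semi-algebraically contractible space with constant coefficients vanishes, we conclude that $E_2^{1,3} = 0$, completing the proof.
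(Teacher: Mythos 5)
Your overall strategy (factor through the spectral sequence \eqref{eq:jolileray}, identify the target with stalks via Proposition~\ref{prop:joli leray general properties}, and kill the $E_2^{p,q}$ with $p\geq 1$, $p+q=4$) is the same as the paper's, and your treatment of $E_2^{1,3}$ is essentially correct. However there are two problems, the second of which is a genuine gap.

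First, a misstatement: you assert that ``any section of any sheaf is determined by the collection of its stalks.'' On a semi-algebraic space this is false in general, as the paper cautions in Remark~\ref{rmk:warning stalks} (citing \cite[Chapter~I, Example~1.7]{delfshomology}): a sheaf can have all its stalks equal to zero without vanishing. The injectivity of your map $\beta$ is nonetheless correct here, but only because $\RR^4f^G_*\Z(2)$ becomes locally constant on the finite stratification of $B(C)/G$ by the locally closed semi-algebraic subsets $B(R)$ and $(B(C)\setminus B(R))/G$ (Proposition~\ref{prop:joli leray general properties}~(vii)), which is exactly the situation that Remark~\ref{rmk:warning stalks} identifies as safe. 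You should invoke this rather than the false general principle.

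The serious gap concerns $E_2^{2,2}$. You claim that \cite[Ch.~II, Lemma~9.1]{delfshomology}, combined with the absence of compact components of $B(C)/G$, yields $H^p(B(C)/G,-)=0$ for all $p\geq 2$ and for an arbitrary sheaf. But that lemma only gives vanishing for $p>\dim(B(C)/G)=2$, i.e., for $p\geq 3$; it takes care of $E_2^{3,1}$ and $E_2^{4,0}$ but says nothing about $p=2$. Moreover the ``no compact component'' assertion only rules out nonzero $H^2$ for local systems (via Poincar\'e duality), not for arbitrary sheaves: a $2$-dimensional semi-algebraic space genuinely has cohomological dimension $2$ in general (e.g., on a closed half-plane $\overline{H}$, one has $H^2(\overline{H},j_!\Z)\neq 0$ for the inclusion $j$ of the complement of a closed disk in the interior). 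Establishing $E_2^{2,2}=0$ is the main content of the proof and requires more work: the paper produces a surjection from $H^2(B(C),\RR^2f_*\Z(2))$ onto $E_2^{2,2}$ via the norm map of Proposition~\ref{prop:joli leray general properties}~(ii)--(iii), whose kernel and cokernel are killed by dimension considerations, and then proves $H^2(B(C),\sF)=0$ for locally constant $\sF$ with finitely generated stalks by showing that group is finitely generated and divisible, the divisibility coming from Poincar\'e duality for finite coefficients (here $B$ being affine, hence $B(C)$ having no compact component, is used precisely to obtain $H^0_c(B(C),\sG)=0$). Your proof omits this entire argument, so the step where $E_2^{2,2}$ is dispatched does not go through.
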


\begin{proof}
The spectral sequence~\eqref{eq:jolileray} for $f:X(C)\to B(C)$
takes the shape
\begin{align*}
E_2^{p,q}=H^p(B(C)/G,\RR^q f^G_* \Z(2)) \Rightarrow H^{p+q}_G(X(C),\Z(2))\rlap{.}
\end{align*}
By Proposition~\ref{prop:joli leray general properties}~(v) and~(vii)
and Remark~\ref{rmk:warning stalks},
the desired statement is equivalent to the injectivity of
the edge homomorphism
$H^4_G(X(C),\Z(2)) \to E_2^{0,4}$.
For this, it is enough to check that $E_2^{1,3}=0$ and that $E_2^{p,q}=0$ for all $p,q$ such that $p\geq 2$.

By Proposition~\ref{prop:joli leray general properties}~(v) and~(vii) and by our assumption
on the fibres of~$f$, the sheaf $\RR^3 f^G_* \Z(2)$ is supported on $B(R) \subseteq B(C)/G$
and is locally constant on~$B(R)$.
On the other hand, as~$B(R)$ is a disjoint union of contractible semi-algebraic spaces,
any locally constant sheaf on~$B(R)$ must be constant and have trivial cohomology in positive
degrees (see~\cite[Chapter~II, Theorem~2.5]{delfshomology}).  Hence $E_2^{1,3}=0$.

Let $p\geq 2$.
Let $\pi:B(C)\to B(C)/G$ denote the quotient map.
Proposition~\ref{prop:joli leray general properties}~(ii) and~(iii)
provides a norm map
$\pi_*\RR^qf_*\Z(2) \to \RR^qf^G_*\Z(2)$
whose cokernel is supported on $B(R) \subseteq B(C)/G$.
As $p>\dim(B(R))$ and $p+1>\dim(B(C)/G)$,
the induced map $H^p(B(C),\RR^qf_*\Z(2)) \to E_2^{p,q}$ is surjective
(see \cite[Chapter~II, Lemma~9.1]{delfshomology}).
To conclude the proof,
it therefore suffices
to check that $H^p(B(C),\sF)=0$
for any locally constant sheaf with finitely generated stalks~$\sF$ on~$B(C)$.
Indeed $\RR^qf_*\Z(2)$ is such a sheaf
by \cite[Corollary~17.20]{scheiderer} applied to $R_{C/R}f$.
As the group $H^p(B(C),\sF)$ is finitely generated
(see \cite[Theorem~17.9]{scheiderer}), it suffices to check that it is divisible.
For this,
as $p\geq\dim(B(C))$,
it is enough that $H^p(B(C),\sF/n\sF)=0$ for all $n\geq 1$.
Now this last vanishing follows from Poincar\'e duality for sheaves of abelian groups
with finite exponent
(see \cite[(\ref*{BW1-eq:semi-algebraic poincare duality})]{BW1})
and from the fact that $H^0_c(B(C),\sG)=0$
for any locally constant sheaf~$\sG$ on~$B(C)$ as~$B$ is affine.
\end{proof}

\subsection{Constructing multisections}
\label{subsec:horizontal}
It is now possible to give the

\begin{proof}[Proof of Theorem \ref{thm:solides fibres en del Pezzo}]
By Proposition \ref{prop:cubicsurfacefibrations}, we only have to consider cases (i), (ii) and (iii) of Theorem \ref{thm:solides fibres en del Pezzo}.
Choose points $t_1,\dots,t_s\in B$ such that $B^0:=B\setminus\{t_1,\dots,t_s\}$ is affine, such that the semi-algebraic connected components of $B^0(R)$ are semi-algebraically contractible, and such that the base change $f^0:X^0\to B^0$ is smooth and projective.
By Proposition \ref{prop:curvesinfibers} and the commutative diagram with exact row
$$
\xymatrix@R=3ex{
& \CH_1(X)\ar@{->>}[r] \ar^{\cl}[d] &\CH_1(X^0)\ar^{\cl}[d]\\
\displaystyle\bigoplus_{i=1}^s H^4_{G,X_{t_i}(C)}(X(C),\Z(2))\ar[r]& H^4_G(X(C),\Z(2))\ar[r]&H^4_G(X^0(C),\Z(2)),
}
$$
it suffices to show the surjectivity of $\cl:\CH_1(X^0)\to H^4_G(X^0(C),\Z(2))_0$. Fix a class $\alpha\in  H^4_G(X^0(C),\Z(2))_0$. To prove that it lies in the image of $\cl$, we first reduce to the case when $\alpha$ has degree $0$ on the geometric fibers of $f^0$. We distinguish two cases.

 Suppose that the degree of $\alpha$ on the geometric fibers of $f^0$ is even. 
By Manin and Colliot-Th\'el\`ene \cite[Chapter IV, Theorem 6.8]{kollarbook}, 
$f^0_{C}:X^0_C\to B^0_C$ has a section.
Its image in~$X^0$ is a curve of degree~$1$ or~$2$ over~$B^0$. Removing from $\alpha$ a multiple of the class of this curve, we may assume that $\alpha$ has degree $0$ on the geometric fibers of $f^0$. 

Suppose now that the degree of $\alpha$ on the geometric fibers of $f^0$ is odd. It follows from \cite[Proposition~\ref*{BW1-prop:hodgereel0cycles}]{BW1} applied to the real fibers of $f^0$ that $X^0(R)\to B^0(R)$ is surjective. In particular, we are in case (i) or (iii). Then we claim that $f^0:X^0\to B^0$ has a section. If $\delta=1$ such a section is given by the base locus of the anticanonical divisor of the generic fiber, and if $\delta=3$, one is provided by \cite[Corollary p.~390]{langrealplaces}. The cases $\delta\in\{5,7,9\}$ have already been dealt with in Example \ref{exeasydP}: then, the generic fiber $X_{R(B)}$ of $f^0$ is even rational over~$R(B)$. When $\delta=8$, $X_{R(B)}$ is either the blow-up of $\P^2_{R(B)}$ in a point (which has obvious rational points),
or a form of $\P^1\times\P^1$, hence a homogeneous space under a linear algebraic group (being homogeneous is a geometric property of projective varieties, by the representability of their automorphism group scheme). When $\delta=6$, $X_{R(B)}$ contains a torsor under a $2$-dimensional torus \cite[Theorem 3.10]{maninrational}.
 Consequently, the existence of a section in these cases
follows from the strong Hasse principle for homogeneous spaces under linear algebraic groups over function fields of curves over $R$ \cite[Corollary 6.2]{ScheidererHasse} (over $\R$, the case $\delta=8$ could have been deduced from \cite[Satz~22]{WittHasse} and the case $\delta=6$ from \cite[Théorème~1.1~(a)]{CTHasse}). 
  Removing from $\alpha$ a multiple of the class of this section, we may assume  that $\alpha$ has degree $0$ on the geometric fibers of $f^0$. 

The real fibers of $f^0$ have a connected or empty real locus: this is obvious in cases~(ii) and~(iii) and is a consequence of
\cite[Corollary~3 to Theorem~3.7, Corollary~1 to Theorem~3.10, Theorem~3.15]{maninrational}
and \cite[Theorem~13.3]{delfsknebuschbasictheory2} in case (i).
It follows from \cite[Lemma~\ref*{BW1-lem:H2dreel}]{BW1}
that if $b\in B^0(R)$, then $H^4_G(X_b^0(C),\Z(2))_0\simeq\Z$, generated by the class of any $R$-point (resp.\ closed point) of $X_b^0$ if $X_b^0(R)\neq\emptyset$ (resp.\ if $X_b^0(R)=\emptyset$). Similarly, if $b\in B^0(C)$, then $H^4(X_b^0(C),\Z(2))\simeq\Z$, generated by the class of any $C$-point of $X_b^0$. Since $\alpha$ has degree $0$ on the geometric fibers of $f^0$, we deduce that
 it satisfies the hypotheses of Proposition \ref{prop:Leraybizarre}, hence that it vanishes, proving the theorem. 
\end{proof}

\section{Non-archimedean real closed fields}
\label{nonarchimedeansection}

In this section, we construct counterexamples to the real integral Hodge conjecture that are specific to non-archimedean real closed fields, explain geometric consequences of these counterexamples,
and discuss and exploit the relationship between
the ``EPT'' and ``signs'' properties for a surface over a real closed field
and the real integral Hodge conjecture for conic bundles over the same surface.

\subsection{Curves of bounded degree}
\label{subsec:curves of bounded degree}

Given an integer~$k$ and a bounded family, defined over~$\R$, of smooth projective
varieties~$X$ which satisfy $H^q(X,\Omega_X^p)=0$ for all $p,q$ such that $p+q=2k$, $(p,q)\neq (k,k)$,
the real integral Hodge conjecture
for codimension~$k$ cycles on the members of this family defined over an arbitrary real closed
field
is equivalent
to the real integral Hodge conjecture for codimension~$k$ cycles
on the members that are defined over~$\R$ together with
a bound on the degrees of the cycles which span their cohomology:

\begin{prop}
\label{prop:ihcnonarchbounded}
Let $f:X\to B$ be a smooth and projective morphism between varieties over a real closed field~$R_0$.
Let $C_0=R_0(\sqrt{-1}\mkern2mu)$.
Let $\Theta \subseteq B(R_0)$ be a semi-algebraic subset.
Let~$L$ be a relatively ample line bundle on~$X$.
Let~$k$ be an integer.
For any integer~$\delta$, any field extension~$R/R_0$ and any $b \in B(R)$,
let $\CH^k(X_b)_{\deg\leq \delta}$ denote the subgroup of $\CH^k(X_b)$ generated by
the classes of integral closed subvarieties of~$X_b$ of dimension~$k$ whose degree,
with respect to~$L$, is~$\leq\delta$,
and let $\Theta_R \subseteq B(R)$ denote the semi-algebraic subset
obtained from~$\Theta$ by extension of scalars (see \cite[\textsection4]{delfsknebuschonthehomology}).
The following conditions are equivalent:
\begin{enumerate}[(i)]
\item for any real closed field extension $R/R_0$ and any $b\in \Theta_R$, the
equivariant cycle class map
$\cl:\CH^k(X_b) \to H^{2k}_G(X_b(C),\Z(k))_0$
is surjective, with $C=R(\sqrt{-1}\mkern2mu)$;
\item there exists an integer~$\delta$ such that for any $b\in \Theta$,
the restriction of the map
$\cl:\CH^k(X_b) \to H^{2k}_G(X_b(C_0),\Z(k))_0$
to the subgroup $\CH^k(X_b)_{\deg\leq\delta}$
is surjective.
\end{enumerate}
\end{prop}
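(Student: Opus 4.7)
The equivalence will be proved by translating both conditions into first-order statements in the language of real closed fields with parameters in~$R_0$, and then invoking Tarski's transfer principle and the compactness theorem of first-order logic.

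As preparation, by Proposition~\ref{prop:joli leray general properties}~(vi)--(vii) applied to $f:X\to B$, the sheaf $\RR^{2k}f^G_*\Z(k)$ on $B(R_0)/G$ is locally constant with finitely generated stalks; the same property holds for the subsheaf cut out by the topological constraint defining $H^{2k}_G(X_b(C),\Z(k))_0$. I would decompose $\Theta$ into finitely many semi-algebraic pieces on which this subsheaf is constant with a common value, a finitely generated abelian group~$A$; fix generators $a_1,\dots,a_r$ of~$A$. For each $\delta\geq 0$, the relative Chow variety $\pi_\delta:\mathcal{W}_\delta\to B$ of effective $k$\nobreakdash-cycles of degree $\leq\delta$ on the fibres of~$f$ is an algebraic $R_0$\nobreakdash-scheme; by o-minimality, the semi-algebraic set $\pi_\delta^{-1}(\Theta)\subseteq \mathcal{W}_\delta(R_0)$ has finitely many connected components $\Gamma_{\delta,1},\dots,\Gamma_{\delta,N}$, and the cycle class map takes a constant value $c_{\delta,j}\in A$ on each of them. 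Setting $B_{\delta,j}:=\pi_\delta(\Gamma_{\delta,j})\subseteq\Theta$, the subgroup $G_\delta(b)\subseteq A$ of classes of cycles of degree $\leq\delta$ on $X_b$ equals $\sum_{j:\,b\in B_{\delta,j}}\Z\, c_{\delta,j}$. Both the incidence pattern $\{j:b\in B_{\delta,j}\}$ and the subgroup of~$A$ it generates are unchanged when~$R_0$ is replaced by any real closed extension~$R$, by Tarski's principle.

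For the implication (ii)$\Rightarrow$(i), the statement ``$G_\delta(b)=A$ for every $b\in\Theta$'' is equivalent to the finite list of assertions: for every $I\subseteq\{1,\dots,N\}$ such that $\bigcap_{j\in I}B_{\delta,j}\setminus\bigcup_{j\notin I}B_{\delta,j}$ is nonempty, the equality $\sum_{j\in I}\Z\,c_{\delta,j}=A$ holds. The nonemptiness conditions are first-order over~$R_0$ with parameters, while the subgroup equalities do not involve the ground field at all; hence by Tarski's completeness of the theory of real closed fields the assertion transfers from~$R_0$ to any extension~$R/R_0$, yielding~(i). For the converse (i)$\Rightarrow$(ii), suppose that (ii) fails. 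Then for every $\delta\geq 0$ there exists $b_\delta\in\Theta$ and some generator missing from $G_\delta(b_\delta)$; by pigeonhole we may fix $i\in\{1,\dots,r\}$ such that the semi-algebraic set $U_\delta:=\{b\in\Theta\mid a_i\notin G_\delta(b)\}$ is nonempty for every~$\delta$. These sets form a decreasing chain, so the collection of formulas ``$b\in\Theta$ and $b\in U_\delta$ for all $\delta\in\Z_{\geq 0}$'' is finitely satisfiable in~$R_0$. By the compactness theorem of first-order logic we obtain an elementary extension $R_0^*\succ R_0$ of real closed fields, together with a point $b^*\in\Theta_{R_0^*}$ such that $b^*\in (U_\delta)_{R_0^*}$ for every standard integer $\delta\geq 0$, \emph{i.e.}, $a_i\notin G_\delta(b^*)$ for every standard~$\delta$.

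The essential point is then that the degree with respect to~$L$ of any cycle on~$X_{b^*}$, being an intersection number, is a non-negative integer \emph{independent of the ground field}, so that
\begin{align*}
\cl(\CH^k(X_{b^*}))=\bigcup_{\delta\in\Z_{\geq 0}}G_\delta(b^*)\rlap{,}
\end{align*}
with the union running over standard~$\delta$; therefore $a_i\notin\cl(\CH^k(X_{b^*}))$, contradicting~(i) applied to $R_0^*$ and $b^*$. The main obstacle will be to verify that the Chow-theoretic analysis above is genuinely first-order in the language of real closed fields with parameters in~$R_0$: this requires combining the local constancy of $\RR^{2k}f^G_*\Z(k)$ furnished by Proposition~\ref{prop:joli leray general properties}, the algebraic nature of the relative Chow variety, and the uniform o-minimal bound on the number of connected components in a definable family. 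Once these ingredients have produced the finite list of conditions indexed by incidence types, the rest is a textbook combination of Tarski transfer with the compactness theorem and the finite generation of~$A$.
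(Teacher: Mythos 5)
Your overall architecture — decompose the parameter space into finitely many definable pieces on which the (locally constant) cohomology sheaf and the cycle class map behave uniformly, then apply a compactness argument to pass between ``for one $\delta$, for all $b$'' and ``for all $b$, for some $\delta$'' — is essentially the same as the paper's, only phrased in the language of Tarski transfer and first-order compactness rather than the constructible topology on the real spectrum $B_\rs$. Those two formalisms are equivalent (a point of $B_\rs$ \emph{is} a complete type over $R_0$, and compactness of the constructible topology \emph{is} the compactness theorem), so this is a translation, not a different route. Your (i)$\Rightarrow$(ii) contrapositive, using pigeonhole on the missing generator and an elementary extension $R_0^*\succ R_0$, is the saturated-model rendering of the paper's ``if $\Theta_\rs \not\subseteq Z_\delta$ for all $\delta$, then by compactness $\Theta_\rs \not\subseteq \bigcup_\delta Z_\delta$''.

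Where there is a real gap is the assertion, made in a single clause, that ``the cycle class map takes a constant value $c_{\delta,j}\in A$ on each'' semi-algebraic connected component $\Gamma_{\delta,j}$ of $\pi_\delta^{-1}(\Theta)$. This is not a formal consequence of local constancy of $\RR^{2k}f^G_*\Z(k)$: what is needed is that the germ at $(R',m')$ of a \emph{fixed global section} of the pulled-back sheaf $\sF'$ computes the cycle class of the fibre at every point of the parameter space, and that is what lets you conclude that the map is locally constant in the semi-algebraic sense. The paper obtains such a global section by working with the Hilbert scheme, stratifying it into pieces $M'$ that are smooth over $R_0$ so that the base-changed total space $X'$ is smooth and the universal reduced subscheme $U'$ defines a class $[U']\in\CH^k(X')$; then $\cl([U'])$ is the desired global section, and restricting it to the fibre over $(R',m')$ recovers $\cl([U'_{m'}])$. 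Your use of the Chow variety $\mathcal{W}_\delta$ rather than the Hilbert scheme makes this step harder, not easier: the universal cycle over the Chow variety is a genuinely subtle object (it exists in characteristic zero only after passing to the seminormalization), and you would still have to arrange for the total space to be smooth before $[U']$ makes sense as a Chow class. You flag this as ``the main obstacle'' and correctly list the ingredients, but misdiagnose it as a question of first-order definability; in fact it is a cohomological spreading-out argument, and it is precisely the content of the paper's Lemma~\ref{lem:constructibility of algebraicity degree delta}. Until it is carried out, the decomposition into $B_{\delta,j}$ with constant values $c_{\delta,j}$ — and hence the entire transfer argument built on it — remains unproved.

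One smaller point of care: in asserting that ``the incidence pattern and the subgroup of $A$ it generates are unchanged'' under extension to $R$, you implicitly use that the semi-algebraic connected components of $\pi_\delta^{-1}(\Theta)$ base change to the semi-algebraic connected components of $\pi_\delta^{-1}(\Theta_R)$, and that the values $c_{\delta,j}$ are preserved under the base-change identification of stalks of $\sF'$. Both are true but both rest on the same spreading-out argument and on Proposition~\ref{prop:joli leray general properties}~(iv); they are worth stating explicitly, since they are what makes $G_\delta(b')$ for $b'\in\Theta_R$ computable from data over~$R_0$.
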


\begin{proof}
Let~$B_\rs$ denote the real spectrum of~$B$ (see \cite{costeroyspectrereel},
\cite[Chapter~7]{bcr},
\cite[(0.4)]{scheiderer}).
We recall that~$B_\rs$ is a topological space whose points are represented by
pairs~$(R,b)$ consisting of a real closed field extension $R/R_0$ and a point $b \in B(R)$;
two pairs $(R,b)$ and $(R',b')$ define the same point of~$B_\rs$ if and only if~$b$ and~$b'$ lie above
the same scheme-theoretic point of~$B$ and~$R$ and~$R'$ induce the same ordering on its residue field.
We shall abuse notation and freely write $(R,b)$ for a point of~$B_\rs$.
We also recall that
semi-algebraic subsets of~$B(R_0)$ are in one-to-one correspondence with constructible subsets of~$B_\rs$
and that the latter form the base of a compact topology (see \cite[Proposition~7.1.12, Theorem~7.2.3]{bcr}).

\begin{lem}
\label{lem:constructibility of algebraicity degree delta}
Let~$\delta$ be an integer.
There is a constructible subset $Z_\delta \subseteq B_\rs$
such that for any real closed field extension~$R/R_0$
and any $b \in B(R)$,
if we let $C=R(\sqrt{-1}\mkern2mu)$,
the restriction
of the equivariant cycle class map
$\cl:\CH^k(X_b) \to H^{2k}_G(X_b(C),\Z(k))_0$
to $\CH^k(X_b)_{\deg\leq\delta}$
is surjective
if and only if
the point $(R,b) \in B_\rs$ belongs to~$Z_\delta$.
\end{lem}

\begin{proof}
Let~$Z_\delta^+$
(resp., $Z_\delta^-$)
denote the set of points of~$B_\rs$ for which every (resp., some)
representative $(R,b)$ satisfies the property of the lemma.
We shall now check that any $(R,b) \in B_\rs$
is contained in a constructible subset~$Z$ of~$B_\rs$
which is itself either contained in~$Z_\delta^+$ or disjoint from~$Z_\delta^-$.
This will prove the lemma.  Indeed, this assertion implies that $Z_\delta^+=Z_\delta^-$ and that,
if~$Z_\delta$ denotes this common subset,
both~$Z_\delta$ and $B_\rs \setminus Z_\delta$ are unions of constructible subsets;
the constructible topology on~$B_\rs$ being compact, it then follows that~$Z_\delta$ is constructible.

Let~$M$ denote the open subvariety of the
relative Hilbert scheme of~$f$ which parametrises the
reduced closed subschemes of pure codimension~$k$
and degree~$\leq \delta$ (with respect to~$L$) in the fibres of~$f$
(see \cite[Th\'eor\`eme~3.2, Lemme~2.4]{tdte4},
\cite[Th\'eor\`eme~12.2.1]{ega43}).
Let $\pi:M\to B$ denote the projection. Let $\mu:U\to M$ be the universal family.
Let us write~$M$ as the set-theoretic union of finitely many locally closed subvarieties which are
smooth over~$R_0$.  Let $M'$ be their disjoint union, let $\nu:M'\to M$ be the natural map,
let $\pi'=\pi\circ\nu:M'\to B$, let $\mu':U'\to M'$ denote the base change of~$\mu$ by~$\nu$
and let $f':X' \to M'$ denote the base change of~$f$ by~$\pi'$.
We note that~$X'$ is smooth, since~$M'$ and~$f'$ are smooth,
and that~$U'$ is a reduced closed subvariety of~$X'$ of pure codimension~$k$
(see \cite[Corollaire~3.3.5]{ega42}).
We write $[U'] \in \CH^k(X')$ for the class of the corresponding cycle.
For $(R',m') \in M'_\rs$,
the fibre $U'_{m'} = \mu'^{-1}(m')$
is a reduced closed subvariety of $X'_{m'}$ of pure codimension~$k$,
therefore
the pull-back map $\CH^k(X')\to \CH^k(X'_{m'})$
sends~$[U']$ to~$[U'_{m'}]$
(see \cite[Proposition~8.2]{fulton}).
In view of the compatibility of the equivariant cycle class map with pull-backs, we deduce that
$\cl([U'_{m'}]) \in H^{2k}_G(X'_{m'}(C'),\Z(k))$, for any $(R',m') \in M'_\rs$, is the image of
the fixed class $\cl([U'])$ by the composed map
\begin{align}
\label{eq:restriction to fibre eq coh}
H^{2k}_G(X'(C_0),\Z(k)) = H^{2k}_G(X'(C'),\Z(k)) \to H^{2k}_G(X'_{m'}(C'),\Z(k))
\end{align}
(see \cite[Chapter~II, Theorem~6.1]{delfshomology} for the first isomorphism).

For a quasi-projective variety~$V$ over~$R_0$, let $V_\cx$ denote the complex
spectrum of~$V$, \emph{i.e.}, the real spectrum of the Weil restriction
of $V \otimes_{R_0}C_0$ from~$C_0$ to~$R_0$
(see \cite[(0.4) and Definition~5.6.2]{scheiderer}).
There is a natural map $B_\rs \to B_\cx$.
Let $Y_{\cx/\rs} = X_\cx \times_{B_\cx} B_\rs$
and let $g:Y_{\cx/\rs}\to B_\rs$ denote the second projection.
Taking up the notation of~\textsection\ref{subsubsec:two leray}
and freely identifying sheaves on the semi-algebraic site of~$B(R_0)$ with
sheaves on the topological space~$B_\rs$ (see \cite[Chapter~I, Proposition~1.4]{delfshomology}),
we set $\sF=\RR^{2k}g^G_*\Z(k)$.
This is a locally constant sheaf on~$B_\rs$
(see Proposition~\ref{prop:joli leray general properties}~(vii)).
The fibre of~$g$ above a point of~$B_\rs$ represented by a pair~$(R,b)$ is the ``abstract semi-algebraic space'',
in the terminology of \cite[Appendix~A to Chapter~I]{delfsknebuschbook},
associated with the semi-algebraic space $X_b(C_1)$, where $C_1=R_1(\sqrt{-1}\mkern2mu)$
and~$R_1$ is the real closed subfield of~$R$ which is algebraic over
the residue field of the scheme-theoretic point of~$B$
over which~$b$ lies.
In particular, the stalk of~$\sF$ at this point coincides with $H^{2k}_G(X_b(C),\Z(k))$
(see Proposition~\ref{prop:joli leray general properties}~(v),
\cite[Chapter~I, Corollary~1.5 and Chapter~II, Theorem~6.1]{delfshomology}).
It follows from Proposition~\ref{prop:joli leray general properties}~(vi)
that the groups $H^{2k}_G(X_b(R),\Z(k))$ are also the stalks of a locally constant
sheaf and, hence, that the subgroups $H^{2k}_G(X_b(C),\Z(k))_0$
fit together in a locally constant subsheaf $\sF_0 \subseteq \sF$.
Finally,
let $Y'_{\cx/\rs}=X'_\cx\times_{M'_\cx}M'_\rs$,
let $g':Y'_{\cx/\rs}\to M'_\rs$ denote the second projection,
let $\sF'=\RR^{2k}g'^G_*\Z(k)$,
and let $\pi'_\rs:M'_\rs\to B_\rs$ denote the map induced by~$\pi'$.

We now fix $(R,b) \in B_\rs$ and construct~$Z$.
Let us choose a connected constructible open neighbourhood
$\Omega \subset B_\rs$ of~$(R,b)$
on which the sheaves~$\sF$ and~$\sF_0$ are constant.
As $\sF'=\pi'^*_\rs\sF$
(see Proposition~\ref{prop:joli leray general properties}~(iv)),
we can canonically identify the stalk of~$\sF'$ at any $(R',m') \in \pi'^{-1}_\rs(\Omega)$
with the group $H^{2k}_G(X_b(C),\Z(k))$.
Let us define a map
\begin{align}
\label{eq:locally constant map hilbert}
\tau:\pi'^{-1}_\rs(\Omega) \to H^{2k}_G(X_b(C),\Z(k))_0
\end{align}
by sending a point $(R',m')$ to the image
of $\cl([U'_{m'}])$
by the
canonical isomorphism
$H^{2k}_G(X'_{m'}(C'),\Z(k))_0=H^{2k}_G(X_b(C),\Z(k))_0$.
As $\cl([U'_{m'}])$
is also the image of~$\cl([U'])$
by~\eqref{eq:restriction to fibre eq coh},
we see that~$\tau$
coincides with the map that
sends $(R',m')$ to the germ at $(R',m')$ of the global section
of~$\sF'$ defined by $\cl([U'])$.
In particular, as~$\sF'$ is locally constant,
we deduce that~$\tau$ is locally constant.

Let $W_1,\dots,W_m$ (resp., $W_{m+1},\dots,W_n$) denote the (finitely many,
see \cite[Corollaire~3.7]{costeroyspectrereel}) connected components of $\pi'^{-1}_\rs(\Omega)$
whose images by~$\pi'_\rs$ contain~$(R,b)$ (resp., do not contain~$(R,b)$).
The subset
\begin{align*}
Z = \pi'_\rs(W_1) \cap \dots \cap \pi'_\rs (W_m) \cap (\Omega \setminus \pi'_\rs(W_{m+1})) \cap \dots \cap \left(\Omega \setminus \pi'_\rs(W_n)\right)
\end{align*}
of~$\Omega$ is constructible
(\emph{op.\ cit.}, Proposition~2.3)
and contains~$(R,b)$.

For $(R',b') \in \Omega$,
if~$R'_1$ denotes the real closed subfield of~$R'$ which is algebraic over
the residue field of the scheme-theoretic point of~$B$
over which~$b'$ lies, and~$b'_1$ denotes the point~$b'$ viewed as an element of $B(R'_1)$,
the real spectrum of $\pi'^{-1}(b'_1)$
coincides with $\pi_\rs'^{-1}((R',b'))$.
Therefore $\pi'^{-1}(b'_1)(R'_1) \subseteq \pi_\rs'^{-1}((R',b'))$ is a dense subset.
In particular, the locally constant map~$\tau$ takes the same set of values
on $\pi_\rs'^{-1}((R',b'))$ and on this subset.
It follows that
for $(R',b') \in \Omega$, the image of
$\CH^k(X_{b'})_{\deg\leq\delta}$ by
the equivariant cycle class map
$\cl:\CH^k(X_{b'}) \to H^{2k}_G(X_{b'}(C'),\Z(k))_0$
coincides, via the identification
$H^{2k}_G(X_{b'}(C'),\Z(k))_0=H^{2k}_G(X_b(C),\Z(k))_0$,
with the subgroup of
$H^{2k}_G(X_b(C),\Z(k))_0$
generated by $\tau(\pi_\rs'^{-1}((R',b')))$.
As the map~$\tau$ is locally constant
and as every fibre of the projection $\pi'^{-1}_\rs(Z) \to Z$
meets each of $W_1,\dots,W_m$
and none of $W_{m+1},\dots,W_n$,
it follows that~$Z$ is either contained in~$Z_\delta^+$ or disjoint from~$Z_\delta^-$.
\end{proof}

We can now prove the proposition.
Let $\Theta_\rs \subseteq B_\rs$ denote the unique constructible subset such that
$\Theta=\Theta_\rs \cap B(R_0)$
(see \cite[Proposition~7.2.2~(i)]{bcr}).
As the groups $H^{2k}_G(X_b(C),\Z(k))_0$ are finitely generated,
property~(i) holds if and only if $\Theta_\rs \subseteq \bigcup_\delta Z_\delta$.
As the constructible topology on~$B_\rs$ is compact,
this is, in turn, equivalent to the existence of~$\delta$ such that $\Theta_\rs \subseteq Z_\delta$.
On the other hand, property~(ii) is equivalent to the existence of~$\delta$
such that $\Theta \subseteq Z_\delta$.
Now $\Theta \subseteq Z_\delta$ if and only if $\Theta=Z_\delta \cap \Theta_\rs \cap B(R_0)$,
which, by the definition of~$\Theta_\rs$, holds if and only if $\Theta_\rs=Z_\delta \cap \Theta_\rs$.
Thus (i)$\Leftrightarrow$(ii).
\end{proof}

We end this paragraph with the following related result.

\begin{prop}
\label{prop:invIHCextR}
Let $R/R_0$ be an extension of real closed fields,
let $C_0=R_0(\sqrt{-1}\mkern2mu)$
and $C=R(\sqrt{-1}\mkern2mu)$.
Let~$X$ be a smooth, projective variety over~$R_0$.
For any integer~$k$,
the map $\cl:\CH^k(X)\to H^{2k}_G(X(C_0),\Z(k))_0$ is surjective if and only if so is the map $\cl:\CH^k(X_R)\to H^{2k}_G(X(C),\Z(k))_0$.
\end{prop}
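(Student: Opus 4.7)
The plan is to deduce this invariance from Lemma~\ref{lem:constructibility of algebraicity degree delta} applied to the structure morphism $f:X\to \Spec R_0$, i.e.\ with $B=\Spec R_0$. Fix a relatively ample line bundle $L$ on $X$ and use its pull-back to $X_R$, so that degrees of cycles are defined compatibly on both sides. First observe that for any real closed extension $R'/R_0$ with $C'=R'(\sqrt{-1}\mkern2mu)$, the group $H^{2k}_G(X_{R'}(C'),\Z(k))_0$ is finitely generated; hence surjectivity of $\cl:\CH^k(X_{R'})\to H^{2k}_G(X_{R'}(C'),\Z(k))_0$ is equivalent to the existence of an integer $\delta$ for which the restriction $\cl|_{\CH^k(X_{R'})_{\deg\leq\delta}}$ is already surjective.

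Next, apply Lemma~\ref{lem:constructibility of algebraicity degree delta} with $B=\Spec R_0$. The real spectrum $B_\rs=(\Spec R_0)_\rs$ consists of a single point, because the only scheme-theoretic point of $B$ is $\Spec R_0$ itself and the real closed field $R_0$ carries a unique ordering. The constructible subset $Z_\delta\subseteq B_\rs$ provided by the lemma is therefore either empty or equal to $B_\rs$ itself. Its defining property then reads: for each fixed $\delta$, the surjectivity of $\cl|_{\CH^k(X_{R'})_{\deg\leq\delta}}$ is independent of the real closed extension $R'/R_0$. Specialising to $R'=R_0$ and $R'=R$ yields, for every $\delta$, the equivalence
\[
\cl\big|_{\CH^k(X)_{\deg\leq\delta}} \text{ surjective} \iff \cl\big|_{\CH^k(X_R)_{\deg\leq\delta}} \text{ surjective.}
\]

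Combining these two observations, the surjectivity of $\cl:\CH^k(X)\to H^{2k}_G(X(C_0),\Z(k))_0$ amounts to the existence of some $\delta$ for which $\cl|_{\CH^k(X)_{\deg\leq\delta}}$ is surjective; by the previous step this is equivalent to the analogous statement for $X_R$, which in turn is equivalent to the surjectivity of $\cl:\CH^k(X_R)\to H^{2k}_G(X_R(C),\Z(k))_0$, as required. There is essentially no obstacle in this plan once Lemma~\ref{lem:constructibility of algebraicity degree delta} is granted: the only extra input is the trivial observation that $(\Spec R_0)_\rs$ is a single point, which forces the a priori $R'$\nobreakdash-dependent degree-bounded surjectivity to be uniform over all real closed extensions $R'/R_0$.
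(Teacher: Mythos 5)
Your proof is correct and is essentially the same as the paper's, which simply states "Apply Lemma~\ref{lem:constructibility of algebraicity degree delta} with $B=\Spec(R_0)$"; you have correctly identified and spelled out the two observations making that one-line deduction work (the real spectrum of $\Spec R_0$ is a single point, and finite generation of the target reduces full surjectivity to degree-bounded surjectivity).
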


\begin{proof}
Apply Lemma~\ref{lem:constructibility of algebraicity degree delta}
with $B=\Spec(R_0)$.
\end{proof}

\begin{rmks}
(i)
Propositions~\ref{prop:ihcnonarchbounded} and~\ref{prop:invIHCextR}
still hold,
with the same proofs,
if, in their statements,
one replaces the equivariant cycle class map
to $H^{2k}_G(X_b(C),\Z(k))_0$
with the Borel--Haefliger cycle class map
to $H^k(X_b(R),\Z/2\Z)$. We recall that the latter factors through the former
(see \cite[\textsection\ref*{BW1-subsubsec:topological constraints}]{BW1}).

(ii)
In Example~\ref{ex:implicationsDucros} below,
we shall provide an example of a smooth and projective morphism $f:X\to B$ defined over~$\R$
and of an integer~$k$
such that the equivariant cycle class map
$\cl:\CH^k(X_b) \to H^{2k}_G(X_b(\C),\Z(k))_0$ is surjective
for all $b\in B(\R)$ even though there is no~$\delta$
such that
its restriction to $\CH^k(X_b)_{\deg\leq\delta}$ is simultaneously surjective
for all $b \in B(\R)$.
This is a specifically real phenomenon,
which cannot occur over~$\C$:
if the cycle class map
$\cl:\CH^k(X_b) \to H^{2k}(X_b(\C),\Z(k))$ is surjective
for all $b\in B(\C)$, then there is always
an integer~$\delta$ such that
its restriction to $\CH^k(X_b)_{\deg\leq\delta}$ is simultaneously surjective
for all $b \in B(\C)$.
Indeed, when~$B$ is irreducible, the surjectivity
of $\cl:\CH^k(X_b) \to H^{2k}(X_b(\C),\Z(k))$
for a very general $b \in B(\C)$ furnishes cycles which spread out over
a dense open subset of~$B$, since the relative Hilbert scheme of~$f$ only has countably many
irreducible components; an induction on~$\dim(B)$ completes the proof of the existence of~$\delta$.
\end{rmks}

\subsection{Counterexamples to the real integral Hodge conjecture}
\label{par:cexnonarch}
As announced at the end of \cite[\textsection\ref*{BW1-section:examples}]{BW1},
we now give examples of smooth, projective, connected varieties~$X$ of dimension~$d$ over a real closed field~$R$,
such that $H^2(X,\sO_X)=0$ and that the real integral Hodge conjecture for $1$\nobreakdash-cycles on~$X$ fails
(in the sense that the equivariant cycle class map $\CH_1(X)\to H^{2d-2}_G(X(C),\Z(d-1))_0$ is not surjective;
see \cite[Definition~\ref*{BW1-def:ihc real closed field}]{BW1})
while the complex integral Hodge conjecture does hold (in the sense that
the cycle class map $\CH_1(X_C)\to H^{2d-2}(X(C),\Z(d-1))$ is surjective).
Our examples are Calabi--Yau threefolds, with or without real points (\S\ref{sss:CYR}), and a rationally connected threefold with real points (\S\ref{sss:RCR}),
over the field or real Puiseux series $\bigcup_{n\geq 1}\R((t^{1/n}))$.
Such varieties satisfy the complex integral Hodge conjecture
by Voisin's theorem \cite[Theorem~2]{voisinthreefolds} combined with
the Lefschetz principle.

By \cite[Corollary \ref*{BW1-cor:IHC pour solides RC ou CY}]{BW1}, these are examples of smooth projective varieties $X$ over $R$ such that $H_1(X(R),\Z/2\Z)\neq H_1^\alg(X(R),\Z/2\Z)$, if $X(R)\neq\varnothing$, or on which there is no geometrically irreducible curve of even geometric
genus, if $X(R)=\varnothing$.
These examples of cycle-theoretic obstructions
(in the terminology of~\cite[\textsection\ref*{BW1-nomenclature}]{BW1})
of a new kind
complement those given over~$\R$ in \cite[\S\ref*{BW1-subsec:cycle theoretic obs}]{BW1}.

By Proposition \ref{prop:ihcnonarchbounded}, these examples show that to prove the real integral Hodge conjecture for $1$-cycles on rationally connected or Calabi--Yau threefolds over $\R$ (see \cite [Question \ref*{BW1-mainquestion}]{BW1}), one cannot restrict to curves of bounded degree, in bounded families of varieties. We refer to \S\ref{sss:discussion IHCR} for further discussion of this question.

Another counterexample to the real integral Hodge conjecture, for a uniruled threefold with no real point, will appear later (see Example \ref{ex:cexuniruled} in \S\ref{subsubsec:cokcycleclassmap}).

\subsubsection{Calabi--Yau threefolds}
\label{sss:CYR}

Here are two Calabi--Yau counterexamples.  They are both simply connected.

\begin{example}[Calabi--Yau threefold with real points]
\label{ex:cexCYavecpoint}
We consider the real closed field $R:=\cup_n\R((t^{1/n}))$.
Let $G:= (u^2+v^2)F(x_0,\dots,x_3)\in H^0(\P^1_\R\times\P^3_\R,\sO(2,4))$, where $u,v$ (resp. $x_0,\dots,x_3$) are the homogeneous coordinates of $\P^1$ (resp. $\P^3$) and $F$ is the equation of a smooth quartic surface in $S\subset\P^3_{\R}$ as in \cite[Example~\ref*{BW1-ex:quartique avec points}]{BW1}. Let $H\in H^0(\P^1_\R\times\P^3_\R,\sO(2,4))$ be the equation of any smooth hypersurface. Consider $G+tH\in  H^0(\P^1_R\times\P^3_R,\sO(2,4))$ and let $X:=\{G+tH=0\}\subset \P^1_R\times\P^3_R$. The Calabi--Yau threefold  $X$ is smooth since $\{H=0\}$ is smooth.

Let $X_0:=\{G=0\}\subset\P^1_\R\times\P^3_\R$ and $X_0^{\sm}$ be its smooth locus. Since $X_0(\R)\subset X_0^{\sm}$, one may consider 
the composition $\CH_1(X_0)\to \CH_1(X_0^{\sm})\xrightarrow{\cl_{\R}} H^2(X_0(\R),\Z/2\Z)$
and define $H^2_{\alg}(X_0(\R),\Z/2\Z)$ to be its image. By the choice of $F$, and noting that $X_0(\R)=\P^1(\R)\times S(\R)$, we see that there exists a class $\alpha_0\in H^2(X_0(\R),\Z/2\Z)$ such that $\deg(\alpha_0\smile\cl_{\R}(\sO_{X_0}(0,1)))\neq 0\in \Z/2\Z$, but no class $\beta\in H^2_{\alg}(X_0(\R),\Z/2\Z)$ is such that $\deg(\beta\smile\cl_{\R}(\sO_{X_0}(0,1)))\neq 0\in \Z/2\Z$.

Introduce $f:\sX\to \A^1_\R$, where  $\sX:=\{G+tH=0\}$ and $t$ is the coordinate of $\A^1_\R$. The variety $X_0$ is the fiber of $f$ over $0$ and $X$ is a real closed fiber of $f$ specializing to~$X_0$. Denoting by $f_r:\sX_r\to\A^1_{\R,r}$ the induced map between real spectra, the sheaf $\RR^2f_{r*}\Z/2\Z$ satisfies proper base change by
\cite[Chapter~II, Theorem~7.8]{delfshomology} and is constant in a neighbourhood of $0$ by \cite[Corollary 17.20 a)]{scheiderer}
(the proof of this last result only uses the smoothness of $f$ in a neighbourhood of the real locus). This induces an isomorphism $H^2(X(R),\Z/2\Z)\simeq H^2(X_0(\R),\Z/2\Z)$.
Let $\alpha\in H^2(X(R),\Z/2\Z)$ be the class corresponding to $\alpha_0$ under this isomorphism. 

 If $\alpha$ were algebraic, there would exist a curve $Z\subset X$ such that $\deg(\cl_{R}(Z)\smile\cl_{R}(\sO_{X}(0,1)))\neq 0\in \Z/2\Z$, hence with odd degree with respect to $\sO_{X}(0,1)$. The specialization $Z_0\subset X_0$ of $Z$ would have odd degree with respect to $\sO_{X_0}(0,1)$, hence would be such that $\deg(\cl_{\R}(Z_0)\smile\cl_{\R}(\sO_{X_0}(0,1)))\neq 0\in \Z/2\Z$, a contradiction. We deduce that $H^2(X(R),\Z/2\Z)\neq H^2_{\alg}(X(R),\Z/2\Z)$. By \cite[Corollary \ref*{BW1-cor:IHC pour solides RC ou CY}]{BW1}, $X$ does not satisfy the real integral Hodge conjecture for $1$-cycles.

Note that in this example, we could have chosen $S$ to be any smooth quartic surface such that $H^1(S(\R),\Z/2\Z)\neq H^1_{\alg}(S(\R),\Z/2\Z)$, after having verified that the specialization isomorphism $H^2(X(R),\Z/2\Z)\simeq H^2(X_0(\R),\Z/2\Z)$ is compatible with the specialization map for Chow groups.
\end{example}

\begin{example}[Calabi--Yau threefold with no real point]
\label{ex:cexCYsanspoint} 
Consider the real closed field $R:=\cup_n\R((t^{1/n}))$ and let $G:= (u^2+v^2)F(x_0,\dots,x_3)\in H^0(\P^1_\R\times\P^3_\R,\sO(2,4))$, where $u,v$ (resp. $x_0,\dots,x_3$) are the homogeneous coordinates of $\P^1$ (resp. $\P^3$) and $F$ is the equation of a smooth quartic surface in $\P^3_{\R}$ with no real points containing no geometrically irreducible curve of even geometric genus, as in \cite[Example~\ref*{BW1-ex:kollar quartique sans point}]{BW1}. Let $H\in H^0(\P^1_\R\times\P^3_\R,\sO(2,4))$ be the equation of any smooth hypersurface. Consider $G+tH\in  H^0(\P^1_R\times\P^3_R,\sO(2,4))$ and let $X:=\{G+tH=0\}\subset \P^1_R\times\P^3_R$.

The variety $X$ is a smooth Calabi--Yau threefold  over $R$ (because $\{H=0\}$ is smooth) that has no $R$-points since $X_0:=\{G=0\}\subset\P^1_\R\times\P^3_\R$ has no $\R$\nobreakdash-points.
Suppose that $X$ contains a geometrically integral curve of even geometric genus. Then, by
\cite[Corollary \ref*{BW1-cor:what is elw1}]{BW1}, $\elw_1(X)=1$ (see \cite[\S\ref*{BW1-subsec:elw}]{BW1} for generalities on intermediate indices). Specializing a coherent sheaf on $X$ with odd Euler characteristic shows that $\elw_1(X_0)=1$. Applying \cite[Corollary \ref*{BW1-cor:what is elw1}]{BW1} again shows that $X_0$ contains a geometrically integral curve of even geometric genus. This curve must be contained in $\{F=0\}$, contradicting the choice of $F$.
By \cite[Corollary~\ref*{BW1-cor:IHC pour solides RC ou CY}]{BW1}, $X$ does not satisfy the real integral Hodge conjecture for $1$-cycles.
\end{example}

\subsubsection{Rationally connected threefolds}
\label{sss:RCR}

We now turn to an example of a rationally connected threefold failing the real integral Hodge conjecture for $1$-cycles.

\begin{example}[rationally connected threefold with real points]
\label{ex:cexRC}
 As before, we let $R:=\cup_n\R((t^{1/n}))$.
 We consider a variety constructed by Ducros \cite[\S 8]{ducros} in his study of the Hasse principle for varieties over functions fields of curves over real closed fields, and we reinterpret it in our setting (beware that his notation differs slightly from ours).

Ducros sets $K:=R(\P^1_R)$ and defines by equations in \cite[Proposition 8.11]{ducros} a conic bundle $X_K\to \P^1_K$. We may take a smooth projective model $X\to \P^1_R\times\P^1_R$ of $X_K$: it is a conic bundle over $\P^1_R\times\P^1_R$. The content of \cite[Proposition~8.11]{ducros} is that $X_K(K)=\emptyset$ although there is no ``reciprocity obstruction'' to the existence of a rational point on $X_K$. The first condition exactly means that the composition $p:X\to\P^1_R$ with the second projection does not admit an algebraic section. By \cite[Th\'eor\`eme 4.3]{ducros}, the second condition is equivalent to the existence of a continuous semi-algebraic section $\sigma:\P^1(R)\to X(R)$ of $p(R):X(R)\to \P^1(R)$, taking values in the smooth locus of $p$.

The image of $\sigma$ induces a homology class $\alpha\in H_1(X(R),\Z/2\Z)$. If $X$ satisfied the real integral Hodge conjecture for $1$-cycles, $\alpha$ would be the Borel--Haefliger class of a $1$-cycle $z$ on $X$, by \cite[Corollary \ref*{BW1-cor:IHC pour solides RC ou CY}]{BW1}. Restricting $z$ to a general fiber of $p:X\to \P^1_R$, one sees that $z$ has odd degree over $\P^1_R$, hence that $X_K$ has a $0$-cycle of odd degree. Since $X_K$ obviously has a closed point of degree $2$, being a conic bundle over $\P^1_K$, it has index $1$ over $K$. Colliot-Th\'el\`ene and Coray have shown in \cite[Corollaire~4 p.~309]{ctcoray} that conic bundles over $\P^1_K$ that have at most $5$ singular geometric fibers have a rational point if and only if they have index $1$. Since there are only $4$ singular geometric fibers in Ducros' example, this result applies to show that $X_K(K)\neq\emptyset$, hence that $p:X\to\P^1_R$ has a section: this is a contradiction.
\end{example}

\subsubsection{Implications for the real integral Hodge conjecture}
\label{sss:discussion IHCR}

Examples  \ref{ex:cexCYavecpoint}, \ref{ex:cexCYsanspoint} and \ref{ex:cexRC} feature Calabi--Yau and rationally threefolds over $R$ for which the real integral Hodge conjecture for $1$-cycles fails. This contrasts with the hope, expressed in \cite[Question \ref*{BW1-mainquestion}]{BW1},
 that
the following question may have a positive answer:

\begin{question}
\label{mainquestionpart2}
 Does the real integral Hodge conjecture hold for $1$-cycles on rationally connected and Calabi--Yau threefolds over $\R$ ?
\end{question}

Since, for such varieties $X$, the integral Hodge conjecture holds for $1$-cycles on $X_C$ by Voisin's theorem \cite[Theorem 2]{voisinthreefolds} and the Lefschetz principle, our examples show that it is not possible to deduce in a formal way a positive answer to Question~\ref{mainquestionpart2}
from Voisin's theorem in the spirit of the proof of \cite[Proposition~\ref*{BW1-prop:real(1,1)nonarch}]{BW1}. This is reflected by the fact that many of our partial answers to Question \ref{mainquestionpart2} use tools that are specific to the field $\R$: either Hodge theory, or the Stone--Weierstrass approximation theorem (see Theorem \ref{thm:fibres en coniques}, Theorem \ref{thm:solides fano} and Theorem \ref{thm:solides fibres en del Pezzo} (iv)).

Despite Example \ref{ex:cexRC}, the real integral Hodge conjecture for $1$-cycles does hold over arbitrary real closed fields for some particular families of rationally connected varieties. We will see that such is the case for cubic hypersurfaces of dimension $\geq 3$ in Theorem \ref{thm:cubiques}.

\begin{example}
\label{ex:implicationsDucros}
Example \ref{ex:cexRC} is particularly interesting because it concerns a conic bundle over $\P^1_R\times\P^1_R$, a kind of variety for which the real integral Hodge conjecture for $1$-cycles over $\R$ is known (Corollary \ref{coro:solides fibres en coniques}).

A glance at the equations given by Ducros \cite[Proposition 8.11]{ducros}
shows that the variety $X$ of Example \ref{ex:cexRC} spreads out to a smooth projective family $f:\sX\to B$ over an open subset $B$ of $\A^1_{\R}$ with coordinate $t$.
Let~$L$ be a relatively ample line bundle on $\sX$.
Then, on the one hand, for every $b\in B(\R)$, $H_1(\sX_b(\R),\Z/2\Z)$ is generated by Borel--Haefliger classes of algebraic curves on $\sX_b$,
by
Corollary~\ref{coro:solides fibres en coniques}
and \cite[Corollary~\ref*{BW1-cor:IHC pour solides RC ou CY}]{BW1}.
On the other hand, it follows from Proposition \ref{prop:ihcnonarchbounded}
that for any integer $\delta\geq 1$, there exists $b\in B(\R)$ such that  $H_1(\sX_b(\R),\Z/2\Z)$ fails to be generated by the Borel--Haefliger classes of curves on $\sX_b$ that have degree $\leq \delta$ with respect to $L$.

Thus there is no hope to prove Theorem \ref{thm:fibres en coniques}
 by showing that some curve-counting invariant (among real avatars of Gromov--Witten invariants) is non-zero. Indeed such an invariant would control curves of a fixed degree, and would be constant in real deformations of smooth projective varieties over $\R$.
\end{example}

\begin{example}
In \cite[Example 26.2]{kollarmangolte}, Koll\'ar and Mangolte exhibit a related example,
which can be viewed as
a smooth quartic threefold $X$ over $R=\cup_n\R((t^{1/n}))$ such that $H_1(X(R),\Z/2\Z)\neq 0$ although all rational curves on $X$ have trivial Borel--Haefliger class. 

Letting~$D$ be the curve used in the construction of \emph{loc.\ cit.},
one can prove the stronger fact that $H_1(X(R),\Z/2\Z)\neq H_1^{\alg}(X(R),\Z/2\Z)$
as soon as $D(\R)$ has at least two connected components.
(Indeed, it is possible, although we do not do it here, to construct a canonical specialisation map
$H_1(X(R),\Z/2\Z)\to H_1(D(\R),\Z/2\Z)$ which preserves algebraic classes and which, in this example, turns out
to be an isomorphism.)
The real integral Hodge conjecture for $1$\nobreakdash-cycles on~$X$ then fails.

Extrapolating from this example, Koll\'ar and Mangolte conjecture that there should exist smooth quartic threefolds~$X$
over~$\R$
such that $H_1(X(\R),\Z/2\Z)\neq 0$ although all rational curves on~$X$ have trivial Borel--Haefliger class.
They explain  that otherwise, the rational curves whose classes span $H_1(X(\R),\Z/2\Z)$ could not have uniformly bounded degrees as $X$ varies over the smooth quartic threefolds over $\R$ (an assertion parallel to our Proposition \ref{prop:ihcnonarchbounded}). Example~\ref{ex:implicationsDucros} shows that this heuristic argument is not satisfactory.
\end{example}

There is no analogue of Example \ref{ex:cexRC} for conic bundles without real points. Indeed, let $X\to S$ be a conic bundle over a smooth geometrically rational surface over $R$ such that $X(R)=\emptyset$. If $S(R)\neq\varnothing$, the fiber of $X\to S$ over a general real point is a real conic in $X$, showing that $\elw_1(X)=1$. If $S(R)=\varnothing$, then $S$ contains a geometrically rational curve $\Gamma$ by a theorem of Comessatti (\cite[discussion below Teorema VI p.~60]{comessatti}, see also \cite{colliotsanspoint}). For the same reason, the inverse image of $\Gamma$ in $X$ contains a geometrically rational curve, showing again that $\elw_1(X)=1$.
 In both cases,  \cite[Corollary~\ref*{BW1-cor:IHC pour solides RC ou CY}]{BW1} shows
 that $X$ satisfies the real integral Hodge conjecture for $1$-cycles.

In fact, we do not have any counterexample to the real integral Hodge conjecture for $1$-cycles on rationally connected varieties without real points over arbitrary real closed fields (although 
we will see that it fails in general for uniruled threefolds with no real points in \S\ref{subsubsec:cokcycleclassmap} below, see Example  \ref{ex:cexuniruled}). It is natural to wonder about the following partial generalization of \cite[Question \ref*{BW1-mainquestion}]{BW1}:

\begin{question}
\label{IHCRCR}
 Does the real integral Hodge conjecture hold for $1$-cycles on rationally connected varieties $X$ over $R$ such that $X(R)=\emptyset$ ?
\end{question}

By Proposition \ref{prop:ihcnonarchbounded}, a positive answer would imply that
in any bounded family of rationally connected varieties over $\R$,
the members that have no real point contain a geometrically irreducible curve of even geometric genus whose degree with respect to a fixed polarization is uniformly bounded.
Already in the case of quartic threefolds without real points (Example \ref{ex:elw hypersurface quartique}), we do not know if such a uniform bound exists.

\subsection{Hypersurfaces in \texorpdfstring{$\P^4$}{𝐏⁴}}
\label{subsec:hypersurfaces in P4}

In this paragraph, we discuss the existence of geometrically irreducible curves of even geometric genus in smooth hypersurfaces $X\subset\P^4_R$. By \cite[Proposition \ref*{BW1-prop:parity of genus in irrelevant situations}]{BW1}, this is only interesting if $X(R)=\varnothing$, which we now assume; the degree $\delta$ of $X$ is then even. By \cite[Theorem \ref*{BW1-th:nohodgetheoreticob}]{BW1}, the existence of a geometrically irreducible curve of even geometric genus on~$X$ would follow from the validity of the real integral Hodge conjecture for $1$-cycles on $X$.

The intersection of $X$ by two general hyperplanes is a curve of genus $\frac{(\delta-1)(\delta-2)}{2}$, showing that there exist such curves when $\delta\equiv 2\pmod 4$.
 When $\delta=4$, we have already explained in Example \ref{ex:elw hypersurface quartique} that $X$ contains a geometrically irreducible curve of even geometric genus if $R=\R$, but that we do not know whether this still holds over arbitrary real closed fields. This is related to Question \ref{IHCRCR}.

When $\delta\geq 6$, we are out of the rationally connected or Calabi--Yau range, and there are no general reasons to expect that the real integral Hodge conjecture for $1$-cycles on $X$ holds. Recall from \cite[Theorem \ref*{BW1-th:phi}]{BW1} the existence of a morphism:
\begin{equation}
\label{eq:rappelphi}\phi:\CH_1(X)\to\Z/2\Z
\end{equation}
sending the class of an integral curve to $1$ if and only if it is geometrically integral of even geometric genus. Over $\C$, Griffiths and Harris have conjectured that a very general hypersurface $X\subset\P^4_{\C}$ of degree $\delta\geq 6$ satisfies $\CH_1(X)\simeq\Z$, generated by the intersection of $X$ with two general hyperplanes \cite[1) p.~32]{griffithsharrisnl}. At this point, it is natural to consider the real analogue of this conjecture:
\begin{question}
\label{griffithsharrisreel}
Does a very general hypersurface $X\subset\P^4_{\R}$ of degree $\delta\geq 6$ satisfy $\CH_1(X)\simeq\Z$, generated by the intersection of $X$ with two general hyperplanes ?
\end{question}
If this question has a positive answer, a very general degree $\delta$ hypersurface $X\subset\P^4_{\R}$ without real points  contains no geometrically integral curve of even geometric genus if $\delta\equiv 0\pmod 4$ and $\delta\geq 8$. Indeed, in this case, the morphism $\phi$ of (\ref{eq:rappelphi}) sends the generator of $\CH_1(X)$ to $0$ because $\frac{(\delta-1)(\delta-2)}{2}$ is odd. This motivates the question:

\begin{question}
If $\delta\equiv 0\pmod 4$ and $\delta\geq 8$, do there exist smooth hypersurfaces 
 of degree $\delta$ in $\P^4_{\R}$ containing no geometrically integral curve of even geometric genus~?
\end{question}

Although we do not know if any such hypersurface exists over $\R$, we are able to give examples over non-archimedean real closed fields.

\begin{example}\label{ex:cexhyp}
If $\delta\equiv 0\pmod 4$ and $\delta\geq 8$, we construct a smooth hypersurface $X$ of degree $\delta$  in $\P^4_R$ over the real closed field $R:=\cup_n\R((t^{1/n}))$ containing no geometrically irreducible curve of even geometric genus.

Write $\delta=2\varepsilon$ with $\varepsilon\geq 4$ even. Let $F_1:=x_0^{\varepsilon}+\dots+x_4^{\varepsilon}\in H^0(\P^4_\R,\sO(\varepsilon))$ be the Fermat equation, and choose $F_2\in H^0(\P^4_\R,\sO(\varepsilon))$ very general.
The surface $S:=\{F_1=F_2=0\}$ is smooth and has no $\R$-point. By the Noether--Lefschetz theorem \cite[Th\'eor\`eme 15.33]{voisinbook} applied to the threefold $\{F_1=0\}\subset\P^4_{\C}$, the geometric Picard group $\Pic(S_{\C})$ is generated by $\sO(1)$, hence so is $\Pic(S)$. Since a hyperplane section of $S$ has odd genus $\varepsilon^3-2\varepsilon^2+1$, \cite[Theorem \ref*{BW1-th:phi}]{BW1} shows that $S$ contains no geometrically irreducible curve of even geometric genus.

Fix a general $F\in H^0(\P^4_\C,\sO(\varepsilon))$ vanishing on $S_{\C}$ that is not defined over $\R$, define $G:=F\bar{F}\in H^0(\P^4_\R,\sO(\delta))$ to be the product with its complex conjugate, and let $X_0:=\{G=0\}\subset\P^4_{\R}$. A point $x\in X_0(\R)$ satisfies $F=0$ or $\bar{F}=0$, hence $F=\bar{F}=0$ because it is real. Since the pencil generated by $F$ and $\bar{F}$ coincides with the one generated by $F_1$ and $F_2$, we would have $x\in S(\R)$. This is a contradiction and shows that $X_0(\R)=\varnothing$. Note that $X_0$ contains no geometrically irreducible curve of even geometric genus:
as the two irreducible components of $(X_0)_\C$ are exchanged by Galois,
such a curve would lie on their intersection, hence on $S$.

Choose the equation $H\in H^0(\P^4_\R,\sO(\delta))$ of a smooth hypersurface, and let $X:=\{G+tH=0\}\subset \P^4_R$. 
The hypersurface $X$ is smooth because so is $\{H=0\}$ and has no $R$-points because $X_0(\R)=\varnothing$. Arguing as at the end of Example \ref{ex:cexCYsanspoint},
we see that it contains no geometrically irreducible curve of even geometric genus because neither does $X_0$.
\end{example}

\subsection{Conic bundles without real points}
\label{subsec:conic bundles non arch}
The main goal of this paragraph is the study, in  \S\ref{subsubsec:cokcycleclassmap}, of the image of $\cl:\CH_1(X)\to H^4_G(X(C),\Z(2))$ for conic bundles $X$ over surfaces over a real closed field $R$, when $X(R)=\varnothing$. It turns out to be strongly related to two classical properties, of independent interest, of algebraic varieties over $R$: the signs and EPT properties.
We first study them in \S\ref{subsubsec:signsEPT}.

\subsubsection{The signs and EPT properties}
\label{subsubsec:signsEPT}

\begin{defn}
Let $X$ be a smooth projective variety over a real closed field $R$. We say that
\begin{enumerate}[(i)]
\item $X$ satisfies the \textit{signs property} if for every $K\subset X(R)$ that is a union of connected components, there is a rational function $g$ on~$X$ that is invertible on $X(R)$, such that $g>0$ on $K$ and $g<0$ on $X(R)\setminus K$;
\item $X$ satisfies the \textit{EPT property} if for every line bundle $\mathcal{L}$ on $X$ with vanishing Borel--Haefliger class $\cl_R(\mathcal{L})\in H^1(X(R),\Z/2\Z)$, there exists a divisor $D$ on $X$ such that $\mathcal{L}\simeq\sO_X(D)$ and the support of $D$ contains no $R$-points.
\end{enumerate}
\end{defn}

These two properties are closely related, and complement each other. Together, they completely describe, for any open subset $U\subset X$, the possible distributions of signs $U(R)\to \Z/2\Z$ induced by rational functions $g$ on~$X$ invertible on $U(R)$.

The signs and EPT properties have been proven to hold when $X$ is a curve, by Witt \cite{wittreel} over the field $\R$ of real numbers and by Knebusch \cite{knebusch1} over general real closed fields. For an arbitrary $X$ over $\R$, the signs property is an immediate consequence of the Stone--Weierstrass
 approximation theorem, and the EPT property has been proven by Bröcker \cite{brocker}. Several proofs of Bröcker's EPT theorem have been given since then, and we refer to \cite[\S 4]{scheidererpurity} for one of them and an account of the literature.
At the end of \S\ref{subsubsec:cokcycleclassmap}, we will establish the validity of the signs and EPT properties for a large class of surfaces over an arbitrary real closed field that includes all geometrically rational surfaces:

\begin{thm}
\label{thm:signsEPTsurfaces}
Let $S$ be a smooth projective surface over $R$ such that $H^2(S,\sO_S)=\Pic(S_C)[2]=0$. Then $S$ satisfies the signs and EPT properties.
\end{thm}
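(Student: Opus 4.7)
The plan is to use the criterion of Proposition~\ref{prop:IHCconicbundlesnonarch}: it reduces both properties for $S$ to the real integral Hodge conjecture for $1$-cycles on the trivial conic bundle $X = S \times_R \Gamma$, where $\Gamma$ is the anisotropic conic over $R$. Since $\Gamma(R)=\emptyset$, we have $X(R)=\emptyset$, so the topological constraint defining $H^4_G(X(C),\Z(2))_0$ is vacuous; moreover, by Lemma~\ref{lem:Hi0 Pictors}, the hypotheses propagate to $H^2(X,\sO_X)=0$ and $\Pic(X_C)[2]=0$, so the Hodge-theoretic constraint is likewise transparent.

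To verify the real integral Hodge conjecture on $X$, I would proceed via the Leray spectral sequence for the projection $f:X\to S$. Since $f$ becomes a $\P^1$-bundle after base change to $C$, the only nonzero higher direct images are $f_*\Z(2)\simeq\Z(2)$ and $\RR^2 f_*\Z(2)\simeq\Z(1)$ (via the trace morphism), and since $\dim S = 2$ the spectral sequence yields a filtration of $H^4_G(X(C),\Z(2))$ whose graded pieces are controlled by $H^4_G(S(C),\Z(2))$ and $H^2_G(S(C),\Z(1))$. The former is spanned by classes of closed points of $S$, whose pull-backs to $X$ are cycles of the form $\{\mathrm{pt}\}\times\Gamma$, which are plainly algebraic. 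For the latter, the vanishing $H^2(S,\sO_S)=0$ combined with the real Lefschetz $(1,1)$ theorem (\cite[Proposition~\ref*{BW1-prop:real(1,1)nonarch}]{BW1}) shows that $H^2_G(S(C),\Z(1))$ is generated by images of line bundles on $S$ modulo $2$-power torsion, while $\Pic(S_C)[2]=0$ eliminates that torsion contribution.

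The main obstacle is the lifting step: each algebraic divisor class $[D]\in\Pic(S)$ must be realised by a genuine $1$-cycle on $X$ whose image in the $H^2_G(S(C),\Z(1))$ graded piece is the prescribed class. Because $\Gamma(R)=\emptyset$, every closed point of $\Gamma$ has degree $2$ over $R$, so the obvious product $D\times\{p\}$ yields twice the required class; one must therefore produce a $1$-cycle on $X$ whose degree on the fibres of $f$ is odd, that is, a $0$-cycle of odd degree on the conic $\Gamma_{R(D')}$ for some projective model $D'$ of (a component of) $D$. This is where the hypotheses enter decisively: via the Kummer sequence for $S_C$ and $D'_C$, the obstruction to such a $0$-cycle lies in $2$-torsion Brauer classes which are controlled by $\Pic(S_C)[2]=0$ and its propagation to $D'$, and one concludes by the classical description of the index of a conic over a function field of a curve over a real closed field. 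Once these $1$-cycles are produced, the real integral Hodge conjecture for $X$ holds, and Proposition~\ref{prop:IHCconicbundlesnonarch} delivers the EPT and signs properties for $S$.
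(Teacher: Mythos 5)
Your first step---reducing to the real integral Hodge conjecture for $X=S\times\Gamma$ via Proposition~\ref{prop:IHCconicbundlesnonarch}---matches the paper. But from there you diverge, and your proposed Leray argument for the real integral Hodge conjecture on~$X$ has a genuine gap.

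The problem is in your ``lifting step''. You claim that for each $[D]\in\Pic(S)$ one can produce a $1$\nobreakdash-cycle on $X$ whose degree over a suitable model $D'$ of (a component of) $D$ is odd, and that ``the obstruction to such a $0$\nobreakdash-cycle lies in $2$\nobreakdash-torsion Brauer classes which are controlled by $\Pic(S_C)[2]=0$''. This is not so: the obstruction to $\Gamma_{R(D')}$ having a $0$\nobreakdash-cycle of odd degree is the index of the conic $\Gamma$ over $R(D')$, and this has nothing to do with $\Pic(S_C)[2]$. Whenever $D'(R)\neq\varnothing$, restriction to a real point of $D'$ shows that the class of $\Gamma$ stays nontrivial in $\Br(R(D'))$, so $\Gamma_{R(D')}$ has index $2$ and no $1$\nobreakdash-cycle on $X$ can dominate $D'$ with odd degree. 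The only escape is to first replace $D$ by a linearly equivalent divisor whose support has no real points---but that is exactly the EPT property for $S$, which is what you are trying to prove. In other words, your direct Leray computation reproduces the difficulty rather than resolving it: the content of Proposition~\ref{prop:IHCconicbundlesnonarch} is precisely that this lifting can be carried out \emph{if and only if} $S$ has the signs and EPT properties, and you cannot break the circle by a Brauer-theoretic appeal to $\Pic(S_C)[2]=0$.

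The paper's actual proof sidesteps all of this. After the same reduction, it dispenses with the case $S(R)=\varnothing$ (vacuous), picks $s\in S(R)$, and invokes the criterion from the companion paper (Theorem~\ref*{BW1-thm:relation ihc phi} of~\cite{BW1}) that, under the hypotheses $H^2(X,\sO_X)=0$ and $X(R)=\varnothing$ (together with the torsion hypothesis on $\Pic$, inherited from $S$ via Lemma~\ref{lem:Hi0 Pictors}), the real integral Hodge conjecture for $1$\nobreakdash-cycles on $X$ follows from the existence of a single geometrically integral curve of even geometric genus on $X$. The curve $\{s\}\times\Gamma$, which is geometrically rational, does the job. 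You will notice that this uses $S(R)\neq\varnothing$ in an essential way, a case distinction that does not appear in your proposal at all; and it never attempts to lift an arbitrary divisor class across the conic, which is the step your approach cannot complete.
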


Despite these positive results, the signs property does not hold in general. In \cite[Example 15.2.2]{bcr}, there is an example of a $K3$ surface $X$ over a non-archimedean real closed field that does not satisfy the signs property (more precisely, consider the minimal resolution of singularities of the surface of \emph{loc.\ cit.}).

It was an open problem (\cite[p.~309]{IS}, \cite[(4.2)]{scheidererpurity}) to decide whether the EPT property always holds. We construct the first example showing that it fails in general. Our example has two sources of inspiration:  the counterexample \cite[Example 15.2.2]{bcr} to the signs property mentioned above and, most importantly, Ducros' example \cite[\S 8]{ducros} discussed in Example \ref{ex:cexRC}.

\begin{prop}
\label{prop:cexEPT}
There exists a $K3$ surface $S$ over $R:=\cup_n\R((t^{1/n}))$ that does not satisfy the EPT property.
\end{prop}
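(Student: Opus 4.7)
The plan is to combine the Ducros construction of Example~\ref{ex:cexRC} with a double cover operation, producing a $K3$ surface $S/R$ together with a line bundle~$\mathcal{L}$ for which the EPT property fails explicitly. Concretely, I would first fix a smooth projective model $p:X\to \P^1_R\times\P^1_R$ of Ducros' conic bundle as in Example~\ref{ex:cexRC}, together with an auxiliary line bundle~$N$ on $\P^1_R\times\P^1_R$ with $N^{\otimes 2}$ isomorphic to the line bundle detecting the degeneracy locus of a well chosen restriction of~$p$. Restricting~$p$ to a sufficiently general real curve $B\subset\P^1_R\times\P^1_R$ of bidegree chosen so that the resulting conic bundle surface $X_B\to B$ is rational with no section, and then forming the double cover of $X_B$ (or, equivalently, a double cover of the ruled surface attached to $X_B$) branched along a tailored divisor linearly equivalent to $2N|_{X_B}$, would yield a smooth projective surface $S$ whose canonical bundle can be arranged to be trivial; choosing~$B$ so that the singular fibers and branch locus have the combinatorics of a $K3$ (e.g.\ $h^{2,0}(S)=1$, $h^{1,0}(S)=0$) will make $S$ a $K3$ surface.

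Next, I would isolate an element $\mathcal{L}\in\Pic(S)$ coming from the Ducros construction. A natural candidate is the pullback to~$S$ of a line bundle $\mathcal{M}$ on~$X_B$ attached to the semi-algebraic section $\sigma:\P^1(R)\to X(R)$ built in Example~\ref{ex:cexRC}; concretely, $\sigma$ restricted to $B(R)$ defines a continuous semi-algebraic section of $X_B(R)\to B(R)$, whose image acquires a well-defined class over every real point of~$S$. The vanishing $\cl_R(\mathcal{L})=0\in H^1(S(R),\Z/2\Z)$ would then come from the existence of this \emph{topological} section: $\sigma$ produces a global semi-algebraic trivialization of $\mathcal{L}|_{S(R)}$, exactly as in the continuous semi-algebraic section argument of Ducros, combined with the standard identification of the Borel--Haefliger class with obstructions to such trivializations.

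The heart of the argument is the third step: proving that no divisor~$D$ on~$S$ representing~$\mathcal{L}$ can have support disjoint from $S(R)$. The strategy is by contradiction: such a divisor~$D$ would restrict to a well-defined cycle on the generic fiber of the composition $S\to X_B\to B$, and tracking its class through the double cover and through the conic bundle structure would produce, over the function field $K=R(\P^1_R)$, an algebraic section (or at least a zero-cycle of odd degree) of Ducros' conic bundle $X_K\to \P^1_K$. Applying the Colliot-Th\'el\`ene--Coray theorem \cite{ctcoray} invoked in Example~\ref{ex:cexRC} (which upgrades odd-degree zero-cycles to rational points on conic bundles with at most five singular geometric fibers) would then contradict the conclusion $X_K(K)=\emptyset$ from \cite[Proposition~8.11]{ducros}.

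The main obstacle is the bookkeeping in the third step: one must verify that the bidegree of $B$ and the branch divisor can simultaneously be arranged so that $S$ is a $K3$, that the pulled-back line bundle $\mathcal{L}$ is non-trivial modulo real-point-avoiding divisors, and that the constraint on the number of singular geometric fibers required by \cite{ctcoray} is preserved. Once these combinatorial constraints are reconciled, the comparison of Ducros' non-existence of a rational section with the hypothetical real-avoiding representative of $\mathcal{L}$ provides the desired contradiction, establishing that $S$ fails EPT.
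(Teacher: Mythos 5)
Your proposal is more ambitious than the paper's proof and, as sketched, contains two genuine gaps that would be hard to close.

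First, the line bundle $\mathcal{L}$ is not well-defined. You propose to attach an algebraic line bundle on $S$ to the continuous semi-algebraic section $\sigma$ of $X(R)\to \P^1(R)$ from Example~\ref{ex:cexRC}, but a semi-algebraic section produces at best a topological class in $H_1(X(R),\Z/2\Z)$ --- it does not yield an element of $\Pic$. Without an algebraic $\mathcal{L}$, the statement ``no divisor representing $\mathcal{L}$ avoids $S(R)$'' is not even meaningful, and the Borel--Haefliger vanishing argument via a ``trivialization by $\sigma$'' is circular. The paper avoids this entirely by taking $\mathcal{L}=\sO_S(\bar D)$ where $\bar D$ is the strict transform of $D=\{v=0\}$, an honest algebraic divisor, and shows $\cl_R(\mathcal{L})=0$ simply because $\bar D(R)$ lies inside a connected component $K$ of $S(R)$ which is a sphere, so $H^1(K,\Z/2\Z)=0$.

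Second, the deduction of a contradiction via Colliot-Th\'el\`ene--Coray does not go through as stated. You want a divisor $D\subset S$ with $D(R)=\varnothing$ to produce a zero-cycle of odd degree on Ducros' surface $X_K$ over $K=R(\P^1_R)$; but $S$ is a double cover of (a blow-up of) the conic bundle surface $X_B=p^{-1}(B)$ over a \emph{curve} $B\subset\P^1_R\times\P^1_R$, not of the generic fiber of $X\to\P^1_R$. Restricting Ducros' conic bundle to a curve $B$ changes the base from $\P^1_K$ to $\P^1_{R(B)}$, and Ducros' non-existence statement $X_K(K)=\varnothing$ says nothing a priori about this restriction. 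The chain ``divisor on $S$ $\Rightarrow$ odd-degree zero-cycle on $X_K$'' is therefore not geometrically coherent without substantial additional argument, and you would also need to control the number of singular geometric fibers after all your modifications to stay within the hypothesis of \cite{ctcoray}; this is more than bookkeeping.

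For comparison, the paper's proof is much more direct: it defines $S$ as the minimal resolution of the double cover $T\to\P^1_R\times\P^1_R$ given by
\begin{equation*}
z^2 = x(y-x)(xw^2-yv^2)(xw^2+yv^2) - tw^4y^4,
\end{equation*}
takes the explicit $\bar D$ above, and derives the contradiction by pushing forward to $\P^1_R\times\P^1_R$ (using the covering involution $i$ to manufacture an invariant section $s'$ of $\sO(d_1,d_2)$ with $d_2$ odd), specializing to $t=0$ over $\R[[t^{1/n}]]$, and analyzing the parity of the order of vanishing of a rational function at a real point of a curve over $\R$. This is entirely elementary; it is \emph{inspired} by Ducros' construction (the branch locus has a similar polynomial shape) but does not invoke Ducros' non-existence theorem or Colliot-Th\'el\`ene--Coray at all. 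If you want to pursue a Ducros-based route you would need to first manufacture an algebraic $\mathcal{L}$ and then justify why a real-point-avoiding representative of it contradicts the absence of a $K$-point; as it stands, neither part is in place.
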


\begin{proof}
Consider $\P^1_R\times \P^1_R$ with homogeneous coordinates $([x:y],[v:w])$.
Let $T$ be the double cover of $\P^1_R\times\P^1_R$, with branch locus of bidegree $(4,4)$, defined by the equation $z^2=x(y-x)(xw^2-yv^2)(xw^2+yv^2)-tw^4y^4$. Since $T$ has only rational double points as singularities, its minimal resolution of singularities $S$ is a $K3$ surface over~$R$. We denote by $i:T\to T$ the involution associated with the double cover.

The unique connected component $K$ of $S(R)$ contained in $0<x/y<1$ is semi-algebraically isomorphic to a sphere, hence satisfies $H^1(K,\Z/2\Z)=0$. The strict transform $\bar{D}$ of $D:=\{v=0\}$ in $S$ has real points only in $K$, so that its Borel--Haefliger class is trivial. Suppose for contradiction that $S$ satisfies the EPT property. Then $\bar{D}$ is linearly equivalent to a divisor whose support has no $R$-points. Pushing forward to $T$ shows that $D$ is linearly equivalent in $T$ to a Weil divisor whose support contains only finitely many $R$-points. 

Write $D=E_1-E_2+\divi(f)$, where $E_1$ and $E_2$ are effective Weil divisors whose supports contain finitely many $R$-points. The divisor $E_2+i(E_2)$ comes from $\P^1\times \P^1$, hence is the zero locus of a section in $H^0(T,\sO(d_1,d_2))$ for some $d_1, d_2\in\Z$. It follows that $E_1+i(E_2)$ is the zero locus of a section in $H^0(T,\sO(d_1,d_2+1))$. These two divisors have finitely many $R$-points in their support. Consequently, after possibly replacing $d_2$ by $d_2+1$, we have constructed $s\in H^0(T,\sO(d_1,d_2))$ with $d_2$ odd such that $\{s=0\}$ contains finitely many $R$-points.

Suppose there exists $a\in T(R)$ such that $s(a)\neq 0$, $i^*s(a)\neq 0$ but $s(a)+i^*s(a)=0$. Choose a semi-algebraic path $\gamma:[0,1]\to T(R)$ joining $a$ and a point $b\in T(R)$ in the ramification locus. Since $s$ vanishes at only finitely many $R$-points, it is possible to assume (after changing $b$) that neither $s$ nor $i^*s$ vanishes along the path. Since $b$ is a ramification point, $s(b)=i^*s(b)$. By hypothesis, $s(a)=-i^*s(a)$.
Since $[0,1]$ is semi-algebraically connected,
the continuous semi-algebraic map $\lambda\mapsto i^*s(\gamma(\lambda))/s(\gamma(\lambda))$ must reach the value $0$, a contradiction. We have proven that $s(a)+i^*s(a)=0$ implies that $s(a)=0$ or $i^*s(a)=0$, hence that $s+i^*s$ vanishes at only finitely many $R$-points of $T$.

Since $s+i^*s$ is $i^*$-invariant, it comes from a
section $s'\in H^0(\P^1_R\times\P^1_R,\sO(d_1,d_2))$ that vanishes at only finitely many
$R$\nobreakdash-points outside of the closed semi-algebraic subset
$\Theta \subset \P^1(R)\times \P^1(R)$
defined by the inequality
\begin{align}
\label{eq:theta inequality}
x(y-x)(xw^2-yv^2)(xw^2+yv^2)\leq tw^4y^4\rlap{.}
\end{align}
Replacing $B:=\{s'=0\}$ by an appropriate reduced irreducible component, we
may assume that $B$ is integral.

Let $A=\R[[t^{1/n}]]$, with~$n$ large enough that
$B=\sB\otimes_A R$ for some closed integral
subscheme $\sB \subset \P^1_A \times \P^1_A$.
Let~$\sB'$ be the normalisation of~$\sB$ and $B'=\sB'\otimes_AR$.
The map $\sB'\otimes_A \R \to \P^1_\R$ induced by the first projection
is generically finite of degree~$d_2$, which is odd;
therefore $\sB' \otimes_A \R$ possesses an irreducible component~$Y$ of odd multiplicity~$m$,
which dominates~$\P^1_\R$ with odd degree.
Letting $Y_\rs \subset \sB'_\rs \supset B'_\rs$ denote the real spectra of~$Y$, $\sB'$
and~$B'$,
the inequality~\eqref{eq:theta inequality} now defines a closed subset~$\Theta_A$ of~$\sB'_\rs$
which contains $B'_\rs$.
The completion of the local ring of~$\sB'$ at the generic point of~$Y$ is isomorphic,
over~$A$, to $\R(Y)[[u]]$, where $u=(\alpha t^{1/n})^{1/m}$ for some $\alpha \in \R(Y)^*$.
As~$m$ is odd,
any ordering of~$\R(Y)$ can be extended to $\R(Y)((u))$, compatibly with the given ordering on~$A$,
by declaring that $u$ is infinitely small of the same sign as~$\alpha$.
Thus, any point of~$Y_\rs$ above the generic point of~$Y$ belongs to the closure of~$B'_\rs$ in~$\sB'_\rs$
(see \cite[Proposition~7.1.21]{bcr}) and, hence, to~$\Theta_A$.
Letting~$Y'$ be the normalisation of~$Y$, it follows that the image of~$Y'_\rs$ in~$Y_\rs$
is contained in~$\Theta_A$.

We have now constructed a smooth, proper, irreducible curve~$Y'$ over~$\R$, endowed with two rational
functions $x,v \in \R(Y')^*$, such that the map $x:Y'\to \P^1_\R$ has odd degree and such that
the function $g=x(1-x)(x-v^2)(x+v^2)$ takes negative values on~$Y'(\R)$
wherever it is invertible.
There exists $P \in Y'(\R)$ at which~$x$ vanishes to odd order.
The function~$g$ then has a zero or pole of odd order at~$P$; its
sign therefore changes around~$P$, a contradiction.
\end{proof}

Proposition \ref{prop:cexEPT} and \cite[Example 15.2.2]{bcr} illustrate the importance of the requirement that $H^2(S,\sO_S)$ vanish in the statement of Theorem \ref{thm:signsEPTsurfaces}. We now provide an example showing that the hypothesis that $\Pic(S_C)[2]=0$ is also essential. We adapt the argument of \cite[Example 15.2.2]{bcr}.

\begin{prop}
\label{prop:cexsigns}
There exists a bielliptic surface $S$ over $R:=\cup_n\R((t^{1/n}))$ that does not satisfy the signs property. 
\end{prop}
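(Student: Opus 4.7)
The plan is to imitate the strategy used for the $K3$ surface in \cite[Example~15.2.2]{bcr}, transposed to the bielliptic setting. The key additional ingredient is that a bielliptic surface comes with a canonical étale quotient structure, which makes its $2$\nobreakdash-torsion in~$\Pic$ rich enough to enter into a parity argument similar to the final steps of the proof of Proposition~\ref{prop:cexEPT}.

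The plan is to construct~$S$ as a quotient $S=(E\times F)/G$, where~$E$ is a real elliptic curve with $E(\R)$ having two connected components, $F$ is an elliptic curve over~$R$ equipped with a real involution~$\iota$ such that $F/\langle\iota\rangle \simeq \P^1_R$, $G=\Z/2\Z$ acts on~$E$ by translation by a $2$\nobreakdash-torsion point~$\tau$ that exchanges the two components of $E(\R)$, and on~$F$ by~$\iota$. I would take~$F$ to be given by a Weierstrass-type equation $z^2=Q_t(v,w)$ with the branch locus depending on~$t$ in the same infinitesimal way as in \cite[Example~15.2.2]{bcr} (so that over the residue field~$\R$, two of the real branch points collide). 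The surface~$S$ is then a smooth bielliptic surface over~$R$, equipped with two elliptic fibrations $\pi_1:S\to E/\langle\tau\rangle$ and $\pi_2:S\to F/\langle\iota\rangle\simeq\P^1_R$.

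Next, I would analyse $S(R)$ using~$\pi_2$: over each connected component of the semi-algebraic locus $\P^1(R)\setminus\pi_2(\{\text{discriminant}\})$, $\pi_2$ is a locally trivial fibration with fibre~$E$, whose real locus is a disjoint union of circles. Tracking how these circles glue across the singular fibres of~$\pi_2$ gives an explicit description of the connected components of $S(R)$. Using the parameter~$t$, I would arrange the discriminant so that, for a chosen union~$K$ of connected components of $S(R)$, the existence of a rational function $g\in R(S)^\times$ invertible on $S(R)$ with $g>0$ on~$K$ and $g<0$ on $S(R)\setminus K$ would force, after pulling back to $E\times F$ and averaging over~$G$, the existence of a $G$\nobreakdash-invariant rational function $\tilde{g}$ on $E\times F$ whose divisor contains very few real points outside an explicit closed semi-algebraic subset~$\Theta$ defined by a degeneracy inequality in~$t$.

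The final and hardest step is to derive a contradiction from this. I would follow the reduction argument at the end of the proof of Proposition~\ref{prop:cexEPT}: spread~$\tilde{g}$ out over $A=\R[[t^{1/n}]]$, pick an irreducible component of odd multiplicity in its divisor that dominates one of the factors with odd degree, and use the fact that any ordering of the residue field lifts to the formal local ring with a prescribed sign of~$t^{1/n}$ to show that this component must lie in the closure of the real part of~$\Theta$, thereby producing a rational function on an elliptic curve over~$\R$ whose signs on the real locus are incompatible with a parity count on a section or on a $2$\nobreakdash-torsion point. The main obstacle will be the bookkeeping of connected components of $S(R)$ and verifying that the $2$\nobreakdash-torsion structure of~$\Pic(S_C)$ (which is what distinguishes this argument from Theorem~\ref{thm:signsEPTsurfaces}) is indeed what forces the obstruction, so that the construction yields a bona fide bielliptic example rather than reducing to a product of elliptic curves or to a~$K3$ surface.
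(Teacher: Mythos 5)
There is a concrete error in the construction.  You propose to let $G=\Z/2\Z$ act on $E$ by translation by a $2$\nobreakdash-torsion point $\tau$ that \emph{exchanges} the two semi-algebraic connected components of $E(R)$.  With this choice the $G$\nobreakdash-action on the set of connected components of $E(R)\times F(R)$ is nontrivial on the $E$\nobreakdash-factor, so the four components of $E(R)\times F(R)$ pair up under~$G$ and $S(R)$ only has two connected components, each indexed by the component of~$F(R)$ it lies over.  For any union~$K$ of components of~$S(R)$, the required sign condition on the pullback $\tilde g$ to $E\times F$ then depends only on the $F$\nobreakdash-coordinate, and such a function can always be produced: pull back a rational function on $F/\langle\iota\rangle\simeq\P^1_R$ separating the images of the two components of $F(R)$ (this exists by the signs property for curves).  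So your~$S$ in fact \emph{satisfies} the signs property and cannot be a counterexample.  The paper avoids this by choosing the translation on the second factor by a $2$\nobreakdash-torsion point in the \emph{identity component} of~$E(R)$, so that $G$ preserves all four components of $E(R)^2$; the quotient $S(R)$ then has four components, and singling out the ``corner'' component $-1\leq x_1,x_2\leq 0$ imposes a genuine quadrant-type sign constraint that cannot be relaxed by products of functions of one variable.

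The proposed contradiction step is also not the one that actually works, and it is considerably heavier than what is needed.  You plan to mimic the proof of Proposition~\ref{prop:cexEPT}: spread over $\R[[t^{1/n}]]$, find an irreducible component of odd multiplicity dominating a $\P^1$\nobreakdash-factor with odd degree, and lift orderings.  That argument leans on the existence of a section of a line bundle of odd bidegree on $\P^1\times\P^1$, a structure you do not have here.  The paper's proof of Proposition~\ref{prop:cexsigns} is instead a direct adaptation of \cite[Example~15.2.2]{bcr}: after multiplying by a square to make $g$ polynomial in $x_1,y_1,x_2,y_2$, average over the four sign choices $(\pm y_1,\pm y_2)$ to obtain a nonzero $h\in R[x_1,x_2]$ that is non-negative on $[-1,0]^2$ and non-positive on the remaining three ``quadrants'' bounded by~$t$; then specialise at $t=0$ and observe that the way the sign of $h_0$ flips (or fails to flip) across $\{x_1=0\}$, according to whether $x_2\in(-1,0)$ or $x_2>0$, forces the multiplicity of $x_1$ in $h_0$ to be simultaneously odd and even.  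No real spectra, no odd-degree multisections, no order-lifting is needed.  If you fix the translation so that it lies in the identity component (making $S(R)$ have four components) and replace the Ducros-style step with this averaging-and-specialisation argument, the proof goes through.
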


\begin{proof}
Let $E$ be the elliptic curve with Weierstrass equation $y^2=x(x+1)(x-t)$ over~$R$. Consider the abelian surface $A=E^2$, with coordinates $(x_1,y_1)$ (resp.~$(x_2,y_2)$) on the first (resp.\ second) factor. The set of real points $A(R)$ has four semi-algebraic connected components. Let $K\subset A(R)$ be the one satisfying the inequalities $-1\leq x_1,x_2\leq 0$.
  Let $\Z/2\Z$ act on $A$ diagonally, by $-1$ on the first factor, and on the second factor, by translation by the $2$-torsion point $(x_2,y_2)=(t,0)$ in the same connected component of $E(R)$ as the identity. Let $S$ be the quotient of $A$ by this fixed-point-free action: it is a bielliptic surface over $R$. Since the quotient map $\pi:A\to S$ is finite \'etale, the image $\pi(K)$ of $K$ in $S(R)$ is a semi-algebraic connected component of $S(R)$. 

Suppose for contradiction that $S$ satisfies the signs property. Then there exists a non-zero rational function $g$ on $S$ that is invertible on $S(R)$ and such that $g\geq 0$ on $\pi(K)$ and $g\leq 0$ on $S(R)\setminus \pi(K)$. View $g$ as a rational function  on $A$, represent it as a rational function in $x_1,y_1,x_2,y_2$, and multiply it by an appropriate square
so that $g\in R[x_1,y_1,x_2,y_2]$. Define $h\in R[x_1,x_2]$ by the formula
 $h=g(x_1,y_1,x_2,y_2)+g(x_1,-y_1,x_2,y_2)+g(x_1,y_1,x_2,-y_2)+g(x_1,-y_1,x_2,-y_2)$.
Then $h$ is a non-zero polynomial that is non-negative when $-1\leq x_1,x_2\leq 0$ and non-positive when $x_1,x_2\geq t$, when $x_1\geq t$ and $-1\leq x_2\leq 0$, as well as when $x_2\geq t$ and $-1\leq x_1\leq 0$. Specializing $h$ when $t=0$ yields a non-zero polynomial $h_0\in\R[x_1,x_2]$ that is  non-negative when $-1\leq x_1,x_2\leq 0$ and non-positive in all other cases for which $-1\leq x_1,x_2$. The way the sign of $h_0$ changes when crossing the line $\{x_1=0\}$ shows that the multiplicity of $x_1$ as a factor of $h_0$ is both even and odd, a contradiction.
\end{proof}

Scheiderer has shown in \cite[Appendix]{scheidererpurity} that the EPT property is equivalent to a weakened formulation of it, and his arguments imply an analogous result for the signs property. We explain this in the following proposition. As in \cite{scheidererpurity}, we say that an irreducible variety $Z$ over $R$ is \textit{real} (resp.\ \textit{non-real}) if the Zariski closure of $Z(R)$ is equal to (resp.\ is a proper subset of) $Z$.

\begin{prop}
\label{prop:signsEPTfaibles}
Let $X$ be a smooth projective variety over $R$.
\begin{enumerate}[(i)]
\item  Suppose that for every union $K$ of connected components of $X(R)$, there is an invertible
function~$g$ on a dense
open subset $U \subseteq X$
such that $g>0$ on $U(R)\cap K$ and $g<0$ on $U(R) \setminus (U(R)\cap K)$.
Then $X$ satisfies the signs property.
\item Suppose that for every line bundle $\mathcal{L}$ on $X$ with vanishing Borel--Haefliger class $\cl_R(\mathcal{L})\in H^1(X(R),\Z/2\Z)$, there exists a divisor $D$ on $X$ all of whose real components have even multiplicity, and such that $\mathcal{L}=\sO_X(D)$. Then $X$ satisfies the EPT property.
\end{enumerate}
\end{prop}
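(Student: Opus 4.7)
The plan is to follow Scheiderer's strategy in \cite[Appendix]{scheidererpurity}, which establishes the equivalence for the EPT property, and to adapt it to the signs property: in both parts, the task reduces to the common problem of replacing a divisor $D$ whose real components have even multiplicity by a linearly equivalent divisor $D'$ whose support avoids $X(R)$. For part (ii), the weak hypothesis directly yields $\mathcal{L} = \sO_X(D_0)$ with every real component of $D_0$ having even multiplicity. Writing $D_0 = 2E + F$ with $E$ effective and $F$ supported on non-real components, the crucial feature is that near each real point $p \in \operatorname{supp}(D_0)$, the canonical local section of $\mathcal{L}$ takes the form $u \cdot f^{2m}$ with $u$ a unit and $f$ a local equation of the component through $p$ (a unit times a square). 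The plan is to construct, locally near each real point of $\operatorname{supp}(D_0)$, a rational function that cancels $D_0$ there, and then to patch these local cancellations into a global $h$ with $\operatorname{supp}(D_0 + \divi(h)) \cap X(R) = \varnothing$. The patching is carried out on the real spectrum $X_\rs$, and the even-multiplicity hypothesis is exactly what causes the patching obstruction to vanish.

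For part (i), given $K$ a union of connected components of $X(R)$ and $g \in \sO_X(U)^*$ as in the weak hypothesis, the first step is to verify that every real irreducible component of $\divi(g)$ has even multiplicity. Indeed, since $K$ has empty topological boundary in $X(R)$, at a smooth real point $p$ of a real component $Z$ of $\divi(g)$ lying on no other component, the two sides of $Z$ near $p$ belong to the same connected component of $X(R)$, hence to the same sign region for $g$ on the semi-algebraically dense subset $U(R) \subseteq X(R)$; writing $g = u \cdot f^n$ locally then forces the multiplicity $n$ to be even. This reduces the situation to the framework of part (ii): one seeks a rational function $h$ with $h > 0$ on $U(R)$ and $\operatorname{supp}(\divi(gh)) \cap X(R) = \varnothing$, built by the same local patching together with a sum-of-squares construction to ensure positivity. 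The resulting $gh$ is then invertible on $X(R)$ and has the same sign behavior as $g$ on $U(R)$.

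The main obstacle is the global patching step on the real spectrum $X_\rs$. Locally near any real point, the even-multiplicity structure provides a modification cancelling the problematic divisor components; gluing these local modifications into a single global rational function, however, requires a \v{C}ech-cohomological vanishing result. Scheiderer's argument handles this in the EPT case, and the same method---suitably refined to also control signs in (i)---completes the proof.
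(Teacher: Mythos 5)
Your proposal follows essentially the same route as the paper: the key observation for (i)---that every real component of $\divi(g)$ must have even multiplicity because $g$ cannot change sign across a real hypersurface within a single connected component of $X(R)$---is exactly the one the paper uses, and the subsequent modification of $\divi(g)$ by a positive (sum-of-squares) rational function so that the support of the result avoids $X(R)$ relies on the very lemmas of Scheiderer (Lemmas~1 and~2 of the Appendix to \cite{scheidererpurity}) that the paper invokes, with $g/h$ in the paper playing the role of your $gh$. For (ii) the paper simply cites Scheiderer's proposition, whereas you sketch his underlying argument; this is a difference in presentation, not in approach.
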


\begin{proof}
Statement (ii) is due to Scheiderer \cite[Appendix, Proposition]{scheidererpurity}.

To show (i), we may assume that~$X$ is irreducible. We fix a union $K$ of connected components of $X(R)$ and an invertible function~$g$ on a dense open subset $U \subseteq X$
such that $g>0$ on $U(R) \cap K$ and $g<0$ on $U(R) \setminus (U(R) \cap K)$. Let $\Spec(A)\subset X$ be an affine open subset containing $X(R)$. 
 Since $g$ does not change its sign on connected components of $X(R)$, a real irreducible divisor can only appear with even multiplicity in $\divi(g)$. Scheiderer shows in \cite[Appendix, Proof of Lemma 2]{scheidererpurity} that for every irreducible divisor $y$ of $\Spec(A)$, there exists $h\in A$ that is a sum of squares in $R(X)$ such that the support of $2y-\divi(h)$ only contains non-real divisors. He also shows in \cite[Appendix, Proof of Lemma~1]{scheidererpurity} that for every non-real irreducible divisor $y$ of $\Spec(A)$, there exists $h\in A$ that is a sum of squares in $R(X)$ such that the support of $y-\divi(h)$ has no $R$-points. Combining these two facts, we see that there exists $h\in A$ that is a sum of squares in $R(X)$ such that the support of $\divi(g)-\divi(h)$ has no $R$-points. The rational function $g/h \in R(X)^*$ then shows that $X$ satisfies the signs property.
\end{proof}

\subsubsection{Torsion in the cokernel of the cycle class map}
\label{subsubsec:cokcycleclassmap}
We now complement the examples of \S \ref{par:cexnonarch} by studying some uniruled threefolds $X$ over $R$ with $X(R)=\emptyset$. 
Since such a variety may not satisfy $H^2(X,\sO_X)=0$, the definition of the real integral Hodge conjecture for $1$-cycles on $X$ of \cite[Definition~\ref*{BW1-def:ihc real closed field}]{BW1} does not apply. As a substitute, we will rather consider the statement, for a smooth proper
 variety $X$ over $R$ of pure dimension $d$, that the cokernel of $\cl:\CH_1(X)\to H^{2d-2}_G(X(C),\Z(d-1))_0$ has no torsion. If $R=\R$
or $H^2(X,\sO_X)=0$,
a norm argument based on the validity of the (rational, complex) Hodge conjecture for $1$\nobreakdash-cycles
shows that this statement is equivalent to the real integral Hodge conjecture for $1$-cycles on $X$.

\begin{prop}
\label{prop:cokersstorsion}
Let $f:X\to S$ be a morphism of smooth projective connected varieties over $R$ whose generic fiber is a  conic. Suppose that $S$ is a surface satisfying the signs and EPT properties, and that $X(R)=\emptyset$. Then the cokernel of $\cl:\CH_1(X)\to H^{4}_G(X(C),\Z(2))$ has no torsion.
\end{prop}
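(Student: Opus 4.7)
The proof adapts that of Theorem~\ref{thm:fibres en coniques}, with the two uses of Stone--Weierstrass that remain relevant, namely the EPT and signs inputs on the base, now built into the hypotheses.  By Theorem~\ref{thm:existenceSarkisov} and the birational invariance argument based on the projection formula (which identifies the cokernel of $\cl$ on~$X$ with that on any smooth projective birational model), we may reduce to the case where $f:X\to S$ is a Sarkisov standard model.  Fix a class $\alpha\in H^4_G(X(C),\Z(2))$ with $n\alpha=\cl(z)$ for some $n\geq 1$ and $z\in\CH_1(X)$; it suffices to show $\alpha\in\Ima(\cl)$.

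The analogue of Step~\ref{step:real locus1} is immediate because $X(R)=\varnothing$.  For Step~\ref{step:pushforward}, the class $nf_*\alpha=\cl(f_*z)$ is algebraic, and the real Lefschetz $(1,1)$ theorem over arbitrary real closed fields \cite[Proposition~\ref*{BW1-prop:real(1,1)nonarch}]{BW1} ensures that the cokernel of $\cl:\Pic(S)\to H^2_G(S(C),\Z(1))$ is torsion-free; hence $f_*\alpha=\cl(y)$ for some $y\in\Pic(S)$.  The key point is that the Borel--Haefliger class $\cl_R(y)\in H^1(S(R),\Z/2\Z)$ vanishes.  Granting this for the moment, the EPT property on~$S$ yields a divisor~$D$ with $\sO_S(D)=\sO_S(y)$ whose support contains no $R$-point.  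Each irreducible component of the support of~$D$ is a curve over~$R$ without real points, so Witt's theorem \cite[Satz~22]{WittHasse} furnishes a section of the conic bundle obtained by restricting~$f$ to (the normalisation of) such a component.  Assembling these sections with multiplicities yields a $1$\nobreakdash-cycle~$\tilde D$ on~$X$ satisfying $f_*[\tilde D]=y\in\CH_1(S)$.  Replacing $\alpha$ by $\alpha-\cl(\tilde D)$, we may assume $f_*\alpha=0$ without losing the property that $n\alpha\in\Ima(\cl)$.

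Steps~\ref{step:real locus2} through~\ref{step:removestuff} then go through as in \S\ref{subsec:proofconicbundles}, with two simplifications.  Step~\ref{step:real locus2} is trivial since $X(R)=\varnothing$.  In Step~\ref{step:notrace}, when invoking Lemma~\ref{lem:AKhomology} with $d=2$, one observes that the canonical decomposition of $H^1_G(S(R),\Z(1))$ reduces to its $H^0(S(R),\Z/2\Z)$-summand, so that the appeal to the Akbulut--King theorem~\cite{AKhomology} becomes vacuous: one takes $\phi$ to be the identity and identifies~$\eta$ with a class $\sigma\in H^0(S(R),\Z/2\Z)$.  The ensuing application of the Stone--Weierstrass approximation theorem to separate the components of~$S(R)$ by the sign of a rational function is replaced verbatim by the signs property on~$S$.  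The remaining ingredients, notably Lemma~\ref{lem:relativevanishing} and Lemma~\ref{lem:algebraichomology}, are purely formal and transfer unchanged to the non-archimedean setting.

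The main obstacle is therefore the vanishing of $\cl_R(y)$ asserted above, which is a prerequisite for using the EPT property.  The argument rests on the geometric observation that, since $X(R)=\varnothing$, every fibre of~$f$ over a real point of~$S$ is an anisotropic conic.  A direct analysis of the pushforward on equivariant cohomology along such fibres, using the computation of~$H^2_G$ of an anisotropic real conic and the base-change properties of $\RR f_*^G$ from~\textsection\ref{subsubsec:two leray}, shows that $(f_*\alpha)|_{S(R)}$ lies in $2\cdot H^2_G(S(R),\Z(1))$; by the canonical decomposition, this $2$-divisibility forces the Borel--Haefliger component of $(f_*\alpha)|_{S(R)}$, which by definition equals $\cl_R(y)$, to vanish.
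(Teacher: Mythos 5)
Your strategy matches the paper's: reduce to a Sarkisov standard model, walk through the six steps of \S\ref{subsec:proofconicbundles}, observe that the parts relying on $X(R)\neq\varnothing$ drop out, and substitute the signs and EPT hypotheses for the uses of Stone--Weierstrass. Most of your accounting is correct, including the collapse of Lemma~\ref{lem:AKhomology} to $\phi=\mathrm{Id}$ when $d=2$.

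The genuine gap is your treatment of $\cl_R(y)=0$. You present it as the ``main obstacle'' and sketch a $2$\nobreakdash-divisibility argument for $(f_*\alpha)|_{S(R)}$ based on an unspecified ``direct analysis'' of the pushforward along anisotropic conic fibres via $\RR f^G_*$; the key divisibility is simply asserted, and it is not clear how to complete the argument as stated. In fact no such machinery is needed, and the conclusion is stronger: $X(R)=\varnothing$ means $\alpha|_{X(R)}\in H^4_G(X(R),\Z(2))$ is literally zero, the stability theorem \cite[Theorem~\ref*{BW1-th:stability of topological constraints}]{BW1} then gives $(f_*\alpha)|_{S(R)}=0$ outright (not merely $2$\nobreakdash-divisibility), and Krasnov's conditions \cite[Theorem~\ref*{BW1-th:conditions de krasnov}]{BW1} yield $\cl_R(y)=0$. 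This is exactly the original Step~\ref{step:pushforward} with its input $\alpha|_{X(R)}=0$ now trivially satisfied; no new argument is required. Two smaller points: you cite Witt's theorem \cite[Satz~22]{WittHasse}, a statement over~$\R$, whereas over an arbitrary real closed field~$R$ one must invoke the generalisation \cite[\textsection5]{elmanlamprestel}, as the paper does; and your lifting of the components of~$D$ to~$X$ does not discuss the components contained in the degeneracy locus~$\Delta$ (or in~$\Sigma$), which the original Step~\ref{step:pushforward} treats separately via the non-smooth locus and the $\P^1$\nobreakdash-bundle structure over~$\Sigma$.
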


\begin{proof}
First, we may assume that $f$ is a Sarkisov standard model by applying Theorem \ref{thm:existenceSarkisov} and \cite[Proposition~\ref*{BW1-prop:birinvIHC}]{BW1}, noting that if $\mu:S'\to S$ is a birational morphism of smooth projective surfaces, writing $\mu$ as a composition of blow-ups at closed points shows that $S'$ satisfies the signs and EPT properties.

Let $\alpha\in H^{4}_G(X(C),\Z(2))$ be such that $N\alpha$ is algebraic for some integer $N\geq 1$. 
We wish to show that $\alpha$ is algebraic. The proof closely follows that of Theorem \ref{thm:fibres en coniques} given in \S\ref{subsec:proofconicbundles}, and we only explain the differences.

Step \ref{step:real locus1} is irrelevant since $X(R)=\varnothing$.

To perform Step \ref{step:pushforward}, we note that $f_*\alpha\in H^2_G(S(C),\Z(1))$ is algebraic by \cite[Proposition~\ref*{BW1-prop:real(1,1)nonarch}]{BW1} and the compatibility of $\cl$ with proper push-forwards \cite[\S \ref*{BW1-subsubsec:eqcl}]{BW1} and we replace 
 \cite[Satz 22]{WittHasse} with the more general \cite[\S 5]{elmanlamprestel}
and the use of Bröcker's theorem \cite{brocker} with the hypothesis that $S$ satisfies the EPT property.

Step \ref{step:real locus2} is unchanged.

In the proof of Lemma \ref{lem:AKhomology} used in Step \ref{step:notrace}, the use of \cite[Lemma 9 (1)]{AKhomology} can be circumvented by taking $\phi$ to be the identity, and the Stone--Weierstrass theorem can be replaced with the hypothesis that $S$ satisfies the signs property.

Steps \ref{step:killonopen} and \ref{step:removestuff} require no modification.
\end{proof}

The hypotheses about the signs and EPT properties are optimal, as shown by the following proposition:

\begin{prop}
\label{prop:IHCconicbundlesnonarch}
Let $S$ be a smooth projective surface over $R$, let $\Gamma$ be the anisotropic conic over $R$ and let $X:=S\times\Gamma$. The following assertions are equivalent:
\begin{enumerate}[(i)]
\item The cokernel of $\cl:\CH_1(X)\to H^4_G(X(C),\Z(2))$ has no torsion.
\item The surface $S$ satisfies the signs and EPT properties.
\end{enumerate}
\end{prop}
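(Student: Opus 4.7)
The implication (ii)$\Rightarrow$(i) will be immediate from Proposition~\ref{prop:cokersstorsion} applied to the projection $p_S:X=S\times\Gamma\to S$: it is a conic bundle whose total space satisfies $X(R)=S(R)\times\Gamma(R)=\emptyset$, since $\Gamma(R)=\emptyset$.

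For the converse (i)$\Rightarrow$(ii), the plan is to reverse-engineer the proof of Proposition~\ref{prop:cokersstorsion}. In that argument, the signs property for~$S$ is used (via Lemma~\ref{lem:AKhomology}) to produce rational functions with prescribed signs on connected components of $S(R)$, and the EPT property for~$S$ is used in Step~\ref{step:pushforward} to find a representative of $f_*\alpha\in\Pic(S)$ whose support avoids $S(R)$. We aim to show that, conversely, failures of either property produce a non-trivial torsion class in the cokernel of $\cl:\CH_1(X)\to H^4_G(X(C),\Z(2))$. By Proposition~\ref{prop:signsEPTfaibles} it suffices to establish only the weak forms of the two properties.

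For the EPT property, given $\mathcal{L}\in\Pic(S)$ with $\cl_R(\mathcal{L})=0$, I would consider the class
\[
\alpha:=p_S^*\cl(\mathcal{L})\smile\cl(S\times\{c\})\in H^4_G(X(C),\Z(2))\rlap{\text{,}}
\]
where $c\in\Gamma$ is a closed point of degree~$2$ (such a point exists because $\Pic(\Gamma)=\Z$ admits a degree-$2$ generator while $\Gamma(R)=\emptyset$ precludes a degree-$1$ one). The projection formula yields $p_{S*}\alpha=2\cl(\mathcal{L})$. A Künneth and Hochschild--Serre analysis should show, using the hypothesis $\cl_R(\mathcal{L})=0$, that~$\alpha$ is $2$\nobreakdash-divisible in $H^4_G(X(C),\Z(2))$; invoking~(i), the class $\alpha/2$ is then represented by some $1$\nobreakdash-cycle~$z$ on~$X$. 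Pushing forward to~$S$ gives a representative of $\mathcal{L}$ supported on $p_S(\mathrm{supp}(z))$, which is disjoint from $S(R)$ because $X(R)=\emptyset$, giving the EPT conclusion.

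For the signs property, a union~$K$ of connected components of $S(R)$ defines a class in $H^0(S(R),\Z/2\Z)$ which lifts, via the canonical decomposition of \cite[(\ref*{BW1-eq:canonical decomposition})]{BW1}, to a torsion element $\kappa\in H^2_G(S(C),\Z(1))_0$. The same ``cup-product and divide by~$2$'' mechanism applied to $\kappa\smile\cl(S\times\{c\})$ should produce a $1$\nobreakdash-cycle on~$X$ whose push-forward to~$S$ is a divisor~$D$ whose Borel--Haefliger class separates~$K$ from its complement; Scheiderer's refinement in \cite[Appendix]{scheidererpurity}, already used in the proof of Proposition~\ref{prop:signsEPTfaibles}, then converts~$D$ into a rational function on~$S$ realising the required sign distribution on~$K$.

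The principal difficulty will be the ``$2$\nobreakdash-divisibility'' step in both constructions, which is sensitive to the precise structure of $H^4_G(X(C),\Z(2))$. Since $\Gamma(R)=\emptyset$, the quotient $\Gamma(C)/G$ is homeomorphic to $\R\P^2$, and $H^*_G(\Gamma(C),\Z(*))$ coincides with the cohomology of $\R\P^2$ with appropriate local coefficients; in particular, $H^2_G(\Gamma(C),\Z(1))=\Z$, generated by $\cl(\{c\})$, and the push-forward $H^2_G(\Gamma(C),\Z(1))\to\Z$ has image $2\Z$, exactly reflecting the non-existence of a degree-$1$ cycle on~$\Gamma$. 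It is this ``factor of~$2$'' that should mediate the transfer of torsion obstructions between the algebraicity statement for~$X$ and the EPT and signs properties for~$S$.
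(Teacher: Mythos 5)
Your identification of (ii)$\Rightarrow$(i) as an application of Proposition~\ref{prop:cokersstorsion} to $p_S : S\times\Gamma\to S$ is correct and is exactly what the paper does. For (i)$\Rightarrow$(ii), however, your argument has genuine gaps and departs substantially from the paper's route, which instead exploits the Leray spectral sequence of the conic bundle $f\colon X\to S$ and the identification of its connecting maps with cup product by $\omega^3\in H^3_G(S(C),\Z(1))$, leading to the explicit exact sequence~\eqref{eq:Lerayconiqueconstante} that drives both halves of the converse.

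The most concrete error is in your treatment of the EPT property: you claim that $p_S(\mathrm{supp}(z))$ ``is disjoint from $S(R)$ because $X(R)=\emptyset$''. This is false. A horizontal integral curve $B\subset X$ has $B(R)=\emptyset$, but $p_S(B)$ may well contain real points whenever $B\to p_S(B)$ has even degree; in the simplest case, the vertical curve $\{s\}\times\Gamma$ over a real $s\in S(R)$ pushes forward to $0$, while curves in the Weil restriction to a quadratic extension project onto real divisors. What is actually true, and what the paper proves, is that any component of $p_{S*}w$ that is \emph{real} must occur with \emph{even} multiplicity (a component mapping with odd degree onto a real divisor would force a real point of~$X$), after which one must invoke Proposition~\ref{prop:signsEPTfaibles}~(ii) to pass from ``even multiplicity at real components'' to ``support disjoint from $S(R)$''. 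Your signs argument is also conceptually off: the Borel--Haefliger class of a divisor lives in $H^1(S(R),\Z/2\Z)$, so it cannot ``separate $K$ from its complement'' in the sense of $H^0(S(R),\Z/2\Z)$; what the paper produces is a class $\gamma\in H^1_G(U(C),\Z(1))$ over a dense open $U\subset S$, whose mod~$2$ reduction, fed through the Kummer sequence, yields an actual rational function with the prescribed signs, so that Proposition~\ref{prop:signsEPTfaibles}~(i) applies. Finally, your ``$2$-divisibility of $\alpha$'' claims are left as assertions; in the paper, the corresponding divisibility comes precisely from the $\omega^3$-structure of the exact sequence~\eqref{eq:Lerayconiqueconstante} together with the norm argument, not from a K\"unneth decomposition.
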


\begin{proof}
That (ii) implies (i) is a consequence of Proposition \ref{prop:cokersstorsion}.
To prove the converse, suppose that (i) holds. We consider Leray spectral sequences as in \S\ref{subsubsec:Leray}. Those associated with the $G$-equivariant sheaf $\Z(1)$ and to the morphisms $f:X\to S$ and $\Gamma\to\Spec(R)$ give rise to a commutative diagram with exact rows:
$$
\xymatrix@R=3ex{
H^0(G,\Z) \ar[r]\ar[d] & H^3(G,\Z(1)) \ar[d]\ar[r] &H^3_G(\Gamma(C),\Z(1))\ar[d]& \\
H^0_G(S(C),\Z) \ar[r] &H^3_G(S(C),\Z(1))\ar[r] &H^3_G(X(C),\Z(1))\rlap{.}
}
$$
Since $H^3_G(\Gamma(C),\Z(1))=0$ by cohomological dimension \cite[\S\ref*{BW1-subsubsec:cohomological dimension}]{BW1}, we deduce that the upper left horizontal map sends $1$ to the non-zero class of $H^3(G,\Z(1))$, hence that the lower left horizontal map sends~$1$ to $\omega^3$ (see \cite[\S\ref*{BW1-subsubsec:omega}]{BW1}).

In turn, the Leray spectral sequence for the equivariant sheaf $\Z(2)$ and the morphism $f:X\to S$ provides an exact sequence:
\begin{align}
\label{eq:Lerayconiqueconstante}
H^1_G(S(C),\Z(1))\stackrel{\omega^3}{\longrightarrow} H^4_G(S(C),\Z(2))&\xrightarrow{f^*} H^4_G(X(C),\Z(2)) \nonumber \\
  &\xrightarrow{f_*}  H^2_G(S(C),\Z(1))\stackrel{\omega^3}{\longrightarrow} H^5_G(S(C),\Z(2)),
\end{align}
where two of the horizontal maps are the cup product by $\omega^3\in H^3_G(S(C),\Z(1))$, by the above computation and the multiplicative properties of Leray spectral sequences.

Let $\sigma\in H^0(S(R),\Z/2\Z)$ be a collection of signs. By \cite[\S\ref*{BW1-subsubsec:cohomological dimension},  (\ref*{BW1-eq:canonical decomposition})]{BW1}, $H_G^5(S(C),\Z(3))\xrightarrow{\sim}H_G^5(S(R),\Z(3))\simeq H^0(S(R),\Z/2\Z)\oplus H^2(S(R),\Z/2\Z)$ and one may lift $(\sigma,0)$ to a class $\alpha\in H_G^5(S(C),\Z(3))$.
By the exact sequence \cite[(\ref*{BW1-eq:real-complex long 10})]{BW1} $$H^4(S(C),\Z)\to H^4_G(S(C),\Z(2))\xrightarrow{\omega} H^5_G(S(C),\Z(3))\to H^5(S(C),\Z)=0,$$
one may write $\alpha=\beta \smile\omega$ for some 
 class $\beta\in H^4_G(S(C),\Z(2))$ (see \cite[\S\ref*{BW1-subsubsec:omega}]{BW1}).
Since $\omega$ is $2$-torsion, $2\beta$ is the norm of a class in $H^4(S(C),\Z)$, hence is algebraic. So are $2f^*\beta$ and, by (i), $f^*\beta\in H^4_G(X(C),\Z(2))$. Let $z$ be a $1$-cycle on $X$ such that $f^*\beta=\cl(z)$ and $U$ be the complement in $S$ of the image by $f$ of the support of $z$.
The exact sequence analogous to (\ref{eq:Lerayconiqueconstante}) for
 the base change morphism $f_U:X_U\to U$
$$H^1_G(U(C),\Z(1))\stackrel{\omega^3}{\longrightarrow} H^4_G(U(C),\Z(2))\to H^4_G(X_U(C),\Z(2))$$
shows that there exists $\gamma\in H^1_G(U(C),\Z(1))$ such that $\gamma\smile\omega^3=\left.\beta\right|_U$.
Consider the reduction $\bar{\gamma}\in H^1_G(U(C),\Z/2\Z)=H^1_\et(U,\Z/2\Z)$ of $\gamma$ modulo $2$.
 Its image in $H^1_\et(U,\Gm)=\Pic(U)$ vanishes after shrinking $U$, so that we may assume that it lifts,
by the Kummer exact sequence, to $g\in H^0_\et(U,\Gm)$. By construction, the signs of~$g$ on $U(R)$ are given by $\sigma$, and $S$ has the signs property by Proposition~\ref{prop:signsEPTfaibles}~(i).

Let $\mathcal{L}\in\Pic(S)$ be such that $\cl_{R}(\mathcal{L})=0$. 
We deduce from  \cite[Theorem~\ref*{BW1-th:conditions de krasnov}]{BW1} that $\cl(\mathcal{L})|_{S(R)}=0\in H^2_G(S(R),\Z(1))$.
It follows that $\cl(\mathcal{L})\smile\omega^3=0$,
since $H^5_G(S(C),\Z(2))\xrightarrow{\sim}H^5_G(S(R),\Z(2))$ by \cite[\S\ref*{BW1-subsubsec:cohomological dimension}]{BW1}.
By (\ref{eq:Lerayconiqueconstante}),
we now deduce that $\cl(\mathcal{L})=f_*\alpha\in H^2_G(S(C),\Z(1))$ for some class $\alpha\in H^4_G(X(C),\Z(2))$.  
On the one hand, representing $\mathcal{L}$ by a linear combination of integral curves $C_i\subset S$ and considering an appropriate combination of integral curves $D_i\subset X$ such that $f(D_i)=C_i$ shows that there exists $z\in \CH_1(X)$ such that $\cl(f_*z)$ is a multiple of $\cl(\mathcal{L})$.
On the other hand, a norm argument shows that the double of every element in $H^4_G(S(C),\Z(2))$ is algebraic. Combining these two facts and (\ref{eq:Lerayconiqueconstante}) shows that some multiple of $\alpha$ is algebraic. 
 By (i), $\alpha=\cl(w)$ for some $1$-cycle $w$ on $X$. Let $B$ be an integral curve in the support of $w$ that is not contracted by $f$. Since $B(R)\subset X(R)=\emptyset$, if the morphism $B\to f(B)$ has odd degree, $f(B)(R)$ cannot meet the smooth locus of $f(B)$. We deduce that all the real components of $f_*w$ have even multiplicity. Consequently, $S$ has the EPT property by Proposition~\ref{prop:signsEPTfaibles}~(ii).
\end{proof}

\begin{example}[uniruled threefolds with no real point]
\label{ex:cexuniruled}
Combining Proposition~\ref{prop:IHCconicbundlesnonarch} with the counterexamples to the signs and EPT properties given in \cite[Example~15.2.2]{bcr}, in Proposition~\ref{prop:cexEPT} and in Proposition~\ref{prop:cexsigns} provides examples of smooth projective uniruled threefolds $X$ over the real closed field $R=\cup_n\R((t^{1/n}))$ such that the cokernel of the cycle class map $\cl:\CH_1(X)\to H^4_G(X(C),\Z(2))_0$ has non-trivial torsion and such that $X(R)=\emptyset$. According to \cite[Proposition \ref*{BW1-prop:ihc defect and bo}]{BW1}, these varieties have non-trivial third unramified cohomology group $H^3_{\nr}(X,\Q/\Z(2))$.

  If $S$ is the bielliptic surface of Proposition \ref{prop:cexsigns} and $\Gamma$ is the anisotropic conic over~$R$, the variety $X=S\times \Gamma$ satisfies $H^2(X,\sO_X)=0$. It then follows from the discussion at the beginning of \S\ref{subsubsec:cokcycleclassmap} that this uniruled threefold with no real point fails the real integral Hodge conjecture for $1$-cycles.
\end{example}

We may now give the

\begin{proof}[Proof of Theorem \ref{thm:signsEPTsurfaces}]
If $S(R)=\emptyset$, there is nothing to prove, so we assume that there exists a real point $s\in S(R)$. Define $X:=S\times\Gamma$, where $\Gamma$ is the anisotropic conic over $R$.
By Proposition \ref{prop:IHCconicbundlesnonarch} and since $H^2(X,\sO_X)=0$, it suffices to prove the real integral Hodge conjecture for $X$.  By \cite[Theorem \ref*{BW1-thm:relation ihc phi}]{BW1}, it suffices to show that $X$ contains a geometrically integral curve of even genus. But there is an obvious one: $\{s\}\times\Gamma\subset X$.
\end{proof}

\subsection{Cubic hypersurfaces}
\label{subsec:cubichypersurfaces}

Despite the above counterexamples, the real integral Hodge conjecture does hold for some particular families of varieties over arbitrary real closed fields. We illustrate this in the case of cubic hypersurfaces in Theorem~\ref{thm:cubiques} below. In particular, although cubic threefolds over $R$ are rationally connected threefolds with $R$-points, they do not give rise to examples analogous to Example~\ref{ex:cexRC}.

Note that, even over the field of real numbers, Theorem \ref{thm:cubiques} is stronger than what we have considered before, because it concerns only lines, and not general curves. It is this boundedness on the degree of the curves that allows the proof to go through over an arbitrary real closed field.

\begin{thm}
\label{thm:cubiques}
Let $X$ be a smooth cubic hypersurface of dimension $d\geq 3$ over $R$.
\begin{enumerate}[(i)]
\item The group $\CH_1(X)$ is generated by real lines and sums of two complex conjugate lines. If $R=\R$, real lines suffice.
\item The Borel--Haefliger classes of real lines on $X$ generate $H_1(X(R),\Z/2\Z)$.
\item The cohomology classes of real lines on $X$ generate $H^{2d-2}_G(X(C),\Z(d-1))_0$. 
\end{enumerate}
\end{thm}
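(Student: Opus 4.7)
The plan is to reduce from an arbitrary real closed field~$R$ to the case $R=\R$ via Lemma~\ref{lem:constructibility of algebraicity degree delta}, and then to prove the case~$R=\R$ by combining Theorem~\ref{thm:fibres en coniques} with classical results on complex cubic hypersurfaces. Since any closed point of the Fano scheme~$F_1(X)$ over~$R$ has residue field~$R$ or~$C$, integral $1$-cycles on~$X$ of degree~$1$ (respectively, of degree~$\leq 2$) with respect to~$\sO_X(1)$ are exactly real lines (respectively, real lines and conjugate pairs of complex lines). Applied to the universal family of smooth cubic hypersurfaces of dimension~$d$ in~$\P^{d+1}_\R$ polarised by~$\sO(1)$, Lemma~\ref{lem:constructibility of algebraicity degree delta} thus propagates statement~(iii) (respectively,~(i)) from~$\R$, with the degree bound $\delta=1$ (respectively, $\delta=2$), to every real closed extension of~$\R$; statement~(ii) is a direct cohomological consequence of~(iii).

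For the case~$R=\R$, we first produce a real line $L\subset X$. Since~$X$ has odd degree, $X(\R)\neq\emptyset$; for any $p\in X(\R)$, a general real linear subspace $\P^3_\R\subset\P^{d+1}_\R$ through~$p$ cuts out a smooth real cubic surface section $S=X\cap\P^3_\R$ containing~$p$, and by the classical Schl\"afli--Segre classification, any smooth real cubic surface with a real point contains at least three real lines. This furnishes a real line $L\subset S\subset X$. Projection from~$L$ realises the blow-up $\tilde X=\mathrm{Bl}_L(X)$ as a conic bundle $\pi_L:\tilde X\to\P^{d-1}_\R$; since~$\P^{d-1}_\R$ trivially satisfies the real integral Hodge conjecture for $1$-cycles (its Chow group being~$\Z$), Theorem~\ref{thm:fibres en coniques} applies to~$\pi_L$, and birational invariance~\cite[Proposition~\ref*{BW1-prop:birinvIHC}]{BW1} yields the real integral Hodge conjecture for $1$-cycles on~$X$.

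To upgrade this algebraicity conclusion to generation by real lines, we rely on the classical fact that~$\CH_1(X_\C)$ is generated by complex lines: this is due to Murre and Collino for $d=3$ (via the Abel--Jacobi map $F_1(X_\C)\to J(X_\C)$ combined with Voisin's integral Hodge conjecture~\cite{voisinthreefolds} for uniruled threefolds, applicable since $H^2(X,\sO_X)=0$ by Lefschetz hyperplane), and is trivial for $d\geq 4$, where $H^{2d-2}(X_\C,\Z)=\Z$ is spanned by any single line. A norm argument from~$\C$ to~$\R$ then shows that real lines and conjugate pairs jointly generate~$\CH_1(X)$, proving~(i); combined with the decomposition of $H^{2d-2}_G(X(\C),\Z(d-1))_0$ from~\cite[\S\ref*{BW1-subsubsec:topological constraints}]{BW1} and with the observation that conjugate pairs, having no real points, contribute trivially to the mod-$2$ summand identified with $H_1(X(\R),\Z/2\Z)$, this yields~(iii) and hence~(ii). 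The main obstacle is the last step of deducing~(iii) from the real integral Hodge conjecture: one must verify that the norm argument descends generators over~$\C$ to real lines over~$\R$ in the equivariant cohomology (and not merely up to conjugate pairs and $2$-torsion), which requires careful bookkeeping through the decomposition of~$H^{2d-2}_G$ and use of the fact, ensured by Schl\"afli--Segre, that real lines exist in sufficient abundance on cubic surface sections of~$X$ to saturate all mod-$2$ obstructions.
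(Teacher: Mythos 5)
Your overall two-step skeleton---establish the degree-$1$ result over~$\R$ via Theorem~\ref{thm:fibres en coniques}, then transfer to arbitrary real closed fields by boundedness of degree---matches the paper's strategy, and the conic bundle step (projection from a real line, then Theorem~\ref{thm:fibres en coniques} and birational invariance) is exactly right. However, there are several genuine gaps.

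First, the input on the Chow group side is wrong for $d\geq 4$. That $H^{2d-2}(X_\C,\Z)=\Z$ does \emph{not} make it ``trivial'' that $\CH_1(X_\C)$ is generated by lines: the kernel of the cycle class map from $\CH_1$ can a priori be large, and even for cubic fourfolds the statement $\CH_1(X_\C)\simeq\Z$ is a theorem (Paranjape), not a consequence of Lefschetz. The paper instead invokes Shen's theorem \cite[Theorem~1.7]{Shenrationality}, which works directly over~$R$ (not just~$\C$) once a line over~$R$ is known to exist. This gives the first part of~(i)---generation by real lines and conjugate pairs---over \emph{every} real closed field in one stroke, with no norm argument and no $2$-torsion ambiguity, and makes the propagation of~(i) unnecessary.

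Second, the transfer of statement~(i) via Lemma~\ref{lem:constructibility of algebraicity degree delta} does not make sense: that lemma (and Proposition~\ref{prop:ihcnonarchbounded}) is a constructibility statement about the image of the cycle class map in $H^{2k}_G(X_b(C),\Z(k))_0$, not about the Chow group $\CH^k(X_b)$ itself. It can propagate~(iii), but it cannot propagate an assertion about generators of $\CH_1(X)$. In fact, the paper does \emph{not} claim that ``real lines suffice'' in~(i) over an arbitrary real closed field; this stronger statement is proved only over~$\R$, and the tool that makes it work there is Lemma~\ref{lem:CH0pointsreels} (over~$\R$, $\CH_0$ of a positive-dimensional smooth projective variety with a real point is generated by real points), applied to the Fano variety of lines~$F_X$ together with Shen's surjectivity of $[Z]^*:\CH_0(F_X)\to\CH_1(X)$. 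That lemma demonstrably fails over non-archimedean real closed fields, as the paper's final remark shows, so the ``real lines suffice'' claim simply does not transfer. Your proposal omits Lemma~\ref{lem:CH0pointsreels} entirely, which is the key $\R$-specific ingredient.

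Third, you correctly flag the obstacle that the norm argument only gives generation up to conjugate pairs and $2$-torsion, but you do not resolve it---the ``careful bookkeeping'' and ``Schl\"afli--Segre saturation of mod-$2$ obstructions'' are not an argument. The paper's resolution is short and has nothing to do with cubic surface sections: for~(iii), one observes that $\cl_C(L')\in H^{2d-2}(X(C),\Z(d-1))$ is independent of the choice of line $L'\subset X_C$ by the Lefschetz hyperplane theorem, so $\cl(L'+\bar L')=N_{C/R}(\cl_C(L'))=N_{C/R}(\cl_C(L))=2\cl(L)$; hence any conjugate pair is cohomologous to twice a real line, and~(i) plus the real integral Hodge conjecture give~(iii). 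And~(ii) is derived not from~(iii) directly but from the surjectivity of the Borel--Haefliger map $\cl_R:\CH_1(X)\to H_1(X(R),\Z/2\Z)$ (a consequence of the real integral Hodge conjecture via \cite[Theorem~\ref*{BW1-th:phi} and Proposition~\ref*{BW1-prop:conoyau norme}]{BW1}) combined with the first part of~(i) and the vanishing of $\cl_R$ on conjugate pairs. Finally, a small technical point: propagating from~$\R$ to ``every real closed extension of~$\R$'' does not by itself cover arbitrary real closed fields; the paper closes this gap with Proposition~\ref{prop:invIHCextR} and the fact that any two real closed fields embed in a common one.
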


\begin{proof} A smooth cubic surface $S$ which is a linear section of $X$ contains exactly $27$ lines over $C$. Since $27 $ is odd, at least one of these has to be $G$-invariant, hence is a real line $L\subset X$. The first part of (i) then follows from a theorem of Shen \cite[Theorem 1.7]{Shenrationality}.
To prove the second part of (i), we introduce the variety of lines $F_X$ of $X$: it is a smooth projective geometrically connected variety over $R$ \cite[Corollary 1.12, Theorem 1.16 (i)]{AK}, and we let $Z\subset X\times F_X$ be the universal line. The first part of (i) is equivalent to the surjectivity of $[Z]^*:\CH_0(F_X)\to\CH_1(X)$. If $R=\R$, Lemma \ref{lem:CH0pointsreels} below then implies that $\CH_1(X)$ is generated by the $[Z]^*x$ for $x\in F_X(\R)$, that is, by real lines on $X$.

Projecting from the real line $L$ shows that $X$ is birational to a conic bundle over~$\P^{d-1}_{R}$. If $R=\R$, we deduce from Theorem \ref{thm:fibres en coniques} and \cite[Propositions~\ref*{BW1-prop:birinvIHC} and \ref*{BW1-propIHCprojectivespace}]{BW1} that $X$ satisfies the real integral Hodge conjecture for $1$-cycles.
 Combining this fact with
(i), we see that if $X$ is a smooth cubic hypersurface of dimension $d$ over~$\R$, the group $H^{2d-2}_G(X(\C),\Z(d-1))_0$ is generated by classes of curves of degree $1$ on $X$. 
By Proposition~\ref{prop:ihcnonarchbounded} applied to the universal family of smooth cubic hypersurfaces of dimension $d$ over $\R$, the real integral Hodge conjecture holds for $1$\nobreakdash-cycles on smooth cubic hypersurfaces of dimension~$d$ over real closed field extensions of~$\R$, hence over arbitrary real closed fields by Proposition \ref{prop:invIHCextR} since any two real closed fields can be embedded simultaneously in a third one (see
\cite[Theorem~28]{conway} or \cite[Proposition~3.5.11, Examples~p.~197]{ChangKeisler}).

By \cite[Theorem~\ref*{BW1-th:phi} and Proposition~\ref*{BW1-prop:conoyau norme}]{BW1},
we deduce that the Borel--Haefliger cycle class map $\cl_R:\CH_1(X)\to H_1(X(R),\Z/2\Z)$ is surjective. The first part of (i) now implies that $H_1(X(R),\Z/2\Z)$ is generated by Borel--Haefliger classes of real lines and sums of complex conjugate lines. Since the Borel--Haefliger class of a sum of complex conjugate lines vanishes, we have shown (ii).

The cycle class $\cl_C(L') \in H^{2d-2}(X(C),\Z(d-1))$
of a line $L' \subset X_C$ is independent of~$L'$ by the Lefschetz hyperplane theorem.
Therefore so is the equivariant cycle class $\cl(L'+\bar L')=N_{C/R}(\cl_C(L')) \in H^{2d-2}_G(X(C),\Z(d-1))_0$.
It follows that $\cl(L'+\bar L')=2\cl(L)$.
Thus (i) and the real integral Hodge conjecture for $1$\nobreakdash-cycles on~$X$ imply~(iii).
\end{proof}

\begin{rmks}
(i)
Theorem \ref{thm:cubiques} (iii) and the second assertion of Theorem \ref{thm:cubiques} (i) fail for some smooth cubic surfaces $X$ over~$\R$. Indeed, $\Pic(X)=H^2_G(X(\C),\Z(1))$ by \cite[Proposition~\ref*{BW1-prop:real(1,1)}]{BW1} but this group may be of rank $4$ while $X$ contains only $3$ real lines \cite[Ch.~VI,~(5.4.4)]{silhol}. In contrast, a case by case analysis based on \cite[Ch.~VI,~(5.4)]{silhol} shows that Theorem~\ref{thm:cubiques}~(ii) still holds for smooth cubic surfaces over~$\R$. This argument works over an arbitrary real closed field $R$ as the classification \cite[Ch.~VI,~(5.4)]{silhol} of real cubic surfaces holds over such a field, with the same proof.
Finally, the first assertion of Theorem~\ref{thm:cubiques}~(i) also holds
for smooth cubic surfaces over~$R$, as follows from the validity of
Theorem~\ref{thm:cubiques}~(ii) for them, from
\cite[Proposition~\ref*{BW1-prop:conoyau norme}]{BW1}
and from the fact that $\Pic(X_C)$ is generated by lines.

(ii)
A direct approach to establishing Theorem~\ref{thm:cubiques}~(ii) over $R=\R$
through a topological classification of the real locus
of cubic hypersurfaces runs into the difficulty that
such a classification is only understood in dimension $\leq 4$ (see \cite{KharlaFin}).
Nevertheless,
in a forthcoming work \cite{slavasergei},
Finashin and Kharlamov provide a direct, purely topological proof of Theorem~\ref{thm:cubiques}~(ii) over $R=\R$.
\end{rmks}

\begin{lem}
\label{lem:CH0pointsreels}
Let $Y$ be a smooth projective connected variety over $\R$ such that $Y(\R)\neq\emptyset$. Then $\CH_0(Y)$ is generated by the classes of real points of $Y$.
\end{lem}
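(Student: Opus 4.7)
Plan: The argument reduces, via a Bertini construction, to the case where~$Y$ is a smooth projective geometrically integral curve, and then appeals to real Abel--Jacobi theory. First observe that~$Y$ is automatically geometrically integral: it is smooth and connected, hence irreducible; and if~$Y_\C$ were disconnected, its components would be disjoint (as~$Y_\C$ is smooth) and permuted nontrivially by~$G$, contradicting $Y(\R)\neq\emptyset$. It therefore suffices to show that the class $[y]\in\CH_0(Y)$ of any closed point~$y$ of~$Y$ with residue field~$\C$ lies in the subgroup generated by real points. Fix once and for all a real point $p_0\in Y(\R)$.

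If $\dim Y\geq 2$, I would embed~$Y$ in $\P^N_\R$ by a sufficiently high power of an ample line bundle, and cut~$Y$ iteratively by hyperplane sections constrained to contain both~$p_0$ and the length-$2$ closed subscheme~$y$. Containing~$p_0$ imposes one real linear condition, while containing~$y$ imposes a conjugation-invariant $\C$-linear condition, hence two real linear conditions; for $N$ large enough, the resulting family of hyperplanes remains large enough that Bertini's theorem yields, at each step, a smooth and geometrically integral section of the previous variety. After $\dim Y-1$ cuts one obtains a smooth projective geometrically integral curve $C\subset Y$ containing both~$p_0$ and~$y$. Since pushforward $\CH_0(C)\to\CH_0(Y)$ sends a real point of~$C$ to a real point of~$Y$, this reduces the statement to the case of curves.

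For~$C$ smooth projective geometrically integral over~$\R$ with $C(\R)\neq\emptyset$, the existence of a real point kills the Brauer obstruction $\Br(\R)=H^2(G,\C^*)$ appearing in the Hochschild--Serre spectral sequence for $C_\C\to C$; so $\Pic(C)=\Pic(C_\C)^G$ and, in particular, $\Pic^0(C)=J(C)(\R)$. Via the splitting $\Pic(C)=\Pic^0(C)\oplus\Z\cdot[p_0]$, it suffices to show that the subgroup $H\subseteq J(C)(\R)$ generated by the image of the Abel--Jacobi map $\alpha:C(\R)\to J(C)(\R)$, $q\mapsto[q]-[p_0]$, equals $J(C)(\R)$. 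For $g=\dim J(C)$, the morphism $\alpha^g:C^g\to J(C)$, $(q_1,\dots,q_g)\mapsto\sum_i([q_i]-[p_0])$, is surjective and generically submersive, and its critical locus is a proper closed subvariety of~$C^g$. Since~$C(\R)$ is Zariski dense in~$C$, one can pick a submersive real tuple $(q_1,\dots,q_g)\in C(\R)^g$ with all~$q_i$ lying in the same oval as~$p_0$; by the implicit function theorem, the image $\alpha^g(C(\R)^g)\subseteq J(C)(\R)$ then contains an open neighbourhood of some point of $J(C)(\R)^0$. Since any subset of a compact connected Lie group containing an open neighbourhood of some point generates the whole component as an abstract subgroup, this forces $H\supseteq J(C)(\R)^0$. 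A parallel argument using tuples spread across different ovals of~$C(\R)$, together with the Comessatti--Gross--Harris description of $\pi_0(J(C)(\R))$ in terms of the ovals of~$C(\R)$, then produces a representative in~$H$ of every connected component of $J(C)(\R)$, so $H=J(C)(\R)$.

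The main obstacle is the argument, in the curve case, that $H$ equals~$J(C)(\R)$ rather than merely a dense subgroup of it: the naive density argument from Zariski density of $C(\R)$ in~$C$ only yields that the closure of~$H$ contains $J(C)(\R)^0$. What bridges the gap is the submersivity of~$\alpha^g$ at some real tuple, combined with the elementary fact that a neighbourhood of the identity in a compact connected Lie group generates the whole group---this promotes topological density into genuine equality of subgroups. The connected-component analysis via Comessatti--Gross--Harris is then a more routine final step.
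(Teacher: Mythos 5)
Your proof is correct, but it takes a genuinely different route from the paper in the curve case. Both arguments begin with the same Bertini-type reduction to the case where~$Y$ is a smooth projective geometrically integral curve (the paper only sketches this step; your version is more explicit). The divergence is in how the curve case is handled. The paper invokes a result from van Hamel's thesis identifying $J(\R)^0$ with the kernel of the cycle class map $\Pic(Y)\to H^2_G(Y(\C),\Z(1))$, together with the fact that $H^2_G(Y(\C),\Z(1))$ is generated by classes of real points; this reduces the problem entirely to showing that $J(\R)^0$ is spanned by differences of real points, which the paper proves via the map $Y^{2g}\to J$, $(P_i,Q_j)\mapsto \sum P_i - \sum Q_j$, restricted to a single oval, combined with a clopen argument. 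Crucially, this sidesteps any analysis of $\pi_0(J(\R))$. You instead split $\Pic(C)=\Pic^0(C)\oplus\Z[p_0]$ directly, prove $J(\R)^0\subseteq H$ via the degree-$g$ Abel--Jacobi map and the fact that a set containing an open neighbourhood of a point generates the identity component of a compact connected Lie group, and then must separately show that $H$ surjects onto $\pi_0(J(\R))$, which you do by appealing to the Comessatti--Gross--Harris structure theory of real Jacobians. The trade-off is clear: the paper's cohomological identification of $J(\R)^0$ eliminates the component analysis at the cost of an external cohomological input, while your approach is self-contained on the cohomology side but needs the classical structure theory of $J(\R)$ as a compensating input. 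Your final step is stated somewhat loosely -- what is actually needed is that the classes $[q]-[p_0]$, as $q$ ranges over all ovals of $C(\R)$, generate $\pi_0(J(\R))\cong(\Z/2\Z)^{s-1}$, which is indeed a consequence of the cited theory -- but the argument is sound as a whole.
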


\begin{proof}
Looking at smooth curves in $Y$ going through an arbitrary closed point of~$Y$ and a fixed real point of $Y$, we are immediately reduced to the case where $Y$ is a curve. Then $\CH_0(Y)=\Pic(Y)$. Let $g$ be the genus of 
$Y$, $J:=\Pic^0(Y)$ be its jacobian, and $J(\R)^0$ be the connected component of the identity of $J(\R)$. We also choose a connected component $K\subset Y(\R)$.
Combining the exact sequence $0\to J(\R)\to \Pic(Y)\stackrel{\deg}{\longrightarrow} \Z\to 0$
and \cite[Chapter~IV, Corollary 4.2]{vanhamelthese} (in the statement of which $H^1$ should read $H^2$), we see that
$$J(\R)^0=\Ker\left(\cl:\Pic(Y)\to H^2_G(Y(\C),\Z(1))\right)\rlap{.}$$

By \cite[Lemma \ref*{BW1-lem:H2dreel}]{BW1}, $H^2_G(Y(\C),\Z(1))$ is generated by classes of real points on $Y$. Consequently, it suffices to show that every line bundle on $Y$ whose class belongs to $J(\R)^0$ is linearly equivalent to a linear combination of real points of $Y$. We introduce the subgroup $H\subset J(\R)^0$ of classes satisfying this property.

Look at the morphism $\phi:Y^{2g}\to J$ defined by $$\phi(P_1,\dots,P_g,Q_1,\dots,Q_g)=\sO_Y(P_1+\dots +P_g-Q_1-\dots-Q_g).$$
 It is dominant as is checked geometrically, hence generically smooth. We deduce that the induced map $K^{2g}\to J(\R)^0$ is open at some point, hence that $H$ contains a non-empty open subset of $ J(\R)^0$. By homogeneity, $H$ is a non-empty open subset of $J(\R)^0$.
As $J(\R)^0\setminus H$ is stable under translation by any element of~$H$,
it must also be open.  Hence~$H$ is open and closed in $J(\R)^0$, so that $H=J(\R)^0$, as desired.
\end{proof}

\begin{rmk}
In Theorem \ref{thm:cubiques}, we do not know whether the second part of (i) holds over arbitrary real closed fields. The proof does not work because Lemma \ref{lem:CH0pointsreels} fails in this more general setting. Here is an example.

Let $Y$ be a smooth projective curve of genus $2$ over $R:=\cup_n\R((t^{1/n}))$ such that $Y(R)\neq\emptyset$. We suppose that $Y$ is defined over $\R((t))$ and has a model over $\R[[t]]$ whose special fiber is a stable curve $Y_0$ of genus $2$ over $\R$ that, over~$\C$, consists of two genus one curves intersecting transversally in one point $P$ and exchanged by the action of~$G$.
Since $Y_0$ is of compact type, the jacobian $J$ of $Y$ has good reduction over $\R[[t]]$, with as special fiber the jacobian of $Y_0$. The $R$-points of $Y$ all specialize to the only real point $P$ of $Y_0$, so that the classes in $J(R)$ representing line bundles linearly equivalent to a sum of $R$-points of $Y$ all specialize to the origin in $J_0(\R)$. But all points of $J_0(\R)$ lift to $J(R)$ by Hensel's lemma, so that there exist line bundle classes in $J(R)$ not linearly equivalent to a sum of $R$-points of $Y$.
\end{rmk}

\bibliographystyle{myamsalpha}
\bibliography{hodgereel}
\end{document}